\documentclass{amsart}

\usepackage{amssymb, amsfonts, amsmath, amsthm,bm, amscd}
\usepackage{mathrsfs}
\usepackage{graphicx}
\usepackage{caption}
\usepackage{subcaption}
\usepackage{wrapfig}
\usepackage{enumitem, multicol}
\usepackage{tikz}
\usetikzlibrary{cd}
\usepackage{nicefrac, bigints}
\usepackage{array}
\usepackage{stmaryrd}
\usepackage{mathtools} 
\usepackage{extarrows} 
\usepackage[margin=1.00 in]{geometry}

\usepackage[colorlinks,citecolor=cyan,linkcolor=teal]{hyperref}


\newcommand{\para}[1]{\medskip\noindent\textbf{#1.}}

\theoremstyle{definition}
\newtheorem{theorem}{Theorem}[section]
\newtheorem{maintheorem}{Theorem}

\newtheorem{definition}[theorem]{Definition}
\newtheorem{lemma}[theorem]{Lemma}

\newtheorem{corollary}[theorem]{Corollary}
\newtheorem{proposition}[theorem]{Proposition}
\newtheorem{remark}[theorem]{Remark}

\newtheorem{claim}[theorem]{Claim}
\newtheorem{fact}[theorem]{Fact}

\newtheorem{construction}[theorem]{Construction}
\newtheorem{example}[theorem]{Example}

\makeatletter
\DeclareRobustCommand{\cev}[1]{%
  {\mathpalette\do@cev{#1}}%
}
\newcommand{\do@cev}[2]{%
  \vbox{\offinterlineskip
    \sbox\z@{$\m@th#1 x$}%
    \ialign{##\cr
      \hidewidth\reflectbox{$\m@th#1\vec{}\mkern4mu$}\hidewidth\cr
      \noalign{\kern-\ht\z@}
      $\m@th#1#2$\cr
    }%
  }%
}
\makeatother

\newcommand{\ZZ}{\mathbb{Z}}
\newcommand{\RR}{\mathbb{R}}

\newcommand{\CC}{\mathbb{C}}
\newcommand{\HH}{\mathbb{H}}

\newcommand{\inverse}{^{-1}}

\newcommand{\cN}{\mathcal{N}}
\newcommand{\cE}{\mathcal{E}}

\DeclareMathOperator{\Mod}{Mod}

\newcommand{\MF}{\mathcal{MF}}

\newcommand{\ML}{\mathcal{ML}}

\newcommand{\GL}{\mathsf{GL}}
\DeclareMathOperator{\GLcr}{\mathcal{GL}^{cr}}

\newcommand{\SL}{\mathsf{SL}}
\DeclareMathOperator{\im}{im}

\newcommand{\T}{\mathcal{T}}
\newcommand{\M}{\mathcal{M}}

\newcommand{\cH}{\mathcal{H}}

\newcommand{\PT}{\mathcal{PT}}
\newcommand{\PM}{\mathcal{PM}}
\newcommand{\PoT}{\mathcal{P}^1\mathcal{T}}
\newcommand{\PoM}{\mathcal{P}^1\mathcal{M}}
\newcommand{\QT}{\mathcal{QT}}
\newcommand{\QM}{\mathcal{QM}}
\newcommand{\QoT}{\mathcal{Q}^1\mathcal{T}}
\newcommand{\QoM}{\mathcal{Q}^1\mathcal{M}}
\newcommand{\sing}{\underline{\kappa}}

\newcommand{\SH}{\mathcal{SH}}
\DeclareMathOperator{\Sp}{\mathsf{Sp}}

\DeclareMathOperator{\PSL}{\mathsf{PSL}}

\DeclareMathOperator{\Area}{Area}
\DeclareMathOperator{\diam}{diam}

\newcommand{\tlambda}{\widetilde{\lambda}}
\newcommand{\tX}{\widetilde{X}}

\newcommand{\tS}{\widetilde{S}}

\newcommand{\taua}{\tau_{\arc}}

\newcommand{\sigl}{\sigma_\lambda}
\newcommand{\ac}{\mathfrak{s}}

\newcommand{\cO}{\mathcal{O}}

\DeclareMathOperator{\Ol}{\mathcal{O}_\lambda}

\newcommand{\arc}{\underline{\alpha}}
\newcommand{\arcwt}{\underline{A}}
\newcommand{\arcb}{\underline{\beta}}

\newcommand{\arcc}{\underline{\gamma}}

\newcommand{\Arcfill}{\mathscr{A}_\text{fill}}

\newcommand{\Fol}{\mathcal W}

\newcommand{\Base}{\mathscr{B}(S \setminus \lambda)}

\DeclareMathOperator{\Eq}{Eq}

\renewcommand{\Re}{\operatorname{Re}}
\renewcommand{\Im}{\operatorname{Im}}

\newcommand{\cQ}{\mathcal{Q}}

\DeclareMathOperator{\hol}{hol}
\DeclareMathOperator{\ImAnn}{ImAnn}


\colorlet{lgray}{gray!40}

\newcommand{\gvarc}{\arc_{\bullet}(X,\lambda,\delta)}
\DeclareMathOperator{\injrad}{injrad}
\DeclareMathOperator{\Thick}{Thick}
\DeclareMathOperator{\Thin}{Thin}
\DeclareMathOperator{\Dil}{Dil}
\DeclareMathOperator{\sys}{sys}
\DeclareMathOperator{\Hor}{Hor}
\DeclareMathOperator{\Hom}{Hom}

\newcommand{\AIS}{\mathcal L}

\newcommand{\Addresses}{{
  \bigskip
  \footnotesize
  \noindent Aaron Calderon, \textsc{Department of Mathematics, University of Chicago}\par\nopagebreak
  \textit{E-mail address}: \texttt{aaroncalderon@uchicago.edu}
  
  \noindent James Farre, \textsc{Max Planck Institute for Mathematics in the Sciences, Leipzig}\par\nopagebreak
  \textit{E-mail address}: \texttt{james.farre@mis.mpg.de}
  }}

\begin{document}

\title[Continuity of the orthogeodesic foliation]{Continuity of the orthogeodesic foliation \\ and ergodic theory of the earthquake flow}
\author{Aaron Calderon}
\author{James Farre}

\setcounter{tocdepth}{1}

\begin{abstract}
In a previous paper, the authors extended Mirzakhani's (almost-everywhere defined) measurable conjugacy between the earthquake and horocycle flows to a measurable bijection. 
In this one, we analyze the continuity properties of this map and its inverse, proving that both are continuous at many points and in many directions.
This lets us transfer measure convergence between the two systems, allowing us to pull back results from Teichm{\"u}ller dynamics to deduce analogous statements for the earthquake flow.
\end{abstract}

\maketitle
\vspace{-5ex}
\thispagestyle{empty}

\section{Introduction}

Let $P$ be the upper triangular subgroup of $\SL_2 \RR$ and let $S$ be a closed surface of genus $g\ge 2$.
We consider two $P$-actions on the cotangent bundle $T^*\M_g$ to the moduli space $\M_g$ of genus $g$ Riemann surfaces.
From the complex analytic theory, $T^*\M_g$ is identified with the bundle $\QM_g$ of holomorphic quadratic differentials, or certain foliated singular flat metrics, on $S$.
From the perspective of hyperbolic geometry, $T^*\M_g$ can be viewed as the bundle $\PM_g$ of measured geodesic laminations on hyperbolic surfaces homeomorphic to $S$.

There is a natural action of $P$ on the space $\QM_g$ of singular flat structures by affine deformations of the metric.
The diagonal and unipotent subgroups
\[A=\left\{g_t = \begin{pmatrix}
e^{t} & 0 \\
0 & e^{-t}
\end{pmatrix}\right\}
\text{ and }
U=\left\{u_s = \begin{pmatrix}
1 & s \\
0 & 1
\end{pmatrix}\right\}
\]
of $P$ are the Teichm\"uller and horocycle flows, respectively.
The $P$-action has been used to study interval exchange transformations, giving a solution to the Keane conjecture \cite{Masur_IETsMF, Veech_IETs}, 
to study the dynamics of polygonal billiards
\cite{KMS:billiard}, 
and to count saddle connections and square-tiled surfaces \cite{EM:asymptotic_counts,DGZZ_freq}.

Thurston introduced the space $\ML_g$ of measured geodesic laminations as a useful completion of the space of weighted simple (multi-)curves on $S$ \cite{Thurston:bulletin}. The earthquake flow $\{\Eq_s\}$ on $\PM_g$ continuously extends the Fenchel-Nielson twist flows in closed curves. 
Thurston also defined stretch rays on the 
subbundle of maximal geodesic laminations \cite{Th_stretch}; 
these normalize earthquakes, and combining the two defines a Borel action of $P$ on $\PM_g$, defined almost everywhere with respect to the Lebesgue measure class, that preserves the hyperbolic length function. 
The geometry of the earthquake flow was used to resolve the Nielson Realization Problem \cite{Kerckhoff_NR}, and its dynamics are intricately related to the growth rate of simple closed geodesics on hyperbolic surfaces 
\cite{Mirz_horo, AH_compcount, Liu_horo, spine}.

While the geometry of horocycle and earthquake orbits are different \cite{MWnondiv,Fu:earthquake_excursions}, the ergodic theory of the two flows is essentially the same.
Using the ``horocyclic foliation'' construction of Thurston \cite{Th_stretch} and further developed by Bonahon \cite{Bon_SPB}, Mirzakhani defined a bijection $H$ between hyperbolic surfaces equipped with a maximal measured geodesic lamination and quadratic differentials in the principal stratum without horizontal saddle connections that exchanges the $P$ actions \cite{MirzEQ}.
Using the ``orthogeodesic foliation'' construction, the authors extended $H$ to an everywhere-defined bijection $\cO:\PM_g\to \QM_g$ swapping the earthquake flow with the horocycle flow \cite{shshI}.
The resulting $P$ action on $\PM_g$ is Borel
and extends the $P$ action coming from stretch rays and earthquakes.

Mirzakhani already observed that $H$ does not admit a continuous extension (hence that $\cO$ is not continuous): discontinuities arise along sequences in $\ML_g$ where the support of the limit in the measure topology and the limit of the supports in the Hausdorff topology disagree.
Arana-Herrera and Wright proved that there can be no continuous conjugation between the earthquake flow and horocycle flow \cite{AHW:continuous}.
Since the maps $H$ and $\cO$ are both defined geometrically using the supports of measured geodesic laminations, it is natural to wonder if they are continuous in a finer topology requiring convergence of laminations in measure and geometrically in the Hausdorff topology.  
We prove that this is indeed the case.
\begin{maintheorem}\label{maintheorem:O and Oinv continuous}
Suppose that $\lambda_n\to \lambda$ in the measure topology and their supports converge in the Hausdorff topology.
Then $(X_n,\lambda_n) \to (X,\lambda)$ in $\PM_g$ if and only if $\cO(X_n,\lambda_n) \to \cO(X,\lambda)$ in $\QM_g$.
\end{maintheorem}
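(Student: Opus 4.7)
The plan is to exploit the geometric definition of $\cO$ from \cite{shshI} and decompose the convergence problem into two essentially independent pieces: one controlled by the measure topology on $\ML_g$ and one by the Hausdorff topology on lamination supports. Recall that $\cO(X,\lambda)$ is built by cutting $X$ along the support $|\lambda|$, placing the canonical orthogeodesic foliation on each complementary piece (the ``vertical'' data), and then gluing the result into a singular flat structure whose horizontal measured foliation records the transverse measure of $\lambda$. Thus the output splits naturally into horizontal data (depending only on the transverse measure of $\lambda$) and vertical data (depending only on the complementary regions of the Hausdorff support in $X$), and I would prove continuity by handling each factor separately and then the gluing.

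For the forward direction, suppose $X_n \to X$ in Teichm\"uller space together with the assumed measure and Hausdorff convergence of the laminations. Choose a train track $\tau$ on $X$ carrying $\lambda$ whose regular neighborhood engulfs the Hausdorff limit $|\lambda|$. The convergence hypotheses guarantee that $\tau$ transports to train tracks on $X_n$ carrying $\lambda_n$ for $n$ large, and that the transverse measures converge to that of $\lambda$; this controls the horizontal foliations of $\cO(X_n,\lambda_n)$. Simultaneously, Hausdorff convergence of $|\lambda_n|$ together with $X_n \to X$ implies that the complementary regions converge geometrically, so their orthogeodesic foliations converge as well. Assembling the horizontal and vertical data produces a singular flat structure converging to $\cO(X,\lambda)$ in $\QM_g$.

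For the reverse direction, suppose instead that $\cO(X_n,\lambda_n) \to q$ in $\QM_g$ and that the laminations converge in both topologies. The flat area of $\cO(X_n,\lambda_n)$ stays bounded, and combined with the length--area features of the orthogeodesic construction this should confine the $X_n$ to a compact part of $\T_g$ (modulo the usual adjustment to account for directions in which $\lambda$ fails to fill). Extract a subsequential hyperbolic limit $X'$ and apply the forward direction to $(X_n,\lambda_n) \to (X',\lambda)$ to obtain $\cO(X',\lambda) = q$. Injectivity of $\cO$ proved in \cite{shshI} then forces $X' = X$, and a Urysohn-style subsequence argument upgrades subsequential convergence to full convergence.

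The main obstacle, and the place where the Hausdorff hypothesis really earns its keep, is controlling the orthogeodesic foliation near ``phantom'' leaves of $|\lambda|$ that carry no transverse mass from $\lambda$. These appear precisely when $\lambda_n$ concentrates measure onto a proper sub-lamination of its Hausdorff limit, and they refine the complementary regions in ways the transverse measure cannot detect. The orthogeodesic foliation threads through the resulting narrow subregions and is delicate to compare across the limit, as orthogeodesic arcs can limit onto ideal boundary crowns or spikes in the degenerating complements. Verifying that the assembled foliation on $X \setminus |\lambda|$ still converges continuously in the $\QM_g$ topology will require uniform estimates yoking the Hausdorff and measure convergences together, and this is where I expect the bulk of the technical work to lie.
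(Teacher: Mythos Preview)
Your forward-direction sketch has the right flavor but glosses over exactly the technical core. The paper implements it via a specific framework: one builds an \emph{equilateral} train track neighborhood $\cE_\delta(\lambda)$ and the standard smoothing $\taua$ of its augmentation by the visible arc system, shows that the dual object is a simply horizontally convex cellulation $\mathsf T$ of $\cO(X,\lambda)$ by saddle connections (so periods of $\cO(X,\lambda)$ become complex weights on $\taua$), and then proves that both the train track and the cellulation are locally constant as $(X,\lambda)$ varies in the product of the Teichm\"uller and Hausdorff topologies (Propositions~\ref{prop:ttstable} and~\ref{prop:cellstable}). This puts $\sigma_\lambda(X)$ and $\sigma_{\lambda'}(X')$ into the \emph{same} finite-dimensional weight space, where one can actually compare them; the estimate on their difference (Theorem~\ref{thm:contcell}) is the payoff of two sections of work on persistence of visible arcs (\S\ref{sec:persistent}) and Hausdorff stability of ties of the orthogeodesic foliation (\S\ref{sec:ties}). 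Your last paragraph correctly identifies where the difficulty lives, but the mechanism for placing shear-shape data for topologically different laminations into a common container is the missing ingredient in the proposal.

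Your reverse direction is \emph{not} what the paper does, and it has a genuine gap. The paper argues directly: given $q' = \cO(X',\lambda')$ close to $q$ in $\Fol^{ss}(q)$ with $\lambda'$ Hausdorff-close to $\lambda$, one shows that $\sigma_\lambda(X)$ and $\sigma_{\lambda'}(X')$ agree as weight systems on a common track, compares both to $\sigma_{\lambda'}(X)$ via Theorem~\ref{thm:contcell}, and then feeds the small combinatorial deformation $\ac = \sigma_{\lambda'}(X') - \sigma_{\lambda'}(X)$ into uniform estimates on the \emph{shape-shifting cocycles} of \cite{shshI} (Theorem~\ref{thm:shapeshift_distance_small}) to bound $d_\Gamma(X,X')$ quantitatively. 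Your proposed route---extract a subsequential limit $X'$ of the $X_n$, apply the forward direction, invoke injectivity---would be elegant, but the precompactness step is unjustified and nontrivial. Corollary~\ref{cor:thinparts} gives you only that the $X_n$ are $s'$-thick; thickness alone is not precompactness in $\T_g$. You would need length bounds on a marking of $X_n$, and there is no evident soft source for these: the inequality $i(\gamma,\cO_\lambda(X)) \le \ell_X(\gamma)$ from the proof of Proposition~\ref{prop:thinparts} goes the wrong way, and Example~\ref{example:thicktothin} shows that bounded flat geometry on $\cO(X,\lambda)$ does not by itself pin down the hyperbolic structure $X$. Extracting the needed bounds amounts to controlling how $\sigma_{\lambda_n}^{-1}$ varies with $\lambda_n$---which is precisely what the shape-shifting estimates of \S\ref{sec:shapeshift_est} provide---so the shortcut collapses back into the paper's argument.
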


Our proof is quantitative (see Theorems \ref{thm:contcell}, \ref{thm:cont_inverse}, and \ref{thm:cont_inverse_stable}).
For example, we show that for a fixed hyperbolic surface $X$, the vertical foliation of the quadratic differential $\cO(X, \lambda)$ varies H{\"o}lder continuously (in a fixed family of period coordinate charts) as the lamination $\lambda$ varies (in the Hausdorff metric on $X$).

It has long been known that there is a combinatorial-topological connection between geodesic laminations on hyperbolic surfaces and the horizontal foliations of quadratic differentials.
In the proof of Theorem \ref{maintheorem:O and Oinv continuous}, we show that this true even on a geometric level: 
there is an explicit duality between certain geometrically constructed train tracks that approximate $\lambda$ on $X$ and cellulations of $\cO(X,\lambda)$ by saddle connections whose cells interact nicely with the horizontal foliation
(see \S\ref{sec:dual cellulations}).
These techniques allow us to identify points and directions in $\QM_g$ where convergence to such a point along such a direction implies Hausdorff convergence of horizontal foliations. As a consequence, we prove that $\PM_g$ and $\QM_g$ can be decomposed into a countable union of nice Borel sets on which $\cO$ is in fact a homeomorphism (Theorem \ref{thm:Boreliso}).

In particular, for Lebesgue--almost every point $q$ of a stratum $\mathcal Q$ of quadratic differentials, 
$q$ is a point of continuity for $\cO\inverse|_{\cQ}$ (see Theorem \ref{thm:continuity for only short horizontals} for a more general statement).
Continuity almost everywhere with respect to a limiting measure is well-known to preserve weak-$*$ convergence of measures \cite[Main Theorem]{B:measures}.
Thus, Theorem \ref{maintheorem:O and Oinv continuous} has immediate dynamical consequences.
Let $\PoM_g$ and $\QoM_g$ denote the unit-length and -area loci inside $\PM_g$ and $\QM_g$, respectively. For a Borel measure $\nu$ on $\QM_g$ or $\QoM_g$ we use $\cO^*\nu$ to denote the pushforward of $\nu$ by $\cO\inverse$.\label{ind:O*}

\begin{maintheorem}\label{mainthm:measure convergence strata}
Let $\cQ^1\subset \QoM_g$ be a stratum of unit area quadratic differentials.  Suppose $\nu_n$ is a sequence of Borel probability measures on $\cQ^1$ converging weak-$*$ to a Borel probability measure $\nu$ that gives zero measure to the set of differentials with a horizontal saddle connection.  
Then $\cO^*\nu_n\to \cO^*\nu$ weak-$*$ on $\PoM_g$.
\end{maintheorem}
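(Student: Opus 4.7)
The plan is to invoke the Main Theorem of \cite{B:measures} already cited in the introduction: weak-$*$ convergence of probability measures is preserved under pushforward by any Borel map that is continuous at almost every point of the limit measure.  I will apply this principle with $f = \cO\inverse|_{\cQ^1}$, which is Borel by Theorem \ref{thm:Boreliso}, so that the entire theorem reduces to showing that $\cO\inverse$ is continuous at $\nu$-almost every point of $\cQ^1$.

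To verify the hypothesis of the continuous mapping theorem, I would use the assumption that $\nu$ gives zero measure to the set of differentials with a horizontal saddle connection.  It then suffices to prove that $\cO\inverse|_{\cQ^1}$ is continuous at every $q \in \cQ^1$ whose horizontal foliation has no saddle connection.  This should be precisely the ``explicit'' statement whose Lebesgue-almost-everywhere version is Theorem \ref{thm:continuity for only short horizontals}: for such $q$, the measured lamination $\lambda(q)$ corresponding to the horizontal foliation has support containing no isolated leaf, and the machinery developed in \S\ref{sec:continuous a.e.} is designed to promote measure convergence $\lambda_n \to \lambda(q)$ to Hausdorff convergence of the supports whenever $q_n \to q$ in the stratum.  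Once this promotion is in hand, Theorem \ref{maintheorem:O and Oinv continuous} immediately delivers $\cO\inverse(q_n) \to \cO\inverse(q)$ in $\PoM_g$.

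The substantive ingredients therefore lie outside this theorem's proof proper: they are (i) the lamination-theoretic promotion from measure to Hausdorff convergence of supports at differentials without horizontal saddle connections, handled in \S\ref{sec:continuous a.e.}, and (ii) Theorem \ref{maintheorem:O and Oinv continuous} itself.  A smaller but still necessary input is Borel measurability of $\cO\inverse$, which is Theorem \ref{thm:Boreliso}.  Given these three inputs, the derivation of Theorem \ref{mainthm:measure convergence strata} is a direct application of \cite[Main Theorem]{B:measures} and requires essentially no additional work; the main obstacle the proof of this theorem must overcome is really the obstacle of (i), namely, isolating the right geometric condition on $q$ (``no horizontal saddle connection'') under which sequences $q_n \to q$ in the stratum are guaranteed to induce the joint measure and Hausdorff convergence that Theorem \ref{maintheorem:O and Oinv continuous} demands.
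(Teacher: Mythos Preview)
Your proposal is correct and matches the paper's approach essentially exactly: the paper's proof of Theorem~\ref{mainthm:measure convergence strata} combines Theorem~\ref{thm:Boreliso} (Borel measurability of $\cO\inverse$), Theorem~\ref{thm:continuity for only short horizontals} (continuity of $\cO\inverse|_{\cQ^*}$ at every $q$ with no horizontal saddle connection), and \cite[Theorem~5.1]{B:measures}. One minor clarification: Theorem~\ref{thm:continuity for only short horizontals} as actually stated in \S\ref{sec:continuous a.e.} is already the pointwise continuity statement you want (at every $q$ without horizontal saddles), not merely a Lebesgue-a.e.\ statement as the introductory paragraph might suggest, so your step (i) is literally that theorem rather than a strengthening of it.
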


\begin{remark}\label{rmk:P invariant measures horizontal null}
By \cite{EM}, every $P$-invariant ergodic probability measure on $\QoM_g$ is $\SL_2\RR$-invariant and locally affine. The set of differentials with a horizontal saddle connection is null for such measures, hence Theorem \ref{mainthm:measure convergence strata} applies whenever $\nu$ is $P$-invariant.
\end{remark}

\begin{remark}
Our techniques in fact apply to a much larger class of limiting measures, including every known $U$-invariant measure on $\QoM_g$.
In such situations, one must constrain the sequence $\nu_n$ to ensure that the generic points of $\nu_n$ all have ``the same'' horizontal saddle connections as the generic point of $\nu$.
These results rely on the full power of Theorem \ref{maintheorem:O and Oinv continuous}; see \S \ref{sec:continuous a.e.} for precise statements.
\end{remark}

Most of our immediate applications (discussed in the next section and in \cite{shshIII}) import recent deep results in Teichm\"uller dynamics on strata in $\QoM_g$ to the earthquake flow and the $P$-action on $\PoM_g$.
In principle, Theorem \ref{maintheorem:O and Oinv continuous} also allows for the exchange of information in the other direction.

\begin{maintheorem}\label{mainthm:push measures from P to Q}
Suppose $\mu_n$ are Borel probability measures on $\PoM_g$ converging weak-$*$ to a measure $\mu$ that gives zero measure to the set of pairs $(X, \lambda)$ where $\lambda$ is not maximal.
Then $\cO_* \mu_n \to \cO_* \mu$ on $\QoM_g$.
\end{maintheorem}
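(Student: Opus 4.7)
The plan is to invoke the Billingsley continuous mapping principle \cite[Main Theorem]{B:measures}, exactly as in the proof of Theorem \ref{mainthm:measure convergence strata}: if the Borel map $\cO$ is continuous at $\mu$-almost every point of $\PoM_g$, then $\cO_*\mu_n \to \cO_*\mu$ weak-$*$. Our hypothesis guarantees that the locus of pairs $(X,\lambda)$ with $\lambda$ non-maximal is $\mu$-null, so it is enough to show that $\cO$ is continuous at every $(X,\lambda) \in \PoM_g$ for which $\lambda$ is maximal.

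Fix such a $(X,\lambda)$ and suppose $(X_n,\lambda_n) \to (X,\lambda)$ in $\PoM_g$, so in particular $\lambda_n \to \lambda$ in the measure topology. By Theorem \ref{maintheorem:O and Oinv continuous}, it suffices to upgrade this to Hausdorff convergence of supports $|\lambda_n| \to |\lambda|$. The space of geodesic laminations on $S$ is compact in the Hausdorff topology, so any subsequence of $|\lambda_n|$ has a further Hausdorff limit $L$. To identify $L$, I would show the containment $|\lambda| \subseteq L$ as follows: for each $x \in |\lambda|$, choose a short transverse arc $\tau$ through $x$ whose endpoints miss $|\lambda|$; then $\lambda(\tau) > 0$, so weak-$*$ convergence forces $\lambda_n(\tau) > 0$ for all large $n$, giving points of $|\lambda_n|$ on $\tau$. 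A diagonal argument as $\tau$ shrinks to $x$ produces a sequence in $|\lambda_n|$ accumulating on $x$, whence $x \in L$. Since $L$ is a geodesic lamination containing the maximal lamination $|\lambda|$, maximality forces $L = |\lambda|$. Every Hausdorff subsequential limit therefore equals $|\lambda|$, so the full sequence $|\lambda_n| \to |\lambda|$ converges in the Hausdorff topology, and Theorem \ref{maintheorem:O and Oinv continuous} gives $\cO(X_n,\lambda_n) \to \cO(X,\lambda)$.

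The main technical point that requires care is justifying the Hausdorff compactness and the diagonal extraction when the underlying hyperbolic structures $X_n$ themselves vary in $\M_g$. I would handle this by working in a fixed topological model of the space of geodesic laminations (for instance, identifying laminations on $S$ with $\pi_1$-invariant closed subsets of the space of unoriented geodesics of the universal cover $\widetilde{S}$), so that Hausdorff convergence is defined independently of the hyperbolic metric and transverse arcs through a given point of $|\lambda|$ make sense uniformly as $X_n \to X$. Beyond this, the argument is essentially a soft compactness-plus-positive-measure argument, and the rest of the proof is a direct concatenation of Theorem \ref{maintheorem:O and Oinv continuous} with the continuous mapping theorem.
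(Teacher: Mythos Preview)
Your proposal is correct and follows the same high-level strategy as the paper: show that $\cO$ is continuous at every $(X,\lambda)$ with $\lambda$ maximal, then invoke Billingsley's continuous mapping theorem. The difference lies only in how Hausdorff convergence of supports is established. The paper observes that a maximal $\lambda$ lies in the interior of a maximal train track chart, so any such chart is snug for $\lambda$ and Lemma~\ref{lem:full_convergence} applies directly to give $|\lambda_n|\to|\lambda|$. You instead use Hausdorff compactness of $\mathcal{GL}$, lower semicontinuity of the support map ($|\lambda|\subseteq L$ for any subsequential Hausdorff limit $L$), and then maximality of $|\lambda|$ to force $L=|\lambda|$. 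Your route is a bit more elementary and avoids the Bonahon--Zhu lemma; the paper's route fits more naturally into its train track framework and generalizes more readily to the non-maximal situations treated later in \S\ref{sec:continuous a.e.}.

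One comment on the technicalities you flag. The issue of varying $X_n$ is not really the sticking point: by Lemma~\ref{lem:lamfacts}(\ref{item:gl_topological}) geodesic laminations are intrinsic topological objects, and by item~(\ref{item:Haus_equiv}) the Hausdorff topologies for nearby metrics are H\"older equivalent, so one may simply work with the fixed metric $X$. The step in your argument that actually deserves justification is the assertion that $\mu\mapsto\mu(k)$ is continuous on $\ML_g$ for a fixed transverse arc $k$ (your ``weak-$*$ convergence forces $\lambda_n(\tau)>0$''). This is not literally the definition of the measure topology, but it holds here precisely because $\lambda$ is maximal: $\lambda$ sits in the open cone $W^{>0}(\tau)$ of a maximal train track $\tau$, so eventually all $\lambda_n$ do too, and $\mu(k)$ is linear in the weight coordinates on that chart.
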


\begin{remark}
The main example of such a limiting measure $\mu$ is the {\bf Mirzakhani measure},\label{ind:Mirzmeasure} which is a $P$-invariant measure in the class of Lebesgue that is locally induced by Thurston measure on $\ML_g$ and Weil-Petersson measure on $\M_g$.\footnote{The pushforward under the projection $\PoM_g\to \M_g$ is also often called the Mirzakhani measure.}
\end{remark}

There is a $P$-invariant {\bf Masur--Smillie--Veech probability measure} $\nu^1_{\cQ}$ in the class of Lebesgue on every component of every stratum $\mathcal Q^1$ of unit area quadratic differentials.  
The pullback of the Masur--Smillie--Veech measure of the principal stratum of quadratic differentials is the Mirzakhani measure \cite{MirzEQ}.
More generally, the $\cO^*\nu_\cQ^1$-typical point in $\PoM_g$ is a pair $(X,\lambda)$ where $\lambda$ is a minimal geodesic lamination whose support cuts $X$ into a union of regular ideal $(\kappa_i+2)$-gons, where $\sing=(\kappa_1, ..., \kappa_n)$ is the partition of $4g-4$ defining $\cQ^1$.
Using the aforementioned duality between train tracks and cellulations,
we develop ``train track coordinate charts'' for subsets of $\PoM_g$ by analogy with period coordinate charts for strata and their boundaries in $\QoM_g$. 
These tools are useful for understanding the topological structure of the pullbacks of strata in $\PoM_g$
and will be used in future work to elucidate the structure of the measures $\cO^*\nu_\cQ^1$.

Along the way towards Theorem \ref{maintheorem:O and Oinv continuous}, we establish other basic structural properties of the maps $\cO$ and $\cO\inverse$.  For example, in \S\ref{sec:thin_parts}, we show that co-bounded sets in $\PoM_g$ do not necessarily map to co-bounded sets in $\QoM_g$ (Example \ref{example:thicktothin}).
In contrast, $\cO\inverse$ does map co-bounded sets in $\QoM_g$ to co-bounded sets in $\PoM_g$ (Proposition \ref{prop:thinparts} and Corollary \ref{cor:thinparts}).
This addresses Remark 5.10 in Wright's notes on Mirzakhani's work on the earthquake flow \cite{Wright_MirzEQ}.  

\section{Applications}

We now use Theorem \ref{mainthm:measure convergence strata} to derive many new results on the ergodic theory of the earthquake flow and the structure of spaces of ergodic measures.
In forthcoming work \cite{shshIII}, we use Theorem \ref{mainthm:measure convergence strata} to address Mirzakhani's Twist Torus Conjecture \cite[Problem 13.2]{Wright_Mirz} and establish related equidistribution results.

\subsection{Ergodic theory of stretchquake disks}

Perhaps the marquee results in Teichm{\"u}ller dynamics are the Ratner-type theorems of Eskin--Mirzakhani \cite{EM} and Eskin--Mirzakhani--Mohammadi \cite{EMM}, building off work of McMullen \cite{McMgenus2} in genus 2.
These results allow for a complete description of the invariant measures, orbit closures, and the distribution of orbits of the $P$ and $\SL_2 \RR$ actions on strata of quadratic differentials, up to understanding the structure and classification of certain special subvarieties of $\QoM_g$.3

An {\bf invariant subvariety}\label{ind:AIS} $\AIS$ of a stratum of $\QM_g$ is an immersed suborbifold that is locally cut out by $\RR$-linear equations in period coordinates; 
in \cite{Filipvarieties}, Filip proved that invariant subvarieties are algebraic varieties defined over $\overline{\mathbb Q}$. 
Let $\AIS^1$ denote the intersection of $\AIS$ with the unit-area locus $\QoM_g$; this is also referred to as an invariant subvariety. 
The affine measure on period coordinates induces an (infinite) measure (up to scale) $\nu_{\AIS}$ on $\AIS$ which in turn gives rise to a probability measure $\nu_{\AIS}^1$ called the {\bf affine measure} on $\AIS^1$.

The following is a amalgam of \cite[Theorem 1.4]{EM}, \cite[Theorems 2.1 and 2.10]{EMM}, and \cite[Theorem 1.1]{Filipvarieties}; we direct the reader to the original papers for a number of further related results.

\begin{theorem}[Eskin--Mirzakhani, Eskin--Mirzakhani--Mohammadi, Filip]\label{thm:EMM}
Let $\cQ^1$ be any stratum of $\QoM_g$.
\begin{itemize} 
    \item (Measure classification): Every $P$-invariant ergodic measure on $\cQ^1$ is also $\SL_2\RR$-invariant and is the affine measure on an invariant subvariety.
    \item (Orbit closures): For any $q \in \cQ^1$, the orbit closure $\overline{P q} = \overline{ \SL_2\RR q}$ is an invariant subvariety of $\cQ^1$.
    \item (Genericity): For any $q \in \cQ^1$, the orbit $Pq$ equidistributes in its closure. That is, for any $\phi \in C_c(\cQ^1)$ and any $r \in \mathbb{R} \setminus \{0\}$,
\begin{equation}\label{eqn:QMg P generic}
\lim_{T \to \infty}
\frac{1}{T} \int_0^T \int_0^r 
\phi\left( g_t u_s (q) \right)
\, ds \, dt
=
\int_{\AIS} \phi \,d\nu_{\AIS}^1
\end{equation}
where $\nu_{\AIS}^1$ is the affine measure on $\AIS^1 = \overline{Pq}$.
\end{itemize}
\end{theorem}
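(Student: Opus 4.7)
The theorem cited is an amalgam of deep foundational results by other authors; my plan sketches the highest-level strategy for each component.

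For the measure classification, the approach is to adapt Ratner-style measure rigidity to the non-homogeneous dynamical system on $\cQ^1$. Starting from a $P$-invariant ergodic probability measure $\nu$, the first goal is to upgrade to $\SL_2\RR$-invariance via an exponential drift argument in the sense of Eskin--Mirzakhani: take two generic nearby points, follow their $U$-orbits in period coordinates, and track the divergence along the Kontsevich--Zorich cocycle, using Forni-type estimates to extract additional invariance transverse to the orbit under the opposite unipotent. The second step is to use the entropy method to conclude that $\nu$ is locally affine in period coordinates. The exponential drift step is by far the main obstacle, since it must replace Ratner's polynomial divergence with a genuinely exponential estimate whose existence hinges on the spectral properties of the cocycle.

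For the orbit closure statement, I would form the normalized orbit averages
$\mu_n = \frac{1}{T_n}\int_0^{T_n}\int_0^r \delta_{g_t u_s q}\,ds\,dt$
for a sequence $T_n \to \infty$, apply Eskin--Masur/Athreya nondivergence on strata to rule out escape of mass, and identify any weak-$*$ subsequential limit as the affine measure on some invariant subvariety $\AIS$ containing $q$ (using the measure classification just proved). A linearization argument --- showing that orbits nearby in the direction transverse to $\AIS$ must themselves land in a common invariant subvariety --- then rigidifies $\overline{Pq} = \AIS$. Genericity follows once one shows that $\overline{Pq}$ admits a unique $P$-invariant affine probability measure, forcing every subsequential weak-$*$ limit of $\mu_n$ to coincide with $\nu_\AIS^1$ and thus promoting the subsequential convergence in \eqref{eqn:QMg P generic} to a genuine limit.

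For Filip's algebraicity statement, the plan is Hodge-theoretic rather than dynamical: interpret the tangent bundle to $\AIS$ as a flat, polarized subbundle of the Hodge bundle on the stratum, verify that this sub-variation of Hodge structure is motivic, and invoke a Deligne--Schmid-style theorem on algebraicity of Hodge loci to conclude that $\AIS$ is algebraic and defined over $\overline{\QQ}$. The hardest technical step of the whole package remains the exponential drift in measure classification, from which orbit closures and genericity follow by relatively standard (though nontrivial) reductions, while algebraicity stands essentially orthogonally on its own Hodge-theoretic footing.
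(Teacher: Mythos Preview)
The paper does not prove this theorem at all: it is stated as a black-box amalgam of \cite[Theorem 1.4]{EM}, \cite[Theorems 2.1 and 2.10]{EMM}, and \cite[Theorem 1.1]{Filipvarieties}, with the reader directed to those sources. Your high-level sketch is a faithful summary of the strategies in the original papers (exponential drift and entropy for measure classification, nondivergence plus measure classification for orbit closures and genericity, Hodge-theoretic algebraicity for Filip's part), but there is nothing in this paper to compare it against beyond the citation.
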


The conjugacy between $\PoM_g$ and $\QoM_g$ allows one to extend the earthquake flow to a Borel measurable (and everywhere-defined) $P$ action whose orbits are locally smooth submanifolds.
The $A$ orbits of this action are called {\bf dilation rays} $\Dil_t(X)$,\label{ind:dilrays} and are defined in terms of the shear-shape coordinates introduced in \cite{shshI}.
The generic dilation ray for any $P$-invariant measure on $\PoM_g$ is a geodesic for the Lipschitz metric on Teichm{\"u}ller space (i.e., a generalized stretch line) \cite[\S\S 1.2 and 15]{shshI}.
To delineate the $P$ action on $\PoM_g$ from the $P$ action on $\QoM_g$, we call this the {\bf stretchquake} action\label{ind:stretchquakes} on $\PoM_g$.

Our construction of the Borel isomorphism $\cO$ in \cite{shshI} allowed us to pull back the measure classification part of Theorem \ref{thm:EMM}, deducing that every stretchquake-invariant ergodic measure on $\PoM_g$ is the pullback of the affine measure on some invariant subvariety of $\QoM_g$ (see Theorem C therein).
Our results on the continuity of $\cO$ now allow us to pull back the genericity part of Theorem \ref{thm:EMM}.

\begin{theorem}[Stretchquakes equidistribute]
For any $(X, \lambda) \in \PoM_g$, any $\phi \in C_c(\PoM_g)$, and any $r \in \RR \setminus \{0\}$, we have that
\[
\lim_{T \to \infty}
\frac{1}{T} \int_0^T \int_0^r 
\phi \left(\Dil_t \Eq_s (X,\lambda) \right)
\, ds \, dt
= \int_{\AIS} \phi \, d\cO^*\nu_{\AIS}^1\]
where $\nu^1_{\AIS}$ is the affine measure on $\AIS^1 = \overline{P \cdot \cO(X, \lambda)}$.
\end{theorem}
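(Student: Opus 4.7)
The plan is to conjugate the stretchquake orbit average to a $P$-orbit average on $\QoM_g$ via $\cO$, invoke the genericity part of Theorem \ref{thm:EMM} there, and then push the resulting weak-$*$ convergence back to $\PoM_g$ using Theorem \ref{mainthm:measure convergence strata}. Set $q := \cO(X,\lambda)$, let $\cQ^1$ denote the stratum containing $q$, and let $\AIS^1 := \overline{Pq}\subset \cQ^1$, which by Theorem \ref{thm:EMM} is an invariant subvariety carrying its affine probability measure $\nu_\AIS^1$. Since $\cO$ intertwines the stretchquake action on $\PoM_g$ with the $P$-action on $\QoM_g$, we have $\Dil_t \Eq_s(X,\lambda) = \cO\inverse(g_t u_s q)$ for all $s,t$.

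Consider the Borel probability measures
\[
\nu_T := \frac{1}{|r|T}\int_0^T\int_0^r \delta_{g_tu_sq}\, ds\, dt
\]
on $\cQ^1$. Equation \eqref{eqn:QMg P generic} says precisely that $\nu_T$ converges weak-$*$ to $\nu_\AIS^1$ on $\QoM_g$ (up to the $|r|$ normalization). Because $\nu_\AIS^1$ is $P$-invariant, Remark \ref{rmk:P invariant measures horizontal null} guarantees that it assigns zero mass to the set of differentials with a horizontal saddle connection. The hypotheses of Theorem \ref{mainthm:measure convergence strata} are therefore met, and we conclude $\cO^*\nu_T \to \cO^*\nu_\AIS^1$ weak-$*$ on $\PoM_g$. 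By the conjugacy identity above,
\[
\cO^*\nu_T = \frac{1}{|r|T}\int_0^T\int_0^r \delta_{\Dil_t\Eq_s(X,\lambda)}\, ds\, dt,
\]
and testing this weak-$*$ convergence against $\phi\in C_c(\PoM_g)$ and clearing the normalization yields the claimed equality.

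No genuine obstacle remains: the argument is an assembly of previously-established results, with Theorem \ref{mainthm:measure convergence strata} providing exactly the continuity/measure-transport machinery required to convert a $P$-equidistribution statement on $\QoM_g$ into a stretchquake-equidistribution statement on $\PoM_g$. The one thing to verify is that $\nu_\AIS^1$ satisfies the null-set hypothesis of Theorem \ref{mainthm:measure convergence strata}, which is immediate from the $\SL_2\RR$-invariance of ergodic $P$-invariant measures classified in Theorem \ref{thm:EMM}. This result is thus the prototypical application advertised in the introduction: deep Teichm\"uller-dynamics input is imported wholesale and translated across $\cO$ to the earthquake-flow setting.
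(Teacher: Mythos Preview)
Your proof is correct and follows exactly the same approach as the paper's: conjugate the orbit average to $\QoM_g$ via $\cO$, invoke the genericity clause of Theorem~\ref{thm:EMM}, and then use Theorem~\ref{mainthm:measure convergence strata} together with Remark~\ref{rmk:P invariant measures horizontal null} to transport the weak-$*$ convergence back to $\PoM_g$. The paper compresses this into a single sentence, while you have written out the details explicitly.
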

\begin{proof}
Apply $\cO^{-1}$ to \eqref{eqn:QMg P generic}; Theorem \ref{mainthm:measure convergence strata} 
and Remark \ref{rmk:P invariant measures horizontal null}
ensure that convergence of measures is preserved.
\end{proof}

Theorem \ref{maintheorem:O and Oinv continuous} does not provide enough continuity to pull back the orbit closure part of Theorem \ref{thm:EMM}.

We can also pull back Theorem 2.3 of \cite{EMM}, which states that the condition of being $P$-ergodic defines a closed condition on the space of probability measures on $\PoM_g$.
This is analogous to a result of Mozes and Shah for unipotent flows acting on homogeneous spaces \cite{MozesShah}.

\begin{theorem}\label{thm:MozesShah_stretchquake}
The space of ergodic stretchquake-invariant probability measures on $\PoM_g$ is compact in the weak-$*$ topology.
\end{theorem}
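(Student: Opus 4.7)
The plan is to transfer the closedness/compactness statement for $P$-ergodic probability measures on $\QoM_g$ (Theorem 2.3 of \cite{EMM}) across the Borel isomorphism $\cO$, using Theorem \ref{mainthm:measure convergence strata} to ensure that weak-$*$ convergence survives pulling back along $\cO$.

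Given a sequence $(\mu_n)$ of stretchquake-ergodic probability measures on $\PoM_g$ converging weak-$*$ to a probability measure $\mu$, I first apply the measure classification on the $\PoM_g$ side (Theorem C of \cite{shshI}) to write each $\mu_n = \cO^* \nu_n$ for some $P$-invariant, $P$-ergodic affine probability measure $\nu_n$ on $\QoM_g$. Next, invoking Theorem 2.3 of \cite{EMM}, I extract a subsequence along which $\nu_n \to \nu$ weak-$*$ with $\nu$ itself a $P$-invariant, $P$-ergodic probability measure on $\QoM_g$. By Remark \ref{rmk:P invariant measures horizontal null}, this limit $\nu$ assigns zero mass to the set of differentials with a horizontal saddle connection, so Theorem \ref{mainthm:measure convergence strata} applies and yields $\cO^* \nu_n \to \cO^* \nu$ along that subsequence. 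Combined with $\cO^* \nu_n = \mu_n \to \mu$ and uniqueness of weak-$*$ limits, this identifies $\mu = \cO^* \nu$. Because $\cO$ is a Borel isomorphism (Theorem \ref{thm:Boreliso}) intertwining the two $P$-actions, $P$-ergodicity of $\nu$ transfers to stretchquake-ergodicity of $\mu$.

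The main technical content I am importing is the absence of escape of mass for sequences of $P$-ergodic probability measures on the noncompact space $\QoM_g$, which is precisely the nontrivial substance of EMM 2.3. The pleasant feature of this strategy is that the discontinuities of $\cO$ and $\cO^{-1}$ — which would obstruct any naive direct transfer of compactness — are sidestepped. In particular, I never need to push $\mu_n$ forward via $\cO$, which via Theorem \ref{mainthm:push measures from P to Q} would require $\mu$ to be supported on maximal laminations, a condition that fails for strata other than the principal stratum. Instead, the classification recovers the $\nu_n$'s on $\QoM_g$ for free, and the pullback direction provided by Theorem \ref{mainthm:measure convergence strata} is always available because $P$-invariant ergodic measures on $\QoM_g$ always avoid horizontal saddle connections.
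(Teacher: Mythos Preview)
Your argument is correct and matches the paper's approach: both transfer EMM's compactness for $P$-ergodic measures on $\QoM_g$ across $\cO$ using Theorem \ref{mainthm:measure convergence strata} together with Remark \ref{rmk:P invariant measures horizontal null}, with the Borel isomorphism handling ergodicity; the paper also cites Theorem \ref{mainthm:push measures from P to Q} to phrase things as a two-sided homeomorphism of ergodic-measure spaces, but as you correctly observe, only the pullback direction is actually needed here. One cosmetic point: your opening hypothesis that $\mu_n$ already converges to a probability $\mu$ is unnecessary and frames the argument as closedness rather than compactness --- simply drop it, since your steps already extract a convergent subsequence $\mu_{n_k} = \cO^*\nu_{n_k} \to \cO^*\nu$ directly from EMM without assuming any convergence on the $\PoM_g$ side, which is precisely sequential compactness.
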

\begin{proof}
Since $\cO$ is a Borel isomorphism the induced maps $\cO_*$ and $\cO^*$ on measures preserve ergodicity, and Theorems \ref{mainthm:measure convergence strata} and \ref{mainthm:push measures from P to Q} (together with Remark \ref{rmk:P invariant measures horizontal null}) imply that they also preserve weak-$*$ convergence along sequences of stretchquake-invariant measures.
\end{proof}

\begin{remark}
One can in fact extend the stretchquake action to a Borel action of $\SL_2\RR$ and then pull back both the ``equidistribution for sectors'' and ``equidistribution for random walks'' theorems from \cite{EMM}.
Since we do not yet have a hyperbolic-geometric description of the (measurable) action of the rotation subgroup we leave the precise formulation of these results to the reader.
\end{remark}

\subsection{Ergodic theory of the earthquake flow}
In the homogeneous setting, Ratner's theorems hold not only for actions by $\SL_2\RR$ but for any subgroup generated by unipotents; in particular, they hold for any unipotent flow.
There have been major recent efforts to investigate analogues of Theorem \ref{thm:EMM} for the Teichm{\"u}ller horocycle flow.

In \cite{CSW}, it was shown that there exist fractal orbit closures for the horocycle flow.
The classification of ergodic invariant measures is a major open problem, with complete (Ratner-like) answers in only a few special cases
\cite{BSW:Ratnerhorocycle, CW:RatnerH11, EMWM:RatnerbranchedVeech}; see also \cite{CWY:weakRatner}.
In these cases, one also usually gets corresponding genericity results for the horocycle flow.
Pulling these back (using a more general version of Theorem \ref{mainthm:measure convergence strata}), we also get corresponding statements for the earthquake flow. For example, we have the following:

\begin{theorem}\label{thm:eigenform genericity}
Let $f:S_2 \to S_2$ be an involution whose quotient is a genus 1 surface with two orbifold points of order $2$.
Suppose that $(X, \lambda) \in \PoM_2$ is symmetric with respect to $f$, i.e., lies in the orbifold locus corresponding to $f$.
Then ergodic averages along earthquake flow lines through $(X, \lambda)$ always converge.
Moreover, the limiting measure lives in one of seven explicit families.
\end{theorem}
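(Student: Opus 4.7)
The plan is to transfer the problem to $\QoM_2$ via $\cO$, apply the Ratner-type genericity theorem for the horocycle flow on the $f$-symmetric (eigenform) locus, and then push the resulting equidistribution back to $\PoM_2$ using Theorem \ref{mainthm:measure convergence strata}.

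First I would observe that the orthogeodesic foliation construction $\cO: \PoM_2 \to \QoM_2$ is defined purely in terms of the geodesic lamination $\lambda$ and the hyperbolic metric $X$, so it is equivariant with respect to the natural action of $\Mod(S_2)$, and in particular with respect to the involution $f$. Consequently $q := \cO(X,\lambda)$ lies in the $f$-symmetric locus of $\QoM_2$. Because the quotient of $S_2$ by $f$ is a torus with two order-$2$ orbifold points, this $f$-symmetric locus is contained in (or identifies with) one of the genus $2$ eigenform loci that are the setting of the Calta--Wortman classification \cite{CW:RatnerH11} (or the related results of Bainbridge--Smillie--Weiss and Eskin--Mirzakhani--Wright--Mohammadi). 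This work provides, for every point in this locus, a complete classification of the possible ergodic limits of the horocycle flow into finitely many families, and establishes the corresponding genericity statement: for any $\phi \in C_c(\QoM_2)$,
\[
\frac{1}{T} \int_0^T \phi(u_s q)\, ds \xrightarrow{T\to\infty} \int \phi \, d\nu_q
\]
where $\nu_q$ is a $P$-invariant (in fact $\SL_2\RR$-invariant) affine probability measure supported on an invariant subvariety, and $\nu_q$ belongs to one of exactly seven explicit families indexed by the arithmetic/combinatorial type of the orbit closure.

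Next, I would encode the earthquake ergodic average along $(X,\lambda)$ as the pushforward under $\cO\inverse$ of the empirical measures for the horocycle flow at $q$. Since $\cO$ intertwines the earthquake and horocycle flows (as established in \cite{shshI} and recalled in the introduction), writing $\nu_n^T = \frac{1}{T}\int_0^T \delta_{u_s q}\,ds$ we have $\cO^*\nu_n^T = \frac{1}{T}\int_0^T \delta_{\Eq_s(X,\lambda)}\,ds$. The previous paragraph gives $\nu_n^T \to \nu_q$ in the weak-$*$ topology on $\QoM_2$. Since $\nu_q$ is $P$-invariant and ergodic, Remark \ref{rmk:P invariant measures horizontal null} guarantees that $\nu_q$ gives zero measure to differentials with a horizontal saddle connection. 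Therefore Theorem \ref{mainthm:measure convergence strata} applies and yields $\cO^*\nu_n^T \to \cO^*\nu_q$ weak-$*$ on $\PoM_2$. This is precisely the statement that earthquake ergodic averages through $(X,\lambda)$ converge, and the limit is the pullback of one of the seven families from the horocycle side.

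The main obstacle, and the one requiring the most care, is matching the symmetric locus in $\PoM_2$ with the eigenform locus in $\QoM_2$ to which the Ratner-type theorem of \cite{CW:RatnerH11} applies. Concretely, one must check that the $f$-symmetric locus on the $\cQ$-side coincides with (or is contained in) the locus over which the seven-family measure classification holds, and that $q = \cO(X,\lambda)$ actually lies there rather than in a larger ambient stratum where no such classification is known. Once the equivariance of $\cO$ under $f$ is established, this reduces to identifying the double cover construction on the flat side with the $f$-invariant hyperbolic side; this is standard, but should be stated explicitly. Everything else — ergodicity preservation under $\cO_*$, weak-$*$ convergence preservation, and the $P$-invariance of the seven limits — is already packaged in Theorem \ref{mainthm:measure convergence strata}, Theorem \ref{thm:MozesShah_stretchquake}, and Remark \ref{rmk:P invariant measures horizontal null}.
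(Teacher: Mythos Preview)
Your overall strategy is right, but there is a real gap in the key step. You assert that the horocycle limit measure $\nu_q$ is $P$-invariant (indeed $\SL_2\RR$-invariant) and then invoke Remark~\ref{rmk:P invariant measures horizontal null} to conclude it gives zero mass to differentials with a horizontal saddle, so that Theorem~\ref{mainthm:measure convergence strata} applies. This is false: the limits of horocycle orbits are only $U$-invariant, not $P$-invariant in general, and the Eskin--Mirzakhani theorem says nothing about $U$-ergodic measures. In fact, several of the seven families in the classification of \cite{BSW:Ratnerhorocycle} (this is the correct reference for the eigenform locus $\mathcal E_4(1,1)$, not \cite{CW:RatnerH11}) are real REL pushes of affine measures and are supported entirely on differentials \emph{with} a horizontal saddle connection. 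For those limits the hypothesis of Theorem~\ref{mainthm:measure convergence strata} fails outright.

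The paper's proof circumvents this by observing that each of the seven limit measures is \emph{special} in the sense of \S\ref{sec:continuous a.e.}: the generic point has ``the same'' horizontal saddle connections, and the empirical measures along the horocycle orbit are specially convergent to $\nu_q$. One then invokes Theorem~\ref{thm:pull back special convergence} rather than Theorem~\ref{mainthm:measure convergence strata} to push the convergence back to $\PoM_2$. So the fix is not cosmetic: you need the finer continuity statement that handles limit measures supported on horizontally-degenerate loci, which is precisely what the machinery of special subspaces and Lemma~\ref{lem:special Hausdconv} is built for.
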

\begin{proof}
The condition on the symmetry of $(X, \lambda)$ exactly corresponds to the statement that $\cO(X, \lambda)$ lives in the eigenform locus $\mathcal E_4(1,1)$.
Horocycle-flow invariant measures on this space were classified in \cite[Theorem 1.1]{BSW:Ratnerhorocycle} and genericity of all horocycle orbits was proved in Theorem 11.1 of the same paper.
We can pull back convergence to these limiting measures because the generic point with respect to any of them have ``the same'' horizontal saddle connections (Theorem \ref{thm:pull back special convergence}).
\end{proof}

However, it was recently shown in both \cite{CSW} and \cite{CKS} that not every point is generic for the horocycle flow.
Pulling back (a slightly modified version of) the result from \cite{CKS} yields:

\begin{theorem}\label{thm:EQ nongeneric}
For every $g \ge 2$ there is a point $(X, \lambda) \in \PoM_g$ that is not generic for the earthquake flow.
\end{theorem}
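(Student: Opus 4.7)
The plan is to pull back a non-generic point for the horocycle flow through the conjugacy $\cO$, treating the Chaika--Khalil--Smillie construction as essentially the source of non-genericity but refining it to fit the continuity framework of this paper. Concretely, take a non-generic horocycle point $q \in \QoM_g$ produced (perhaps after modification) by \cite{CKS} and set $(X,\lambda) := \cO\inverse(q) \in \PoM_g$. Because $\cO$ carries the earthquake orbit of $(X,\lambda)$ bijectively onto the horocycle orbit of $q$, the empirical measures
\[
\mu_T := \tfrac{1}{T}\int_0^T (\Eq_s)_*\delta_{(X,\lambda)}\,ds
\quad\text{and}\quad
\nu_T := \tfrac{1}{T}\int_0^T (u_s)_*\delta_{q}\,ds
\]
are related by $\mu_T = \cO^*\nu_T$.

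By non-genericity there exist sequences $T_k, T'_k \to \infty$ for which $\nu_{T_k}$ and $\nu_{T'_k}$ converge weak-$*$ to distinct horocycle-invariant Borel probability measures $\nu_1 \neq \nu_2$ (after extracting subsequences using tightness on the stratum). The goal is then to apply a strengthened form of Theorem \ref{mainthm:measure convergence strata} to both subsequences to deduce $\cO^*\nu_{T_k} \to \cO^*\nu_1$ and $\cO^*\nu_{T'_k} \to \cO^*\nu_2$ in $\PoM_g$. Since $\cO$ is a Borel isomorphism, $\cO^*\nu_1 \neq \cO^*\nu_2$, so $\mu_{T_k}$ and $\mu_{T'_k}$ have distinct weak-$*$ limits. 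This says precisely that $(X,\lambda)$ is not generic for the earthquake flow.

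The main obstacle, and the reason the statement refers to ``a slightly modified version'' of the result in \cite{CKS}, is that the limit measures $\nu_1,\nu_2$ are only horocycle-invariant and need not be $P$-invariant, so Remark \ref{rmk:P invariant measures horizontal null} does not apply and one cannot automatically conclude that they give zero measure to the horizontal saddle connection locus. The empirical measures $\nu_{T_k}$ are supported on the horocycle orbit of a fixed point $q$, whose horizontal saddle connections form a well-defined (possibly empty) set $\Gamma$. The plan is to select the non-generic $q$ from \cite{CKS} so that either (i) the limit measures $\nu_i$ are supported on differentials with no horizontal saddle connection, allowing direct application of Theorem \ref{mainthm:measure convergence strata}; or (ii) the generic points of $\nu_1$ and $\nu_2$ have the ``same'' horizontal saddle collection as $q$, allowing the use of the extended version discussed in \S\ref{sec:continuous a.e.}. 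Verifying that the combinatorial--geometric construction underlying \cite{CKS} can be arranged to satisfy one of these alternatives is the only serious technical content; once it is in place, the pullback argument above is formal.

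To extend to every $g \ge 2$, one carries out the same argument within a suitable stratum of $\QoM_g$ for each genus (for instance a stratum in which the \cite{CKS} construction, or its modification, can be performed); the conjugacy $\cO$ and the continuity theorem apply uniformly across $g$, so no new ideas are required genus-by-genus.
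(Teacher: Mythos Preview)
Your overall strategy---pull back a horocycle-nongeneric point through $\cO$ and use the measure-transport theorem on the two distinct weak-$*$ limits---matches the paper's. You have also correctly identified the real obstacle: the limit measures produced by \cite{CKS} are \emph{a priori} only $U$-invariant, so Remark \ref{rmk:P invariant measures horizontal null} does not apply and Theorem \ref{mainthm:measure convergence strata} cannot be invoked as stated.

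However, you leave this obstacle open, writing that ``verifying that the \ldots\ construction can be arranged to satisfy one of these alternatives is the only serious technical content.'' The paper actually resolves it, and the resolution is the content of the ``slight modification.'' The idea is not to analyze the horizontal saddles of the \cite{CKS} limits directly, but to \emph{force} one of the limit measures to be $P$-invariant. Concretely: take the periodic horocycles from the proof of Theorem \ref{thm:MozesShah_EQ}, whose uniform measures converge to a nontrivial convex combination $\nu$ of $P$-invariant ergodic measures (so $\nu$ itself is $P$-invariant). Now run a Baire-category argument: for each $k$, the set $V_k$ of $q$ whose empirical measure at some time $r_n>k$ is within $1/k$ of $\nu$ is open (inequality condition) and dense (it contains arbitrarily long segments of those periodic horocycles, and $\nu$ has full support). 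Intersecting $\bigcap_k V_k$ with the full-measure set of $\nu_{\cQ}^1$-generic points yields a $q$ with two distinct limit measures $\nu$ and $\nu_{\cQ}^1$, \emph{both} $P$-invariant. Now Remark \ref{rmk:P invariant measures horizontal null} and Theorem \ref{mainthm:measure convergence strata} apply to each, and the pullback goes through exactly as you outlined. Your option (ii) is not needed.

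Your plan for higher genus---redo the construction in a suitable stratum of $\QoM_g$ for each $g$---is not obviously available, since the \cite{CKS} construction is carried out in specific strata. The paper instead passes to covers: a nongeneric point in genus $2$ lifts to a nongeneric point in any covering genus, which handles all $g\ge 2$ at once.
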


A proof follows at the end of this subsection.
The non-genericity result of \cite{CKS} is in fact a corollary of their result that the Mozes--Shah phenomenon (Theorem \ref{thm:MozesShah_stretchquake}) does not hold for the horocycle flow alone.
We conclude a similar statement for earthquakes:

\begin{theorem}\label{thm:MozesShah_EQ}
The space of ergodic earthquake-invariant subprobability measures on $\PoM_g$ is not compact.
\end{theorem}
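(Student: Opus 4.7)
The strategy is to transfer the failure of the Mozes--Shah phenomenon for the horocycle flow established in \cite{CKS} to the earthquake flow via the Borel isomorphism $\cO$, in parallel with the pullback argument underlying Theorem \ref{thm:EQ nongeneric}.

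First I would invoke the main construction of \cite{CKS} to produce a sequence $\{\nu_n\}$ of ergodic horocycle-invariant probability measures on $\QoM_g$ whose weak-$*$ limit $\nu$ is a horocycle-invariant subprobability measure that is \emph{not} ergodic; this is precisely the shape of the Mozes--Shah failure proved there. Because $\cO$ is a Borel isomorphism (Theorem \ref{thm:Boreliso}) conjugating the two $U$-actions, each pullback $\cO^*\nu_n$ is an ergodic earthquake-invariant probability measure on $\PoM_g$, while $\cO^*\nu$ is earthquake-invariant but non-ergodic.

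Next I would transfer the weak-$*$ convergence $\nu_n \to \nu$ along $\cO^*$. In the case that $\nu$ gives zero mass to the set of differentials with horizontal saddle connections, this follows immediately from Theorem \ref{mainthm:measure convergence strata}. In the general case, where generic points in $\supp(\nu)$ carry horizontal saddles, one applies the ``same horizontal saddles'' variant (Theorem \ref{thm:pull back special convergence}), exactly as in the proof of Theorem \ref{thm:eigenform genericity}. The crucial input is that the \cite{CKS} construction is modeled on a single invariant subvariety, and the sequence $\nu_n$ together with its limit $\nu$ is supported on differentials sharing a common combinatorial horizontal saddle structure. Hence $\cO^*\nu_n \to \cO^*\nu$ in weak-$*$.

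Since ergodicity is preserved by the Borel isomorphism $\cO$ in both directions, $\{\cO^*\nu_n\}$ is a sequence of ergodic earthquake-invariant probability measures whose weak-$*$ limit $\cO^*\nu$ is non-ergodic. Therefore the ergodic locus is not closed in the space of earthquake-invariant subprobability measures on $\PoM_g$, and so is not compact. I expect the main obstacle to be checking the compatibility-of-horizontal-saddles hypothesis for the specific sequence produced by \cite{CKS}: because these horocycle-invariant measures are not $\SL_2\RR$-invariant, Remark \ref{rmk:P invariant measures horizontal null} does not apply automatically, and one must directly analyze the structure of horizontal saddle connections for points in $\supp(\nu_n)$ and $\supp(\nu)$ to see that Theorem \ref{thm:pull back special convergence} is applicable.
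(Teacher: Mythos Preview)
Your strategy matches the paper's exactly: pull back the \cite{CKS} sequence via $\cO$, use the Borel isomorphism to preserve ergodicity, and use the continuity results to preserve weak-$*$ convergence. The argument is correct.

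However, you are making your life harder than necessary with the horizontal-saddle discussion. The specific result invoked in the paper is Theorem~2.5 of \cite{CKS}, which produces closed horocycle orbits whose uniform measures $\nu_n$ converge to a nontrivial convex combination $\nu$ of \emph{$P$-invariant} ergodic probability measures. Since $\nu$ is then itself $P$-invariant, Remark~\ref{rmk:P invariant measures horizontal null} applies directly: $\nu$ gives zero mass to the locus of differentials with a horizontal saddle connection, and Theorem~\ref{mainthm:measure convergence strata} transfers the convergence with no further hypotheses. Your ``general case'' via Theorem~\ref{thm:pull back special convergence} and the ``main obstacle'' you flag never arise. (The paper also observes, for concreteness, that the $\nu_n$ are uniform on closed orbits, so the $\cO^*\nu_n$ are uniform on closed earthquake orbits and manifestly ergodic; this is a minor sharpening of your ergodicity step.)
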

\begin{proof}
Theorem 2.5 of \cite{CKS} constructs a sequence of closed horocycle orbits in a stratum $\cQ^1$ of unit area quadratic differentials (that are squares of abelian differentials) whose corresponding uniform (ergodic) measures $\nu_n$ converge to a non-trivial convex combination $\nu$ of $P$-invariant ergodic probability measures on $\cQ^1$. 
The pullbacks $\cO^* \nu_n$ are the uniform measures supported on a sequence of closed earthquake orbits, and are in particular ergodic.
Using Theorem \ref{mainthm:measure convergence strata} and Remark \ref{rmk:P invariant measures horizontal null}, we have that $\cO^*\nu_n\to \cO^*\nu$ where $\cO^*\nu$ is a non-trivial convex combination of stretchquake-invariant ergodic probability measures: in particular, it is not ergodic for the earthquake flow.
\end{proof}

\begin{proof}[Proof of Theorem \ref{thm:EQ nongeneric}]
Let all notation be as in the proof of Theorem \ref{thm:MozesShah_EQ}. 
Corollary 1.2 of \cite{CKS} constructs a dense $G_\delta$ in $\cQ^1$ such that for any $q$ in this set, there are sequences $r_n, s_n \to \infty$ such that in the weak-$*$ topology
\begin{equation}\label{eqn:U nongeneric}
u_{[0, r_n]} q := \frac{1}{r_n}\int_0^{r_n} \delta_{u_s q} \, ds \to \nu
\hspace{3ex}
\text{ and }
\hspace{3ex}
u_{[0, s_n]} q := \frac{1}{s_n}\int_0^{s_n} \delta_{u_s q} \, ds \to \nu_{\cQ}^1
\end{equation}
where $\nu_{\cQ}^1$ is the Masur--Smillie--Veech measure on $\cQ^1$ and $\nu$ is some other measure.
With a very slight modification to their proof,
one can also ensure that $\nu$ is the convex combination of ergodic measures from the proof of Theorem \ref{thm:MozesShah_EQ}.\footnote{As indicated to the authors \cite{Osamachat}, one can even find $q$ such that $\nu$ is ergodic for the $P$ action and singular to $\nu_{\cQ}^1$.}
We outline the argument below.

Let $\{r_n\}$ denote the periods of the closed horocycles limiting to $\nu$ and consider the sets
\[V_k:= 
\{ q \, |\,  d(u_{[0, r_n]}q, \nu) < 1/k \text{ for some } r_n > k\}
\]
where $d$ is any metric on the space of probability measures on $\cQ^1$ inducing the weak-$*$ topology. 
These sets are defined by inequalities, hence are open, while they also contain arbitrary long horocycles that limit to $\nu$, hence each $V_k$ is dense since $\nu$ has full support. Taking the intersection of the $V_{k}$'s yields a dense $G_\delta$ where any point has some subsequence of times $r_{n_k} \to \infty$ along which $u_{[0, r_{n_k}]} q \to \nu$.
On the other hand, by the ergodicity of the horocycle flow with respect to Masur--Smillie--Veech measure $\nu_{\cQ}^1$ there is another dense $G_\delta$ whose ergodic averages along a different sequence of times $\{s_n\}$ converges to $\nu_{\cQ}^1$; taking any point in the intersection of these dense $G_\delta$'s gives a $q$ satisfying \eqref{eqn:U nongeneric} for $\nu$ as desired.

We may now pull back \eqref{eqn:U nongeneric} to obtain points in $\PoM_2$ that are not generic for the earthquake flow.
To get arbitrary genus, one needs only take covers of the appropriate degree. 
\end{proof}

\subsection{Expanding horospheres}
Theorems \ref{mainthm:measure convergence strata} and \ref{mainthm:push measures from P to Q} also provide a new perspective on the equidistribution of certain interesting sets inside of $\PoM_g$ and $\QoM_g$.

In \cite{Mirz_horo}, Mirzakhani established the equidistribution of level sets for the hyperbolic length of a simple multicurve in $\PoM_g$;
these are the analogues of expanding horospheres based at a cusp in a cusped hyperbolic manifold.
In \cite{AH_horo} and \cite{Liu_horo}, Arana--Herrera and Liu independently proved similar results for the level sets of a broader class of functions of the lengths of individual components of a multicurve, and in \cite[\S5]{spine}, Arana--Herrera and the first author proved a similar result given constraints on the geometry of the complementary subsurface to the multicurve.
Via averaging and unfolding arguments in the style of Margulis \cite{Margulisthesis},
these equidistribution results can be turned into counting results for multicurves
\cite{AH_compcount, Liu_horo, spine}.
The proofs of all three of the above-mentioned ``expanding horosphere'' results rely on the ergodicity of the earthquake flow and delicate absolute continuity arguments.

On the other hand, there are also equidistribution results for the level sets of extremal length
(and more generally, pushes of pieces of the unstable foliation of $\QoM_g$ under the Teichm{\"u}ller geodesic flow), which play the role of expanding horospheres in $\QoM_g$.
See \cite[Theorem 1.6]{Forni} and \cite[Proposition 3.2]{EMM:effscc}, as well as \cite{QDspine}.
The map $\cO$ is defined leafwise on the unstable leaves to take hyperbolic length to extremal length, and so in particular we see that it takes horospheres to horospheres.
Theorems \ref{mainthm:measure convergence strata} and \ref{mainthm:push measures from P to Q} now show that the equidistribution results 
of \cite{Mirz_horo, AH_horo, Liu_horo, spine} on $\PoM_g$ are {\em equivalent} to those of \cite{Forni, EMM:effscc, QDspine} on the principal stratum of $\QoM_g$.

While expanding horospheres in $\PoM_g$ have only been studied for multicurves, the equidistribution results in $\QoM_g$ hold for any leaf of the unstable foliation.
Using Theorem \ref{mainthm:measure convergence strata}, we can deduce similar results in $\PoM_g$.
To state this result, let us first fix some notation.

For any $\lambda \in \ML$, let $H_\lambda$ denote the 1--level set of its length function.
Following \cite{Forni}, say that a measure $\mu$ on $H_\lambda$ is {\bf horospherical}\label{ind:horosphere} if it is in the Lebesgue class with continuous density and such that almost all of the conditional measures on earthquake flow lines are just restrictions of Lebesgue.
Such a measure can be obtained, for example, by picking some nice set in $H_\lambda$ using shear coordinates for a completion of $\lambda$, then restricting the natural affine measure coming from shear coordinates to that set.

Given a horospherical measure $\mu$ on $H_\lambda$, one can lift it to live on the section 
$H_\lambda \times \{\lambda\} \subset \PoT_g$
and then take the pushforward of $\mu$ under the covering map to get a measure $\widehat{\mu}$ on $\PoM_g$.

\begin{theorem}
Let $\lambda \in \ML$ and let $\mu$ be any horospherical measure on $H_\lambda$.
Then $(\Dil_t)_* \widehat{\mu}$ converges to Mirzakhani measure on $\PoM_g$ as $t \to \infty$.
\end{theorem}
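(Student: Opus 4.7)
The plan is to transfer known expanding-horosphere equidistribution results on strata of $\QoM_g$ \cite{Forni, EMM:effscc, QDspine} to $\PoM_g$ via the conjugacy $\cO$, using Theorem \ref{mainthm:measure convergence strata} to pull weak-$*$ convergence back. As noted just before the theorem, $\cO$ takes horospheres to horospheres: the section $H_\lambda\times\{\lambda\}$ lands on a $1$-level set of horizontal extremal length inside a stratum $\cQ^1$, and the map intertwines $\Dil_t$ with the Teichm\"uller geodesic flow $g_t$. Throughout the argument I would assume $\lambda$ is maximal, so that $\cO(X,\lambda)$ lies in the principal stratum and the limit $\cO^*\nu_\cQ^1$ really is the Mirzakhani measure as defined in the introduction.

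The first step is to verify that $\widehat{\nu} := \cO_*\widehat{\mu}$ is horospherical on the corresponding unstable leaf in the sense of \cite{Forni}. By the excerpt's description, $\widehat{\mu}$ is in the Lebesgue class of shear coordinates for a completion of $\lambda$, with conditional measures along earthquake orbits that restrict to Lebesgue. Under the shear-shape coordinates of \cite{shshI}, these shear coordinates are leafwise identified with period coordinates on the $g_t$-unstable leaf, and earthquake orbits are identified with horocycle orbits. Transporting by $\cO$ then shows that $\widehat{\nu}$ has continuous Lebesgue density along the unstable leaf and that its horocycle conditionals are Lebesgue, which is exactly the Forni-horosphericity condition.

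The second step invokes expanding-horosphere equidistribution (\cite[Theorem 1.6]{Forni}; see also \cite{EMM:effscc, QDspine}) to conclude $(g_t)_*\widehat{\nu}\to\nu_\cQ^1$ weak-$*$, where $\nu_\cQ^1$ is the Masur--Smillie--Veech probability measure on the principal stratum. The intertwining $g_t\circ\cO = \cO\circ\Dil_t$ rewrites this as $\cO_*(\Dil_t)_*\widehat{\mu}\to\nu_\cQ^1$. Since $\nu_\cQ^1$ is $P$-invariant, Remark \ref{rmk:P invariant measures horizontal null} ensures it gives zero mass to differentials with a horizontal saddle connection, so Theorem \ref{mainthm:measure convergence strata} applies to yield
\[
(\Dil_t)_*\widehat{\mu}\;=\;\cO^*\cO_*(\Dil_t)_*\widehat{\mu}\;\longrightarrow\;\cO^*\nu_\cQ^1,
\]
and the right-hand side is by definition Mirzakhani measure.

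The step I expect to be the main obstacle is the first: cleanly matching the shear-coordinate affine structure on $H_\lambda$ (defining ``horospherical'' on the $\PoM_g$ side) with the period-coordinate affine structure on the $g_t$-unstable leaf (defining ``horospherical'' in the sense of Forni) under $\cO$. This identification should follow from the leafwise description of $\cO$ and the shear-shape coordinates of \cite{shshI}, but the precise correspondence of Lebesgue classes and of the horocycle-orbit foliations will require careful bookkeeping, especially near the boundary of the support of $\widehat{\mu}$.
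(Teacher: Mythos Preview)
Your approach matches the paper's: push $\widehat{\mu}$ forward along $\cO$ to a horospherical measure on the unstable leaf, apply \cite[Theorem 1.6]{Forni}, then pull convergence back via Theorem \ref{mainthm:measure convergence strata}. The paper dispatches your identified obstacle in one sentence by citing from \cite{shshI} that $\cO$ restricted to unstable leaves is piecewise real-analytic and takes $\Eq_s$ to $u_s$ in a time-preserving way; piecewise real-analyticity preserves the Lebesgue class, and the time-preserving intertwining carries earthquake conditionals to horocycle conditionals, so horospherical measures go to horospherical measures without further bookkeeping.

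Your restriction to maximal $\lambda$ is unnecessary and slightly misconceived. Forni's theorem applies to any unstable leaf in $\QoM_g$, and for arbitrary $\lambda\in\ML$ the horospherical measure $\cO_*\widehat{\mu}$ is still Lebesgue-class on the unstable leaf (hence concentrated on the principal stratum), so the limit is MSV on $\QoM_g$ and its $\cO$-pullback is Mirzakhani measure regardless of the topological type of $\lambda$.
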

\begin{proof}
The corresponding statement in $\QoM_g$ is Theorem 1.6 of \cite{Forni}.
We note that the map $\cO$ restricts to a (piecewise) real-analytic map on leaves of the unstable foliation \cite{shshI} that takes $\Eq_s$ to $u_s$ in a time-preserving way. Thus, it takes horospherical measures on $\PoM_g$ to horospherical measures on $\QoM_g$.
We can then pull back convergence using Theorem \ref{mainthm:measure convergence strata}.
\end{proof}

Work of Lindenstrauss and Mirzakhani \cite{ML:measures} implies that whenever $\lambda$ is arational (i.e., contains no simple closed curves), the projection of the level set $H_\lambda$ is dense in $\M_g$. 
This is an analogue of a result of Dani that says that any horospere in a cusped hyperbolic manifold is either closed or dense \cite{Dani:horosphere}.
In the homogeneous setting, horospheres arise as the orbits of a horospherical subgroup, and in fact every non-closed orbit equidistributes.
In $\PoM_g$ or $\QoM_g$ there is no notion of a horospherical subgroup (and these spaces are completely inhomogeneous), but we expect that one should be able to use the techniques of this paper to prove an analogous result.

The results of \cite{Forni} and \cite{EMM:effscc} hold for non-principal strata as well; pulling these back, we can deduce equidistribution results for specific slices of expanding horospheres; the limiting measures will now be stretchquake-flow invariant ergodic measures which are singular to Mirzakhani measure.
We remark more on this in a sequel paper, in which we also apply recent deep results in Teichm{\"u}ller dynamics to address Mirzakhani's twist torus conjecture \cite{shshIII}.

\section{Outline of the paper}

In Part \ref{part:prelim} of the paper, we gather preliminaries and prove some elementary estimates in hyperbolic geometry.
In \S\ref{sec:background}, we recall some basics about geodesic laminations, the orthogeodesic foliation, and shear-shape cocycles from \cite{shshI}.
In \S\ref{sec:thin_parts}, we turn our attention to establishing two basic and useful properties of $\cO$: that $\cO$ need not map bounded sets to bounded sets (Example \ref{example:thicktothin}), but $\cO\inverse$ does (Proposition \ref{prop:thinparts} and Corollary \ref{cor:thinparts}).
In \S\ref{sec:triplecenters} we consider tuples of pairwise disjoint complete geodesics in $\HH^2$, none of which separates the others, and study the geometry of the circles inscribed in this configuration and how they change under perturbation. 
These elementary estimates are used extensively throughout Parts \ref{part:continuity} and \ref{part:inverse continuity}.
\vspace{1ex}

In Part \ref{part:tts and cellulations}, we establish a duality between certain train tracks defined via hyperbolic geometry and certain cellulations by saddle connections on singular flat surfaces. 
We thus can use train tracks to coordinatize the ``strata'' of both $\PT_g$ and $\QT_g$ and analyze how they fit together.
In \S\ref{sec:geometric tts}, 
we build an important class of geometric train track neighborhoods called \emph{equilateral train track neighborhoods}.
Then, in \S\ref{sec:dual cellulations}, we show that dual to a (filling) equilateral train track $\tau$ constructed from $(X,\lambda)\in \PT_g$, there is a dual cellulation $\mathsf T$ of $\cO(X,\lambda)$ by \emph{veering} saddle connections.  When $\tau$ is not filling, we augment it by adjoining the (visible) geometric filling arc system to obtain a filling train track $\taua$ and dual cellulation. 
In the final \S\ref{sec:ttperiod coords} of this part, we explain how to view period coordinate charts for strata of quadratic differentials adapted to a cellulation by saddle connections as weights on the dual augmented train track.
Throughout the paper, horizontal saddle connections play a special role; given a component $\cQ$ of a stratum of quadratic differentials, we show how to coordinatize $\cQ^*$, which consists of the differentials in strata adjoining $\cQ$ that are obtained by opening up higher order zeros \emph{horizontally}.
Later on in \S\ref{sec:continuous a.e.}, we obtain good continuity properties of $\cO\inverse$ restricted to $\cQ^*$ (Theorem \ref{thm:continuity for only short horizontals}), hence a more general version of Theorem \ref{mainthm:measure convergence strata}.
\vspace{1ex}

In the next Part \ref{part:continuity}, we establish quantitative continuity for $\cO$ with respect to the Hausdorff + measure topology on $\ML_g$.  
In order to compare shear-shape cocycles on different measured laminations (more generally, chain recurrent laminations), we have to understand how the geometric arc system $\arc(X,\lambda)$ and the weights that describe the geometry of $X\setminus \lambda$ vary as $X$ and $\lambda$ vary.  
In \S\ref{sec:persistent}, we establish that visible arcs ``persist'' and control how the weights vary as the metric $X$ and $\lambda$ vary in a small neighborhood (Proposition \ref{prop:persistent}).
This essentially establishes continuity of the ``shape'' part of the maps $\sigl$ recording the shear-shape cocycle of the metric $X$ with respect to $\lambda$.
In the next \S\ref{sec:ties}, we study the structure of equilateral train track neighborhoods and show that ties of equilateral neighborhoods associated to $\lambda$ and $\lambda'$ on a hyperbolic surface $X$ are Hausdorff close when $\lambda$ and $\lambda'$ are Hausdorff close (Proposition \ref{prop:tiestable}).  
This analysis is the main ingredient for continuity of ``shear'' part of the maps $\{\sigl\}_\lambda$.
In the last \S\ref{sec:continuity proof}, we assemble the ingredients to prove the main Theorem \ref{thm:contcell}, establishing one direction of Theorem \ref{maintheorem:O and Oinv continuous}.
To accomplish this, we show that certain augmentations of equilateral train tracks are stable as one varies the metric and lamination (Proposition \ref{prop:ttstable} and Corollary \ref{cor:augstable}); this implies that the corresponding flat surfaces have comparable cellulations by saddle connections.
This is a crucial step in our proof that $\cO$ is continuous in that it provides a common container for the shear-shape cocycles for geometrically similar but topologically different geodesic laminations.  
\vspace{1ex}

Part \ref{part:inverse continuity} deals with continuity of $\cO\inverse$ along convergent sequences $q_n \to q$ where the horizontal measured laminations also converge in the Hausdorff topology.
In \S\ref{sec:shapeshift_est} we establish detailed, uniform estimates on the sizes of the ``shape-shifting'' deformation cocycles introduced in \cite{shshI}.
The main result of this section is Theorem \ref{thm:shapeshift_distance_small}. The estimates that go into its proof are fairly involved; fortunately, they are also independent from the rest of the paper and can safely be ignored on first reading.
In the next \S\ref{sec:inverse continuity}, Theorem \ref{thm:cont_inverse} establishes the remaining direction of the main Theorem \ref{maintheorem:O and Oinv continuous}.
Using the main result of \cite{shshI}, this essentially amounts to proving a quantified continuity result for $\cO\inverse$ along leaves of the \emph{stable} foliation consisting of differentials whose real foliations coincide, after enforcing the condition that the imaginary foliations converge Hausdorff as well as in the measure topology.  
Although the main point of the proof appeals to the estimates on shape-shifting cocycles from the previous section, it is surprisingly technical to get to a point that we can compare shear-shape cocycles coming from periods of differentials with different imaginary foliations.  This is where our work in the previous sections establishing the (geometric) duality between train tracks and cellulations by saddle connections pays off.
We point out also Proposition \ref{prop:stable_hausdorff_stratum}, which asserts that differentials in the same leaf of the stable foliation with Hausdorff-close imaginary laminations are actually in the same stratum component.  
\vspace{1ex}

Finally, in Part \ref{part:measures}, we conclude our main results on convergence of measures.
In \S\ref{sec:continuous a.e.}, 
we establish Theorem \ref{thm:continuity for only short horizontals}, which generalizes Theorem \ref{mainthm:measure convergence strata}.
We do this by identifying directions in $\ML_g$ where measure convergence implies Hausdorff convergence of supports (Lemma \ref{lem:full_convergence}); the result then follows from Theorem \ref{thm:cont_inverse} and a well-known result from probability theory.
Along the way, we make explicit the fact that $\cO$ induces an isomorphism between the Borel $\sigma$-algebras on $\PT_g$ and $\QT_g$.
This part also contains a much more general result than Theorem \ref{mainthm:measure convergence strata} that uses the full strength of the continuity theorems we prove in the paper.

\subsection*{Acknowledgements}
The authors would like to extend a special thanks to Francisco Arana-Herrera for a very useful discussion at an early stage of this project, and are also grateful to Osama Khalil, Yair Minsky, and Alex Wright for enlightening conversations and helpful comments.

\tableofcontents

\vfill
\pagebreak

\part{Preliminaries}\label{part:prelim}

\section{Background}\label{sec:background}

\begin{remark}\label{rmk: O notation}
Throughout the paper, for real valued functions $g$ and $f$, we say that $h = O(f)$ if there is a constant $C$ such that $h(x) \le C f(x)$ for all $x$ in a suitable set.  We usually try to make this suitable set explicit, e.g., that $x$ is required to be positive and smaller than some specific threshold.
The constant $C$ is referred to as the ``implicit constant.''
\end{remark}

\subsection{Geodesic laminations}
Recall that a geodesic lamination $\lambda$ on a closed hyperbolic surface $X\in \T_g$ is a closed set foliated by simple complete geodesics called its leaves.  
We denote by $\mathcal {GL}(X)$ the space of all geodesic laminations contained in $X$ equipped with the Hausdorff metric $d_X^H$\label{ind:Hausdmetric} on closed subsets of $X$.  
It is well known that $\mathcal {GL}(X)$ is closed in the Hausdorff topology, hence is itself a compact metric space.

We are mostly interested in \textbf{chain recurrent}\label{ind:chainrec} geodesic laminations, which are those that can be approximated by geodesic multi-curves; let $\mathcal {GL}^{cr}(X)$ denote the (closed) subspace of chain recurrent geodesic laminations (see \cite[\S\S6 and 8]{Th_stretch}).
We collect here a list of useful, well-known facts about geodesic laminations and their geometry.

\begin{lemma}\label{lem:lamfacts}
Let $X$ and $X'$ be any hyperbolic structures on $S$.  Then the following are true:
\begin{enumerate}
    \item \label{item:gl_topological} There is a canonical homeomorphism between the spaces of geodesic laminations on $X$ and $X'$: thus geodesic laminations are intrinsic topological objects, rather than strictly metric. 
    \item\label{item:Haus_equiv}  The Hausdorff metrics $d_X^H$ and $d_{X'}^H$ on geodesic laminations with respect to $X$ and $X'$ are H\"older equivalent \cite{BonZhu:HD}. 
    More precisely, for any compact $K \subset \T_g$ there are constants $M$ and $\alpha$ depending on $K$ such that \[\frac1M\left( d_X^H\right)^{1/\alpha} \le d_{X'}^H\le M\left( d_X^H\right)^\alpha\]
    for any $X, X' \in K$.
    \item \label{item:measurezero} For any geodesic lamination $\lambda$ on $X$ and any transverse arc $k$, the $1$-dimensional Lebesgue measure of $k\cap \lambda$ is zero \cite{BS}.   
\end{enumerate}
\end{lemma}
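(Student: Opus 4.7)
The three items are classical in the theory of geodesic laminations; I would treat each separately, invoking the cited references for the more technical estimates rather than reproducing them.

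For (1), the plan is to pass to the universal cover. Fix a marking identifying both $X$ and $X'$ with a fixed topological surface $S$, and let $\phi : \widetilde X \to \widetilde{X'}$ be any $\pi_1(S)$-equivariant lift of a diffeomorphism isotopic to the identity. Then $\phi$ is a $\pi_1$-equivariant quasi-isometry between two copies of $\HH^2$, so it extends to a $\pi_1$-equivariant homeomorphism $\partial\phi$ of the circles at infinity. A geodesic lamination lifts to a $\pi_1$-invariant closed subset of the M\"obius band $\mathcal{G} := (\partial\HH^2)^{(2)}/\ZZ_2$ of unordered pairs of distinct ideal points, and the map induced on pairs by $\partial\phi$ carries such subsets to such subsets. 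Any two choices of $\phi$ are a bounded distance apart and hence have the same boundary extension, so the resulting bijection $\mathcal{GL}(X) \to \mathcal{GL}(X')$ is canonical. It is a homeomorphism because Hausdorff convergence of laminations is equivalent to Hausdorff convergence of their lifts to closed subsets of $\mathcal{G}$.

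For (2), I would again lean on boundary regularity. On any compact $K \subset \T_g$, the quasi-isometry constants of $\phi$ are uniformly bounded in terms of $K$, and the classical Morse-lemma-based argument, adapted to the lamination setting by Bonahon and Zhu, upgrades $\partial\phi$ to a H\"older homeomorphism of $S^1_\infty$ with H\"older constants depending only on $K$. Translating this H\"older control back to the Hausdorff metrics on the respective surfaces yields the claimed bi-H\"older comparison $\frac{1}{M}(d_X^H)^{1/\alpha} \le d_{X'}^H \le M (d_X^H)^\alpha$; the explicit $M$ and $\alpha$ are precisely those produced in \cite{BonZhu:HD}.

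For (3), this is a direct consequence of the Birman--Series theorem \cite{BS}: the set of points of $X$ lying on some complete simple geodesic has Hausdorff dimension one, and meets any transverse geodesic arc $k$ in a set of one-dimensional Hausdorff measure zero. Since $\lambda$ is a closed subset of this union of simple geodesics, the same holds for $k \cap \lambda$. Of the three items the only substantive step is (2), since the dependence of the H\"older exponent on $K$ requires the careful boundary-regularity analysis of Bonahon--Zhu; items (1) and (3) are essentially bookkeeping around standard references.
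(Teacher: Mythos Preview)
Your proposal is correct; in fact it supplies more than the paper does, since the paper states Lemma~\ref{lem:lamfacts} without proof, simply citing \cite{BonZhu:HD} and \cite{BS} for items (2) and (3) and treating all three as well-known background facts. Your sketches via boundary extensions of equivariant quasi-isometries, the Bonahon--Zhu H\"older regularity, and Birman--Series are exactly the standard arguments underlying those citations.
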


A transverse measure $\mu$ on a geodesic lamination $\lambda$ assigns a finite Borel measure $\mu_k$ to every arc $k$ transverse to $\lambda$ satisfying that the support of $\mu_k$ is \emph{equal to} $k\cap \lambda$.  In addition, this assignment is required to be invariant under holonomy.
Often, we abuse notation and use the symbol $\lambda$ for a geodesic lamination equipped with a transverse measure.
Likewise, for a transverse arc $k$, we let $\lambda(k)\ge 0$ denote the total mass of the measure deposited on $k$ from $\lambda$.

We denote by $\ML_g$\label{ind:ML} the space of measured geodesic laminations (with the topology of measure convergence).
There is a continuous, locally bi-linear \emph{geometric intersection form}
\[i: \ML_g \times \ML_g \to \RR_{\ge 0}\]
extending the minimal geometric intersection number between simple multi-curves.

Since any measurable lamination is chain recurrent, there is a set-theoretic map $\ML_g \to \mathcal {GL}^{cr}(X)$ remembering only the support of a measured lamination $\lambda$.
While this map is not continuous, almost every point with respect to the Lebesgue measure on $\ML_g$ is a point of continuity; see Section \ref{subsec:measure vs hausdorff} for details.

We denote by $\PT_g$\label{ind:PMg} the space $\T_g\times \ML_g$, which can (and will) be thought of either as a bundle over $\T_g$ or over $\ML_g$.  The mapping class group $\Mod_g$ acts nicely with quotient $\PM_g := \PT_g/\Mod_g$.  Thurston introduced a continuous, homogeneous length function $\PT_g \to \RR_{>0}$ that extends the total hyperbolic length of simple multi-curves \cite{Thurston:bulletin}.  Let $\PoT_g$ and $\PoM_g$ denote the unit length loci.  

\subsection{Quadratic differentials}\label{subsec:QD bckgrd}
A quadratic differential $q$ on a Riemann surface $X$ is a section of the symmetric square of its cotangent bundle. Away from its zeros, a holomorphic quadratic differential $q$ locally has the form $dz^2$, and the expression $|dz|^2$ defines a flat cone metric on $X$ whose cone points correspond to the zeros of $q$.
We denote by $\QT_g$ the bundle of holomorphic quadratic differentials over $\T_g$ minus the zero section and $\QM_g = \QT_g/\Mod_g$ the moduli space of such.\label{ind:QMg}
We let $\QoT_g$ and $\QoM_g$ be the unit area loci of holomorphic quadratic differentials, i.e., the area of the singular flat metric on $S$ induced by such a differential is $1$.

All of these spaces are naturally broken up into {\bf strata}, sub-orbifolds that parametrize those $q$ with fixed number and order of zeros.\label{ind:strata}
Strata are not necessarily connected, but their components have been classified
\cite{KZstrata, Lanneau, CMexceptional}.
Apart from a few sporadic examples in genus four,
the components of a stratum are always classified by whether or not every surface in them is hyperelliptic, whether or not they parametrize squares of abelian differentials, and by the Arf invariant of an associated spin structure (when one exists).
Throughout the paper, we will use $\cQ$ to refer to a component of a stratum and $\cQ^1$ for its unit-area locus.

Strata can be given local {\bf period coordinates} as follows.\label{ind:period coordinates}
Every quadratic differential comes with an {\em orientation cover} $\widehat q \to q$ that is a double cover of $q$, branched over the zeros of odd order.
This object is naturally an {\em abelian} differential corresponding to the square root of $q$.
One can measure the periods of all relative cycles in order to associate to $\widehat{q}$ a map 
\[H_1(\widehat q, Z(\widehat q); \ZZ) \to \mathbb{C}\]
that is naturally anti-invariant under the covering involution of $\widehat{q}$.
Performing this construction in families therefore locally models a (marked) stratum component $\cQ$ on $H^1(\widehat q; Z(\widehat q), \CC)^-$, the $-1$-eigenspace for the covering involution.
The natural affine measure on this subspace pulls back to a measure called the {\bf Masur--Smillie--Veech measure} $\nu_\cQ$ on $\cQ$\label{ind:MSV}
which can be used to induce a probability measure $\nu_\cQ^1$ on $\cQ^1$ that is ergodic for the $\SL_2(\RR)$ action \cite{Masur_IETsMF, Veech_IETs}.

\subsection{Measured foliations}\label{subsec:MF backgrd}
A (singular) measured foliation on $S$ is a $C^1$ foliation $\mathcal F$ of $S\setminus Z$, where $Z$ is a finite set (the singular points), equipped with a transverse measure $\nu$ on arcs transverse to $\mathcal F$.  The transverse measure is required to be invariant under holonomy and every singularity is modeled on a standard $k$-pronged singularity.
Isotopic measured foliations are viewed as identical.
The space $\MF_g$\label{ind:MF} of Whitehead equivalence classes of singular measured foliations is equipped with its measure topology making the geometric intersection pairing $i:\MF_g \times \MF_g \to \RR_{\ge0}$ given by integrating the product of transverse measures continuous; see  \cite{Thurston:bulletin, FLP} for details.

By straightening the non-singular leaves of a measured foliation with respect to a choice of hyperbolic metric and taking the closure, one can obtain a geodesic lamination together with a (pushforward) transverse measure.
This describes a natural map $\MF_g \to \ML_g$ which is in fact a homeomorphism commuting with geometric intersection \cite{Levitt}.  As such, we often conflate measured foliations and measured laminations.

Important examples of singular measured foliations are the directional foliations coming from quadratic differentials. Building on work of Hubbard and Masur, Gardiner and Masur proved that a holomorphic quadratic differential $q$ is determined uniquely by the Whitehead equivalence classes of its imaginary and real foliations, whose leaves are horizontal and vertical, respectively.
The imaginary and real foliations of $q$ {\bf bind} the surface, a condition that is equivalent to the fact that the corresponding measured geodesic laminations cut $S$ into compact disks.\label{ind:bind}
Moreover, every pair of measured foliations which bind $S$ are realized as the vertical and horizontal foliations of some $q\in \QT_g$, and this assignment yields a homeomorphism
\begin{equation}\label{eqn:GM QT is MFMF}
\QT_g \cong \MF_g \times \MF_g \setminus \Delta
\end{equation}
where $\Delta$ denotes the set of pairs of measured foliations that do not together bind the surface 
\cite{GM, HubMas}.

This product structure gives rise to two natural foliations $\Fol^{uu}$ and $\Fol^{ss}$\label{ind:unstablefols} of $\QT_g$: given $q \in \QT_g$, let $\Fol^{uu}(q)$ denote those $q'$ with the same (Whitehead equivalence class of) imaginary measured foliation, while $\Fol^{ss}(q)$ denotes those $q'$ with the same real foliation.
In period coordinates, these foliations simply look like changing the real and imaginary parts of periods, respectively.
This identification also gives a hint as to the meaning of the superscripts: deformations in $\Fol^{uu}$ come from either scaling the measure or are exponentially expanded under the Teichm{\"u}ller geodesic flow, while deformations in $\Fol^{ss}$ that do not come from scaling are exponentially contracted.

At a global level, \eqref{eqn:GM QT is MFMF} implies that we can identify each leaf of $\Fol^{uu}$ and each leaf of $\Fol^{ss}$ with open subsets in $\MF_g$. However, the intersection of these leaves with a stratum $\cQ$ is {\em not} described just in terms of the topological type of the foliation (in part due to complications arising from taking Whitehead equivalence classes).
A global description of $\Fol^{uu} \cap \cQ$ is a consequence of the main theorem of our previous paper (see \cite[Corollary 2.6]{shshI}).

\subsection{The orthogeodesic foliation}\label{subsec:orthofoliation}
Let $X\in \T_g$ and let $\lambda$ be a geodesic lamination.
The complement of $\lambda$ in $X$ is a (possibly disconnected) hyperbolic surface whose metric completion has totally geodesic (possibly non-compact) boundary.  For each such complementary component $Y$, there is a piecewise geodesic $1$-complex $\Sp$ called the \textbf{spine}\label{ind:spine} consisting of points that are closest to two or more components of $\partial Y$ in its universal cover.  Away from $\Sp$, there is a unique nearest point in $\partial Y$.

The fibers of the projection map $Y\setminus \Sp \to \partial Y$ form a foliation of $Y\setminus \Sp$ whose leaves extend continuously across $\Sp$ to a piecewise geodesic singular foliation $\cO_{\partial Y}(Y)$ called the \textbf{orthogeodesic foliation}.
This singular foliation has $n$-pronged singularities at the vertices of the spine of valence $n$, and
every endpoint of every leaf of $\cO_{\partial Y}(Y)$ meets $\partial Y$ orthogonally.\label{ind:orthofol}
    
The orthogeodesic foliation $\cO_{\partial Y}(Y)$ is equipped with a transverse measure: the measure assigned to  a small enough transversal is Lebesgue after projection to $\partial Y$, and 
this assignment is invariant by isotopy transverse to $\cO_{\partial Y}(Y)$. 
This construction produces a (non-smooth) singular foliation on  $X\setminus \lambda$, which extends continuously across the leaves of $\lambda$ and defines a singular measured foliation $\cO_{\lambda}(X)$ on $S$
(after smoothing out the leaves in a neighborhood of $\Sp$); see \cite[Section 5]{shshI} for more details.
An essential feature of $\Ol(X)$ is that for a geodesic segment of a leaf of $\lambda$, the length of that segment is computed by the transverse measure to $\Ol(X)$, and this is preserved under isotopy transverse to $\Ol(X)$.

We have therefore produced a map\label{ind:orthofolmap}
\[\cO_\lambda: \T_g \to \MF_g\]
recording the measure equivalence class of $\cO_\lambda(X)$.
Suppose now that $\lambda$ admits a transverse measure of full support, and define $\MF(\lambda)$ to be the subset of measured foliations that bind together with $\lambda$.\label{ind:MFlambda}
In \cite{shshI}, the authors proved the following:
\begin{theorem}\label{thm:ortho homeo}
    For $\lambda\in \ML_g$, the map $\cO_\lambda: \T_g \to \MF_g(\lambda)$ is a homeomorphism, is equivariant with respect to the stabilizer of the support of $\lambda$ in $\Mod_g$, and satisfies $i(\cO_\lambda(X), \lambda) = \ell_X(\lambda)$.
\end{theorem}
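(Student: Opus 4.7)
The plan is to verify the length identity and equivariance, both of which follow directly from the construction, and then establish the homeomorphism statement by combining continuity, injectivity, invariance of domain, and properness.

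First I would check the length identity $i(\cO_\lambda(X), \lambda) = \ell_X(\lambda)$. By the very definition of the orthogeodesic foliation, its transverse measure on an arc $k$ is computed by the length of the projection of $k$ onto $\partial(X \setminus \lambda)$; for a subarc $k$ of a leaf of the support of $\lambda$, this projection is the identity, so $\cO_\lambda(X)(k)$ equals the hyperbolic length of $k$. Integrating against the transverse measure of $\lambda$ recovers $\ell_X(\lambda)$. Equivariance under the stabilizer of the support of $\lambda$ is immediate: every step in the construction (straightening $\lambda$, forming the spine, taking nearest-point projection) is canonical and hence intertwines with the action of any homeomorphism preserving the support of $\lambda$.

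Next I would show $\cO_\lambda(X) \in \MF(\lambda)$ and that $\cO_\lambda$ is continuous. On each component $Y$ of $X \setminus \lambda$, the spine $\Sp$ deformation-retracts $Y$ onto a finite $1$-complex, and together with the orthogeodesic arcs through its vertices it dissects $Y$ into polygonal disks; globally this means $\lambda \cup \cO_\lambda(X)$ fills $S$. For continuity, the geodesic straightening of $\lambda$ depends continuously on $X \in \T_g$, as do the spines of $X \setminus \lambda$ (even across the codimension-one loci where the combinatorial type of $\Sp$ changes, the transverse measures vary continuously), and hence so does $\cO_\lambda(X)$ in the measure topology on $\MF_g$. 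Injectivity follows from the fact that $\cO_\lambda(X)$ together with $\lambda$ records the full set of shear-shape parameters of $X$ relative to $\lambda$ in the sense of Thurston and Bonahon: transverse measures on arcs inside each complementary component recover its hyperbolic shape, while transverse measures across leaves of $\lambda$ recover the shear cocycle of $X$ along $\lambda$. These parameters together determine $X$ uniquely.

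The main obstacle is promoting the above to a homeomorphism onto $\MF(\lambda)$. Both $\T_g$ and $\MF(\lambda)$ are real manifolds of dimension $6g-6$ (the latter being open in $\MF_g$), so by invariance of domain the continuous injection $\cO_\lambda$ is an open embedding. To upgrade this to surjectivity I would establish properness of $\cO_\lambda$ as a map into $\MF(\lambda)$: if $X_n \to \infty$ in $\T_g$, some simple closed curve $\gamma$ has hyperbolic length leaving every compact subinterval of $(0,\infty)$, and via the length identity applied to $\lambda$ together with elementary intersection estimates, one shows that $\cO_\lambda(X_n)$ must accumulate mass along or perpendicular to $\gamma$ and therefore exit every compact subset of $\MF(\lambda)$. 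Path-connectedness of $\MF(\lambda)$ is witnessed by linear interpolation (the bilinearity of $i$ and the binding condition are both preserved under convex combinations), so an open-and-closed argument forces $\cO_\lambda$ to be surjective; continuity of the inverse is then automatic from invariance of domain.
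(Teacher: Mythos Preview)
This theorem is not proved in the present paper: it is quoted from the authors' earlier paper \cite{shshI} as a background result (see the sentence ``In \cite{shshI}, the authors proved the following'' immediately preceding the statement). So there is no proof here to compare your proposal against. The actual proof in \cite{shshI} proceeds via the shear-shape coordinate machinery: one builds a homeomorphism $\sigma_\lambda: \T_g \to \SH^+(\lambda)$ (Theorem~12.1 there) and a companion homeomorphism $\operatorname{I}_\lambda: \MF(\lambda) \to \SH^+(\lambda)$, and $\cO_\lambda$ is the composite. Both halves require substantial work.

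Your sketch has real gaps. The injectivity step --- ``$\cO_\lambda(X)$ together with $\lambda$ records the full set of shear-shape parameters of $X$'' --- is essentially what has to be proved; that the orthogeodesic foliation encodes enough shearing data to pin down $X$ is the content of \cite[Theorem~12.1]{shshI}, not an input. Your path-connectedness argument for $\MF(\lambda)$ via ``linear interpolation'' does not work as stated: $\MF_g$ has no global linear structure, and convex combinations are only meaningful within a single train-track chart, where there is no reason both endpoints lie. Finally, the properness paragraph is too vague to be an argument --- the phrase ``elementary intersection estimates'' is hiding exactly the difficulty, namely controlling how $\cO_\lambda(X_n)$ behaves when a curve $\gamma$ interacts with $\lambda$ in a complicated way (e.g., $\gamma$ disjoint from $\lambda$ with length tending to infinity).
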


Combining Theorem \ref{thm:ortho homeo} with the Theorem of Gardiner and Masur, 
we obtain a $\Mod_g$-equivariant bijection\label{ind:O}
\[\cO: \PT_g \to \QT_g\]
mapping the unit length locus to the unit area locus,
such that the imaginary and real foliations of $\cO(X,\lambda)$ are measure equivalent to $\lambda$ and $\cO_\lambda(X)$, respectively.  
One can also give an explicit construction of $\cO(X,\lambda)$ by ``inflating'' $\lambda$ and ``deflating'' the complementary subsurfaces to $\lambda$; this viewpoint makes it clear that the real foliation of $\cO(X,\lambda)$ is \emph{isotopic} to $\cO_\lambda(X)$, not just Whitehead-equivalent \cite[Proposition 5.10]{shshI}.

For use in the sequel, we record here an elementary geometric estimate that says that leaves of $\Ol(X)$ are nearly $C^1$ in the spikes of $\lambda$. It can be proven via elementary Euclidean geometry in the plane.

\begin{fact}\label{fact:kink angle}
    Let $Y$ be a hyperbolic surface with non-compact boundary and let $\ell$ be a leaf of $\cO_{\partial Y}(Y)$ of length less than $l\le \log(3)$.  Then the angle between the two geodesic segments that comprise $\ell$ is bounded below by $2\pi/3$ and tends to $\pi$ linearly with $l$, i.e., the angle is $\pi-O(l)$.  
    
    Additionally, even without a bound on $l$ the angle is bounded below by a constant that depends only on the area of $Y$.
\end{fact}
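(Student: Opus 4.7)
The plan is to lift to the universal cover and reduce all three claims to a single sharp inequality proved by an elementary computation in the Poincar\'e disk model. The leaf $\ell$ lifts to a piecewise geodesic from $\tilde a_1 \in \tilde\gamma_1$ through the spine point $\tilde p$ to $\tilde a_2 \in \tilde\gamma_2$, where $\tilde\gamma_1, \tilde\gamma_2 \subset \partial \tilde Y$ are disjoint complete geodesics in $\HH^2$ and each half meets the corresponding $\tilde\gamma_i$ perpendicularly and has length $d := l/2$. Writing $\theta$ for the kink angle at $\tilde p$ (so $\theta \to \pi$ corresponds to no kink), the goal reduces to the sharp bound
\[
\sin(\theta/2) \;\ge\; \sech(d),
\]
with equality exactly when $\tilde\gamma_1, \tilde\gamma_2$ are asymptotic.

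To prove this I would work in the Poincar\'e disk model with $\tilde p$ at the origin, and by rotating place $\tilde a_{1,2} = \tanh(d/2)\, e^{\mp i\theta/2}$. Setting $r = \tanh(d/2)$, $a = \tfrac{1+r^2}{2r}$, and $b = \tfrac{1-r^2}{2r}$, a direct Euclidean computation identifies $\tilde\gamma_1, \tilde\gamma_2$ as arcs of Euclidean circles of common radius $b$ centered at $a\, e^{\mp i\theta/2}$, with $a^2 - b^2 = 1$ encoding perpendicularity to the unit circle. By symmetry about the real axis, whenever these two circles intersect they do so at two points on the real axis satisfying the quadratic $x^2 - 2a\cos(\theta/2)\, x + 1 = 0$; in particular the two intersection $x$-coordinates are positive with product $1$, so at least one of them always lies in the closed unit disk. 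Thus $\tilde\gamma_1$, $\tilde\gamma_2$ are disjoint in the open unit disk if and only if the discriminant of this quadratic is non-positive, equivalently $\sin(\theta/2) \ge b/a$, and the half-angle identity $b/a = (1-r^2)/(1+r^2) = \sech(d)$ finishes the argument.

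With the main inequality in hand, the three claims follow quickly. For $l \le \log 3$ we have $\sech(l/2) \ge \sech((\log 3)/2) = \sqrt 3/2 = \sin(\pi/3)$, so $\sin(\theta/2) \ge \sin(\pi/3)$ and hence $\theta \ge 2\pi/3$. Expanding $\sech(d) = 1 - d^2/2 + O(d^4)$ and writing $\theta = \pi - \eta$ gives $\cos(\eta/2) \ge 1 - d^2/2 + O(d^4)$, which yields $\eta \le l + O(l^3)$, i.e.\ $\theta = \pi - O(l)$. For the area-only bound, it suffices to find $D = D(\operatorname{area}(Y))$ with $d \le D$, since then $\sin(\theta/2) \ge \sech(D) > 0$; this follows from a Margulis / thick-thin argument using that the hyperbolic $d$-disk about $\tilde p$ (tangent to both $\tilde\gamma_i$) projects into $Y$ with total area at most $\operatorname{area}(Y)$, while its area in $\HH^2$ grows as $2\pi(\cosh d - 1)$, with any short closed geodesic contributing only a bounded collar. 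The main subtlety throughout is the disjointness bookkeeping in the disk model: the product identity $x_+ x_- = 1$ is precisely what forces the sharp transition from asymptotic to ultraparallel to occur at $\sin(\theta/2) = \sech(d)$.
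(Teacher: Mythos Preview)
Your derivation of the sharp inequality $\sin(\theta/2) \ge \sech(d)$ in the Poincar\'e disk is correct, and the deductions of the $2\pi/3$ bound and the $\pi - O(l)$ asymptotic from it are clean. The paper offers no proof beyond the phrase ``elementary Euclidean geometry in the plane,'' so for those two assertions your argument is exactly a fleshing-out of the intended approach.

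The gap is in the third assertion. You reduce it to showing $d \le D(\operatorname{area}(Y))$ via the area $2\pi(\cosh d - 1)$ of the $d$-ball about $\tilde p$, but that ball need not project injectively to $Y$, and the phrase ``any short closed geodesic contributing only a bounded collar'' does not repair this. Concretely, if $Y$ contains a short \emph{interior} simple closed geodesic $\gamma$ of length $\epsilon$, spine points deep in the collar of $\gamma$ can have $d$ of order $\log(1/\epsilon)$ while $\operatorname{area}(Y)$ is fixed, and the covering multiplicity on the $d$-ball is then unbounded; so no bound on $d$ in terms of area alone comes out of your sketch. In the paper's actual uses of this fact, $Y$ arises as a component of $X \setminus \lambda$ for a closed $X$ with a systole bound, and the needed control on $d$ is supplied separately by Lemma~\ref{lem:radius_bounded} (the remark immediately following it emphasizes that this genuinely requires thickness of $X$, not merely the area of the piece). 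So either invoke that extra hypothesis, or give an argument that bounds the kink angle directly without passing through a bound on $d$; the Margulis/thick--thin gesture you wrote does neither.
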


\subsection{Arc systems}\label{subsec:arc systems}
From now on we tacitly take the metric completion of $X\setminus \lambda$, so each component is a finite area hyperbolic surface with totally geodesic boundary.  
For each component $Y$ of $X\setminus \lambda$ and for each compact edge $e$ of $\Sp$, there is a dual arc $\alpha$ properly isotopic to any non-singular leaf of $\cO_{\partial Y}(Y)$ meeting $e$.
It is not difficult to see that the union $\arc(X,\lambda)$ of all arcs obtained this way cuts $X\setminus \lambda$ into disks, i.e., is a filling arc system on $X\setminus \lambda$.
To each arc $\alpha \in \arc(X,\lambda)$ we record a {\bf weight} $c_\alpha := \cO_{\partial Y}(Y)(e)$, where $e\subset \Sp$ and $\alpha$ are dual.  That is, $c_\alpha$ is the length of the projection of $e$ to $\lambda$.\label{ind:arcsystems}

We remark that if the weight of $\arc$ is small, its shortest (orthogeodesic) representative may not necessarily be a leaf of $\Ol(X)$. This is related to the observation in \cite{Luo} that ``radius invariants'' can be negative even when ``radius coordinates'' are positive.

It turns out that the formal sum
\[\arcwt(X,\lambda) := \sum_{\alpha \in \arc(X,\lambda)} c_\alpha \alpha\]
completely determines the geometry of $X\setminus \lambda$ in the following sense.

The {\bf arc complex} $\mathscr{A}(Y)$ of a surface with boundary $Y$ is the simplicial flag complex whose vertices are essential proper isotopy classes of embedded arcs, and where two vertices are joined if they have disjoint representatives. 
The {\bf weighted filling arc complex} of $Y$ is either the cone 
\[\mathscr{A}(Y) \times \mathbb{R}_{\ge 0} / (\mathscr{A}(Y) \times \{0\})\]
when $Y$ is simply connected, or an $\mathbb{R}_{>0}$-bundle over the subspace of the arc complex corresponding to filling arc systems, when $Y$ is not.
The weighted filling arc complex $|\Arcfill(S\setminus \lambda)|_\RR$\label{ind:arccx} 
is the product over all complementary subsurfaces to $\lambda$.
Generalizing work of \cite{Do,Luo, Ushijima} for when $\lambda$ is a multi-curve, in \cite[\S6]{shshI} we proved that the map \[\arcwt : \T(S\setminus \lambda) \to |\Arcfill(S\setminus\lambda)|_\RR\]
is a piecewise analytic mapping class group equivariant homeomorphism.

This induces a map $\T_g \to |\Arcfill(S\setminus\lambda)|_\RR$ obtained by cutting $X\in \T$ along the geodesic realization of $\lambda$ and recording the geometric weighted filling arc complex; 
abusing notation,  this map is also denoted by $\arcwt$.  In general, this map is not surjective, but rather maps onto a subspace $\mathscr B(S\setminus \lambda)\subset |\Arcfill(S\setminus\lambda)|_\RR$\label{ind:base} cut out by equations that are linear on each cell (see \S7.3 of \cite{shshI}). 

\begin{example}[Arc complex of a $4$-gon]\label{example:arc complex 4-gon}
    Let $Y$ be an ideal $4$-gon.  There are exactly three filling arc systems: the empty arc system and the two arcs that join opposite edges of $Y$.  So, $|\Arcfill(Y)|_\RR$ is a union of two rays $\RR_{\ge 0 }$ glued along their endpoints and is therefore homeomorphic to $\RR$.  The special point $0$ in both rays corresponds to the empty arc system, or geometrically, the ideal $4$-gon with an order $4$ cyclic symmetry.  

    Similarly, if $Y$ is an (infinite area) region in $\HH^2$ with smooth geodesic boundary consisting of $4$ complete geodesics, we can still record the weighted arc system dual to the compact arc of $\Sp$, and the weighted filling arc complex is still homeomorphic to $\RR$.
\end{example}

\subsection{Hexagons, centers, and basepoints}\label{subsec:hexagons center and basepoints}
Since the arc system $\arc = \arc(X,\lambda)$ is filling, the complement of $\lambda\cup \arc$ in $X$ consists of topological disks.  Generically, i.e., if every vertex of $\Sp$ is trivalent, then when $\arc$ is realized orthogeodesically each component $H$ of the metric completion of  $X \setminus (\lambda\cup \arc)$ is a partially ideal (degenerate) right angled hexagon where we consider spikes of $H$ as sides of zero length.
More generally, if $H$ contains a vertex of $\Sp$ of valence $n$, then $H$ is a partially ideal  (degenerate) right angled $2n$-gon. It is a consequence of Gauss--Bonnet that there is a uniform bound (depending only on genus) on the total valence of the vertices of $\Sp$.

Abusing terminology, we will often refer to \emph{any} component of $X\setminus (\lambda \cup \arc)$ a as {\bf hexagon}\label{ind:hexagon}, regardless of its number of sides.
Every vertex $v$ of $\Sp$ corresponds to a unique hexagon $H_v$, and $v$ is equidistant to each of the geodesic boundary components of $H_v$ corresponding to leaves of $\lambda$.
In other words, $v$ is the center of a hyperbolic circle contained in $X\setminus \lambda$ tangent to $\lambda$; these points of tangency are also the first intersection points with $\lambda$ of the $n$-pronged singularity of $\cO_\lambda(X)$ emanating from $v$.
Note that this incircle is not necessarily contained in $H_v$.

More generally, to any triple of pairwise disjoint complete geodesics in $\HH^2$, none of which separates the others, there is a unique inscribed circle.
The {\bf center}\label{ind:center} of the triple is the (hyperbolic) center of this circle; equivalently, it is the unique point which is equidistant from all three geodesics.
The {\bf basepoints}\label{ind:basepts} of the triple are the points of tangency of this circle with the geodesics. Equivalently, they are the closest-point projections of the center of the triple to the constituent geodesics.

The following observation gives a universal lower bound on distance from any center to its corresponding geodesics, i.e., the radius of the incircle.

\begin{lemma}\label{lem:closethenarc}
Let $Y$ be a hyperbolic surface with totally geodesic boundary.
If two geodesics of $\partial Y$ are distance less than $\log\sqrt3$ of each other then they are connected by an arc of $\cO_{\partial Y}(Y)$.
\end{lemma}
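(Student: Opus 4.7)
The plan is to show that the midpoint $p$ of the common perpendicular segment $\gamma \subset Y$ from $\beta_1$ to $\beta_2$ lies in the interior of an edge of the spine $\Sp$ equidistant from $\beta_1$ and $\beta_2$; such an edge is dual to an arc of $\cO_{\partial Y}(Y)$ connecting them. Since $d(p, \beta_1) = d(p, \beta_2) = d/2$ where $d = d(\beta_1, \beta_2)$, it suffices to show $d(p, \beta_3) > d/2$ strictly for every other boundary component $\beta_3$ of $Y$; this will force $\beta_1$ and $\beta_2$ to be the unique closest boundaries to $p$.

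I would pass to the universal cover and take lifts $\tilde\beta_1, \tilde\beta_2, \tilde\beta_3 \subset \partial\tilde Y$ realizing these distances from a lift $\tilde p$ of $p$, along with the lift $\tilde\gamma \subset \tilde Y$ of $\gamma$. Since $\tilde Y$ is convex and $\tilde\gamma \subset \tilde Y$, the geodesic $\tilde\beta_3$ cannot separate $\tilde\beta_1$ from $\tilde\beta_2$ (otherwise $\tilde\gamma$ would cross the boundary $\tilde\beta_3$ in the interior of $\tilde Y$). Normalizing in the upper half plane so that $\tilde\beta_1$ and $\tilde\beta_2$ are concentric semicircles of radii $1$ and $R = e^d$, we have $\tilde p = (0, e^{d/2})$ on the $y$-axis, and the complement $\HH^2 \setminus (\tilde\beta_1 \cup \tilde\beta_2)$ decomposes into three regions---the inside $\{|z|<1\}$, the annular strip $\{1 < |z| < R\}$, and the outside $\{|z|>R\}$---with $\tilde\beta_3$ contained in exactly one of them.

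When $\tilde\beta_3$ sits in the inside or outside region, the bound $d(\tilde p, \tilde\beta_3) > d/2$ is immediate from disjointness with the separating concentric semicircle. The main obstacle is the annular strip case: by the non-separating hypothesis $\tilde\beta_3$ cannot cross the $y$-axis, and up to symmetry its ideal endpoints both lie in $(1, R)$. Here I would apply a M\"obius transformation sending $\tilde\beta_3$ to the imaginary axis and use the elementary identity $\sinh(\mathrm{dist}) = |x|/y$ to compute the distance; a monotonicity argument shows that the infimum of $d(\tilde p, \tilde\beta_3)$ over such geodesics is attained in the asymptotic limit where the endpoints approach $1$ and $R$, yielding the value $d_*$ with $\sinh(d_*) = 1/\sinh(d/2)$.

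The inequality $d_* > d/2$ is equivalent to $\sinh(d/2) < 1$, i.e., $d < 2\log(1+\sqrt 2)$; since $\log\sqrt 3 < 2\log(1+\sqrt 2)$, the hypothesis $d < \log\sqrt 3$ yields strict inequality in every case. Thus $\beta_1$ and $\beta_2$ are the strictly closest boundaries to $p$, so $p$ lies in the interior of the desired spine edge and the lemma follows.
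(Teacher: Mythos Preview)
Your proof is correct. Both you and the paper work at the midpoint of the common perpendicular and argue that no third boundary geodesic of $\tilde Y$ can lie within distance $d/2$ of it, but the methods differ. The paper's argument is essentially a one-liner invoking the fact that for any triple of pairwise disjoint complete geodesics in $\HH^2$, none of which separates the others, the inscribed circle has radius at least $\log\sqrt 3$ (the ideal triangle being extremal); from this it concludes that the disk of radius $d/2$ about the midpoint---tangent to the two given boundary geodesics---cannot meet a third. Your approach instead sets up explicit upper-half-plane coordinates, reduces the annular-strip case to the extremal configuration where $\tilde\beta_3$ is asymptotic to both $\tilde\beta_1$ and $\tilde\beta_2$, and computes the exact distance $\sinh(d_*) = 1/\sinh(d/2)$. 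This yields the sharp threshold $d < 2\log(1+\sqrt 2)\approx 1.76$, considerably stronger than the $\log\sqrt 3 \approx 0.55$ the lemma actually asserts. The paper's route is terser and packages the geometry into one memorable inequality; yours is more hands-on, fully self-contained (no auxiliary fact about inradii is needed), and incidentally proves a better bound.
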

\begin{proof}
Let $g$, $h$, and $k$ be distinct, pairwise disjoint complete geodesics in $\HH^2$, none of which separates the others.  Then $\log\sqrt 3$ is a lower bound on the distance from any of $g$, $h$, and $k$ to their center.

In the universal cover $\widetilde{Y}$, suppose $g$ and $h$ are components of $\partial \widetilde {Y}$ that are distance at most $\log\sqrt3$ from each other.
If $g$ and $h$ are asymptotic, then the conclusion is obvious.
Otherwise, consider the unique orthogeodesic $\alpha$ running from $g$ to $h$, let $r$ be its length, and let $a$ be its midpoint.
By assumption the circle of radius $r$ centered at $a$ is tangent to both $g$ and $h$.
Moreover, it can meet no other geodesics of $\partial \widetilde{Y}$ since there is no configuration of three pairwise disjoint geodesics, none of which separates the other two, and which are all at distance less than $\log\sqrt3$ from each other.

Since leaves of the orthogeodesic foliation are fibers of the closest point projection map, this implies that $\alpha$ is (the lift of) a leaf of $\cO_{\partial Y}(Y)$.
\end{proof}

If, in addition, we have an lower bound on the injectivity radius of the hyperbolic structure $X$, we can give an upper bound on this radius.

\begin{lemma}\label{lem:radius_bounded}
For any $s>0$ there is a bound $r= r(s, g)$ such that the following holds.
Let $X$ be a hyperbolic metric on a genus $g$ surface with systole bounded below by $s$ and let  $\lambda $ be a geodesic lamination on $X$; then the function $p\mapsto d(p,\lambda)$ on $X$ is bounded above by $r$.
\end{lemma}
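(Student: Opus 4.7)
The plan is to reduce the statement to a diameter bound on $X$: once we know $\diam(X) \le D(s,g)$, the desired inequality is immediate, since for any $p \in X$ and any $q \in \lambda$ (which we take to be non-empty) we have $d(p,\lambda)\le d(p,q) \le \diam(X)$.

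To obtain the diameter bound, I would run the standard packing/covering argument. The hypothesis $\sys(X)\ge s$ implies $\inj(X) \ge s/2$, so the hyperbolic disk $B(p,s/4)$ around any point embeds in $X$. Choose a maximal subset $\{p_1,\dots,p_N\}\subset X$ with pairwise distances at least $s/2$; the disks $B(p_i, s/4)$ are then disjoint, and by Gauss--Bonnet
\[
N \cdot 4\pi\sinh^2(s/8) \;\le\; \Area(X) \;=\; 2\pi(2g-2),
\]
so $N \le N_0(s,g)$. By maximality $\{p_i\}$ is an $(s/2)$-net, and hence the $B(p_i, s/2)$ cover $X$. Forming the nerve graph on $\{1,\dots,N\}$ (with $i \sim j$ iff $B(p_i, s/2) \cap B(p_j, s/2)\ne \emptyset$), connectedness of $X$ gives connectedness of the graph, and any two points can be joined by a path that passes through at most $N$ such disks. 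This yields $\diam(X) \le D(s,g)$ for an explicit $D$.

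Set $r := D(s,g)$; then $d(p,\lambda)\le r$ for every $p \in X$ and every geodesic lamination $\lambda \subset X$. There is really no obstacle here—this is a short Mumford-compactness style estimate—and the only mild subtlety is keeping track of how $N_0$ and the resulting $D$ depend on $s$ and $g$, both of which can be made explicit from the packing inequality above.
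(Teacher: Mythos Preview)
Your argument is correct. You bound $\diam(X)$ explicitly via a packing/covering argument using the injectivity radius lower bound and the Gauss--Bonnet area, and then observe that $d(p,\lambda)\le\diam(X)$ for any nonempty $\lambda$. This is entirely valid and yields an explicit $r(s,g)$.

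The paper takes a different, softer route: it observes that the function $(X,x,\lambda)\mapsto d(x,\lambda)$ is continuous on the universal curve over the moduli space of pairs $(X,\lambda)$ with $X$ $s$-thick, and that this space is compact (Mumford compactness for $X$, Hausdorff compactness for $\lambda$), so the function is bounded. Your approach has the virtue of giving an explicit constant and of avoiding any appeal to compactness of the space of laminations; the paper's approach is shorter and fits the pattern used elsewhere in the paper of invoking compactness of the $s$-thick part to get uniform bounds on continuous geometric quantities. Both are standard; your version is in some sense the content behind the paper's invocation of Mumford compactness.
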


\begin{proof}
To bound the radius of any circle inscribed in an equidistant configuration, we note that the function that sends triples $(X, x, \lambda)$ where $x \in X$ to the distance from $x$ to $\lambda$ is continuous as $\lambda$ varies in the Hausdorff topology.
The space of all geodesic laminations $\mathcal{GL}$ is compact, so the (universal curve over the) moduli space of pairs $(X, \lambda) \subset \T_g \times \mathcal{GL}$ where $X$ is $s$-thick is also compact.
\end{proof}

Lemma \ref{lem:radius_bounded} necessarily requires a bound on the thickness of $X$. For example, consider an ideal triangulation of a once-punctured surface. Opening the cusp into a boundary component, this determines a maximal arc system on the bordered surface.
The centers of each triangle all have definite injectivity radius, but the boundary is far away from the thick part of the surface. See Figure \ref{fig:nounifribbd}.

\begin{figure}[ht]
\includegraphics[width=\linewidth]{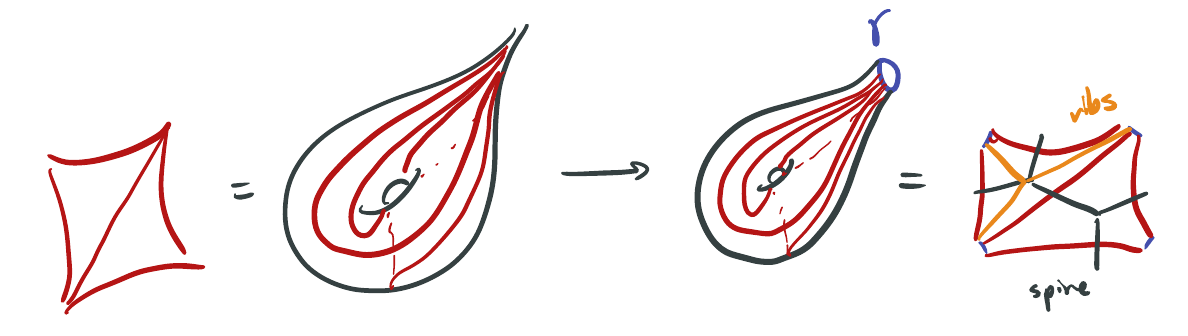}
\caption{No uniform bound on radii over the entirety of moduli space.}
\label{fig:nounifribbd}
\end{figure}

\subsection{Shear-shape cocycles}\label{subsec:shsh}
Let $\lambda\in \GLcr(S)$, and let $\arc = \cup_i \alpha_i$ be a filling arc system on $S\setminus \lambda$. Following \S7 of \cite{shshI}, we define shear-shape cocycles and discuss the structure of the space of these objects.

A shear-shape cocyle $\sigma$ for the pair $(\lambda, \arc)$ is an element of a certain cohomology theory.
We need the notion of an {\bf orientation} of $\lambda\cup \arc$,\label{ind:lamor} which is a continuous orientation of $\lambda$ and a continuous co-orientation of $\arc$ that agrees with the orientation of $\lambda$ at the endpoints of all of the arcs.
While $\lambda\cup \arc$ is not usually orientable, there is always a two-to-one orientation cover $\widehat {\lambda}\cup\widehat{\arc} \to \lambda\cup \arc$. 

Let $N_{\arc}$ be a {\bf snug neighborhood}\label{ind:snugtopnbhd} of $\lambda\cup \arc$, i.e., an open neighborhood of $\lambda\cup \arc$ on $S$ satisfying that the complement of $\lambda\cup \arc$ on $S$ and the complement of $N_{\arc}$ on $S$ have the same topological type.
Then the orientation cover $\widehat {\lambda}\cup\widehat{\arc} \to \lambda\cup \arc$ extends to a two-to-one cover $\widehat {N_{\arc}} \to N_{\arc}$.

A {\bf shear-shape cocycle}\label{ind:shshcoc} $\sigma$ for $(\lambda,\arc)$ is then a relative class $\sigma \in H^1(\widehat{N_{\arc}},\partial \widehat{N_{\arc}};\RR)^-$ that is anti-invariant with respect to the covering involution.
We also require that $\sigma$ evaluates positively on {\bf standard transversals}
\label{ind:standardtrans}, which are choices of relative cycles $t_i$ crossing the arc $\widehat{\alpha}_i$ of $\widehat\arc$ (and no others) which are disjoint from $\widehat \lambda$, given the orientation which agrees with the co-orientation of $\widehat{\alpha}_i$. 
From positivity on standard transversals and anti-invariance of $\sigma$ under the covering involution, we recover a (positively) weighted filling arc system \[\arcwt(\sigma) = \sum_{\alpha_i \in \arc} \sigma(t_i) \alpha_i \in |\Arcfill(S\setminus\lambda)|_\RR.\]
The space of shear-shape cocycles with arc system $\arc$ 
is denoted by $\SH(\lambda;\arc)$;\label{ind:shshwitharc} it is routine to check that the definition does not depend on the choice of snug neighborhood $N_{\arc}$ \cite[Lemma 7.7]{shshI}.

In more concrete terms, a shear-shape cocycle $\sigma \in \SH(\lambda;\arc)$ defines a weighted filling arc system $\arcwt(\sigma)$ as above and a function on the set of unoriented arcs that are transverse to $\lambda$ and disjoint from $\arc$ that satisfies:\label{ind:shshaxioms}
\begin{enumerate}
    \item [{(SH0)}] (support): If $k$ does not intersect $\lambda$ then $\sigma(k)=0$.
    \item [{(SH1)}] (transverse invariance): If $k$ and $k'$ are isotopic through arcs transverse to $\lambda$ and disjoint from $\arc$, then $\sigma(k) = \sigma(k')$.
    \item [{(SH2)}] (finite additivity): If $k = k_1 \cup k_2$ where $k_i$ have disjoint interiors, then $\sigma(k) = \sigma(k_1) + \sigma(k_2)$.
    \item[(SH3)] ($\arcwt$-compatibility): 
    Suppose that $k$ is isotopic rel endpoints and transverse to $\lambda$ to some arc which may be written as $t_i \cup \ell$, where $t_i$ is a standard transversal and $\ell$ is disjoint from $\arc$. Then the loop $k \cup t_i \cup \ell$ encircles a unique point $p$ of $\lambda \cap \arc$, and 
    \[\sigma(k) = \sigma(\ell) + \varepsilon c_{\alpha_i}\]
    where $\varepsilon$ denotes the winding number of $k \cup t_i \cup \ell$ about $p$ (where the loop is oriented such that the edges are traversed $k$ then $t_i$ then $\ell$). 
\end{enumerate}
The axioms (SH0) through (SH3) determine uniquely a shear-shape cocyle \cite[Proposition 7.14]{shshI}.

In \S9 of \cite{shshI}, the authors constructed ``train track coordinates'' for $\SH(\lambda, \arc)$; the point is that a shear-shape cocyle is completely determined by the weights it deposits on a suitable train track.
Compare \S\ref{sec:geometric tts} below, in which we construct geometric train track neighborhoods of laminations on hyperbolic surfaces and recall the standard terminology for train tracks.

The space of {\em all} shear-shape cocycles $\SH(\lambda)$\label{ind:SHlambda} is glued together from spaces $\SH(\lambda;\arc)$, where $\arc$ ranges over all filling arc systems of $S\setminus \lambda$, and the gluings are encoded by the combinatorics of $\Arcfill(S\setminus \lambda)$.  
Then $\SH(\lambda)$ has the structure of an affine $\cH(\lambda)$ bundle over $\Base$, where $\cH(\lambda)$ is Bonahon's space \cite{Bon_GLTHB,Bon_THDGL} of transverse cocycles\label{ind:transversecoc} for $\lambda$ \cite[Theorem 8.1]{shshI}.\footnote{The proof of the structure Theorem 8.1 in \cite{shshI} is written for measurable laminations $\lambda$, but extends with minor changes to chain recurrent geodesic laminations.}

\subsection{The shear-shape cocycle of a hyperbolic metric}
Let $X\in\T_g$ and let $\lambda$ be a chain recurrent geodesic lamination.
Let $\arcwt(X,\lambda)\in \Base$ be the weighted filling arc system with underlying geometric arc system $\arc$ recording the geometry of $X\setminus\lambda$ as in \S\ref{subsec:arc systems}.
Following Bonahon \cite{Bon_SPB}, we defined in \cite[\S13]{shshI} the {\bf geometric shear-shape cocycle} $\sigma_\lambda(X) \in \SH(\lambda;\arc)$ of the hyperbolic metric $X$.\label{ind:geomshsh}

Using the axioms for shear-shape cocycles, it is enough to compute the value of $\sigma_\lambda(X)$ on small transversal arcs $k$ to $\lambda$ that are disjoint from $\arc$.\footnote{This is essentially the assertion that augmented train tracks can be used to give coordinates for shear-shape space.}
Briefly, a small enough transverse arc $k$ has endpoints in hexagons $H_u$ and $H_v$ in the universal cover. These hexagons give basepoints to their boundary geodesics, and the orthogeodesic foliation defines an isometry between these two basepointed geodesics.
The quantity $\sigma_\lambda(X)(k)$ is a \emph{signed} distance between these two basepoints.

There is a bi-linear pairing\label{ind:Thform}
\[\omega_{\SH}: \SH(\lambda)\times \mathcal H(\lambda) \to \RR\]
that computes hyperbolic length: for every transverse measure $\mu$ with support contained in $\lambda$, we have 
\begin{equation}\label{eqn:positive pairing}
    \ell_X(\mu) = \omega_{\SH}(\sigl(X), \mu)>0.
\end{equation}
Thus the image of $\sigl$ is contained in the locus $\SH^+(\lambda)$ satisfying the positivity conditions \eqref{eqn:positive pairing}.\label{ind:SH+}
For $\lambda$ measurable, $\sigl$ is a homeomorphism onto $\SH^+(\lambda)$ \cite[Theorem 12.1]{shshI}.

\section{Thin parts}\label{sec:thin_parts}
In this section, we prove that $\cO$ maps the end of $\PoM_g$ into the end of $\QoM_g$.
More precisely, if $X\in \T_g$ has a short curve, then $\cO(X,\lambda)$ has a short curve, both in the singular flat metric (Proposition \ref{prop:thinparts}) and in the uniformizing hyperbolic metric (Corollary \ref{cor:thinparts}).
We also observe that $\cO$ does not necessarily map bounded sets to bounded sets (Example \ref{example:thicktothin}).

\begin{proposition}\label{prop:thinparts}
Suppose that $(X, \lambda) \in \PoM_g$ and that $\gamma$ is a simple closed curve on $X$ such that $\ell_X(\gamma) < \delta \le 1$. Then the length of the geodesic representative of $\gamma$ on $\cO(X, \lambda)$ is at most $\delta+(\log1/\delta)^{-1}$.
\end{proposition}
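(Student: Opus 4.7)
The plan is to bound $\ell_{\cO(X,\lambda)}(\gamma)$ by the sum of the horizontal and vertical transverse measures of a convenient representative of $\gamma$ in the flat metric, using the explicit identifications of the foliations of $\cO(X,\lambda)$. Recall that the imaginary and real foliations of $\cO(X,\lambda)$ are measure-equivalent to $\lambda$ and $\cO_\lambda(X)$ respectively, so in local flat coordinates $z=x+iy$ the transverse measures $|dy|$ and $|dx|$ restrict to $\lambda$ and $\cO_\lambda(X)$. The pointwise inequality $|dz|\le|dx|+|dy|$ integrated along any rectifiable representative $c$ of $[\gamma]$ gives $\ell_{\cO(X,\lambda)}(c)\le\cO_\lambda(X)(c)+\lambda(c)$; specializing to $c=\gamma^*_X$, the $X$-geodesic representative, and using that the flat-geodesic representative minimizes length in the isotopy class yields
\[\ell_{\cO(X,\lambda)}(\gamma)\le\cO_\lambda(X)(\gamma^*_X)+\lambda(\gamma^*_X).\]

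I would bound the first term by $\ell_X(\gamma^*_X)<\delta$: inside each complementary region $Y$ of $\lambda$ the foliation $\cO_\lambda(X)$ is built from the fibers of the $1$-Lipschitz closest-point projection onto $\partial Y$, and its transverse measure of an arc equals the length of the projection, which is no larger than the $X$-length of the arc.

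For the second term I would invoke the collar lemma: $\gamma$ has an embedded annular neighborhood of width $w$ satisfying $\sinh(w)\sinh(\delta/2)=1$, and a short calculus check gives $2w\ge\log(1/\delta)$ throughout $\delta\in(0,1]$. Any leaf of $\lambda$ meeting $\gamma$ does so transversely (since $\gamma$ is a geodesic and $\lambda$ a geodesic lamination) and must cross the full width of the collar, contributing hyperbolic length at least $2w$; Fubini against the transverse measure then gives $\ell_X(\lambda)\ge 2w\cdot i(\gamma,\lambda)$. Because $\ell_X(\lambda)=1$ and $\lambda(\gamma^*_X)\le i(\gamma,\lambda)$ in all cases (with both sides zero in the degenerate case $\gamma\subset\supp(\lambda)$), I conclude $\lambda(\gamma^*_X)\le 1/(2w)\le(\log(1/\delta))^{-1}$, and summing the two estimates delivers the advertised bound.

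The one thing to handle carefully is the edge case $\gamma\subset\supp(\lambda)$, where the collar argument is vacuous but $\gamma$ is simply the core of a flat cylinder of circumference $\ell_X(\gamma)<\delta$, consistent with the estimate coming from the $\cO_\lambda(X)$-term alone. The routine elementary inequality $2\operatorname{arcsinh}(1/\sinh(\delta/2))\ge\log(1/\delta)$ on $(0,1]$ is the only other loose end and follows from the asymptotic $w\sim\log(4/\delta)$ as $\delta\to 0$.
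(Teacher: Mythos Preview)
Your proof is correct and follows essentially the same route as the paper: bound the flat length by $i(\gamma,\cO_\lambda(X))+i(\gamma,\lambda)$, control the first term by the $1$-Lipschitz nearest-point projection, and control the second via the collar lemma together with $\ell_X(\lambda)=1$. The only difference is cosmetic: the paper approximates $\lambda$ by weighted multicurves $\beta/\ell_X(\beta)$ and passes to the limit using continuity of $i(\cdot,\cdot)$, whereas you run the collar-length estimate directly on $\lambda$ via a Fubini-type argument on leaf segments inside the collar. Both yield $i(\gamma,\lambda)\le (\log 1/\delta)^{-1}$.
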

\begin{proof}
The flat geodesic length of $\gamma$ on $\cO(X, \lambda)$ is bounded above by the sum of the total variation of its real and imaginary parts; therefore we need only to bound $i(\gamma, \Ol(X))$ and $i(\gamma, \lambda).$

As the measure on $\Ol(X)$ is defined by isotoping transverse arcs onto $\lambda$ and measuring the length along $\lambda$, and nearest point projections to geodesics are distance-nonincreasing, 
we see immediately that 
\[i(\gamma, \Ol(X)) \le \ell_X(\gamma) < \delta.\]

To bound $i(\gamma, \lambda)$, we observe that if $\gamma$ is either a component of or disjoint from $\lambda$ then $i(\gamma, \lambda) = 0$ and we are done.
So assume that $\gamma$ and $\lambda$ intersect. By the density of weighted multicurves in $\ML$, there exists a multicurve $\beta$ such that $\frac{1}{\ell_X(\beta)} \beta$ is an arbitrarily good approximation of $\lambda$. 
Since $\gamma$ is $\delta$-short on $X$, the collar lemma \cite[Theorem 4.1.1]{Buser} implies that $X$ contains an embedded hyperbolic annulus of width 
\[w(\delta):=\sinh^{-1}(1/ \sinh(\delta/2))\]
about $\gamma$. In particular, each time that $\beta$ intersects $\gamma$ it must travel through this collar neighborhood and so $i(\gamma, \beta) w(\delta) \le \ell_X(\beta).$
By continuity of the intersection number, this in turn implies
\[i(\gamma, \lambda) \approx i \left( \gamma, \frac{1}{\ell_X(\beta)} \beta \right) \le 1/w(\delta) < \frac{1}{\log1/\delta},\]
where the last inequality holds for all $\delta\in (0,1]$.
Since the approximating curve $\frac{1}{\ell_X(\beta)} \beta$ can be taken arbitrarily close to $\lambda$, we conclude that
\[\text{length of } \gamma \text{ on } \cO(X, \lambda) \le i(\gamma, \Ol(X)) + i(\gamma, \lambda) < \delta + \frac{1}{\log1/\delta},\]
which is what we wanted to show.
\end{proof}

We say that a hyperbolic surface $X$ is {\bf $s$-thick} if its systole $\sys(X)$ is at least $s$; otherwise $X$ is {\bf $s$-thin}.\label{ind:thickthin}
We say that $q\in \QT_g$ is {\bf $s$-thick} if the hyperbolic metric in the conformal class of $q$ is $s$-thick.
Note that if $Y$ is the hyperbolic structure in the conformal class of $\cO(X,\lambda)$, then it may not hold that $\ell_X(\gamma)<\delta$ implies $\ell_{Y}(\gamma) <\delta + \frac{1}{\log1/\delta}$. 
However, we can still deduce that \emph{some} curve is short on $Y$.

\begin{corollary}[Inverse takes thick to thick]\label{cor:thinparts}
For any $s>0$ there is an $s'>0$ such that if $q\in \QoT_g$ is $s$-thick, then $\cO\inverse(q)\in \PoT_g$ is $s'$-thick.
\end{corollary}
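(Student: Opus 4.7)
The plan is to argue by contrapositive: if $(X,\lambda)=\cO\inverse(q)$ satisfies $\sys(X)\le s'\le 1$ with $s'$ chosen sufficiently small as a function of $s$, then $q$ cannot be $s$-thick. Proposition~\ref{prop:thinparts} applied to a systolic curve on $X$ gives that the flat systole of $q$ is at most $s'+(\log(1/s'))\inverse$, so it suffices to show that every $s$-thick $q$ has its flat systole bounded below by some positive $c(s)$; the contradiction then follows because $s'+(\log(1/s'))\inverse\to 0$ as $s'\to 0$.

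For the flat-systole lower bound I would use sequential compactness. The $s$-thick locus of $\M_g$ is compact by Mumford's theorem, and the forgetful map $\QoM_g\to\M_g$ has compact fibers—each is a unit sphere in the finite-dimensional space of holomorphic quadratic differentials on a fixed Riemann surface—so the $s$-thick locus $K_s\subset\QoM_g$ is compact. Since $s$-thickness is $\Mod_g$-invariant, it is enough to bound flat systole below on $K_s$. If flat systole were not bounded below on $K_s$, I would take a sequence $q_n\in K_s$ with systolic curves $\gamma_n$ of flat length tending to $0$, lift to $\QoT_g$ by choosing marked representatives, and extract a subsequence $q_n\to q$ with $q\in K_s$.

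The main step is then to invoke uniform finiteness of the short flat length spectrum on compact families (e.g.\ via Masur's polynomial bound on flat-geodesic counting, with constants continuous in $q$): only finitely many simple closed curves on $S$ can have flat length below a fixed threshold anywhere in $\{q_n\}\cup\{q\}$. This forces $\gamma_n$ to stabilize to some fixed $\gamma$ along a further subsequence, whereupon continuity of $q\mapsto\ell_{|q|}(\gamma)$ produces the contradiction $\ell_{|q|}(\gamma)=0$. The principal obstacle is exactly this uniform finiteness statement across the compact family; once it is in hand, the remaining combination of continuity of flat length in $q$, compactness of $K_s$, and Proposition~\ref{prop:thinparts} is routine.
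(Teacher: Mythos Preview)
Your argument is correct and takes a different route from the paper. Both begin with the contrapositive and invoke Proposition~\ref{prop:thinparts} to produce a flat-short curve on $q$, but the paper then proceeds quantitatively through Rafi's thick--thin comparison \cite{Rafi:thickthin}: it fixes a short marking $\mu$ of the uniformizing hyperbolic surface $Y$, uses Rafi's estimates to control $\ell_q(\beta)$ for $\beta\in\mu$ in terms of $s$, and combines this with the area bound $1=\Area(q)\le\bigl(\sum_{\beta\in\mu}\ell_q(\beta)\bigr)^2$ to extract an explicit inequality forcing $\delta=\sys(X)$ bounded below in terms of $s$ and the topology of $S$.

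Your compactness approach is softer: observe that $K_s\subset\QoM_g$ is compact (Mumford plus compactness of the unit sphere in each fiber) and reduce to showing the flat systole is bounded below there. The step you correctly flag as the principal obstacle---uniform finiteness of short closed flat geodesics across the compact family---is exactly the substantive content. Your proposed resolution via Masur's counting with locally uniform constants works; alternatively one can cite directly that the flat systole is a proper function on $\QoM_g$, a standard fact underlying e.g.\ quantitative nondivergence of the horocycle flow. What your route gains in conceptual cleanliness it trades for explicitness: the paper's argument yields an effective dependence of $s'$ on $s$, while yours gives pure existence. Note too that proving the properness of flat systole from scratch ultimately requires some flat--hyperbolic length comparison, so the two approaches are closer in content than they first appear---the paper simply makes that comparison explicit.
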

\begin{proof}
    We are going to exhibit an $s'$ such that if $Y$ is the hyperbolic metric underlying $\cO(X,\lambda)$ and $\ell_Y(\gamma)>s$ for all curves $\gamma$, then $\ell_X(\gamma)>s'$ for all curves $\gamma$.
    Following \cite{Rafi:thickthin}, we let $\mu$ be a short marking for $Y$. 
    Applying the collar lemma for hyperbolic surfaces, we find that every curve $\beta$ in $\mu$ has length \[s\le \ell_Y(\beta) = O(\log(1/s)). \]
    
    Suppose the shortest curve $\gamma$ in $X$ has length $\delta$.
    By Proposition \ref{prop:thinparts}, $\ell_q(\gamma) = O\left(\frac{1}{\log(1/\delta))}\right)$.
    From the proof of \cite[Theorem 2 and Lemma 3]{Rafi:thickthin}, there are constants $C_1$, $C_2$, and $C_3$ depending only on the topology of $S$ such that 
    \[\ell_Y(\beta)\le C_1\exp\left(C_2 \left(\frac{1}{s}\right)^{C_3} \right)\frac{1}{\log(1/\delta)} \]
    for all curves $\beta$ in $\mu$, in particular, for $\gamma$.
    
    From the proof of \cite[Theorem 4]{Rafi:thickthin}, the area of $q$ is bounded above by the square of the sum of the $q$-lengths of the curves in $\mu$.
    On the other hand, since $q$ had area $1$, we see that 
    \[\log(1/\delta) \le \# \mu \cdot C_1\exp\left(C_2 \left(\frac{1}{s}\right)^{C_3} \right). \]
    This inequality is violated if $\delta$ is too small.
    This shows that $X$ is $s'$-thick with $s'$ depending only on $s$ and the topology of $S$, which is what we wanted.
\end{proof}

\begin{remark}
Proposition \ref{prop:thinparts} and Corollary \ref{cor:thinparts} also clearly hold on the locus of differentials with area bounded above and below.
They do not, however, hold on the entirety of $\PT_g$ and $\QT_g$, as rescaling a differential corresponds to rescaling a measured lamination as well as applying the dilation flow to the underlying hyperbolic surface.
\end{remark}

\subsection{Examples}

We now demonstrate that Corollary  \ref{cor:thinparts} is sharp with the following example, which shows that short curves of $(X, \lambda)$ may not be taken to short curves of $\cO(X, \lambda)$.

\begin{example}[Short curve to long curve]
For $t>0$, define an arc system on the 4-holed sphere as in Figure \ref{fig:gammathin}; as discussed in Section \ref{subsec:arc systems}, this specifies a hyperbolic metric with boundary components of length $(1,1, 2t, 2t)$.
Glue together the boundary components of the same length (with any twisting) to yield a hyperbolic structure $X_t$ on a genus two surface equipped with two simple closed curves $\gamma_1$ and $\gamma_2$ such that
\[\ell_{X_t}(\gamma_1) = 1 \text{ and } \ell_{X_t}(\gamma_2) = 2t.\]
Define $\underline{\gamma_t} = \gamma_1 + 2t \gamma_2$; then the quadratic differential $q_t := \cO( X_t, \underline{\gamma_t})$ is the union of two tori along a slit of length $t$. Compare Figure \ref{fig:gammathin}.

\begin{figure}[ht]
\centering
\includegraphics[scale=.6]{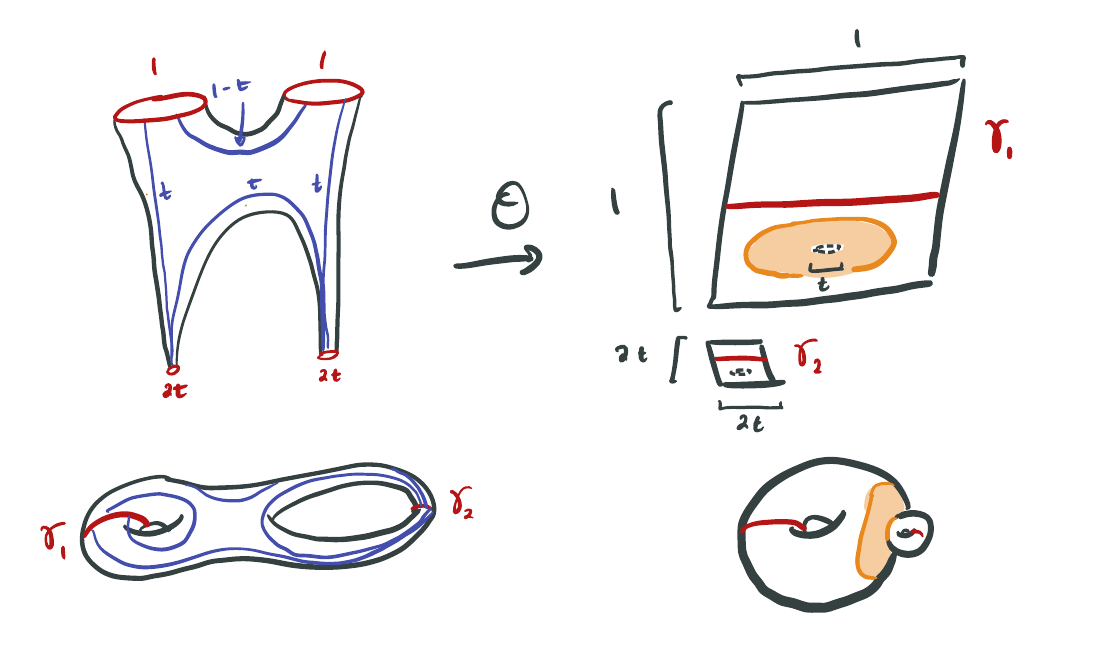}
\caption{Short curves on corresponding surfaces. While $\gamma_2$ is hyperbolically short on $X_t$, it is not conformally short on $q_t$. The shaded expanding annulus demonstrates that the separating curve $\gamma_3$ is conformally short on $q_t$.
}
\label{fig:gammathin}
\end{figure}

Now as $t \to 0$, we see that the moduli of the two pieces remain bounded as they always lie on the same horocycle in $\mathcal M_{1,1}$ (corresponding to different choices of twisting).
In particular, $\gamma_2$ is always the core curve of an embedded cylinder of height $t$ and width $2t$, i.e., modulus 1/2. 

However, we can find a large expanding annulus (shaded in Figure \ref{fig:gammathin}) homotopic to the curve $\gamma_3$ that separates the two tori, and hence the extremal length of $\gamma_3$ must go to $0$ as $t \to 0$.
\end{example}

We now observe that the converse to Proposition \ref{prop:thinparts} also does not hold: there are families of thick pairs $(X, \lambda)$ whose corresponding differentials exit the cusp of $\QoM_g$. Compare with \cite[Remark 5.10]{Wright_MirzEQ}.

\begin{example}[Thick to thin]\label{example:thicktothin}
Fix a surface of any genus and consider a pants decomposition $P$ one of whose curves separates off a genus one subsurface $\Sigma$.  Let $\gamma$ denote the curve of $P$ contained inside $\Sigma$.
Using Fenchel-Nielsen coordinates, set all curves of $P$ to have length 1. This in particular ensures the existence of an orthogeodesic arc $\alpha$ of weight $\nicefrac{1}{2}$ running between the two sides of $\gamma$.
Glue together the curves of $P \setminus \gamma$ with arbitrary twists, and glue together the two boundaries corresponding to $\gamma$ such that the endpoints of $\alpha$ match up; call the resulting hyperbolic structure $X$ and resulting curve $\delta$. By construction, we have that $\cO_P(X)$ contains a foliated annulus of weight $\nicefrac{1}{2}$ with core curve $\delta$. See Figure \ref{fig:thicktothin}.

\begin{figure}[ht]
\centering
\includegraphics[scale=.6]{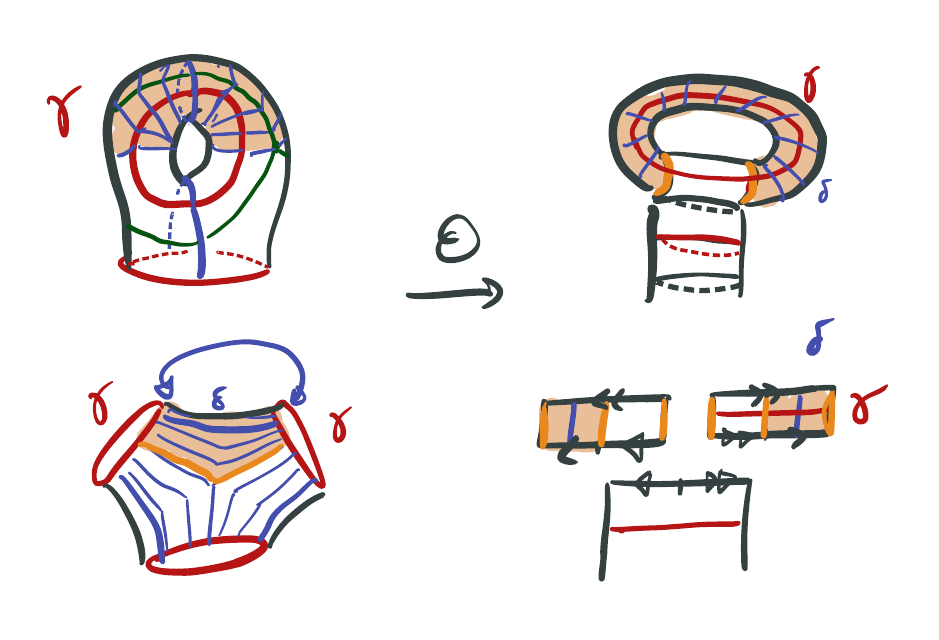}
\caption{Thick hyperbolic surfaces mapping to thin flat surfaces. As the coefficient of $\gamma$ gets smaller, it deposits less mass on $\delta$ and so there is a flat cylinder of larger modulus.}
\label{fig:thicktothin}
\end{figure}

Define the weighted multicurve
\[\underline{P}_t := \sum_{P \setminus \gamma} \gamma' + t \gamma.\]
Then the quadratic differentials $q_t := \cO(X, \underline{P}_t)$ each have area in $=(3g-4, 3g-3]$ and each contain a vertical cylinder with core curve $\delta$ whose circumference is $i(\delta, \underline{P}_t) = t$ and whose height is $\nicefrac{1}{2}$; compare Figure \ref{fig:thicktothin}.

The extremal length of $\delta$ on $q_t$ is bounded above by the reciprocal of the modulus of an embedded cylinder with core curve $\delta$, hence by $\frac{t}{1/2} = 2t$.  
Since this tends to $0$ with $t$ and extremal length provides upper bounds for  hyperbolic length in the same conformal class, we see that $q_t$ leaves the end of $\QoM_g$.
\end{example}

\section{Hausdorff-close geodesic tuples}\label{sec:triplecenters}

The goal of this section is to establish some elementary estimates in the hyperbolic plane related to configurations of geodesics. 
We will apply these estimates in 
Part \ref{part:continuity}
when we wish to show that if two laminations $\lambda$ and $\lambda'$ are Hausdorff close, then certain auxiliary points are also close; see Lemma \ref{lem:triple_centers} and Corollary \ref{cor:triplebasepoints}.
Suppose $G = (g_1, g_2, g_3)$ is a configuration of complete, pairwise disjoint geodesics in $\HH^2$ such that that none separates the others.
Recall from \S\ref{subsec:hexagons center and basepoints} that the center of $G$ is the center of the inscribed circle and this circle meets each geodesic $g_i$ at its basepoint.

\subsection{Close triples have close centers}
We consider two triples of geodesics satisfying that no geodesic in one triple separates the others in the same triple.  If the configurations are close on a ball of large radius around a center of one of the triples, then the center of the other triple is very close to that of the first.

\begin{lemma}\label{lem:triple_centers}
Fix $r\ge \log(\sqrt 3)$, let $e\ge 0$, and suppose $\zeta<1$.
Let $g_1$, $g_2$, and $g_3$ be pairwise disjoint complete geodesics in $\HH^2$, none of which separates the others, with distance at most $r$ from their center, $u$.
Suppose $h_i$ are complete geodesics that are $\zeta$-Hausdorff close to $g_i$ in the ball of radius $\log(1/\zeta)-e$ about $u$, for $i =1, 2, 3$. 
Then the center $v$ of $(h_1,h_2,h_3)$ satisfies \[d(u,v) \le O(\zeta^2),\]
where the implicit constant depends only on $r$ and $e$. 
\end{lemma}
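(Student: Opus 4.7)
The plan has two steps. First, I would sharpen the hypothesis of $\zeta$-Hausdorff closeness on the large ball $B(u, R)$ (with $R := \log(1/\zeta) - e$) to $O(\zeta^2)$-closeness on a bounded ball around $u$. Second, I would deduce closeness of the centers via a quantitative inverse function theorem argument.

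For the first step, the key geometric input is that geodesics in $\HH^2$ are determined by their endpoints on $\partial \HH^2$ and diverge exponentially. Quantitatively: if two complete geodesics $g$ and $h$ are Hausdorff $\zeta$-close on $B(u, R)$, their ideal endpoints as seen from $u$ differ in angular displacement by $O(\zeta / \sinh R)$, so at any hyperbolic radius $R' \leq R$ their Hausdorff distance on $B(u, R')$ is at most $O(\sinh R' \cdot \zeta / \sinh R) = O(\zeta \, e^{R' - R})$. Choosing $R' = r + 3$ (so that perpendicular feet from points near $u$ to any $g_i$ or $h_i$ all lie inside $B(u, R')$), this gives Hausdorff distance $O(\zeta^2)$ between each $g_i$ and $h_i$ on $B(u, r+3)$, with implicit constant depending on $r$ and $e$.

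For the second step, define $F \colon \HH^2 \to \RR^2$ by $F(x) = (d(x, g_1) - d(x, g_2),\, d(x, g_2) - d(x, g_3))$ and define $\tilde F$ analogously using the $h_i$. Then $F(u) = 0$, and the center $v$ is characterized as the nearby zero of $\tilde F$. For $x$ close to $u$, the perpendicular feet from $x$ to each $g_i$ and $h_i$ lie in $B(u, r+3)$, so Step~1 yields $|F(x) - \tilde F(x)| = O(\zeta^2)$, along with analogous smallness of the derivative. The derivative $DF(u)$ has rows $\nabla d(u, g_i) - \nabla d(u, g_j)$, where $\nabla d(u, g_i) \in T_u \HH^2$ is the unit vector pointing away from $g_i$ along the geodesic through the basepoint $p_i$ and $u$. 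A direct hyperbolic-trigonometry computation shows that two geodesics tangent to the common circle of radius $R_0 \leq r$ about $u$ are disjoint only if their tangent points are at angular separation at least $2\arcsin(1/\cosh R_0)$; applying the sine-product identity to $\det DF(u)$ then gives $|\det DF(u)| \geq 4/\cosh^3(r)$. The quantitative inverse function theorem thus produces a unique $v$ near $u$ with $\tilde F(v) = 0$ and $d(u, v) = O(\zeta^2)$, with implicit constants depending only on $r$ and $e$.

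The main obstacle is Step~1: the exponential-sharpening estimate must be proven uniformly across the possible configurations of $g$ and $h$ (crossing, asymptotic, or ultraparallel), with careful tracking of implicit constants. The visual-boundary viewpoint described above handles all three cases simultaneously; an alternative is to pick two well-separated points on $g$ inside $B(u, R)$, use the Hausdorff hypothesis to find nearby points on $h$, and apply the hyperbolic law of cosines to control the deviation on the smaller ball. A secondary check is that the $v$ produced by the inverse function theorem is genuinely the center of $(h_1, h_2, h_3)$ rather than a spurious equidistant point, which follows from the smallness of $\zeta$ together with continuity of the triple configuration.
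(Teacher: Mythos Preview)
Your proposal is correct and takes a route that begins the same way as the paper but then diverges in the second step.

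Your Step~1 is essentially the paper's argument, just phrased slightly differently: the paper observes that the endpoints at infinity of the $h_i$ lie in sets of visual diameter $O(\zeta^2)$ about the endpoints of $g_i$ (as seen from $u$), which is exactly the exponential-sharpening you describe. In Step~2 the approaches part ways. The paper argues softly: the map ``triple of geodesics $\mapsto$ center'' is smooth, the space of triples at distance $\leq r$ from their center is compact, so the map has a uniform Lipschitz constant $L=L(r,e)$ on the relevant product sets $H_1\times H_2\times H_3$; the bound $d(u,v)\le L C_1\zeta^2$ then follows immediately. You instead work hands-on via the quantitative inverse function theorem, producing an explicit lower bound $|\det DF(u)|\ge 4/\cosh^3 r$ from the angular-separation constraint on disjoint tangent geodesics. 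Your route is longer and requires tracking derivatives of $F$ and $\tilde F$ more carefully, but it yields explicit constants rather than existential ones from compactness; the paper's route is shorter and sidesteps all the trigonometry at the cost of a non-effective Lipschitz constant.
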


\begin{proof}
Let $H_i(\zeta, e)$ be the set of geodesics that are $\zeta$-close to $g_i$ on the ball of radius $\log(1/\zeta)-e$ around $u$, and notice that $h_i\in H_i(\zeta, e)$.
The set $\partial H_i(\zeta,e)$ of endpoints of these geodesics has two components in the circle at infinity.  
An easy computation in the plane produces a bound $C_1\zeta^2$ on the diameter of each component of $\partial H_i(\zeta,e)$ in the visual metric on $\partial \HH^2$ centered at $u$, where $C_1$ depends only on $e$ and $r$.

The map that assigns a center to a triple of  geodesics, none of which separates the others, is smooth, and our hypotheses on the $g_i$, $r$, $e$, and $\zeta$ imply that the product $H_1(\zeta, e)\times H_2(\zeta,e)\times H_3(\zeta,e)$ is compact in this space.
We can therefore find a Lipschitz constant $L=L(g_1, g_2, g_3, r, e, \zeta)$ so the diameter of the corresponding set of centers is at most $L C_1\zeta^2$ in $\HH^2$.
Since $H_i(\zeta,e)$ shrinks as $\zeta$ goes to $0$, we may assume that $L$ is independent of $\zeta$.
Also, the set of triples of pairwise disjoint complete geodesics with distance at most $r$ from their center, none of which separates the others, is compact in the space of triples, so we can further remove the dependence of $L$ on $(g_1, g_2, g_3)$.
We have now produced the  desired bound.
\end{proof}

In addition to the above estimate on the distance between centers of Hausdorff-close triples, we also want to be able to compare the projections of these centers to each of the geodesics in the triple.
We begin with a general estimate on the distance between the projections of a point to Hausdorff-close geodesics.

\begin{lemma}\label{lem:projectionestimate}
Fix $r\ge 0$, 
let $g$ be a complete geodesic in $\HH^2$, and suppose $u \in \HH^2$ is distance at most $r$ from $g$.
Suppose that $h$ is a complete geodesic that is $\zeta$-Hausdorff-close on the ball of radius $b\log(1/\zeta)$ about $u$, where $b\in (0,1]$ and $\zeta<1$.
Then if $\pi_g$ and $\pi_h$ are the closest-point projection maps,
\[d(\pi_g(u), \pi_h(u)) = O(\zeta^{1+b})\]
where the implicit constant depends only on $r$.
\end{lemma}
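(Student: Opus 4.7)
The plan is to translate Hausdorff closeness of $g$ and $h$ into closeness of their ideal endpoints in the visual metric at $u$, and then invoke smoothness of the closest-point projection map as a function of the endpoint pair.

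First, I would normalize via an isometry of $\HH^2$ so that $u$ is the center of the Poincar\'e disk. The projection $\pi_\gamma(u)$ depends only on the unordered pair of ideal endpoints $(\xi_+^\gamma, \xi_-^\gamma)$: it lies on the perpendicular to $\gamma$ through $u$, and $d(u, \pi_\gamma(u)) = d(u, \gamma)$. Both the direction of this perpendicular in $T_u \HH^2$ and the distance $d(u,\gamma)$ are real-analytic functions of $(\xi_+^\gamma, \xi_-^\gamma)$ off the diagonal.

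Next I would convert metric closeness on the ball $B(u, R)$ of radius $R = b\log(1/\zeta)$ into angular closeness of ideal endpoints. If $p \in g$ and $q \in h$ both lie at distance close to $R$ from $u$, the hyperbolic law of cosines in the triangle with vertices $u, p, q$ gives
\[\cosh d(p, q) \;=\; \cosh^2(R) - \sinh^2(R) \cos\angle_u(p, q) \;=\; 1 + \sinh^2(R)\bigl(1 - \cos \angle_u(p, q)\bigr).\]
Choosing $p \in g$ on either side of $\pi_g(u)$ at hyperbolic distance $R - 1$ from $u$ (possible since $d(u,g) \le r$ and we may assume $R \gg r$, else the conclusion is vacuous), the Hausdorff hypothesis supplies $q \in h$ with $d(p, q) \le \zeta$, hence $\angle_u(p, q) = O(\zeta/\sinh(R)) = O(\zeta e^{-R}) = O(\zeta^{1+b})$. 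Passing to visual limits as one sends $p$ out to $\partial\HH^2$ along $g$, the same estimate holds between the ideal endpoints:
\[\angle_u(\xi_\pm^g, \xi_\pm^h) = O(\zeta^{1+b}),\]
where the implicit constant is absolute at this stage.

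Finally, I would apply uniform Lipschitz continuity of the projection map $(\xi_+, \xi_-) \mapsto \pi_\gamma(u)$ on the compact set of endpoint pairs whose corresponding geodesic lies within $r+1$ of $u$. This promotes the $O(\zeta^{1+b})$ bound on visual angles to an $O(\zeta^{1+b})$ bound on $d(\pi_g(u), \pi_h(u))$ with implicit constant depending only on $r$. The only subtle point is verifying that the Lipschitz constant really is uniform on this compact family: without the hypothesis $d(u, g) \le r$, a small angular perturbation of the endpoints of $g$ could translate $\pi_g(u)$ arbitrarily far along $g$. The constraint $d(u, g) \le r$ confines $\pi_g(u)$ to the compact ball $\overline{B(u, r)}$, where the real-analytic projection map has bounded derivative, so the compactness argument closes without issue.
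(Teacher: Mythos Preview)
Your approach is sound and genuinely different from the paper's. You work at the visual boundary: bound the visual angle between corresponding ideal endpoints of $g$ and $h$, then invoke Lipschitz continuity of $(\xi_+,\xi_-)\mapsto\pi_\gamma(u)$ on the compact set of geodesics within $r+1$ of $u$. This is exactly the technique the paper uses for the neighboring Lemma~\ref{lem:triple_centers} on centers of triples. By contrast, the paper's proof of the present lemma stays inside $\HH^2$: it introduces auxiliary points $\rho_g(u),\rho_h(u)$ where the perpendicular from $u$ to one geodesic meets the other, observes (via convexity of the distance function between geodesics) that $g$ and $h$ are in fact $O(\zeta^{1+b})$-close near $\pi_g(u)$ rather than merely $\zeta$-close, and finishes with a nearly-isosceles right-triangle computation. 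Your route is more conceptual and reuses existing machinery; the paper's is more hands-on.

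One step in your argument needs more care: ``passing to visual limits as one sends $p$ out to $\partial\HH^2$ along $g$.'' You cannot literally send $p$ to infinity while maintaining $d(p,q)\le\zeta$, since the Hausdorff hypothesis only applies on $B(u,R)$. Nor does the triangle inequality through $p,q$ at distance $R-1$ suffice: the correction $\angle_u(p,\xi_+^g)$ is of order $r\,e^{-2R}=r\,\zeta^{2b}$, which for $b<1$ swamps $\zeta^{1+b}$. The fix is to note that beyond $B(u,R)$ the two geodesics diverge at most exponentially (a Jacobi-field estimate: if $|J(t)|\le\zeta$ on $[-R,R]$ then $|J(R+T)|\le 2\zeta e^T$). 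Thus at distance $R+T$ one has $d(p,q)=O(\zeta e^T)$, and the law-of-cosines estimate gives $\angle_u(p,q)=O(\zeta e^T/\sinh(R+T))=O(\zeta^{1+b})$ uniformly in $T$; now letting $T\to\infty$ legitimately yields the endpoint bound. With this one-line addition your argument is complete.
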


\begin{proof}
We refer to Figure \ref{fig:basepointsclose} throughout the proof.

Let $\rho_g(u)$ denote the intersection of the line containing $u$ and $\pi_h(u)$ with $g$, and let $\rho_h(u)$ be the intersection of the line containing $u$ and $\pi_g(u)$ with $h$.  
Gromov hyperbolicity of $\HH^2$ implies that $g$ and $h$ fellow travel at scale $\zeta$ on segments of length at least $b\log(1/\zeta) - r- C$ to the right and to the left of $\pi_g(u)$, where $C$ is some universal constant.  
Negative curvature then bounds
\[d(\rho_h(u),\pi_g(u)) \le \zeta e^{-(b\log(1/\zeta)-r-C)} = O(\zeta^{1+b}).\]
A symmetric argument gives the same bound $d(\rho_g(u),\pi_h(u)) =O(\zeta^{1+b})$.  

Because $\pi_g$ is a closest point projection, we know that 
\[d(u, \pi_g(u)) \le d(u, \rho_g(u)) \le d(u, \pi_h(u)) + O(\zeta^{1+b})\]
where the last inequality is the triangle inequality.
Similarly, we have 
\[d(u, \pi_h(u)) \le d(u, \rho_h(u)) \le d(u, \pi_g(u)) + O(\zeta^{1+b}).\]

Thus, the right triangle with vertices $\pi_g(u), \rho_g(u)$ and $u$ is very nearly isosceles, with two sides whose lengths differ by $O(\zeta^{1+b})$.
The hyperbolic law of cosines then gives 
\[\cosh(d(u,\pi_g(u))+O(\zeta^{1+b}))=\cosh(d(u,\rho_g(u)) =\cosh(d(u,\pi_g(u))\cosh(d(\rho_g(u), \pi_g(u))).\]
Rearranging the terms and expanding, this implies 
\[\frac{\cosh(d(u,\pi_g(u))+O(\zeta^{1+b})}{\cosh(d(u,\pi_g(u)))} = 1+ O(\zeta^{1+b}) = 1+ O(d(\rho_g(u), \pi_g(u))).\]
Thus $d(\rho_g(u),\pi_g(u))=O(\zeta^{1+b})$, with  implicit constant depending only on $r$.

An application of the triangle inequality proves the lemma.
\end{proof}

\begin{figure}
\centering
\includegraphics[scale=.6]{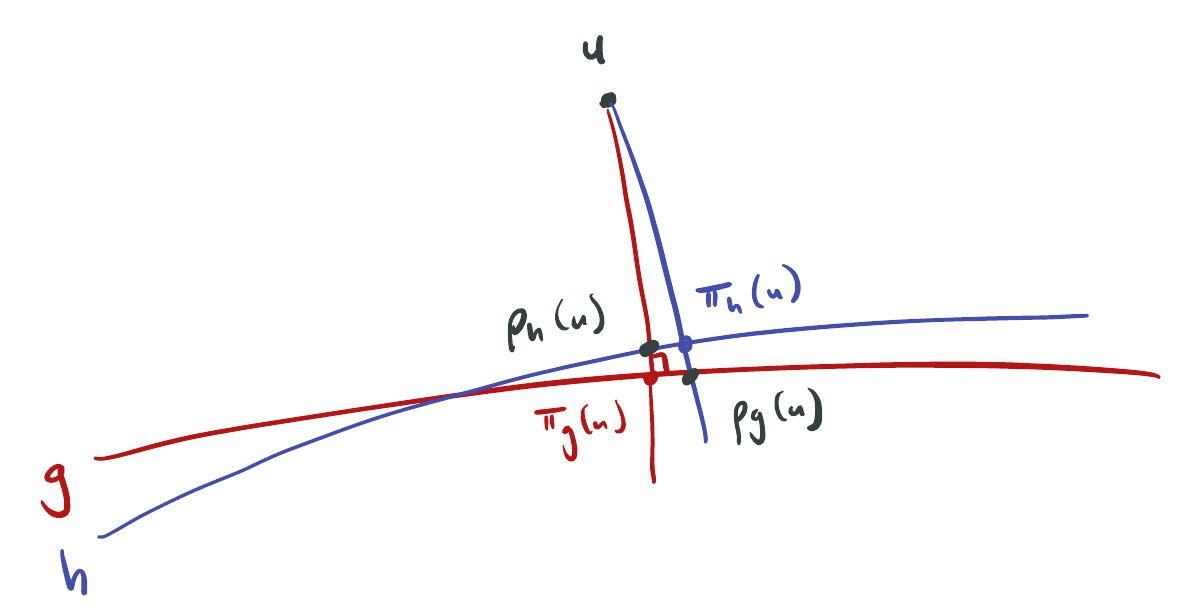}
\caption{Projections to Hausdorff-close geodesics}
\label{fig:basepointsclose}
\end{figure}

In particular, we highlight the following special case, which can be proven by taking $b = d(u,g) / \log(1/\zeta)$ and observing that if $h$ fellow-travels $g$ in a larger ball then the estimates in the Lemma get better.

\begin{corollary}\label{cor:projectionestimate2}
Let $r, g, u$ be as above and let $B$ be any hyperbolic ball centered at $u$ and meeting $g$. Then if $h$ is any geodesic that has Hausdorff distance at most $\zeta$ from $g$ in $B$, we have
\[d(\pi_g(u), \pi_h(u)) = O(\zeta)\]
where the implicit constant depends only on the cutoff $r$.
\end{corollary}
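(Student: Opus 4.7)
The plan is to reduce to Lemma~\ref{lem:projectionestimate} by choosing its parameter $b$ adaptively based on the radius of $B$, and then exploit the algebraic identity $\zeta^{b} = e^{-b\log(1/\zeta)}$ to convert the sharper bound $O(\zeta^{1+b})$ into the linear bound $O(\zeta)$ required here. Write $R$ for the radius of $B$; by hypothesis $R \geq d(u,g)$. I may assume at the outset that $\zeta < 1$, since otherwise the conclusion is automatic: $d(\pi_g(u),\pi_h(u)) \leq 2r$ is absorbed by the implicit constant.

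The argument splits into two cases depending on the size of $R$ relative to $\log(1/\zeta)$. If $R \geq \log(1/\zeta)$, then the ball of radius $\log(1/\zeta)$ about $u$ sits inside $B$ and still meets $g$, so $h$ is $\zeta$-Hausdorff close to $g$ there. Applying Lemma~\ref{lem:projectionestimate} with $b = 1$ delivers $d(\pi_g(u),\pi_h(u)) = O(\zeta^2) = O(\zeta)$, well inside the target bound.

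If instead $R < \log(1/\zeta)$, I set $b = R/\log(1/\zeta) \in (0,1)$, so that $b\log(1/\zeta) = R$ and the Lemma's hypothesis applies verbatim on the ball $B$ itself. Lemma~\ref{lem:projectionestimate} then gives
\[
d(\pi_g(u),\pi_h(u)) \;=\; O\bigl(\zeta^{1+b}\bigr) \;=\; O\bigl(\zeta \cdot e^{-R}\bigr) \;=\; O(\zeta),
\]
with implicit constant depending only on $r$ through the Lemma's implicit constant.

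The one step requiring any care, and the closest thing to an obstacle, is the observation that $\zeta^{b} = e^{-b\log(1/\zeta)} = e^{-R} \leq 1$. This is the quantitative sense in which the fellow-traveling estimate of Lemma~\ref{lem:projectionestimate} degrades gracefully as $B$ shrinks toward $g$: one loses the super-linear gain, but the linear bound survives for every $B$ that meets $g$. Everything else is routine bookkeeping.
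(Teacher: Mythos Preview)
Your proof is correct and follows essentially the same approach as the paper's. The paper's one-line hint takes $b = d(u,g)/\log(1/\zeta)$ rather than your $b = R/\log(1/\zeta)$, but since $R \ge d(u,g)$ both choices feed into the identical key computation $\zeta^{1+b} = \zeta \cdot e^{-b\log(1/\zeta)} \le \zeta$; your case split on $R$ versus $\log(1/\zeta)$ simply makes explicit the bookkeeping that the paper's remark ``larger ball $\Rightarrow$ better estimates'' leaves implicit.
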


\begin{corollary}\label{cor:triplebasepoints}
Let all notation as in Lemma \ref{lem:triple_centers}.
If $p_i \in g_i$ and $q_i \in h_i$ are the basepoints for these geodesic triples, then we have
\[d(p_i, q_i) = O(\zeta^{2})\]
where the implicit constant depends only on $r$ and $e$.
\end{corollary}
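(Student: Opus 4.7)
The plan is to combine Lemma \ref{lem:triple_centers} with Lemma \ref{lem:projectionestimate}, by splitting $d(p_i,q_i)$ through an intermediate point so as to separately measure (a) the effect of moving the projection basepoint from $u$ to $v$ along the same geodesic, and (b) the effect of switching from $g_i$ to $h_i$ while projecting from the same point.

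First I would invoke Lemma \ref{lem:triple_centers} to get $d(u,v)=O(\zeta^{2})$ with the implicit constant depending only on $r$ and $e$. Since closest-point projection onto a geodesic is $1$-Lipschitz, this immediately yields
\[ d(p_i,\pi_{g_i}(v))\;=\;d(\pi_{g_i}(u),\pi_{g_i}(v))\;\le\;d(u,v)\;=\;O(\zeta^{2}). \]

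Next I would compare $\pi_{g_i}(v)$ to $q_i=\pi_{h_i}(v)$ using Lemma \ref{lem:projectionestimate} applied at $v$. For $\zeta$ sufficiently small, $d(u,v)\le 1$, so the ball of radius $\log(1/\zeta)-(e+1)$ around $v$ is contained in the ball of radius $\log(1/\zeta)-e$ around $u$; thus $h_i$ remains $\zeta$-Hausdorff-close to $g_i$ on this ball about $v$. Also $d(v,g_i)\le r+1$ by the triangle inequality, so the resulting cutoff depends only on $r$. Applying Lemma \ref{lem:projectionestimate} with $b=1-(e+1)/\log(1/\zeta)\in(0,1]$ gives
\[ d(\pi_{g_i}(v),\pi_{h_i}(v))\;=\;O\!\bigl(\zeta^{1+b}\bigr)\;=\;O\!\bigl(\zeta^{2}\cdot\zeta^{-(e+1)/\log(1/\zeta)}\bigr)\;=\;O(\zeta^{2}), \]
where the last step uses the identity $\zeta^{-(e+1)/\log(1/\zeta)}=e^{\,e+1}$, a constant depending only on $e$. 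Combining the two estimates by the triangle inequality yields the claimed bound $d(p_i,q_i)=O(\zeta^{2})$.

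There is no serious obstacle here; the proof is essentially a bookkeeping exercise once Lemmas \ref{lem:triple_centers} and \ref{lem:projectionestimate} are in hand. The only subtle point is making sure that the exponent $1+b$ coming out of Lemma \ref{lem:projectionestimate} is close enough to $2$ that the prefactor $\zeta^{-(e+1)/\log(1/\zeta)}$ is bounded by a constant depending only on $e$; this is why the $O(\zeta^{2})$ rate promised by the corollary is correct rather than some weaker $O(\zeta^{2-o(1)})$ rate, and it is also the place where the hypothesis that Hausdorff-closeness persists on a ball of \emph{logarithmic} (not merely bounded) radius is used.
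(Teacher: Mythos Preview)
Your proof is correct and follows essentially the same route as the paper: split $d(p_i,q_i)$ through the intermediate point $\pi_{g_i}(v)$, bound the first leg by the $1$-Lipschitz property of projection together with Lemma~\ref{lem:triple_centers}, and bound the second leg by Lemma~\ref{lem:projectionestimate}. You supply more detail than the paper does in justifying the application of Lemma~\ref{lem:projectionestimate} at $v$ (shifting the ball and tracking the exponent), but the structure is identical.
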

\begin{proof}
Fix an $i \in \{1, 2, 3\}$ and let $\pi_g$ denote the closest-point projection of $\HH^2$ to $g_i$.
Since the closest-point projection map is a contraction, we know that 
\[d(p_i, \pi_g(v)) = d(\pi_g(u), \pi_g(v)) \le d(u, v) = O(\zeta^{2})\]
where the last estimate follows by Lemma \ref{lem:triple_centers}. 
On the other hand, Lemma \ref{lem:projectionestimate} implies that 
\[d(\pi_g(v), q_i) = d(\pi_g(v), \pi_h(v)) = O(\zeta^{2}).\]
These two estimates plus the triangle inequality prove the desired statement.
\end{proof}

\subsection{Equidistant configurations}

Valence $n$ vertices of $\Sp$ occur when an $n$-tuple of geodesics are equidistant from a point, but these configurations are highly unstable as one deforms in the Hausdorff topology.
Here we prove an estimate on nearly-equidistant tuples that will be important in Section \ref{sec:persistent} to compare arc systems on nearby laminations.

\begin{definition}\label{def:equidistant}
We say that a configuration of $n$ complete pairwise disjoint geodesics in $\HH^2$, none of which separates the others, is {\bf $\zeta$-equidistant} if the collection of centers of triples of geodesics in the collection form a set of diameter at most $\zeta$. 
\label{ind:zetaequid}
\end{definition}

The boundary of a regular ideal $n$-gon is $0$-equidistant, as is any collection of disjoint geodesics equidistant from a given point.

The following lemma states that if two pairs of $4$-tuples are close to one another on a ball of large radius but different pairs of opposite geodesics are closer in one $4$-tuple than the other, then both configurations are nearly equidistant.
Compare Corollary \ref{cor:persist_notallarcs} and Figure \ref{fig:persistent} below.
As discussed in Example \ref{example:arc complex 4-gon}, the weighted filling arc system of a configuration of $4$ pairwise disjoint geodesics in the plane, none of which separates the others, is homeomorphic to $\RR$.

\begin{lemma}\label{lem:4_centers_close}
Let $G=(g_1, g_2, g_3, g_4)$ be pairwise disjoint complete geodesics in $\HH^2$, none of which separates the others, and let $r$ be an upper bound on the distance between any center of $G$ and any geodesic of $G$. 
Suppose $H=(h_1, h_2, h_3, h_4)$ is $\zeta$-Hausdorff close to $G$ in a ball of radius at least $\log 1/\zeta -e $ around one of the centers $u$ of $G$. 
If the dual arcs to the two configurations $G$ and $H$ cross each other, 
then both $G$ and $H$ are $O(\zeta^2)$-equidistant with implicit constant depending only on $r$ and $e$.  
\end{lemma}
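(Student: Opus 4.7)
The plan is to use the crossing condition to extract a 4-equidistant configuration $E$ that is Hausdorff-close to both $G$ and $H$, and then compare centers via Lemma \ref{lem:triple_centers}. By Example \ref{example:arc complex 4-gon}, the weighted filling arc complex of an admissible 4-tuple (pairwise disjoint complete geodesics, none separating the others) is homeomorphic to $\RR$, with $0$ corresponding to the 4-equidistant configuration and the two rays corresponding to the two possible opposite-pair arcs. Let $\psi$ denote the resulting signed weight; the hypothesis that the dual arcs of $G$ and $H$ cross means $\psi(G)$ and $\psi(H)$ have opposite signs.

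First I would construct a continuous path $\{F_t\}_{t \in [0,1]}$ from $G$ to $H$ in the space of admissible 4-tuples that remain $O(\zeta)$-Hausdorff close to $G$ on the ball $B(u, \log(1/\zeta) - e)$. One natural choice is to interpolate the endpoints at infinity of corresponding geodesics (e.g., in the visual metric from $u$); for $\zeta$ sufficiently small depending on $r$ and $e$, this stays within the space of admissible 4-tuples because the perturbed endpoints maintain their cyclic order and do not collide. The function $t \mapsto \psi(F_t)$ is continuous and changes sign, so by the intermediate value theorem there exists $t^*$ with $\psi(F_{t^*}) = 0$; call this 4-equidistant configuration $E$, and let $c_E$ be its unique center.

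Next I apply Lemma \ref{lem:triple_centers} to each of the four triples of $G$ paired with the corresponding triple of $E$ (both Hausdorff-close on $B(u, \log(1/\zeta) - e)$). Since every center of $G$ is within $2r$ of $u$ by the triangle inequality through any geodesic of $G$, the ball $B(u, \log(1/\zeta) - e)$ contains a ball of radius $\log(1/\zeta) - e - 2r$ about any center of $G$. Thus Lemma \ref{lem:triple_centers} applies with $e$ replaced by $e + 2r$, yielding $d(c_G(i,j,k), c_E) = O(\zeta^2)$ for each triple $\{i,j,k\} \subset \{1,2,3,4\}$, with implicit constant depending only on $r$ and $e$. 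Therefore the four centers of $G$ all lie within $O(\zeta^2)$ of the single point $c_E$, so their diameter is $O(\zeta^2)$ and $G$ is $O(\zeta^2)$-equidistant. The analogous argument, recentering the ball at a center of $H$ (which sits within $2r + O(\zeta^2)$ of $u$ by Lemma \ref{lem:triple_centers}), yields the same bound for $H$.

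The main subtlety I expect is verifying that the interpolation path genuinely stays in the space of admissible 4-tuples and that $\psi$ remains continuous along it. For $\zeta$ sufficiently small in terms of $r$ and $e$, the set of admissible 4-tuples that are $2\zeta$-Hausdorff close to $G$ on $B(u, \log(1/\zeta) - e)$ is a path-connected open neighborhood in the moduli of admissible 4-tuples (parametrized, e.g., by endpoints at infinity), so the intermediate value step should go through cleanly; once this is arranged the rest of the argument is a single application of Lemma \ref{lem:triple_centers}.
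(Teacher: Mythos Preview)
Your proposal is correct and follows essentially the same route as the paper: interpolate endpoints at infinity to get a path of 4-tuples, use continuity of the dual-arc weight and the intermediate value theorem to locate a $0$-equidistant configuration along the path, then apply Lemma~\ref{lem:triple_centers} to bound every center of $G$ and of $H$ against the unique center of that configuration. The paper handles the dependence of the implicit constant on $r$ via a compactness argument on the family of intermediate configurations rather than your explicit $e \mapsto e + 2r$ shift, but the two are equivalent.
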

\begin{proof}
We let $H_i$ be the collection of complete geodesics that are $\zeta$-Hausdorff close to $g_i$ in a ball of radius $\log1/\zeta-e$ around $u$.
As in the proof of Lemma \ref{lem:triple_centers}, the visual diameter centered at $u$ of each component of the set of endpoints of geodesics in $H_i$ is at most $O(\zeta^2)$ with implicit constant depending only on $r$ and $e$.

We may homotope the endpoints of $h_i$ to endpoints of $g_i$ so that the geodesics joining those points at infinity stay inside $H_i$ for all time.  
This defines a path $H_t$ in the space of $4$-tuples from the configuration $H$ to the configuration $G$.  
By continuity of the map which associates to a $4$-tuple its weighted dual arc system, there is some time $t_0$ for which the arc system of the configuration $H_{t_0}$ is empty. That is, $H_{t_0}$ is an $0$-equidistant configuration.  

There is a unique center $o$ of $H_{t_0}$ and now we can apply Lemma \ref{lem:triple_centers} to see that all centers of $G$ and of $H$ are $O(\zeta^2)$ from $o$.
The implicit constant in the previous sentence depends (continuously) on the isometry type of $H_{t_0}$, but there is a compact family of such configurations at bounded distance $r + O(\zeta)$ from $o$.  We may therefore choose the implicit constant to depend only on $r$. 
The triangle inequality concludes the proof of the lemma.
\end{proof}

Using Lemma \ref{lem:projectionestimate}, this of course implies that the basepoints of all of the triples of geodesics in $G$ differ by $\zeta^2$, and the same for $H$.
In particular, combined with Corollary \ref{cor:triplebasepoints}, this gives us a bound of $O(\zeta^2)$ for the diameter of the collection of all of the basepoints on the coresponding geodesics of $G$ and $H$.

\part{Train tracks and cellulations}\label{part:tts and cellulations}

In this part of the paper, we discuss a number of geometric-combinatorial constructions that we will make frequent use of in the later parts.
For measured geodesic laminations on hyperbolic surfaces, these are (augmented) ``equilateral train tracks,'' while for quadratic differentials these are specific families of cellulations by saddle connections.
The main point of this part is to show that these constructions are dual to each other and thereby develop a technical framework in which to state quantified versions of our continuity results.

\section{Geometric train tracks}\label{sec:geometric tts}
\subsection{Uniform train tracks}\label{subsec:uniformttdef}

The following is a variant of Thurston's construction of a train track neighborhood of a geodesic lamination on a hyperbolic surface (\cite[\S8.9]{Thurston:notes} and \cite[\S4]{Th_stretch}).
\begin{construction}[Uniform train track neighborhood]\label{const:uniform_tt}
Let $\lambda\subset S$ be a geodesic lamination and $X\in \T_g$.  Consider the $\delta$-neighborhood $\cN_{\delta}(\lambda)\subset X$; it is foliated by leaves of the orthogeodesic foliation.\label{ind:unifnbhd}

If the restriction of every leaf to $\cN_{\delta}(\lambda)$ is a segment, then the map $\pi:\cN_{\delta}(\lambda) \to \cN_{\delta}(\lambda)/\sim$ that collapses these leaves extends to a homotopy equivalence $X\to X$.  The leaf space $\tau = \tau(X, \lambda, \delta)$ is a train track carrying $\lambda$ which can be $C^1$ embedded into $\cN_{\delta}(\lambda)$ as a deformation retract.
\end{construction}

\begin{remark}
Thurston's construction employed a horocyclically foliated neighborhood of a geodesic lamination and made some choices about how to interpolate between opposite horocycles away from the spikes.
The combinatorics of train tracks constructed using horocycle foliations and orthogeodesic foliations are identical for appropriate defining parameters, but 
there are no choices in the definition of the orthogeodesic foliation and it behaves uniformly over different topological types of laminations.
\end{remark}

A train track $\tau$ constructed as in Construction \ref{const:uniform_tt} from parameters $X$, $\lambda$, and $\delta$ is called a \textbf{uniform geometric train track}.\label{ind:uniftt}
Any uniform geometric train track is implicitly equipped with the data of its collapse map $\pi: \cN_\delta(\lambda)\to \tau$.
\medskip

We adopt the following standard definitions and notation:\label{ind:ttbasics}
\begin{itemize}
\item The \textbf{branches} of $\tau$ are its edges, and the \textbf{switches} of $\tau$ are its vertices (with their $C^1$ structure). 
\item  A train track is \textbf{generic} or \textbf{trivalent} if all switches are trivalent.
\item  The \textbf{ties} of $\tau$ or $\cN_\delta(\lambda)$ are connected components of restrictions of the leaves of $\Ol(X)$ to the neighborhood.
All ties are rectifiable compact arcs and meet $\lambda$ at right angles.
\item  A \textbf{train path} $\gamma$ is a $C^1$ edge-path in $\tau$.
\item The \textbf{width} of $\tau$ or $\cN_{\delta}(\lambda)$ is the supremum of the length of its ties.
\item {\bf Spikes} of $S \setminus \lambda$ (or $S \setminus \tau$) are regions bounded by two segments of $\lambda$ or of $\tau$ which share an ideal endpoint (or tangential direction at a switch), together with an arc in the complement.
\end{itemize}

The spikes of $\tau$ often correspond to pairs of asymptotic geodesics in $\lambda$, but may also arise when there is an arc of $\arc(X,\lambda)$ with length smaller than $2\delta$.
In \S\ref{subsec:geominvis}, we discuss the notion of ``proto-spikes,'' which allows us to treat these two geometrically similar cases in a uniform manner.

\subsection{Length along train tracks}\label{subsec:length along tau}
Geometric train tracks inherit a length function from $\lambda$.
More precisely, we can identify each branch of $\tau$ with an interval as follows: given two points $p_1$ and $p_2$ in the same branch, the distance from $p_1$ to $p_2$ is the length of any segment of a leaf of $\lambda$ running along the branch between the ties $\pi^{-1}(t_1)$ and $\pi^{-1}(t_2)$, and this is well-defined because transport along the orthogeodesic foliation preserves length along the lamination.

As an example, let us compute the lengths of the sides of triangles of $S \setminus \tau$ when the base lamination is maximal.
Consider first an ideal hyperbolic triangle, equipped with the orthogeodesic foliation relative to its boundary.
The foliation has a unique trivalent singular leaf, whose intersections with the boundary are the basepoints.
Elementary hyperbolic geometry allows us to compute that the distance $D_\delta$ along the boundary between any basepoint and either of the leaves of the orthogeodesic foliation of length $2\delta$ is given by \label{ind:Ddelta}
\begin{equation}\label{eqn:defDdelta}
D_\delta := \log\left(\frac{\coth(\delta)}2\right) = \log(1/\delta)-\log (2)+O(\delta^2).
\end{equation}
Note in particular that this is a constant depending only on $\delta$.

Now suppose $\lambda$ is maximal and $\tau = \tau(X, \lambda, \delta)$ is a uniform geometric train track. Then each side of each triangle of $S \setminus \tau$ defines a train path running between spikes of $\tau$, and by the above discussion this path must have length (as measured along $\lambda$) exactly $2D_\delta$. Moreover, the midpoint of each of these train paths is the image of a basepoint of a complementary triangle under the collapse map.

We emphasize that when the base lamination is not maximal, the side lengths of $S \setminus \tau$ are not necessarily all equal (even if $\tau$ is maximal).

\subsection{Equilateral geometric train tracks}\label{subsec:equittdef}
The following construction of a geometric train track neighborhood reflects bettter the singular flat geometry of $\cO(X,\lambda)$ when $\lambda$ is not maximal.
Our main desideratum is that the distance between the tips of the spikes of $X \setminus \tau$ and the corresponding basepoints should be constant over all spikes.
We therefore build neighborhoods with this property in mind.

Recall from \eqref{eqn:defDdelta} that $D_\delta \approx \log(1/2\delta)$ is the distance between any basepoint of an ideal triangle and the leaves of the orthogeodesic foliation of length $2\delta$, as measured along the triangle's boundary.

\begin{construction}[Equilateral train track neighborhood]\label{constr:equitt}
Let $\delta<\log(\sqrt3)$ be given and fix a crowned hyperbolic surface $Y$. The set of basepoints of the orthogeodesic foliation $\cO_{\partial Y}(Y)$ is a finite set in $\partial Y$, and we let $I \subset \partial Y$ denote those points that are at most $D_\delta$ away from a basepoint, as measured along $\partial Y$. Set
\[\text{Thick}_\delta(Y) := \{ y \in Y \mid \text{ the leaf of } \cO_{\partial Y}(Y) \text{ through } y \text{ meets } I \}\]
and set Thin$_\delta(Y)$ to be its complement. 
Then we define the $\delta$--equilateral neighborhood of the boundary as
\[{\cE}_{\delta}(\partial Y) := \cN_{\delta}(I) \cup \text{Thin}_\delta(Y)\]
where $\cN_{\delta}(I)$ is the uniform $\delta$-neighborhood of $I$. See Figure \ref{fig:equi-nbhd}.\label{ind:equithickthin}

Given a hyperbolic surface $X$ and a geodesic lamination $\lambda$, we define a $\delta$-{\bf equilateral neighborhood} ${\cE}_{\delta}(\lambda)$ of $\lambda$ in $X$ by taking the union of the equilateral neighborhoods for each component of $X \setminus \lambda.$\label{ind:equinbhd}
\end{construction}

We observe that the boundary of each component of $\Thin_\delta(Y)$ decomposes into two geodesic segments of $\lambda$, together with either one or two leaves of $\cO_{\partial Y}(Y)$.

\begin{figure}
    \centering
    \includegraphics[scale=.2]{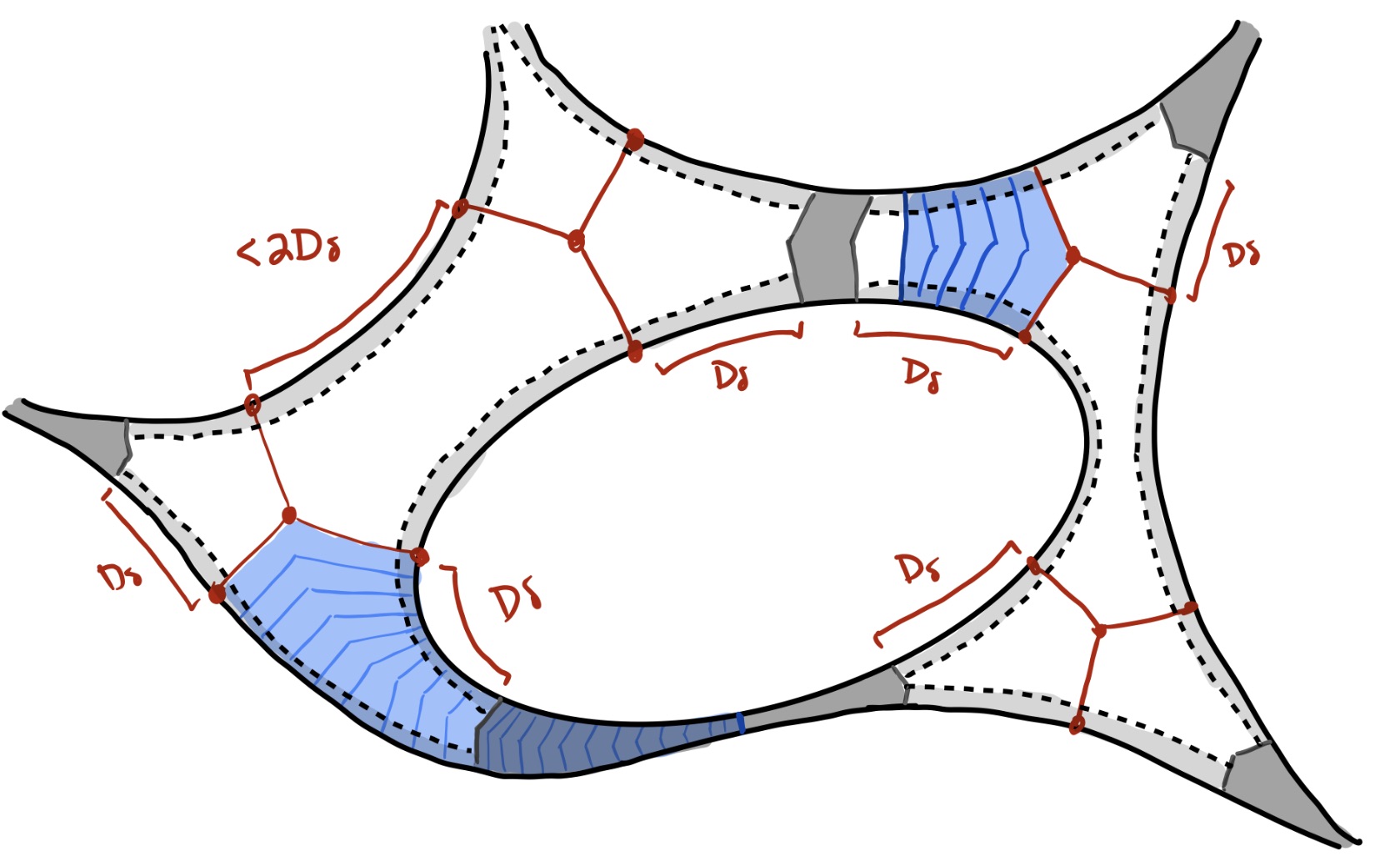}
    \caption{Construction of the equilateral neighborhood (shaded) of the boundary of a crowned hyperbolic surface. The dark shaded regions are the $\delta$-thin parts of $Y$; observe that they do not necessarily have to contain the orthogeodesic arc in the homotopy class of leaves.
    The shaded blue regions are both proto-spikes.}
    \label{fig:equi-nbhd}
\end{figure}

One can also define the decomposition of $Y$ into Thick$_\delta(Y)$ and Thin$_\delta(Y)$ as follows. Contracting along the leaves of $\cO_{\partial Y}(Y)$ defines a deformation retract $r$ of $Y$ onto its spine $\Sp(Y)$, and the lengths of projections of edges to $\partial Y$ allows us to identify each each of $\Sp(Y)$ with an interval, hence defining a metric.
Letting $J$ denote the set of points of $\Sp(Y)$ at most $D_\delta$ away from a vertex with respect to this metric, we have that
\[\text{Thick}_\delta(Y) = r^{-1}(J).\]
From this discussion, it becomes apparent that the complement of $J$ in $\Sp(Y)$ is a union of intervals, hence each component of Thin$_\delta(Y)$ is a band foliated by parallel leaves of $\cO_{\partial Y}(Y)$.

Any equilateral neighborhood $\cE_\delta(\lambda)$ is foliated by (segments of) leaves of $\Ol(X)$; if the collapse map 
\[\pi:\cE_\delta(\lambda) \to \cE_\delta(\lambda)/\sim\]
is a homotopy equivalence, 
then we say that
\[\tau( X, \lambda , \delta) := \cE_\delta(\lambda)/\sim\]
is an {\bf equilateral train track} and $\cE_\delta(\lambda)$ is an {\bf equilateral train track neighborhood}.\label{ind:equitt}
Note that the length along $\lambda$ endows the branches of $\tau$ with a length function, as in Section \ref{subsec:uniformttdef}.

For the rest of the paper, the expression $\tau(X, \lambda, \delta)$ will always refer to an equilateral train track (and not a uniform geometric one).
\medskip

\begin{remark}
The reader may find it helpful to compare this construction to the ``unzipping'' of a quadratic differential $q$ along its horizontal separatrices, as in Construction \ref{const:ttfromQD} below.
\end{remark}

\subsection{Geometrically (in)visible arcs}\label{subsec:geominvis}
Fix an equilateral neighborhood ${\cE_\delta(\lambda)} \subset X$ as in Construction \ref{constr:equitt}.  Suppose that $\cE_\delta(\lambda)$ has no closed leaves so that the leaf space $\tau =  \tau(X,\lambda,\delta)$ is a equilateral train track. 
As noted above, the components of Thin$_\delta(X \setminus \lambda)$ are spikes and bands foliated by parallel leaves of $\cO_{\lambda}(X)$.  Leaves of the compact components (bands) are necessarily isotopic to some arc of $\arc = \arc(X, \lambda)$.
However, the orthogeodesic representative of an arc may not be completely contained in $ \cE_\delta(\lambda)$, which may occur when one side of an arc is ``sharper'' than the other; compare Figure \ref{fig:equi-nbhd}.

\begin{definition}\label{def:geometric visibility}
The arcs of $\arc$ which can be properly isotoped into Thin$_\delta(X \setminus \lambda)$ are {\bf invisible} with respect to $ \cE_\delta(\lambda)$ or $\tau$.\label{ind:geominvis}
The remaining arcs of $\arc$ are called {\bf visible}, and we denote the subsystem of all visible arcs by $\gvarc \subset \arc$.
\end{definition}

The spikes of $\tau$ correspond either to pairs of asymptotic geodesics or invisible arcs.
In either case, we call the region of $X \setminus \lambda$ that is foliated by leaves of $\Ol(X)$ parallel to the switch leaf that lie on the same side of the orthogeodesic realization of that leaf (if one exists) a {\bf proto-spike} at scale $\delta$. \label{ind:proto-spike}
Note that so long as $\delta$ is taken small enough, the orthogeodesic representatives of leaves of invisible arcs are always leaves of $\Ol(X)$.
See Figure \ref{fig:equi-nbhd}.

Let $\arc_\bullet = \arc_\bullet (X,\lambda,\delta)$ denote the visible arc system and ${\cE}= \cE_\delta(\lambda)$.
For each arc of $\arc_\bullet$, pick the leaf of $\Ol(X) \setminus \lambda$ in the ``middle'' of the isotopy class, i.e., such that the distances (as measured along $\lambda$) between the leaf and the basepoints on either side are equal.
Each such leaf intersects ${\cE}$ in two orthogeodesic segments, and so its image under the collapse map is an arc with two well-defined endpoints on the equilateral train track $\tau$.
We may therefore consider the {\bf augmented equilateral train track} \label{ind:augtt}
\[\tau\cup \arc_\bullet = (\cE \cup \arc_\bullet)/ \sim\]
which can be thought of as a train track with arcs attached at right angles. 
In Section \ref{subsec:smoothings} below, we show how to smooth the incidences of $\alpha_\bullet$ onto $\tau$ to obtain a train track $\taua$ extending $\tau$.

\begin{lemma}\label{lem:visiblearcsfill}
Any augmented equilateral train track is filling, i.e., $S \setminus (\tau \cup \arc_\bullet)$ is a union of topological disks.
\end{lemma}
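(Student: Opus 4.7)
The plan is to identify the complementary regions of $\tau \cup \arc_\bullet$ in $S$ with the hexagons $H_v$ of the filling system $\lambda \cup \arc$. Since $\tau$ is $C^1$-embedded in $\cE := \cE_\delta(\lambda)$ as a deformation retract, the components of $S \setminus (\tau \cup \arc_\bullet)$ correspond bijectively (each a disk iff its counterpart is) to those of $S \setminus (\cE \cup \arc_\bullet')$, where $\arc_\bullet'$ denotes the portions of the orthogeodesic realizations of $\arc_\bullet$ outside $\cE$. Since $\lambda \subset \cE$ separates the components of $X \setminus \lambda$ from one another, it is enough to fix a single component $Y$ and prove that $Y \setminus (\cE_\delta(Y) \cup \arc_\bullet'(Y))$ is a disjoint union of open disks.

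The main claim I will prove is that the components of this complement are in bijection with the spine vertices $v$ of $Y$, each being a ``reduced hexagon'' $R_v := H_v \setminus \cE_\delta(Y)$. This proceeds in two steps: first, identifying $\Thin_\delta(Y)$ explicitly as the disjoint union, over all invisible arcs $\alpha \in \arc(Y) \setminus \arc_\bullet(Y)$, of the middle sub-bands of the tubes of $\cO_{\partial Y}(Y)$-leaves parallel to $\alpha$; and second, using this to verify that each $R_v$ is a topological disk, with distinct $R_v, R_{v'}$ separated by removed $\Thin_\delta$-bands across invisible arcs or by removed middle portions of visible arcs.

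The first step is where the real work lies. By definition, a point of $Y$ lies in $\Thin_\delta$ iff the perpendicular foot of its $\cO$-leaf sits in $\partial Y \setminus I$. Using that the projection of each spine edge $e$ dual to an arc $\alpha$ has length exactly $c_\alpha$ on $\partial Y$, this complement is precisely the union, over invisible arcs (those with $c_\alpha > 2D_\delta$), of the middle intervals of length $c_\alpha - 2D_\delta$ between adjacent basepoints. The second step then follows because $\cE_\delta(Y) \cap H_v$ decomposes as the collar $\cN_\delta(I) \cap H_v$ along the $I$-portion of $\partial H_v \cap \partial Y$ together with the $v$-halves of the middle sub-bands for invisible arcs on $\partial H_v$; each such subregion meets $\partial H_v$ in a connected arc, so iteratively cutting them out of the open disk $H_v$ preserves the disk property. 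The principal technical obstacle is thus the first step, which requires careful bookkeeping of the threshold $D_\delta$ from \eqref{eqn:defDdelta} and a precise understanding of the projection geometry of spine edges to $\partial Y$.
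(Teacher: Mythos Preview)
Your approach is genuinely different from the paper's, which gives a three-line topological argument: any essential simple closed curve in $S\setminus(\tau\cup\arc_\bullet)$ can be realized in $X\setminus\cE$, hence is disjoint from $\lambda$; since $\arc(X,\lambda)$ fills $S\setminus\lambda$ the curve must cross some arc of $\arc$; it cannot cross a visible arc by assumption, and it cannot cross an invisible arc because those can be isotoped into $\cE$, which the curve avoids. This yields a contradiction with no explicit identification of the complementary regions needed.

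Your constructive route has a real gap, and it is exactly the phenomenon flagged in Figure~\ref{fig:equi-nbhd} and the remark preceding Definition~\ref{def:geometric visibility}. Your identification $R_v=H_v\setminus\cE_\delta(Y)$ presumes that the orthogeodesic realization of each invisible arc $\alpha$ lies inside $\cE_\delta(Y)$, so that removing $\cE_\delta(Y)$ alone already separates $H_v$ from the adjacent $H_w$. But invisibility only says $\alpha$ can be \emph{properly isotoped} into $\Thin_\delta(Y)$; the orthogeodesic itself is a specific leaf at some parameter $t_\alpha\in[0,c_\alpha]$, and when $t_\alpha<D_\delta$ (or $t_\alpha>c_\alpha-D_\delta$) it sits in $\Thick_\delta(Y)$. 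If in addition $\ell(\alpha)>2\delta$, the orthogeodesic escapes $\cN_\delta(I)$ as well, so $\alpha\cap(Y\setminus\cE_\delta(Y))\neq\emptyset$. Since $\alpha$ is invisible you do not remove it, so $H_v\setminus\cE_\delta(Y)$ and $H_w\setminus\cE_\delta(Y)$ remain glued along this piece of $\alpha$; worse, $H_w\setminus\cE_\delta(Y)$ is disconnected, with a strip trapped between $\alpha$ and the $\Thin_\delta$-band on the $v$-side. So the complementary components are \emph{not} the sets $H_v\setminus\cE_\delta(Y)$, and your Step~2 decomposition of $\cE_\delta(Y)\cap H_v$ into ``the collar plus the $v$-halves of the middle sub-bands'' breaks down for the same reason.

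The bijection with spine vertices you are after is still true, but it is mediated by the $\Thin_\delta$-bands and visible-arc middles rather than by the hexagon boundaries $\arc$. Fixing your argument would require replacing the orthogeodesic representatives of invisible arcs by leaves lying inside their $\Thin_\delta$-bands (so that the ``hexagons'' are cut by those leaves instead), or else working directly with the retraction $r:Y\to\Sp(Y)$ described after Construction~\ref{constr:equitt}. Either route works but is considerably longer than the paper's argument; you should also note that your description of $\Thin_\delta(Y)$ omits the genuine spikes (asymptotic boundary pairs), which also contribute components.
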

\begin{proof}
Any simple closed curve contained in $S \setminus \tau$ can be realized as a curve in $X \setminus {\cE}$. In particular, it is disjoint from $\lambda$. Now $\arc(X, \lambda)$ fills $S \setminus \lambda$, so the curve must run through some arcs of $\arc(X, \lambda)$.
These cannot be contained in ${\cE}$ (or isotoped into it) by assumption, so they must be visible.
\end{proof}

\subsection{Equilateral and uniform neighborhoods}
The two Constructions \ref{const:uniform_tt} and \ref{constr:equitt} of geometric train track neighborhoods are related as follows.
This estimate will allow us to make geometric statements about (the more useful) equilateral train tracks using (the simpler) uniform ones.

\begin{proposition}[Comparing equilateral and uniform]\label{prop:ttdefs_comp}
There are universal constants $\delta_{\ref{prop:ttdefs_comp}}<\log(\sqrt3)$ and $w_{\ref{prop:ttdefs_comp}}\ge 1$ such that for all $\delta\le \delta_{\ref{prop:ttdefs_comp}}$,
\[\cN_{w_{\ref{prop:ttdefs_comp}}\inverse\delta}(\lambda) \subset \cE_{\delta}(\lambda)\subset \cN_{w_{\ref{prop:ttdefs_comp}}\delta}(\lambda).\]
\end{proposition}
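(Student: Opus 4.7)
My plan is to analyze the orthogeodesic foliation on each component $Y$ of $X \setminus \lambda$ separately, using the piecewise-geodesic structure of leaves of $\cO_{\partial Y}(Y)$ meeting the spine $\Sp(Y)$.

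For the forward inclusion $\cN_{w^{-1}\delta}(\lambda) \subset \cE_\delta(\lambda)$, I would argue via a dichotomy. Suppose $y \in Y$ satisfies $d(y,\lambda) < w^{-1}\delta$, and let $\ell$ be the leaf of $\cO_{\partial Y}(Y)$ through $y$ with closer endpoint $p \in \partial Y$, so $d(y,p) < w^{-1}\delta$. If $y \in \Thin_\delta(Y)$ we are done by definition. Otherwise $\ell$ meets $I$, and I claim in fact \emph{both} endpoints of $\ell$ lie in $I$. This follows from the isometric involution of any spine edge that exchanges its two adjacent boundary components: this involution fixes the spine edge pointwise and sends basepoints on one side to basepoints on the other, so the two endpoints of any leaf crossing the edge sit at equal projection-distance from their respective basepoints and enter $I$ simultaneously. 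In particular $p \in I$, giving $y \in \cN_{w^{-1}\delta}(I) \subset \cN_\delta(I) \subset \cE_\delta(\lambda)$ for any $w \ge 1$.

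For the reverse inclusion $\cE_\delta(\lambda) \subset \cN_{w\delta}(\lambda)$, the containment $\cN_\delta(I) \subset \cN_\delta(\lambda)$ is immediate since $I \subset \partial Y \subset \lambda$. The substantive step is to bound the half-length of a leaf in $\Thin_\delta(Y)$ by a universal multiple of $\delta$. The key is the defining identity $D_\delta = \log(\coth(\delta)/2)$: in a model spike between two asymptotic geodesics, an explicit computation in the upper half-plane (using the semicircle perpendicular to a side at boundary-distance $D_\delta$ from the basepoint) shows that the orthogeodesic leaf there has half-length exactly $\delta + O(\delta^2)$; since sides become exponentially closer toward the cusp, leaves deeper in the spike are even shorter. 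For a thin band between non-asymptotic sides one performs a parallel computation using the formula $\cosh r(\ell) = \mu/\sqrt{1 - \mu^2 \tanh^2\ell}$ for the perpendicular distance along the equidistant geodesic (where $\ell$ is projection arclength from the foot of the common perpendicular and $\mu = \cosh(\text{half common perpendicular})$), combined with the universal inradius bound $\log\sqrt 3$ from Lemma \ref{lem:closethenarc}.

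The main obstacle I anticipate is extracting a genuinely \emph{universal} $w$ in the band case: when two sides of a band are close, the spine edge can extend far along their equidistant locus, and one must verify that at the thin-thick interface the perpendicular distance remains $O(\delta)$ rather than drifting to $O(1)$. My expectation is that one uses the cutoff $\delta < \delta_{\ref{prop:ttdefs_comp}} < \log\sqrt 3$ together with the inradius lower bound of Lemma \ref{lem:closethenarc} and the Lambert-quadrilateral trigonometric identity above to pin down the constant $w$; setting $w_{\ref{prop:ttdefs_comp}}$ to be the larger constant produced by the two inclusions then completes the proof.
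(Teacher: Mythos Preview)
Your forward inclusion is correct and in fact cleaner than what the paper does: the reflection in the geodesic containing any spine edge swaps the two adjacent boundary geodesics and fixes the spine vertex at each end, hence swaps the basepoints and the two endpoints of every orthogeodesic leaf crossing that edge. Both endpoints therefore lie in $I$ simultaneously, and you actually get $\cN_\delta(\lambda)\subset\cE_\delta(\lambda)$ with constant $1$. The paper instead deduces both inclusions from a two-sided bound on the half-length $\Delta_V(\delta)$ of the bounding leaf of each proto-spike (Lemma~\ref{lem:bounded_neighborhood}), so your symmetry observation is a genuine shortcut for this direction.

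There is, however, a real gap in your reverse inclusion for spikes. You assert that the leaf at boundary-distance $D_\delta$ from the basepoint has half-length $\delta + O(\delta^2)$, but this would only hold if the basepoint sat at the ideal-triangle position (where it is exactly $\delta$, by definition of $D_\delta$). The basepoint of a spike of $Y$ is the projection of the incenter of \emph{three} geodesics, and its position along $\partial Y$ depends on how far away the third geodesic is; the inradius can be arbitrarily large (Lemma~\ref{lem:radius_bounded} needs a systole hypothesis precisely for this reason), so you cannot locate the basepoint from the spike alone. What makes the constant universal is a different fact: even though the inradius is unbounded, the displacement $E_V$ of the basepoint from the leaf of length $\log 3$, measured \emph{along $\partial Y$}, is always at most $\log(2+\sqrt 3)$. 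This is the paper's Lemma~\ref{lem:boundedlengthspikes}, and it is the crux: once $E_V$ is bounded, horocycle comparison gives $\Delta_V(\delta)\le C e^{E_V}\delta$ with universal $C$. You correctly anticipate the difficulty in the band case, but the spike case is not easier---it needs exactly this $E_V$ bound, and the band case then follows by the parallel argument with hypercycles.
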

The proof of this statement follows from bounding the following geometric quantity.

\begin{definition}
Let $Y$ be a crowned hyperbolic surface and fix $\delta < \log(\sqrt{3})$.
Let $V$ be a proto-spike at scale $\delta$, and suppose first that $V$ is a spike or is bounded by an arc $\alpha$ contained in $\text{Thin}_\delta(Y)$. 
Define $\Delta_V(\delta)$ to be one half of the length of the longest leaf of $\cO_{\partial Y}(Y)$ contained in (the closure of) Thin$_\delta(Y) \cap V$.\label{ind:DVd}

Otherwise, $V$ is bounded by an arc $\alpha$ that is disjoint from $\text{Thin}_\delta(Y)$, but can be properly homotoped into a component $Z$ of $\text{Thin}_\delta(Y)$; see Figure \ref{fig:equi-nbhd}.
In this case, we define $\Delta_V(\delta)$ to be one half the length of the boundary leaf of $Z$ that is closest to the center of the hexagon containing $V$.
\end{definition}

We note that if Thin$_\delta(Y)$ is nonempty, then $\Delta_V(\delta)$ will always be achieved by the corresponding boundary leaf.
This quantity allows us to compare the width of an equilateral neighborhood with that of a uniform one; Proposition \ref{prop:ttdefs_comp} is therefore an immediate consequence of the following lemma.

\begin{lemma}\label{lem:bounded_neighborhood}
There are universal constants $\delta_{\ref{prop:ttdefs_comp}}<\log(\sqrt3)$ and $w_{\ref{prop:ttdefs_comp}}\ge 1$ such that for any crowned hyperbolic surface $Y$ and any proto-spike $V$ at scale $\delta\le \delta_{\ref{prop:ttdefs_comp}}$, we have
\[w_{\ref{prop:ttdefs_comp}}\inverse\delta \le{\Delta_V(\delta)} \le w_{\ref{prop:ttdefs_comp}} \delta.\]
\end{lemma}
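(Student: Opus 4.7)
The plan is to reduce the estimate to explicit hyperbolic computations in the universal cover $\widetilde Y \subset \HH^2$ by comparing a general proto-spike with the model of an ideal cusp. A proto-spike $V$ lifts to a region $\widetilde V$ bounded by two geodesics $g_1, g_2 \subset \partial \widetilde Y$ and, when $V$ is not a true spike, by the orthogeodesic realization of the invisible arc $\alpha$ joining them. The defining relation \eqref{eqn:defDdelta} is exactly the assertion that in an ideal cusp the leaf of $\cO_{\partial Y}(Y)$ of length $2\delta$ hits the boundary at distance $D_\delta$ from the basepoint: placing $g_1, g_2$ as $x=\pm 1$ in the upper half-plane, a direct computation shows that the leaf of $\cO_{\partial \widetilde Y}$ through height $y=1/\sinh(\delta)$ has length exactly $2\delta$. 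Thus in this idealized case $\Delta_V(\delta)=\delta$ on the nose, and the lemma holds with constant $1$.

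There are then two general cases. In case (i), $g_1, g_2$ are asymptotic but the ambient hexagon has additional boundary structure that may shift basepoints. For $\delta < \delta_{\ref{prop:ttdefs_comp}}$ small enough, the basepoint lies so deep in the spike (at boundary-distance $D_\delta\approx \log(1/\delta)$ from the tip) that the region of leaves of length at most $2\delta$ is determined only by $g_1, g_2$ and is isometric to the model up to error $O(\delta)$. In case (ii), $g_1, g_2$ have a common perpendicular realized by $\alpha$. Introducing Fermi coordinates along $\alpha$ (or using right-angled pentagon trigonometry), one obtains an explicit function $L(d)$ expressing the leaf length at boundary-distance $d$ from the basepoint. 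In each of the two sub-cases of the definition of $\Delta_V(\delta)$, the quantity $2\Delta_V(\delta)$ is exactly $L(D_\delta)$: either the longest leaf of $\Thin_\delta(Y)\cap V$ is the one at distance $D_\delta$ from the basepoint, or it is the boundary leaf of a $\Thin_\delta$ component that also sits at distance $D_\delta$. The comparison $L(D_\delta)\asymp 2\delta$ then follows from the same trigonometry as in the model; the invisibility of $\alpha$ forces $\ell(\alpha)$ to be comparable to $\delta$, and $D_\delta$ dominates $\ell(\alpha)$ for small $\delta$, so the slab geometry approximates the cusp model.

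The main obstacle is ensuring uniformity across all hexagon geometries, especially those with very short bounding geodesics or invisible arcs where the right-angled trigonometry becomes delicate. Here the key input is Lemma \ref{lem:closethenarc}: any center of $\Sp$ lies at distance at least $\log\sqrt 3$ from $\partial Y$, which keeps the basepoints macroscopically separated from the lamination and prevents the local configurations from degenerating. By shrinking $\delta_{\ref{prop:ttdefs_comp}}$ further if necessary, we ensure that in every proto-spike the region of leaves of length at most $2\delta$ lies within a compact family of local model configurations for which the comparison $\Delta_V(\delta)\asymp \delta$ holds with universal constants, yielding the desired $w_{\ref{prop:ttdefs_comp}}$.
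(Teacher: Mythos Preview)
Your outline identifies the right reduction (compute in the lift, compare with the ideal-cusp model), but it misses the one nontrivial geometric input that makes the comparison uniform. In any hexagon the leaf of length $2\delta$ sits at distance $D_\delta$ from the leaf of length $\log 3$ (that is the definition of $D_\delta$), whereas $\Delta_V(\delta)$ is half the length of the leaf at distance $D_\delta$ from the \emph{basepoint}. These differ by the quantity $E_V := $ distance along $\partial Y$ from the basepoint to the leaf of length $\log 3$, and because leaf-length behaves like an exponential in that parameter one gets $\Delta_V(\delta)\asymp \delta\, e^{\pm E_V}$. Everything therefore hinges on a universal bound on $E_V$, which is exactly the content of the paper's Lemma~\ref{lem:boundedlengthspikes}. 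Your case~(i) asserts ``isometric to the model up to error $O(\delta)$'' without proving it; that assertion is equivalent to $E_V=O(1)$, which is what must be shown.

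Two further specific problems. First, in case~(ii) the claim that invisibility of $\alpha$ forces $\ell(\alpha)$ to be comparable to $\delta$ is false: invisibility at scale $\delta$ is a condition on the \emph{weight} $c_\alpha$ (roughly $c_\alpha>2D_\delta$), not on the length $\ell(\alpha)$; the paper only uses $\ell(\alpha)<\log 3$. Second, the compactness argument at the end does not close. The statement is for an \emph{arbitrary} crowned surface $Y$, so there is no upper bound on the inradius (Lemma~\ref{lem:closethenarc} gives only a lower bound), and the space of proto-spike configurations is not compact. The point is that $E_V$ remains bounded even as the inradius tends to infinity---in the spike model with $g_1=\{x=0\}$, $g_2=\{x=2\}$ and center at $(1,y_0)$, the basepoint on $g_1$ is at height $\sqrt{1+y_0^2}\to 1$ as $y_0\to 0$, while the $\log 3$ leaf stays at height $2$, so $E_V\to\log 2$. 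This stabilisation is not visible from a soft compactness argument; you need the explicit computation (or an equivalent) that the paper carries out in Lemma~\ref{lem:boundedlengthspikes}. Once $E_V\le\log(2+\sqrt 3)$ is in hand, comparing leaves of $\cO_{\partial Y}(Y)$ to horocycles (for spikes) or hypercycles (for proto-spikes bounded by an arc) gives the multiplicative bound immediately.
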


It is not difficult to see that if $\delta$ is small enough, then any leaf of $\cO_{\partial Y}(Y)$ with distance along $\partial Y$ at most $\log(2+\sqrt3)$ from a leaf of length $2\delta$ has length at most $\log(3)$.  Let $\delta_{\ref{prop:ttdefs_comp}}$ be as such.

Assume that $V$ is a protospike at scale $\delta\le \delta_{\ref{prop:ttdefs_comp}} $.  In particular, if $V$ is bounded by an arc $\alpha$, then $\ell(\alpha)< \log(3)$.
Using \cite[Lemma 6.6]{shshI}, we know  that for any such protospike, there is a leaf of the orthogeodesic foliation of length $\log 3$ in $V$.
Note that $V$ inherits basepoints from the hexagon containing it.

\begin{lemma}\label{lem:boundedlengthspikes}
The distance $E_V$ along $\partial Y$ between the basepoints of $V$ and the leaf of length $\log 3$ in $V$ is at most $\log(2+\sqrt3)$.
\end{lemma}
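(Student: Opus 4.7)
Plan.
The plan is to set up explicit coordinates in $\HH^2$ in the universal cover of $Y$ and reduce the bound to a two-parameter optimization. Let $g_1, g_2$ denote the two boundary geodesics of $\partial Y$ bounding $V$; these are either asymptotic (ideal corner) or ultraparallel with common perpendicular $\alpha_{12}$ of length $a < \log 3$ (invisible arc). Since $V$ is symmetric under swapping $g_1 \leftrightarrow g_2$, it suffices to bound the distance along $g_1$ from the basepoint $B_1$ to the endpoint $E_1$ of the leaf of length $\log 3$. The ideal-corner case arises as the $a \to 0$ degeneration of the ultraparallel case and yields the weaker bound $\log 2$, so I focus on the latter.

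Place $g_1 = \{|z|=1\}$ and $g_2 = \{|z|=e^a\}$ as concentric semicircles in the upper half plane; their equidistant locus is the spine $s = \{|z|=e^{a/2}\}$, parametrized by $\phi \in (0,\pi)$. Standard distance formulas in $\HH^2$ yield $\sinh(d(\mathrm{spine}(\phi), g_1)) = \sinh(a/2)/\sin\phi$, and the foot of the perpendicular from a spine point at $\phi$ to $g_1$ lies at $\cos\theta(\phi) = \cos\phi/\cosh(a/2)$. Thus $E_1$ sits at $\sin\phi_{\log 3} = \sqrt 3\,\sinh(a/2)$ (solvable precisely when $a \leq \log 3$), while $B_1$ sits at $\sin\phi_c = \sinh(a/2)/\sinh(r)$, where $r \geq \log\sqrt 3$ is the inradius of the containing hexagon (Lemma \ref{lem:closethenarc}). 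The quantity $E_V$ is then an explicit arc-length integral along $g_1$ between these two points, expressible as a closed-form function of $(a, r)$.

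As $r$ grows, $\cos\phi_c$ increases monotonically to $1$, pushing $B_1$ away from $E_1$ along $g_1$, so the supremum of $E_V$ is attained in the limit $r \to \infty$. After this limit, applying the half-angle identity $\tan(\theta/2) = \sin\theta/(1+\cos\theta)$ collapses the arc-length integral to
\[
E_V = \log\frac{2(\cosh(a/2)+1)}{\cosh(a/2) + \sqrt{1 - 3\sinh^2(a/2)}}.
\]
A direct differentiation (the simplification uses the algebraic identity $(1-3\sinh^2(a/2)) + 3\cosh^2(a/2) = 4$ derived from $\cosh^2 - \sinh^2 = 1$) shows this is increasing in $a$ on $(0, \log 3]$, so the maximum occurs at $a = \log 3$ where $\sinh(a/2) = 1/\sqrt 3$ and $\cosh(a/2) = 2/\sqrt 3$. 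Substitution yields $E_V = \log(2+\sqrt 3)$, establishing the bound (which is tight in the extremal limit of an asymptotically large hexagon with a maximally long invisible arc, where $B_1 = (\sqrt 3/2, 1/2)$ and $E_1 = (0, 1)$). The main obstacle is purely algebraic bookkeeping: tracking the two parameters $(a, r)$ and performing the trigonometric simplifications that make the monotonicity and the extremal computation transparent.
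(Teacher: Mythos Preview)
Your proof is correct and follows essentially the same route as the paper's. Both arguments bound $E_V$ by sending the hexagon center to infinity along the equidistant locus (your $r\to\infty$ limit is exactly the paper's ``projection of the ideal endpoint of the spine to $g_1$'') and then optimize over the remaining shape parameter; you use the arc length $a\in(0,\log 3]$ while the paper uses the equivalent angle $\varphi\in[\pi/2,2\pi/3]$ between the $\log 3$ leaf and the spine, and your explicit closed-form formula for the limiting $E_V$ makes the monotonicity and extremal evaluation at $a=\log 3$ (equivalently $\varphi=\pi/2$) more transparent than the paper's ``an explicit computation shows''.
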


\begin{proof}
Suppose first that $V$ is a spike.  If the distance $E_V$ between the leaf of length $\log 3$ and the center of the hexagon $H$ containing it was $\log 2 $ or larger, then the center of $H$ would be infinitely far away, which is absurd; see Figure \ref{fig:ortho_spikes}.

Now suppose that $V$ is bounded by an arc $\alpha$; by assumption, $\ell(\alpha)\le 2\delta<\log3$.
A computation shows that the larger angle $\varphi$ made by the leaf of the orthogeodesic foliation of length $\log3$ in $V$ and the edge of the spine that it meets ranges between $\pi/2$ and $2\pi/3$; these values of $\varphi$ correspond, respectively, to the cases  $\ell(\alpha)= \log3$ and $\ell(\alpha) =0$, i.e., that $V$ is a spike.  
Consider the complete geodesic in $\HH^2$ that extends the relevant edge of the spine, and the projection of its ideal endpoints to a geodesic comprising the spike $V$ (see Figure \ref{fig:ortho_spikes}).  
Then $E_V$ is bounded above by the distance between these points and the leaf of length $\log3$.
Another explicit computation shows that this upper bound varies monotonically between $\log(2)$ when $\varphi = 2\pi/3$ and $\log(2+\sqrt3)$ at the other extreme where $\varphi = \pi/2$.
This completes the proof.
\end{proof}

\begin{figure}[ht]
\centering
\includegraphics[scale =.2]{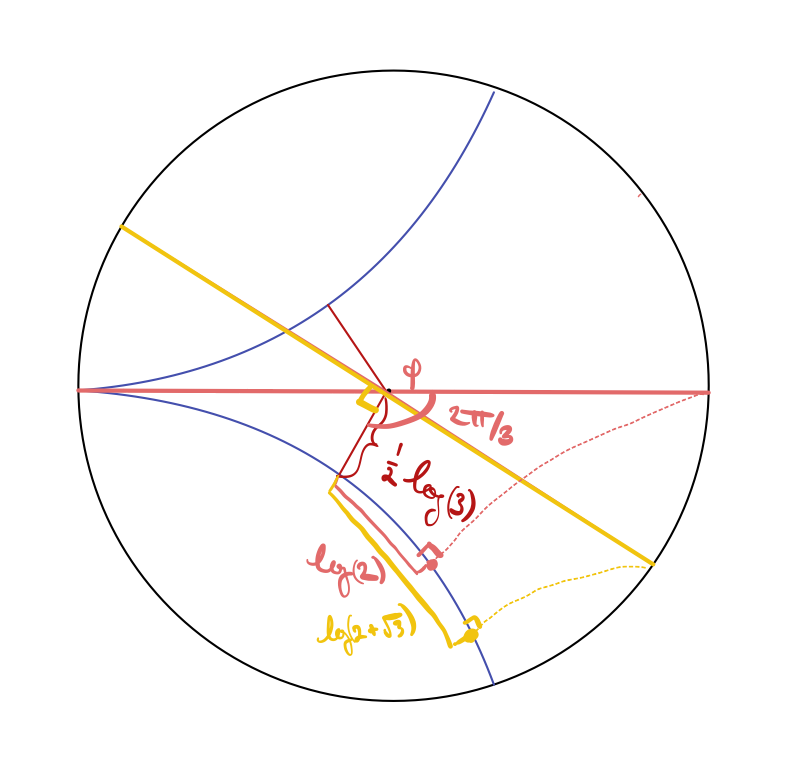}
\hspace{3ex}
\includegraphics[scale=.15]{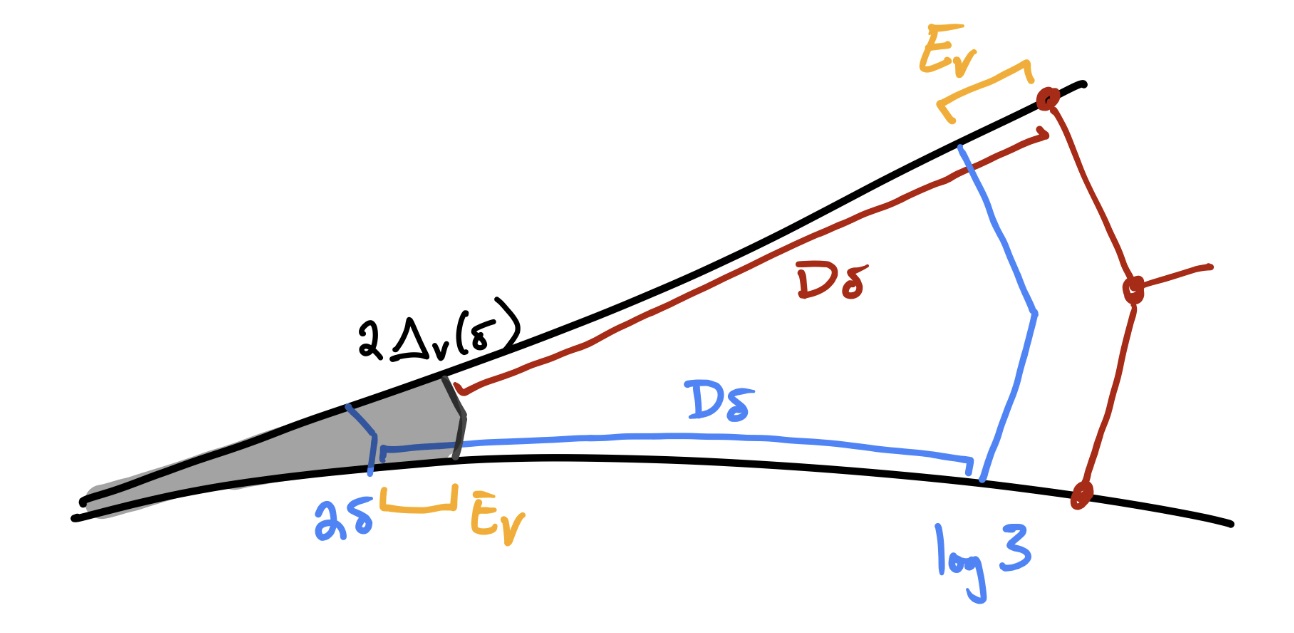}
\caption{Left: The angle $\varphi$ between the leaf of $\cO_{\partial Y}(Y)$ of length $\log (3)$ in $V$ and the spine of $Y$.
Right: The distances between leaves in a spike.
}
\label{fig:ortho_spikes}
\end{figure}

\begin{proof}[Proof of Lemma \ref{lem:bounded_neighborhood}]
First we prove the assertion for spikes in a component $Y$ of $X\setminus \lambda$.

The leaf through the basepoints of $V$ has length at least $\log(3)$.
By definition of $D_\delta$, the distance in the spike $V$ along its boundary between the leaf of length $2\delta$ and the leaf of length $\log(3)$ is exactly $D_\delta$. 
By definition of $\Delta_V(\delta)$, the distance in $V$ along its boundary between the basepoint on $V$ and the leaf of length $2\Delta_V(\delta)$ is also $D_\delta$.
Thus, $\delta\le \Delta_V(\delta)$ and the distance between the leaf of length $2\delta$ and the leaf of length $2\Delta_V(\delta)$ is equal to $E_V$.
Since $\delta\le \delta_{\ref{prop:ttdefs_comp}}$ and $E_V\le \log(2+\sqrt3)$ by Lemma \ref{lem:boundedlengthspikes}, we have $2\Delta_V(\delta)\le\log(3)$.
Compare Figure \ref{fig:ortho_spikes}.

Now the lengths of leaves of the orthogeodesic foliation of length at most $\log(3)$ in a spike are comparable to the lengths of the horocyclic segments with the same endpoints by a universal
multiplicative factor $C$. 
Because horocycles grow exponentially in spikes,
this implies that $\Delta_V(\delta)$ cannot be larger than $C\delta e^{E_V}$.
Applying Lemma \ref{lem:boundedlengthspikes} yields the conclusion for spikes.
\medskip

The proof for proto-spikes bounded by an arc $\alpha$ of length at most $\log3$ is similar.
This time, however, we have to consider the possibility that Thin$_\delta(Y)\cap V$ is empty but that there is a component $Z$ of Thin$_\delta(Y)$ contained in the proto-spike $V'$ that meets $V$ along their shared arc (as in Figure \ref{fig:equi-nbhd}).
Then $E_V$ still computes the distance along the boundary of $V\cup V'$ between the leaf of the orthogeodesic foliation contained in $V$ of length $2\delta$ and the boundary component of $Z$ facing $V$.
Again since $\delta\le \delta_{\ref{prop:ttdefs_comp}}$, we have $2\Delta_V(\delta)\le\log(3)$.

Now we use the fact that for every leaf of the orthogeodesic foliation in $V \cup V'$ there is a hypercycle with the same endpoints on $\partial Y$.
The lengths of a hypercycles parallel to $\alpha$ distance $d$ and $d+E_v$ away are $\ell(\alpha)\cosh(d)$ and $\ell(\alpha)\cosh(d+E_V)$, respectively.  
The ratio of these lengths does not exceed $2\cosh(E_V)$.

For leaves of $\cO_{\partial Y}(Y)$ with length at most $\log(3)$, there is a constant $C'$ such that the ratio of the length of a leaf of $\cO_{\partial Y}(Y)$ in $V$ and the length of the corresponding hypercyclic arc with the same endpoints is bounded above by $C'$ and below by $1/C'$. 
The existence of such a $C'$ follows from the fact that as $d \to \infty$, proto-spikes converge to spikes and hypercycles to horocycles, and hence we can apply the bounds of the previous paragraph.

In particular, comparing the leaves of $\cO_{\partial Y}(Y)$ of length $2\delta$ and $2\Delta_V(\delta)$ to their corresponding hypercycles, we obtain
\[\frac{\delta}{C'^22\cosh(E_V)}\le \Delta_V(\delta) \le C'^22\cosh(E_V)\delta.\]
Appealing to Lemma \ref{lem:boundedlengthspikes} completes the proof.
\end{proof}

\subsection{Geometry of equilateral train tracks}

The width of a geometric train track neighborhood shrinks linearly in the defining parameter, where the linear rate is uniform for all surfaces in a compact part of the moduli space.

\begin{lemma}[Width of equilateral neighborhoods]
\label{lem:equittwidth}
    Given $s>0$ there is a $W_{\ref{lem:equittwidth}}>0$ such that if $X\in \T_g$ is $s$-thick, and $\lambda$ is a geodesic lamination, then the width of $\cE_\delta(\lambda)$ is at most $W_{\ref{lem:equittwidth}}\delta$.
\end{lemma}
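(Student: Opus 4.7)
The plan is to bound the width of $\cE_\delta(\lambda)$ by reducing to a bound on a comparable uniform neighborhood via Proposition \ref{prop:ttdefs_comp}, then controlling tie lengths one complementary component at a time, and finally using the $s$-thickness hypothesis to bound uniformly the number of components any single tie can traverse.

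First, for $\delta \le \delta_{\ref{prop:ttdefs_comp}}$, Proposition \ref{prop:ttdefs_comp} yields the containment $\cE_\delta(\lambda) \subset \cN_{w\delta}(\lambda)$ with $w := w_{\ref{prop:ttdefs_comp}}$, so every tie of $\cE_\delta(\lambda)$ lies in this uniform $w\delta$-neighborhood. In each complementary component $Y$ of $X \setminus \lambda$, a tie's restriction to $Y$ is a sub-arc of a leaf of $\cO_{\partial Y}(Y)$ contained in $\cN_{w\delta}(\partial Y) \cap Y$. Since such leaves emanate orthogonally from $\partial Y$ and reach depth at most $w\delta$ before returning, each such restriction has length at most $2w\delta$.

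The main obstacle will be bounding the number of complementary components a single tie can traverse, since a tie may cross $\lambda$ transversely and extend across several components before exiting $\cE_\delta(\lambda)$. I plan to invoke compactness: the $s$-thick locus $\cM_g^{\ge s}$ is compact by Mumford's theorem, and $\mathcal{GL}(X)$ is Hausdorff-compact on each such $X$ (recalled in \S\ref{sec:background}), so the total space of pairs $(X, \lambda)$ with $X$ being $s$-thick is compact. For each fixed $(X, \lambda)$, as $\delta \to 0$ each tie contracts to an infinitesimal perpendicular transversal crossing exactly one leaf of $\lambda$, so the number of components visited is bounded pointwise; promoting this to a uniform $N = N(s, g)$ valid for all $\delta \le \delta_{\ref{prop:ttdefs_comp}}$ will require a continuity argument tracking how the tie structure depends on $\delta$ and on $(X,\lambda)$.

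Combining these estimates gives $\text{width}(\cE_\delta(\lambda)) \le 2Nw\delta$ for $\delta \le \delta_{\ref{prop:ttdefs_comp}}$. For $\delta > \delta_{\ref{prop:ttdefs_comp}}$, the width is trivially bounded by $\diam(X)$, which is uniformly bounded on the $s$-thick locus by Mumford compactness, so absorbing this into the constant yields the desired bound with $W_{\ref{lem:equittwidth}} = W_{\ref{lem:equittwidth}}(s, g)$.
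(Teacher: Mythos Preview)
There is a genuine gap in your second paragraph, and it undermines the rest of the argument. You claim that a tie's restriction to a single complementary component $Y$ of $X \setminus \lambda$ is \emph{a sub-arc} of a leaf of $\cO_{\partial Y}(Y)$, and hence has length at most $2w\delta$. This is false: the intersection $t \cap Y$ is typically a countably infinite disjoint union of such sub-arcs. Indeed, a spike of $Y$ can wind around the surface and re-enter the neighborhood of $t$ arbitrarily many times; each return contributes another component of $t \cap Y$. Equivalently, for a lamination that is not a multicurve, $t \cap \lambda$ is a Cantor set (cf.\ Lemma~\ref{lem:lamfacts}\eqref{item:measurezero}), so $t \setminus \lambda$ has infinitely many components. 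Your compactness argument in the third paragraph therefore cannot produce a finite $N$: the quantity you are trying to bound is infinite for generic $\lambda$, and the statement that ``as $\delta \to 0$ each tie \ldots\ cross[es] exactly one leaf of $\lambda$'' is incorrect for the same reason.

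The paper's proof addresses exactly this issue. Rather than counting components, it groups the components of $t \setminus \lambda$ according to which proto-spike of $X \setminus \lambda$ they lie in (there are at most $6|\chi(S)|$ proto-spikes). Within a fixed proto-spike $V$, successive intersections with $t$ are separated by a train path of length at least $s$ (each return closes up a homotopically nontrivial loop), and the leaf-lengths in a spike decay like $Ce^{-d}$ with distance $d$ along the spike. Summing the resulting geometric series $\sum_{r \ge 0} 2C\delta e^{-rs}$ gives a bound $O_s(\delta)$ on the total length of $t \cap V$, and then multiplying by the number of proto-spikes finishes. The $s$-thickness enters through the minimal return time, not through a compactness argument.
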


\begin{proof}
We will first produce a bound on the length of a tie of $\cN_\delta(\lambda)$.

Let $t$ be a tie of $\cN_\delta(\lambda)$. As the intersection $t\cap \lambda$ has $1$-dimensional Lebesgue measure zero (Lemma \ref{lem:lamfacts} item (\ref{item:measurezero})), it suffices to bound the sum of the lengths of each component of $t\setminus \lambda$.
For each proto-spike $V$, we consider the components of $t\setminus \lambda$ corresponding to $V$, i.e.,  $t\cap V$.  The longest such component has length at most $2\delta$.
There is a smooth train path that follows the proto-spike in forward time, and recurrences to $t$ correspond to shrinking components of $t\cap V$.
Subsequent recurrences to $t$ define homotopically non-trivial loops in $S$, and therefore have length at least $s$.

As in the proof of Lemma \ref{lem:bounded_neighborhood}, the lengths of leaves of the orthogeodesic foliation decay at rate $Ce^{-d}$, where $d$ is the distance along the spike and $C$ is a universal constant.
Thus the total length of $V \cap t$ is at most
\[\sum_{r = 0}^\infty 2C \delta e^{-rs}.\]
There are at most $6|\chi(S)|$ proto-spikes of $\cE_{\delta}(\lambda)$, so the total length of a tie $t$ of $\cN_\delta(\lambda)$ is at most $6|\chi(S)|$ times the bound above. Invoking Proposition \ref{prop:ttdefs_comp} now yields a bound on the length of a tie of $\cE_\delta(\lambda)$.
\end{proof}

Thus, if the width is smaller than length of the systole, then $\cE_\delta(\lambda)$ is a train track neighborhood of $\lambda$.

\begin{lemma}[Uniform defining parameter]\label{lem:equittswork}
For any $s>0$, there is a $\delta_{\ref{lem:equittswork}}>0$ such that if $X\in \T_g$ is $s$-thick, $\lambda$ is a geodesic lamination, and $\delta \le \delta_{\ref{lem:equittswork}}$, then $\pi: \cE_\delta (\lambda) \to \tau(X,\lambda,\delta)$ extends to a homotopy equivalence $X\to X$ and the width of $\cE_\delta(\lambda)$ is at most $1/2$.  In particular, $\tau(X,\lambda,\delta)$ is a train track.
\end{lemma}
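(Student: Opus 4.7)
The plan is to reduce everything to the width bound in Lemma \ref{lem:equittwidth}.

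First, I would choose $\delta_{\ref{lem:equittswork}}$ small enough so that the conclusion of Lemma \ref{lem:equittwidth} gives a width bounded by $\min(s,1/2)/2$. Explicitly, set
\[
\delta_{\ref{lem:equittswork}} := \min\left(\delta_{\ref{prop:ttdefs_comp}}, \, \frac{\min(s,1/2)}{2W_{\ref{lem:equittwidth}}}\right),
\]
so that every tie of $\cE_\delta(\lambda)$ has length strictly less than both $1/2$ and $s$. This immediately handles the width claim.

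For the homotopy equivalence, I need to check that the leaf-collapse map $\pi \colon \cE_\delta(\lambda)\to \cE_\delta(\lambda)/\!\sim$ is a homotopy equivalence. The essential point is to show that each leaf of $\Ol(X)$ meets $\cE_\delta(\lambda)$ in a single connected segment (equivalently, that no tie wraps nontrivially). Suppose a leaf of $\Ol(X)$ re-entered $\cE_\delta(\lambda)$ after exiting it. Then a segment of that leaf, together with a short arc in $\cE_\delta(\lambda)$ joining its endpoints, would form a closed loop of total length at most $2W_{\ref{lem:equittwidth}}\delta+\text{(diameter of ties)}$, which by our choice of $\delta_{\ref{lem:equittswork}}$ is strictly less than $s$. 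Since $X$ is $s$-thick and this loop is either trivial or homotopically essential, it must bound a disk; but a nontrivial return of a leaf would force it to be essential, a contradiction. (The only remaining possibility is that the loop bounds a disk in $X$, which would force the leaf of $\Ol(X)$ to close up against itself in a disk, contradicting the local product structure of $\Ol(X)$ away from its singularities, and near singularities the same diameter argument rules it out.)

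Once each leaf of $\Ol(X)$ meets $\cE_\delta(\lambda)$ in a single segment, the collapse map $\pi$ is the standard leaf-space quotient of a foliated neighborhood and hence a homotopy equivalence onto its image, extending across the complement of $\cE_\delta(\lambda)$ to a homotopy equivalence $X\to X$ as in Construction \ref{const:uniform_tt}. This makes the leaf space $\tau(X,\lambda,\delta)$ a $C^1$ train track carried by $\cE_\delta(\lambda)$. The main technical obstacle is really just the leaf-return argument in the previous paragraph; everything else is a direct citation of the preceding width estimate and the standard facts about leaf-space quotients of foliated neighborhoods.
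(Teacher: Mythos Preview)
Your choice of $\delta_{\ref{lem:equittswork}}$ and the reduction to Lemma \ref{lem:equittwidth} match the paper exactly: the paper's entire proof is the observation that once $W_{\ref{lem:equittwidth}}\delta < \min\{1/2,s\}$, no tie can close up into an essential loop and the width bound is immediate.

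Your discussion of the homotopy equivalence misidentifies the obstruction, though. What is needed (compare Construction \ref{const:uniform_tt}) is that each \emph{tie}---each connected component of a leaf of $\Ol(X)$ restricted to $\cE_\delta(\lambda)$---is an arc rather than a circle. It is \emph{not} required that a single leaf of $\Ol(X)$ meet $\cE_\delta(\lambda)$ in only one component, and in fact this is typically false: nonsingular leaves of $\Ol(X)$ leave the neighborhood to reach the spine (at distance at least $\log\sqrt{3}$ from $\lambda$ by Lemma \ref{lem:closethenarc}) and then return, producing many ties per leaf. Consequently your ``re-entry'' loop has no reason to be short---the portion of the leaf outside $\cE_\delta(\lambda)$ has length of order $1$, not order $\delta$---so the length bound you claim for that loop is incorrect. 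Fortunately this does no damage: multiple ties per leaf cause no trouble for the collapse map, so there is nothing to prove there. The genuine obstruction, a circular tie, is what your width bound actually rules out: such a tie would be a closed curve of length at most $W_{\ref{lem:equittwidth}}\delta < s$, hence inessential, which is impossible since it crosses $\lambda$ transversely. This is the one-clause argument the paper gives.
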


\begin{proof}
Let $W_{\ref{lem:equittwidth}}$ be our bound on the width of $\cE_\delta(\lambda)$ from above.  As long as $W_{\ref{lem:equittwidth}}\delta<\min\{1/2,s\}$, 
no tie can close up into a (homotopically essential) closed loop, and the width of $\cE_\delta(\lambda)$ is at most $1/2$.
\end{proof}

\section{Train tracks and dual cellulations}\label{sec:dual cellulations}

In this section, we explain how to pass back and forth between (augmented) equilateral train tracks built from a small parameter $\delta$ and $(X,\lambda) \in \PT_g$ and certain ``horizontally convex'' cellulations $\mathsf T$ of $\cO(X,\lambda)$ by saddle connections.  
This dictionary will be useful in the next section, where we use complex weight spaces of (smoothed, augmented) equilateral train tracks as period coordinate charts for strata of quadratic differentials.

\subsection{Cellulations from train tracks}\label{subsec:dualcell}
Let $X\in \T_g$, let $\lambda \in \ML_g$,  let $\delta$ be small enough that $\cE_\delta(\lambda)\subset X$ is an equilateral train track neighborhood whose leaf space $\tau = \tau(X,\lambda,\delta)$ is trivalent, let $\arc_\bullet =\gvarc$ be the visible arc system,  and let $\tau \cup \arc_\bullet$ be the augmented track, considered as a 1-complex with some tangential data.

Since $\tau \cup \arc_\bullet$ is filling
(Lemma \ref{lem:visiblearcsfill}), its dual complex is a cellulation $\mathsf T$.\label{ind:dualcelltott}
Its vertices correspond to the complementary components of $S \setminus (\tau \cup \arc_\bullet)$, which are in turn in bijection with hexagons of $S \setminus (\lambda\cup \arc)$ and hence both the vertices of the spine and the zeros of $\cO(X, \lambda)$.
Some of its edges of $\mathsf T$ are dual to (segments of) branches of $\tau$, while others are dual to visible arcs.

\begin{definition}
    Let $q$ be a quadratic differential. A saddle connection $e$ is \textbf{veering}\label{ind:veering} if any lift of $e$ to the universal cover $\tilde{q}$ spans a singularity-free rectangle embedded in $\tilde{q}$.
    Horizontal and vertical saddle connections are veering by fiat.
\end{definition}

\noindent See \cite{Agol:veering, G:veering, LMT:veering} for the genesis of this terminology. 

For $z\in \CC$ we recall from \cite{shshI} the definition of the following function:\label{ind:z+}
\[[z]_+:= \left\{ \begin{array}{rl}
    z & \arg(z) \in [0, \pi) \\
    -z & \arg(z) \in [\pi, 2\pi)
\end{array}\right.\]
The point of this function is that while periods of a quadratic differential are only defined up to $\pm 1$, the function $[z]_+$ distinguishes a choice of square root for each saddle connection.

\begin{proposition}[Geometric cellulation dual to train track]\label{prop:dual cell veering}
    The edges of the cellulation $\mathsf T$ of $\cO(X,\lambda)$ dual to $\tau \cup \arc_\bullet$ are realized by veering saddle connections.  Moreover, for each non-horizontal edge $e$ of $\mathsf T$, we have 
    \[[\hol_{\cO(X,\lambda)}(e)]_+ = \sigma_\lambda(X)(t_e) +i\lambda(t_e)\]
    where $t_e$ is a tie crossing the segment of the branch of $\tau$ dual to $e$, and for each horizontal edge, we have that $[\hol_{\cO(X,\lambda)}(e)]_+$ is the weight of the arc of $\arc$ dual to $e$.
\end{proposition}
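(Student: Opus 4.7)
\emph{Proof plan.} The strategy is to convert the combinatorial structure of the augmented equilateral train track $\tau \cup \arc_\bullet$ directly into flat-geometric data on $\cO(X,\lambda)$ via the explicit ``inflation/deflation'' construction of \cite[Proposition 5.10]{shshI}. Recall that this construction exhibits $\cO(X,\lambda)$ as obtained from $X$ by cutting along $\lambda$, collapsing each complementary crowned surface $Y$ along its orthogeodesic foliation onto the spine $\Sp(Y)$, and then re-gluing via horizontal strips whose widths are encoded by the transverse measure $\lambda$. Under this construction, each vertex of $\Sp$ becomes a zero of $\cO(X,\lambda)$ (of order $n-2$ at a valence-$n$ spine vertex), and each compact edge of $\Sp$ becomes a horizontal saddle connection of length equal to its weight $c_\alpha$, where $\alpha$ is the dual arc. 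This immediately identifies the vertices of $\mathsf T$ with the zeros of $\cO(X,\lambda)$, and it realizes each horizontal edge of $\mathsf T$ --- dual to a visible arc $\alpha$ --- by the horizontal saddle connection corresponding to the spine edge, giving $[\hol(e)]_+ = c_\alpha$ and veering by fiat.

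For a non-horizontal edge $e$ dual to a segment $b$ of a branch of $\tau$ (that is, a maximal subbranch between consecutive switches and $\arc_\bullet$-attachments), I will argue that the corresponding strip in $\cE_\delta(\lambda)$ maps under inflation to an embedded flat rectangle $R_b$ in $\cO(X,\lambda)$ whose four corners are lifts of the two spine vertices adjacent to $b$. The horizontal width of $R_b$ equals the length of $b$ measured along $\lambda$, and its vertical height is exactly $\lambda(t_e)$ for any tie $t_e$ crossing $b$; independence from the choice of $t_e$ comes from invariance of both $\lambda$ and the shear-shape cocycle under transverse isotopy of ties. The dual edge $e$ is then realized by the diagonal of $R_b$ joining the two corner zeros adjacent to $b$; since $R_b$ is embedded and singularity-free, this diagonal is a veering saddle connection. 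The imaginary part of its holonomy is the full height $\lambda(t_e)$, while the real part is the signed horizontal offset between the two corner zeros, which by the very definition of the geometric shear-shape cocycle (as the signed distance between basepoints transported by the orthogeodesic foliation across $t_e$) is $\sigma_\lambda(X)(t_e)$. Since $\lambda(t_e) \ge 0$, the $[\cdot]_+$ convention selects this representative of the period, yielding the claimed formula.

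The principal obstacle is establishing that each $R_b$ is genuinely embedded and singularity-free in $\cO(X,\lambda)$, and that the collection $\{R_b\}$ glued along horizontal saddle connections $\{e_\alpha : \alpha \in \arc_\bullet\}$ really tiles $\cO(X,\lambda)$. Embeddedness follows from the filling property of $\tau \cup \arc_\bullet$ (Lemma \ref{lem:visiblearcsfill}): complementary regions of the augmented track are in bijection with hexagons of $X \setminus (\lambda \cup \arc)$, hence collapse to distinct zeros under inflation, so no two rectangles can overlap. Absence of interior zeros in $R_b$ follows because, by construction, the strip over the branch segment $b$ contains no vertex of $\Sp$. Together these verifications show that $\{R_b\} \cup \{e_\alpha\}$ constitute a cellulation of $\cO(X,\lambda)$ by veering saddle connections, and by duality this is precisely $\mathsf T$ with the asserted holonomy data.
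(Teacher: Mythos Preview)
Your overall strategy --- pass to the flat surface via inflation/deflation and find an embedded singularity-free rectangle spanning the two zeros --- is exactly the paper's. The horizontal-edge case is fine. But your construction of the rectangle for a non-horizontal edge is wrong, and this is where the argument breaks.

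You take $R_b$ to be the image of the strip over the branch segment $b$, so its width is the length of $b$ along $\lambda$, and you assert that the two adjacent zeros sit at (opposite) corners of $R_b$. Neither of these is correct. A switch of $\tau$ is the boundary leaf of a proto-spike, and by the defining property of the \emph{equilateral} neighborhood that switch lies at distance exactly $D_\delta$ along $\lambda$ from the nearest basepoint, not at the basepoint itself. Likewise, a visible arc $\alpha$ attaches to $\tau$ at the foot of its orthogeodesic realization, not at a basepoint. So the vertical sides of your $R_b$ do not pass through zeros; the zeros $v,w$ lie on the horizontal boundaries of the strip at positions determined by their basepoints $p_v,p_w$, which generically sit in the interior of $b$ or even outside it. This also produces the internal inconsistency you did not notice: if the zeros were at opposite corners, the real part of the diagonal's holonomy would be $\pm(\text{length of }b)$, not $\sigma_\lambda(X)(t_e)$. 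These quantities differ (compare the branch-length formulas in Figure~\ref{fig:branches_length}, which all involve $D_\delta$).

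The fix is to use the equilateral property directly. Pick any tie $t_e$ over $b$; then by construction of $\cE_\delta(\lambda)$, both $p_v$ and $p_w$ lie within $D_\delta$ of $t_e$ along $\lambda$, while every other basepoint is strictly farther than $D_\delta$ away. Hence the band of orthogeodesic leaves of horizontal extent $2D_\delta$ centered at $t_e$ maps to an embedded rectangle in $\widetilde{\cO(X,\lambda)}$ of height $\lambda(t_e)$ that contains $v$ on one horizontal side and $w$ on the other, and no other zero. The saddle joining them is then veering, with imaginary part $\lambda(t_e)$ and real part equal to the signed horizontal offset $p_w-p_v$, which is precisely $\sigma_\lambda(X)(t_e)$. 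Your tiling claim at the end is also off: the $R_b$'s are indexed by \emph{edges} of $\tau\cup\arc_\bullet$, whereas the cells of $\mathsf T$ are dual to \emph{vertices}, so they are not the same cellulation. But once the rectangle is built correctly, you do not need any tiling statement --- the existence of the singularity-free rectangle for each branch segment is already enough.
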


That the quantity $\sigl(X)(t_e)$ is well-defined can be checked using the axioms for shear-shape cocycles (see \S\ref{subsec:shsh} or \cite[\S7]{shshI}; see also the discussion on ``admissible routes''  in \cite[\S14.5]{shshI}).

\begin{remark}
The realization of the visible arc system $\arc_\bullet$ as leaves of $\Ol(X)$ determines the combinatorics of the dual cellulation. A small perturbation of the visible arc system by proper isotopy may determine a different dual cellulation, which is also realized by veering saddle connections; see Corollary \ref{cor:moresaddles}.  However, the veering property of dual saddles may not persist under arbitrary (large) proper isotopy.
\end{remark}

\begin{proof}
Let $b$ be an edge of $\tau \cup \arc_{\bullet}$ with dual edge $e$ joining components of $X\setminus \tau \cup \arc_\bullet$ that have centers $v$ and $w$.
If $b$ is an arc of $\arc_\bullet$, then then $e$ is the dual edge of the spine of $\cO_\lambda(X)$, which is realized as a horizontal saddle connection on $\cO(X,\lambda)$, whose length is exactly the weight of the arc corresponding to $b$.

Otherwise, $b$ comes from a branch of $\tau$. Choose a non-switch tie $t_e$ of $\cE_{\delta}(\lambda)|_b$.
By construction of the equilateral neighborhood $\cE_\delta(\lambda)$, since $t_e$ terminates in the component of $X\setminus \tau \cup \arc_{\bullet}$ containing $v$, the distance along $\lambda$ between $t_e$ and the corresponding basepoint $p_v$ of $v$ is strictly less than $D_\delta$ (by our choice of representatives for $\arc_{\bullet}$). Furthermore, the same is true for some basepoint $p_w$ of $w$.
Using the definition of the equilateral neighborhood $\cE_\delta(\lambda)$, the distance between $t_e$ along $\lambda$ (in the universal cover) and the center of any other component of $X\setminus (\lambda\cup \arc)$ meeting $t_e$ is strictly greater than $D_\delta$.
See Figure \ref{fig:dualbranchmax}.

\begin{figure}[ht]
\includegraphics[width=\linewidth]{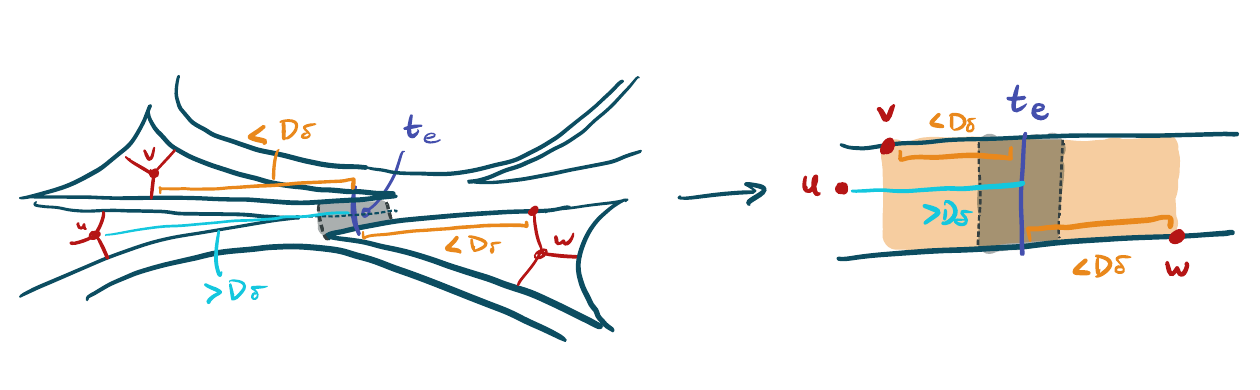}
\caption{Distances along a lamination from a tie to basepoints of incident components of $X\setminus \tau \cup \arc_\bullet$. The data of a geometric train track encodes a cellulation by veering saddle connections.
The highlighted orange rectangle is singularity-free.}
\label{fig:dualbranchmax}
\end{figure}


This implies that there is a band of isotopic leaves of $\Ol(X)$ running from $p_v$ to $p_w$ about $t_e$ such that any leaf of $\lambda$ meeting all of these leaves has length $2 D_\delta$.
There is a map from $\mathcal E_\delta(\lambda)$ to $\cO(X,\lambda)$, obtained by integrating the transverse measure on ties coming from (the measure on) $\lambda$, which is locally isometric along the leaves of $\lambda$. Compare with \cite[Proposition 5.10]{shshI}.
Under this map, the band of isotopic leaves about $t_e$ becomes an embedded rectangle in (the universal cover of) $\cO(X, \lambda)$ of length $2D_\delta$, height $\lambda(t_e)$, and containing the singularities corresponding to $v$ and $w$ on its boundaries.
In particular, there is a nonsingular geodesic segment connecting the corresponding singularities.

Since the horizontal measure in this rectangle is given by intersection with $\Ol(X)$ (equivalently the hyperbolic length measured along $\lambda$) and the vertical measure is given by intersection with $\lambda$, the period of this saddle is as claimed. The sign of the real part of the period of $e$ dual to $b$ is consistent with the sign of $\sigl(X)(t_e)$; see, e.g., \cite[Thoerem 13.13]{shshI}. 
\end{proof}

The proof of the Proposition actually shows the following:

\begin{corollary}\label{cor:moresaddles}
Suppose that the endpoints of a tie of $\cE_\delta(\lambda)$ are within $D_\delta$ of basepoints $p$ and $q$ (as measured along $\lambda$). Then the path joining the centers $u$ and $v$ corresponding to $p$ and $q$ respectively is realized by a veering saddle connection on $\cO(X, \lambda)$.
\end{corollary}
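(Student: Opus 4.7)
The plan is to adapt the rectangle construction from the proof of Proposition \ref{prop:dual cell veering} to the weaker hypotheses of the corollary. Concretely, I would take the given tie $t$, whose endpoints lie at $\lambda$-distances $d_p, d_q \le D_\delta$ from the basepoints $p$ and $q$ respectively, and extend the band of $\Ol(X)$-leaves in a narrow transverse neighborhood of $t$ in both directions to a band of homotopic leaves running from $p$ to $q$. Applying the locally isometric map $\cE_\delta(\lambda) \to \cO(X, \lambda)$ that integrates the $\lambda$-transverse measure on ties, this band lifts to a rectangle in the universal cover of $\cO(X, \lambda)$ of horizontal length $d_p + d_q \le 2D_\delta$ and vertical height equal to the $\lambda$-measure of the transverse neighborhood. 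The singularities corresponding to the centers $u$ and $v$ sit on the opposite short sides of this rectangle.

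The key step will be to verify that the interior of the rectangle is singularity-free. This reduces to showing that no other center of a component of $X \setminus (\lambda \cup \arc)$ can lie in the interior of the band: any such intervening center would give rise to a basepoint at $\lambda$-distance strictly less than $\max(d_p, d_q) \le D_\delta$ from the tie, contradicting the fact that $p$ and $q$ are the basepoints of the complementary components containing the endpoints of $t$. This ``only the designated basepoints are within $D_\delta$'' property is built into the construction of the equilateral neighborhood, and the argument is the same as in the proof of Proposition \ref{prop:dual cell veering}.

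Once embeddedness and singularity-freeness are in hand, the diagonal of the rectangle from $u$ to $v$ is a veering saddle connection by definition. I do not foresee significant obstacles beyond those already addressed in the proof of the Proposition; the corollary merely relaxes the hypothesis on the tie from ``non-switch tie of a branch of $\tau$'' to ``tie whose endpoints are within $D_\delta$ of basepoints,'' and the same geometric argument carries through.
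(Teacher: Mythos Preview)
Your proposal is correct and matches the paper's approach exactly: the paper simply states that the proof of Proposition~\ref{prop:dual cell veering} already establishes this, and your recap of that argument is accurate. One minor imprecision: the singularity-free rectangle in the paper has horizontal length $2D_\delta$ (extending $D_\delta$ to each side of the tie), not $d_p + d_q$, and the reason intervening basepoints are excluded is that the tie passes through the \emph{thin} part of interior components (a consequence of being a connected tie of $\cE_\delta(\lambda)$), not the endpoint condition you cite---but you correctly defer to the Proposition's proof for this, so the argument goes through.
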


\subsection{Train tracks from cellulations}\label{subsec:ttfromcell}

Given a cellulation $\mathsf T$ of $q$ by saddle connections, we would like to produce a $C^1$ structure at the vertices of the dual graph making it a train track that carries the imaginary folliation of $q$.
For the following class of cellulations, such a choice can be made unambiguously. 

\begin{definition}[Horizontally convex cellulation]\label{def:horizconvex}
Say that a polygon $P \subset \mathbb{R}^2$ is {\bf horizontally convex} if its intersection with each leaf of the horizontal foliation $\ker(dy)$ is connected. \label{ind:horconv}
It is {\bf simply horizontally convex} if for every horizontal edge $e$ of $\partial P$, the intersection of $P$ with the horizontal line containing $e$ is exactly $e$.
Observe that simple horizontal convexity implies that $P$ is allowed to have at most two horizontal edges (one along its top and one along its bottom).

Given a cellulation $\mathsf{T}$ of a quadratic differential $q$ by saddle connections, we say that $\mathsf{T}$ is {\bf (simply) horizontally convex} if each component of $q \setminus \mathsf{T}$ is a (simply) horizontally convex polygon. 
That is, for any cell $P$ of $q \setminus \mathsf{T}$, let $d(P) \subset \mathbb{R}^2$ denote the image of $P$ under a developing map $d:\widetilde{q} \to \mathbb{R}^2$.
This polygon is well-defined up to rotation by $\pi$ and translation, and $q$ is (simply) horizontally convex if each $d(P)$ is.
\end{definition}

This class of cellulations arises naturally. Indeed, the dual cellulations considered above are of this form.

\begin{lemma}[Dual of equilateral tracks horizontally convex]\label{lem:dual track is horizontally convex}
Let $(X, \lambda) \in \PM_g$ and consider the augmentation $\tau \cup \arc_\bullet$ of an equilateral train track by its visible arc system $\arc_\bullet$.
Then the cellulation $\mathsf T$ of $\cO(X, \lambda)$ dual to $\tau \cup \arc_\bullet$ is simply horizontally convex.
\end{lemma}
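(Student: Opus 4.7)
The plan is to reduce to a case-by-case verification on the 2-cells of $\mathsf T$, using the trivalence of the augmented train track and the period formula from Proposition~\ref{prop:dual cell veering}. First I will show that every 2-cell of $\mathsf T$ is a triangle. By hypothesis $\tau$ is trivalent, and each visible arc $\alpha\in\arc_\bullet$ meets $\tau$ orthogonally at two interior points of branches; each such attachment point becomes a trivalent switch of $\tau\cup\arc_\bullet$, since the arc separates a branch of $\tau$ into two sub-branches. Dualizing, every 2-cell of $\mathsf T$ is bounded by exactly three saddle connections, and hence is a geodesic triangle in $\cO(X,\lambda)$, embedded under the developing map since it contains no interior singularities.

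Since any triangle is convex, horizontal convexity is automatic, and only simple horizontal convexity requires proof. I split into two cases. For a triangle dual to a switch of $\tau$ proper, Proposition~\ref{prop:dual cell veering} gives each dual edge period of the form $\pm(\sigma_\lambda(X)(t_e)+i\lambda(t_e))$ with $\lambda(t_e)>0$; hence no edge is horizontal, and the simple convexity condition holds vacuously. For a triangle dual to an arc attachment, the arc's dual edge is horizontal with real period $c_\alpha>0$, while the two sub-branches' dual edges are non-horizontal.

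To settle this second case, I will orient the triangle cyclically, labeling its boundary periods $v_\alpha, v_1, v_2$ in cyclic order with $v_\alpha$ horizontal. The closure relation $v_\alpha+v_1+v_2=0$ combined with $v_\alpha\in\mathbb{R}$ forces $\Im(v_1)=-\Im(v_2)$, and both magnitudes equal $\lambda(t_{b_i})>0$ by the Proposition, so $\Im(v_1)$ and $\Im(v_2)$ are nonzero with opposite signs. The third vertex of the triangle therefore lies strictly above or below the horizontal line through the arc's dual edge, which is precisely simple horizontal convexity. I expect the only subtle point to be confirming the strict positivity $\lambda(t_{b_i})>0$, but this is already built into the classification of non-horizontal dual edges in Proposition~\ref{prop:dual cell veering}, so no additional argument is needed.
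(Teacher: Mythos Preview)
Your triangle arguments are correct, but the reduction to triangles is where the gap lies. You assert that every vertex of $\tau\cup\arc_\bullet$ is trivalent because $\tau$ is trivalent and each arc attaches at an interior point of a branch. Neither the ``interior point'' claim nor the implicit ``one arc per point'' claim is justified: an arc's endpoint on $\tau$ is determined by where the orthogeodesic foot lands under the collapse map, and nothing prevents this from coinciding with a switch of $\tau$, nor prevents two arcs (one from each side of $\tau$) from attaching at the same point of a branch. In either scenario the vertex has valence $\geq 4$ and the dual cell is a quadrilateral or pentagon, not a triangle.

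The paper's proof avoids this by arguing directly from the tangential structure at each vertex: there are at most four tangential directions (two from the $C^1$ structure of $\tau$, and up to two perpendicular directions for arc incidences, each carrying at most one arc). Since edges dual to $\tau$-branches are non-horizontal and edges dual to arcs are horizontal, this immediately forces each dual polygon to have at most one horizontal edge along its top and one along its bottom, which is precisely simple horizontal convexity. Your closure-relation computation extends to these larger polygons without difficulty --- the non-horizontal edges group by the sign of their imaginary period according to which side of the switch they emanate from --- so the fix is straightforward; but as written the proof omits cells that can genuinely occur.
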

\begin{proof}
Each polygon of the dual cellulation is dual to a vertex of $\tau \cup \arc_\bullet$. The edges of the cellulation either cross an arc of $\arc_{\bullet}$, in which case they are horizontal, or cross $\tau$, in which case they are not. 
The statement now follows by observing that every vertex of $\tau \cup \arc_\bullet$ has at most four tangential directions: two coming from the tangential structure of $\tau$ and up to two coming from incidences of $\arc_{\bullet}$ with $\tau$, each of which has a unique arc emanating from it. 
Compare Figure \ref{fig:dualltt_smooth}. 
\end{proof}

We can construct a train track out of a (simply) horizontally convex cellulation as follows. The following is a generalization of \cite[\S4.4]{MirzEQ} and of \cite[Construction 10.4]{shshI}.

\begin{construction}[Train track dual to cellulation]\label{constr:dualtt} 
Suppose that $\mathsf{T}$ is a horizontally convex cellulation of a quadratic differential $q$ by saddle connections and let $\mathsf{H} \subset \mathsf{T}$ denote the set of horizontal saddle connections.
Consider the 1-skeleton $\mathsf{T}^*$ of the dual cellulation to $\mathsf{T}$, and let $\mathsf{H}^*$ denote the edges of $\mathsf{T}^*$ dual to the edges of $\mathsf{H}$.
Then in each polygon $P$ of $q \setminus \mathsf{T}$ (identified with its image under a developing map), we assign tangential data at the dual vertex $v_P$ of $\mathsf{T}^*$ so that the arcs which exit $P$ out its left-hand side have the same tangential data at $v_P$ and similarly for the arcs which leave $P$ from its right-hand side.
See Figure \ref{fig:dualltt_smooth}.

This turns $\mathsf{T}^* \setminus \mathsf{H}^*$ into a train track which we denote by $\tau(q, {\mathsf{T}})$.\label{ind:ttdualtocell}
Moreover, the edges of $\mathsf{H}^*$ define an arc system $\arc(q, {\mathsf{T}})$ properly embedded on $S \setminus \tau$ which meets $\tau$ at the vertices of $\mathsf{T}^*$.
\end{construction}

\begin{figure}[ht]
    \centering
    \includegraphics[width=.8\linewidth]{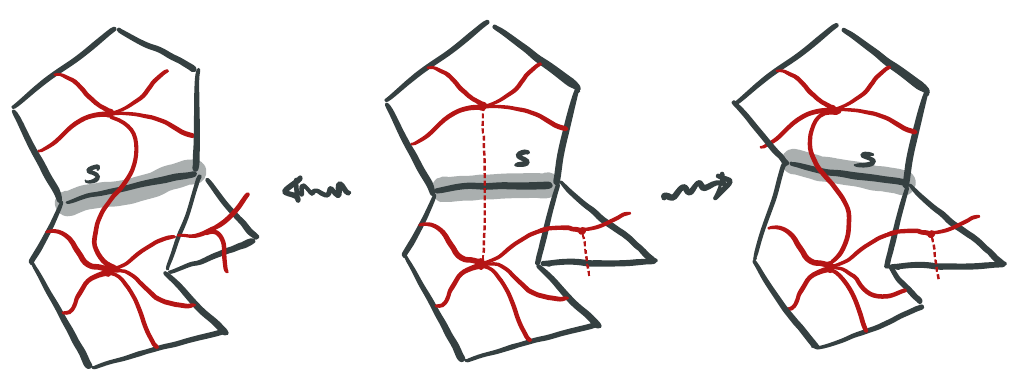}
    \caption{Simply horizontally convex cellulations and their dual train tracks. The tangential data of the edge dual to the saddle connection $s$ depends on the slope of $s$. Dashed red arcs denote arcs dual to horizontal saddle connections.}
    \label{fig:dualltt_smooth}
\end{figure}

Our choice of tangential data is such that this construction is exactly dual to the one from Proposition \ref{prop:dual cell veering}; the proof is just a definition chase.

\begin{lemma}\label{lem:dualtt_equitt}
Let $(X,\lambda)\in \PT_g$ and let $\tau$ be an equilateral train track with visible arc system $\arc_{\bullet}$.
Then if $\mathsf{T}$ denotes the dual cellulation to $\tau \cup \arc_\bullet$, we have that
\[ \tau(\cO(X, \lambda), \mathsf{T}) = \tau.\]
Furthermore, we have that $\arc(\cO(X, \lambda), \mathsf{T}) = \arc_{\bullet}$ as arc systems on $S \setminus \tau$.
\footnote{The combinatorics of incidences to $\tau$ may be different.}
\end{lemma}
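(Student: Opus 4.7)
The plan is a definition chase. I would split the verification into three claims: that $\mathsf T^* \setminus \mathsf H^*$ and $\tau$ agree as graphs; that $\mathsf H^*$ and $\arc_\bullet$ agree as arc systems on $S \setminus \tau$; and that the tangential data assigned by Construction \ref{constr:dualtt} at each vertex reproduces the $C^1$ structure of $\tau$ inherited from the equilateral neighborhood $\cE_\delta(\lambda)$.

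The first two claims are essentially immediate from the fact that dualization of a filling 1-complex embedded in a surface is involutive at the level of 1-skeleta. Since $\mathsf T$ is defined as the dual of $\tau \cup \arc_\bullet$, the edges of $\mathsf T^*$ are canonically in bijection with the edges of $\tau \cup \arc_\bullet$, each dual edge being properly isotopic to a transverse arc through its corresponding edge. Proposition \ref{prop:dual cell veering} then identifies the horizontal saddle connections of $\mathsf T$ as precisely those dual to arcs of $\arc_\bullet$; removing $\mathsf H^*$ from $\mathsf T^*$ therefore leaves exactly the branches of $\tau$, and $\mathsf H^*$ itself coincides with $\arc_\bullet$ viewed as an arc system on the complement of $\tau$.

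The main technical content is matching the tangential data, and here the key inputs are simple horizontal convexity of $\mathsf T$ (Lemma \ref{lem:dual track is horizontally convex}) together with the local-isometry-along-leaves map $\cE_\delta(\lambda) \to \cO(X, \lambda)$ used in the proof of Proposition \ref{prop:dual cell veering}. Fix a $0$-cell $v$ of $\tau \cup \arc_\bullet$ with dual polygon $P$. Under the collapse $\pi \colon \cE_\delta(\lambda) \to \tau$, the $C^1$ structure at the switch records which pairs of incident branches are bridged by a generic tie near $v$ before bifurcation at the switch tie; under the local isometry to $\cO(X,\lambda)$, such ties become horizontal segments in $P$ joining the corresponding pair of dual saddle connections. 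By simple horizontal convexity, the pairs of non-horizontal edges of $P$ that are joined by horizontal trajectories are precisely those on the same horizontal side (top or bottom) of $P$, which is exactly the grouping prescribed by Construction \ref{constr:dualtt}.

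The step I expect to require the most care is the case when $v$ is the foot of an arc $\alpha \in \arc_\bullet$: then $P$ has a horizontal edge dual to $\alpha$, and one must verify that the ``right-angle'' attachment of $\alpha$ to $\tau$ in the equilateral neighborhood matches the Construction's prescription at both endpoints of that horizontal edge, i.e., that the two branches of $\tau$ incident to $v$ correspond to the two non-horizontal edges of $P$ lying on the same horizontal side as each other and on opposite sides of $\alpha$'s dual. This is a local geometric check near the invisible proto-spike associated to $\alpha$, using the description of proto-spikes from Section \ref{subsec:geominvis} and the veering property from Proposition \ref{prop:dual cell veering}, but involves no new ideas beyond what is already present in the horizontally convex case.
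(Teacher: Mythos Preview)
Your proposal is correct and takes essentially the same approach as the paper, which simply asserts that ``the proof is just a definition chase'' and provides no further argument. Your elaboration of that chase into (i) double duality of the underlying $1$-skeleta, (ii) identification of horizontal edges via Proposition \ref{prop:dual cell veering}, and (iii) matching tangential data using simple horizontal convexity (Lemma \ref{lem:dual track is horizontally convex}) and the local-isometry map from the proof of Proposition \ref{prop:dual cell veering} is exactly the intended unpacking.
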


\subsection{Equilateral train tracks via unzipping}

There is another method, more intrinsic to the flat geometry of $\cO(X, \lambda)$, by which we can recover the equilateral train track $\tau = \tau(X,\lambda,\delta)$ and the visible arc system  $\arc_\bullet = \arc_\bullet(X,\lambda,\delta)$.
The following construction is well-known to experts.

\begin{construction}[Train track via horizontal unzipping]\label{const:ttfromQD}
Let $q\in \QT$ and $D>0$.  Starting at each zero of $q$, cut along all horizontal separatrices distance $D$, recording all compact horizontal saddle connections $\mathsf H(q,D)$ that are completely cut through.

As long as $D$ is long enough to sever every vertical leaf at least once, the resulting object is a bi-foliated band complex, which is tiled by rectangles comprised of maximal unions of connected vertical segments.
The vertical leaf space $\tau(q,D)$ of this band complex is a graph which inherits a $C^1$ structure at its vertices corresponding to the vertical segments along which three or more rectangles are attached, and the compact horizontal saddle connections $\mathsf H(q,D)$ define a dual arc system $\arc(q,D)$  in the complement of $\tau$ in $q$.
\end{construction}

The following lemma states that the output of  Construction \ref{const:ttfromQD} using singular flat geometry is identical to that of Construction \ref{constr:equitt} using hyperbolic geometry. Its proof is apparent from the definitions.

\begin{lemma}[Unzipping recovers equilateral track]\label{lem:hyperbolic_flat_tts_same}
For any $(X, \lambda) \in \PoT_g$, the train track $\tau(\cO(X,\lambda), D_\delta)$ is isotopic to $\tau(X,\lambda,\delta)$. 
Furthermore, we have that 
$\arc(\cO(X,\lambda), D_\delta)=\arc_\bullet(X,\lambda,\delta)$
as arc systems on $S \setminus \tau$.
\end{lemma}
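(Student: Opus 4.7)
The plan is to use the inflation/deflation construction of $\cO(X,\lambda)$ from \cite[Proposition 5.10]{shshI} to put $X$ and $\cO(X,\lambda)$ into correspondence, and then verify that under this correspondence the equilateral neighborhood $\cE_\delta(\lambda)$ coincides with the band complex obtained by unzipping $\cO(X,\lambda)$ by $D_\delta$. The isotopy of train tracks and the equality of arc systems then follow by comparing the respective leaf-space constructions.

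First I will lay out the dictionary. Via the marking-preserving identification $X \leftrightarrow \cO(X,\lambda)$, the inflation/deflation sends leaves of $\lambda$ to non-singular horizontal leaves, deflates each component $Y$ of $X\setminus\lambda$ onto a horizontal neighborhood of its spine $\Sp(Y)$, and identifies centers of hexagons and edges of $\Sp(Y)$ with zeros and horizontal saddle connections of $\cO(X,\lambda)$, respectively. Leaves of $\Ol(X)$ correspond to vertical leaves, and horizontal length in the flat metric is pulled back from the orthogeodesic foliation (equivalently, from projection to $\partial Y$); in particular, each horizontal saddle connection has flat length equal to the $\Ol$-length of the dual spine edge, which is the weight $c_\alpha$ of the dual arc.

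With this dictionary in hand, unzipping $\cO(X,\lambda)$ for horizontal distance $D_\delta$ from each zero corresponds exactly to cutting each spine edge by $\Ol$-length $D_\delta$ from each endpoint. A saddle connection is completely cut through precisely when its dual spine edge has length at most $2D_\delta$, and I would verify this is equivalent to visibility at scale $\delta$: when $c_\alpha \le 2D_\delta$, the projections of the dual spine edge to the two bounding geodesics of $\partial Y$ lie entirely in the set $I$ of points within $D_\delta$ of a basepoint, forcing the orthogeodesic representative of $\alpha$ to lie inside $\Thick_\delta(Y)$; conversely, when $c_\alpha > 2D_\delta$, the midpoint of this projection falls outside $I$, so $\alpha$ can be isotoped along $\Ol$-leaves into the $\Thin_\delta$ band. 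This yields the equality $\arc(\cO(X,\lambda), D_\delta) = \arc_\bullet(X,\lambda,\delta)$.

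To finish I would compare the two resulting decompositions of the surface. On the hyperbolic side, $\cE_\delta(\lambda) = \cN_\delta(I) \cup \Thin_\delta$ is foliated by $\Ol$-leaves and decomposes into bands (one per branch of $\tau$) meeting at ties through hexagon centers. On the flat side, the band complex decomposes into rectangles: each uncut middle segment of a spine edge of length $c_\alpha - 2D_\delta$ inflates to a $\Thin_\delta$-band, while each cut endpoint and each completely-cut saddle connection lifts to a horizontal rectangle whose vertical direction corresponds under the inflation/deflation to the adjacent part of $\cN_\delta(I)$. The two collapse maps (quotient by $\Ol$-leaves versus quotient by maximal vertical leaves) agree under the identification, so the resulting train tracks coincide on $S$ up to isotopy, with matching dual arc systems. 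The main bookkeeping obstacle is tracking how $\cN_\delta(I)$ on the hyperbolic side matches the horizontal rectangles straddling the cut endpoints on the flat side; however, since both are parameterized by the common transverse measure on $\lambda$ (respectively the imaginary foliation of $\cO(X,\lambda)$), the match is canonical, which is why the lemma is in fact ``apparent from the definitions.''
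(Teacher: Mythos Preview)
Your proposal is correct and is exactly the unpacking of definitions that the paper has in mind; the paper itself offers no argument beyond the sentence ``Its proof is apparent from the definitions,'' and your use of the inflation/deflation dictionary from \cite[Proposition 5.10]{shshI} together with the $r^{-1}(J)$ description of $\Thick_\delta(Y)$ is precisely how one makes that sentence honest. One small phrasing issue: where you write ``forcing the orthogeodesic representative of $\alpha$ to lie inside $\Thick_\delta(Y)$,'' what you actually need (and what your argument via $e\subset J$ establishes) is that \emph{every} $\Ol$-leaf in the isotopy class of $\alpha$ lies in $\Thick_\delta(Y)$, since visibility is about the nonexistence of a representative in $\Thin_\delta$; the orthogeodesic representative itself plays no role.
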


\section{Train track period coordinate charts}\label{sec:ttperiod coords}

In the previous section we exhibited a duality between augmented train tracks and cellulations.
In this one, we explain how the complex weight space of a ``smoothing'' $\taua$ of an augmented equilateral train track $\tau \cup \arc_{\bullet}$ can be used to give local (period) coordinate charts around $\cO(X, \lambda)$ in its ambient stratum.
With a bit more work, we can extend these to coordinates for a neighborhood of $\cO(X,\lambda)$ in $\QT_g$.

Our philosophy is that that shear-shape coordinates give train track charts for $\PT_g$, and these should be thought of as an analogue of period coordinates for $\QT_g$.

\subsection{Weights and smoothings}\label{subsec:smoothings}
Suppose $\tau$ is a train track; in this paper, $\tau$ is always bi-recurrent.
The \textbf{weight space} $W(\tau)$ of $\tau$ is the linear subspace of $\RR^{b(\tau)}$ cut out by the switch conditions.\label{ind:wtspaces}
That is, each switch $V$ locally separates $\tau$, so any half branch incident to $V$ is on one side or the other, which we give (arbitrary) names incoming and outgoing.
The switch conditions stipulate that the sum of the numbers assigned to the incoming half-branches is equal to the sum of the numbers assigned to outgoing half-branches.
By $W_\mathbb{C}(\tau)$, we mean the complexification of $W(\tau)$; equivalently, the subspace of $\CC^{b(\tau)}$ cut out by the switch conditions.

Let $W^{>0}(\tau)$ denote the open convex cone of strictly positive weights and $W^{\ge0}(\tau)$ its closure.
Any measured lamination carried by $\tau$ defines a non-negative weight system by integrating the measure over fibers of a carying map, and every non-zero $w\in W^{\ge0}(\tau)$ defines, by an unzipping process, a measured geodesic lamination whose support is carried by $\tau$ (see \cite[Construction 1.7.7]{PennerHarer}).
If $\tau$ is maximal, i.e., the complement of $\tau$ consists only of triangles, then $W^{>0}(\tau)$ defines an open set in $\ML_g$.

We say that a train track $\tau'$ is an \textbf{extension} of $\tau$ if some number of branches of $\tau'$ can be removed from $\tau'$ (together with the remaining bivalent switches) such that the result is isotopic to $\tau$. 
In this setting, there is a natural identification of $W(\tau)$ with the subspace of $W(\tau')$ where the branches of $\tau' \setminus \tau$ have $0$ weight.\label{ind:extensions}

In the sections above, we associated to geometric data (either $(X,\lambda)$ plus an auxiliary defining parameter $\delta$ or $q$ and a cellulation $\mathsf{T}$) a train track $\tau$ and an arc system.
It will be convenient in the sequel to combine these data into an extension of $\tau$.
Compare \cite[Construction 9.3]{shshI}.

\begin{definition}[Smoothing augmented train tracks]\label{def:smoothing}
Let $\tau$ be a train track and let $\alpha$ be an arc properly embedded on $S \setminus \tau$ that is not isotopic into a spike.
A {\bf smoothing} of $\tau \cup \alpha$ is an extension of $\tau$ obtained by assigning tangential data to the points of $\alpha \cap \tau$ such that, when viewed from a point inside $\alpha$, either both endpoints turn right onto $\tau$ or both turn left onto $\tau$.\label{ind:smoothing}
This definition can be extended to smooth any disjoint union of nonisotopic arcs.
A smoothing is called {\bf standard} if every arc turns left when it encounters $\tau$.
\end{definition}

The isotopy class of a smoothing depends on the position of the endpoints of $\arc$ on $\tau$, not just on its isotopy class rel $\partial (S\setminus \tau)$.
Given any proper isotopy class of arc system $\arc$ on $S \setminus \tau$, there is a natural equivalence between any two of its standard smoothings given by sliding the feet of $\arc$ along the boundary of $S \setminus \tau$.
The corresponding moves on the train track $\taua$ are all shifts and so their weight spaces are naturally identified \cite[Proposition 2.2.2]{PennerHarer}; furthermore, the subspaces corresponding to $W(\tau)$ are also identified.
We say that any two standard smoothings of $\tau$ and an arc system $\arc$ on $S \setminus \tau$ are {\bf slide equivalent.}\label{ind:slideequiv}
\medskip

Standard smoothings are natural places to record the transverse measure to $\lambda$, shears across it, and the weights of an arc system, all at the same time.
In particular, if $(X, \lambda) \in \PT_g$ and if $\tau$ is a train track snugly carrying $\lambda$, then clearly $\lambda$ defines a weight system on $\tau \prec \taua$.
We proved in \cite[Proposition 9.5]{shshI} 
that if $\arc(X, \lambda)$ is the geometric arc system, then the shear-shape cocycle $\sigl(X)$ may be represented by a weight system on a standard smoothing $\taua$.
The following statement is an instance of this phenomenon in the flat setting, and is a generalization of \cite[Lemma 10.10]{shshI} to a larger class of train tracks. See Figure 15 of that paper, and compare also with the equality between periods and weights in Proposition \ref{prop:dual cell veering}.

\begin{lemma}[Periods are complex wieghts]\label{lem:ttwts_from_cell}
Let $\mathsf{T}$ be a horizontally convex cellulation of a quadratic differential $q$. Let $\tau = \tau(q, \mathsf{T})$ be the dual train track from Construction \ref{constr:dualtt} and let $\taua$ be a standard smoothing of $\tau$ together with the arcs $\arc(q, \mathsf{T})$ dual to the horizontal saddle connections of $\mathsf{T}$.
Then $[\hol(q)]_+$ defines a $\mathbb{C}$-valued weight system on $\taua$ satisfying the switch conditions.
\end{lemma}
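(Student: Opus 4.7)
The strategy is to reduce the switch conditions of $\taua$ to the polygon closure relations for the cells of $\mathsf{T}$, polygon by polygon.

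First, I would establish a dictionary using Construction \ref{constr:dualtt}: each edge of $\mathsf{T}$ corresponds bijectively to a branch of $\taua$, with non-horizontal saddle connections giving branches of $\tau \subset \taua$ and horizontal ones giving the smoothed arcs of $\arc(q, \mathsf{T})$. The switches of $\taua$ are indexed by the polygons of $\mathsf{T}$: each polygon $P$ gives a switch at the dual vertex $v_P$, and by horizontal convexity together with the choice of tangential data in Construction \ref{constr:dualtt}, the two tangent classes at $v_P$ separate the incident non-horizontal branches into those dual to edges on the left side $\partial_L P$ and those dual to edges on the right side $\partial_R P$. Assigning $W(e) := [\hol_q(e)]_+$ to each edge then yields a well-defined $\CC$-valued function on branches: positive imaginary part for non-horizontal $e$, and positive real (the length of $e$) for horizontal $e$.

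Next, for each polygon $P$, I would develop $P$ into $\CC$ as a flat polygon with no interior singularities, so that $\partial P$ closes up. Traversing $\partial P$ counterclockwise and tracking signs against the direction of $[\hol]_+$ on each edge yields the closure identity
\begin{equation*}
\sum_{e \in \partial_L P} [\hol_q(e)]_+ \; + \; [\hol_q(e^T)]_+ \; = \; \sum_{e \in \partial_R P} [\hol_q(e)]_+ \; + \; [\hol_q(e^B)]_+,
\end{equation*}
where $e^T$ and $e^B$ denote the top and bottom horizontal edges of $\partial P$ (omitted if absent, with the convention that the corresponding term is $0$). Provided the standard smoothing attaches the arc dual to $e^T$ on the same side of the switch $v_P$ as the branches dual to $\partial_L P$, and attaches the arc dual to $e^B$ on the opposite side, this identity becomes exactly the switch condition at $v_P$: sum of weights on one tangent class equals sum on the other. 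Because every switch of $\taua$ arises this way and every polygon of $\mathsf{T}$ closes up tautologically, the switch conditions then hold throughout $\taua$.

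The main technical obstacle lies in the bookkeeping of this last step: verifying that the ``turn left at both endpoints'' convention of Definition \ref{def:smoothing} forces the top and bottom horizontal arcs at each $v_P$ onto opposite sides of the switch, and in the configuration compatible with the polygon closure identity above. The key geometric input is that $e^T$ and $e^B$ lie above and below $v_P$ in the developed picture, so the arcs dual to them approach $v_P$ from opposite vertical directions; the standard smoothing then deflects these two arcs into the two different tangent classes of $\tau$ at $v_P$, and a local orientation computation identifies which arc goes with which side. Once this matching is checked at a single representative switch, it applies uniformly, and the verification of the lemma is complete.
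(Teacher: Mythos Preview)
Your strategy is exactly right and is what the paper's citation to \cite[Lemma~10.10]{shshI} would unpack to; the paper itself gives no self-contained argument, only pointing to that lemma, its Figure~15, and Proposition~\ref{prop:dual cell veering}. The polygon–closure–equals–switch–condition mechanism you describe is the whole proof, and you have correctly isolated the orientation bookkeeping for the standard smoothing as the one thing that must be checked by hand.

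There is one genuine gap in your write-up. Your displayed closure identity assumes $\partial P$ has at most two horizontal edges, $e^T$ and $e^B$; that is the \emph{simply} horizontally convex condition (Definition~\ref{def:horizconvex}), whereas the lemma is stated for merely horizontally convex cellulations. A horizontally convex polygon can have horizontal edges at intermediate heights sitting on its left or right boundary: for instance, the hexagon with vertices $(0,0),(1,0),(1,1),(2,1),(2,2),(0,2)$ is horizontally convex but carries a third horizontal edge from $(1,1)$ to $(2,1)$ along its right side. In the closure relation such an edge contributes $\pm[\hol(e)]_+$ with sign determined by whether the counterclockwise boundary traverses it in the positive or negative real direction, which is exactly the sign it would carry were it a non-horizontal edge on that same side of $P$. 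Correspondingly, in $\taua$ the dual arc must be smoothed into the tangent class of that side at $v_P$, and one checks (as part of the same local orientation computation you already flag) that the standard smoothing does this. Once you extend your identity to allow these intermediate horizontal edges, grouped with $\partial_L P$ or $\partial_R P$ according to which side they lie on, the rest of your argument goes through unchanged.
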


\subsection{Train track charts for strata}
Saddle connections are stable as one deforms in a stratum (i.e., the property of any individual path being a saddle connection defines an open subset), so any $q'$ near a fixed $q$ in the same stratum can be cellulated by the same set of saddle connections.
The property of a saddle connection being horizontal is not stable, however, we have carefully chosen our definitions so that simple horizontal convexity persists under small deformations.

\begin{lemma}\label{lem:simplehorizconvex_stable}
Given a simply horizontally convex cellulation $\mathsf{T}$ of a quadratic differential $q$, there is a neighborhood of $q$ in its ambient stratum such that for every $q'$ in that neighborhood, the cellulation $\mathsf{T}$ persists and remains simply horizontally convex on $q'$.
\end{lemma}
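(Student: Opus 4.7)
The plan is to combine two facts: persistence of saddle connections under small perturbations in a stratum, and openness of simple horizontal convexity as a condition on the periods of the edges of each polygon. First I would invoke the well-known stability of saddle connections to find a neighborhood $U$ of $q$ in its ambient stratum so that every edge of $\mathsf{T}$ persists as a saddle connection for each $q' \in U$, and no new saddle connections are introduced across any polygon $P$ of $q \setminus \mathsf{T}$. Since the combinatorics of incidences is preserved, $\mathsf{T}$ continues to cellulate $q'$, and each polygon $P'$ of $q' \setminus \mathsf{T}$ has the same sequence of boundary edges (as isotopy classes of saddle connections) as the corresponding $P$ of $q \setminus \mathsf{T}$.

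Next I would use period coordinates: after choosing a developing map compatibly for $P$ and $P'$, the vertices of $d'(P')$ are determined (up to the ambiguity fixed by the developing choice) by partial sums of the complex periods $[\hol(e)]_+$ of the edges bounding $P'$, and hence vary continuously in $q'$. Labeling the vertices of $d(P)$ cyclically as $v_1, \ldots, v_n$ with heights $h_i = \Im(v_i)$, simple horizontal convexity of $d(P)$ splits $\partial P$ into a descending left arc and an ascending right arc along which adjacent heights satisfy strict inequalities, except possibly for a single horizontal edge at the very top and at the very bottom; additionally, if a horizontal edge is present, its height is strictly separated from the heights of all non-adjacent vertices. This package of conditions is a finite conjunction of strict inequalities $h_i > h_j$ (together with the equalities $h_i = h_{i+1}$ defining horizontal top/bottom edges, which are \emph{allowed} but not \emph{required}). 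Strict inequalities are preserved under small perturbations, and if a horizontal top/bottom edge loses horizontality under perturbation the polygon simply acquires a single top/bottom vertex with two descending/ascending edges, which still satisfies simple horizontal convexity. Since there are only finitely many polygons $P$, shrinking $U$ uniformly handles all of them simultaneously.

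The main technical point to verify carefully is the behavior of the horizontal edges at the top and bottom: their corresponding periods are real on $q$ but may acquire a small imaginary part on $q'$, so the ``horizontal'' part of simple horizontal convexity is unstable as a statement about individual edges. The key observation is that the \emph{conclusion} of Definition \ref{def:horizconvex} is a statement about the polygon $P$ (no horizontal line meeting $P$ in a disconnected set, no two horizontal edges at the same height), which follows from strict separation of non-adjacent vertex heights; this separation \emph{is} stable, and it is what survives even when horizontal edges tilt. Once this is articulated, the lemma follows from continuity of periods and compactness of the finite set of polygons.
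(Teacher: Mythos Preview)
Your proposal is correct and follows the same approach as the paper; the paper's proof is a single sentence (``The property of being simply horizontally convex is a stable condition on polygons''), and your argument is simply a careful unpacking of that assertion, together with the persistence of saddle connections (which the paper states in the surrounding text rather than in the proof). Your explicit treatment of what happens when a horizontal top or bottom edge tilts is exactly the content behind the word ``simply'' in the definition, and is the main thing one has to check.
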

\begin{proof}
The property of being simply horizontally convex is a stable condition on polygons.
\end{proof}

If a saddle connection of $\mathsf{T}$ changes from horizontal to not as one deforms $q$, then the dual track from Construction \ref{constr:dualtt} changes by smoothing the corresponding arc (see Figure \ref{fig:dualltt_smooth}). 
Moreover, so long as all non-horizontal edges of $\mathsf{T}$ remain non-horizontal (an open condition), the corresponding edges of the dual train track persist.
We record this fact as follows:

\begin{lemma}[Dual tracks stable in strata]\label{lem:simplehorizconvex_smooth}
There is a neighborhood $B^{\cQ}_{\mathsf{T}}(q)$ of $q$ in its ambient stratum such that for any $q' \in B^{\cQ}_{\mathsf{T}}(q)$, the dual train track $\tau' := \tau(q', {\mathsf{T}})$ is a smoothing of $\tau \cup \arcb$, where $\arcb \subseteq \arc(q,\mathsf T)$.\label{ind:BQT(q)}
\end{lemma}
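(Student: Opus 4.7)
The plan is to first apply Lemma~\ref{lem:simplehorizconvex_stable} to obtain a neighborhood $U$ of $q$ in its ambient stratum over which $\mathsf{T}$ persists as a simply horizontally convex cellulation, then shrink $U$ using continuity of period coordinates to a neighborhood $B^{\cQ}_{\mathsf{T}}(q)$ such that every saddle of $\mathsf{T}$ that is non-horizontal on $q$ remains non-horizontal on every $q' \in B^{\cQ}_{\mathsf{T}}(q)$. This shrinking is possible because non-horizontal saddles have non-zero imaginary period. For $q' \in B^{\cQ}_{\mathsf{T}}(q)$, the arcs of $\arc(q', \mathsf{T})$ are then a subset of those of $\arc(q, \mathsf{T})$, and I set $\arcb := \arc(q, \mathsf{T}) \setminus \arc(q', \mathsf{T})$, namely the arcs dual to the saddles that switched from horizontal to non-horizontal.

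With these definitions in hand, $\tau'$ and $\tau \cup \arcb$ share the same underlying $1$--complex, since the non-horizontal saddles of $\mathsf{T}$ on $q'$ are exactly those on $q$ together with the newly tilted ones. Moreover, the tangential data that Construction~\ref{constr:dualtt} assigns at each vertex of $\tau$ coming from $\tau$'s original branches is preserved: each non-horizontal saddle of $q$ varies continuously and remains on the same (left or right) side of its two bounding polygons, so its left/right group membership is unchanged on $q'$. Thus $\tau'$ differs from $\tau$ only by adding the arcs of $\arcb$ as new branches, with tangential data at $\arcb \cap \tau$ prescribed by Construction~\ref{constr:dualtt}.

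The key step is to verify that this addition constitutes a valid smoothing in the sense of Definition~\ref{def:smoothing}. For each $\alpha \in \arcb$, dual to a saddle $e$ bounding polygons $P$ and $Q$, the outward normals of $e$ from $P$ and from $Q$ point in exactly opposite directions. Since $e$ is non-horizontal on $q'$, both normals have non-zero horizontal components of opposite signs; consequently $e$ is a left-side edge of one of $\{P, Q\}$ and a right-side edge of the other. Through Construction~\ref{constr:dualtt}, this places the new branch dual to $e$ into the left group at one of $\{v_P, v_Q\}$ and the right group at the other.

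The main obstacle is confirming that this seemingly mismatched ``left at one end, right at the other'' assignment is nonetheless a consistent smoothing from the perspective of an observer inside $\alpha$. Orienting $\alpha$ from the endpoint $v_P$ with $P$ below $e$ to the endpoint $v_Q$ with $Q$ above, an observer standing in $\alpha$ facing $v_P$ has her right hand pointing westward, while facing $v_Q$ it points eastward. A direct case check then shows that if $e$ has positive slope on $q'$, then $\alpha$ joins the (west-exiting) left group at $v_P$ and the (east-exiting) right group at $v_Q$, which is ``right'' at both endpoints from the observer's point of view; negative slope yields the symmetric ``both left'' case. In either case, the condition of Definition~\ref{def:smoothing} is satisfied, so $\tau'$ is a smoothing of $\tau \cup \arcb$ as claimed.
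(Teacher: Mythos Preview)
Your proof is correct and follows the same approach as the paper, which treats this lemma as essentially immediate from the discussion preceding it (the paper just observes that tilting a horizontal saddle smooths the dual arc, pointing to Figure~\ref{fig:dualltt_smooth}, and that non-horizontality is an open condition). Your version is more detailed: you explicitly verify that the tangential data assigned by Construction~\ref{constr:dualtt} to a newly-tilted saddle satisfies the ``both turn the same way'' condition of Definition~\ref{def:smoothing}, via the observer/left-right case check, whereas the paper leaves this to the figure.
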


Together with Lemma \ref{lem:ttwts_from_cell}, this statement allows us to parameterize differentials in $B^{\cQ}_{\mathsf{T}}(q)$ in terms of weight systems on different smoothings of $\tau$.
However, in the sequel it will be convenient to consider these all as weight systems on the same train track.
We therefore introduce a combinatorial move on smoothings that will allow us to do this.
Let $\tau$ be a train track, $\arc$ an arc system on $S \setminus \tau$, and $\taua$ any smoothing of $\tau \cup \arc$. We can {\bf flip} $\taua$ by changing the smoothing direction of any arc $\alpha \in \arc$, so if $\alpha$ turned right when encountering $\tau$ then it turns left in the flip of $\taua$.
\label{ind:flipping}

\begin{lemma}[Flipping weights]\label{lem:flipiso}
Suppose that $\tau$ is a train track and $\alpha$ is an arc on $S \setminus \tau$.
Let $\tau_\alpha$ and $\tau_\alpha'$ denote the two possible smoothings of $\tau \cup \alpha$ (so they are obtained from each other by flipping $\alpha$).
Then there is a natural $\RR$-linear isomorphism between $W_{\CC}(\tau_\alpha)$ and $W_{\CC}(\tau_\alpha')$ that negates the weight on $\alpha$ and restricts to the identity on the common subspace $W_{\CC}(\tau)$.
\end{lemma}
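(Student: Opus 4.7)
The plan is to write down the claimed isomorphism explicitly on coordinates and then verify it is well-defined by a local analysis at the two endpoints of $\alpha$. The key preliminary observation is that $\tau_\alpha$ and $\tau_\alpha'$ have identical underlying graphs: both are obtained from $\tau \cup \alpha$ by subdividing the branch $b_p$ of $\tau$ containing each endpoint $p$ of $\alpha$ into two half-branches $b_p^+, b_p^-$ at $p$, producing a new trivalent vertex there. The two extensions differ only in the tangential data at these two new switches. Consequently, their branch sets are canonically in bijection, and I can define a candidate $\mathbb{R}$-linear (in fact $\mathbb{C}$-linear) map $\Phi : \mathbb{C}^{b(\tau_\alpha)} \to \mathbb{C}^{b(\tau_\alpha')}$ sending a weight system $w$ to the system $w'$ that agrees with $w$ on every branch except $\alpha$, where $w'(\alpha) := -w(\alpha)$. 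Because $\Phi$ is its own inverse, once it is shown to carry $W_{\CC}(\tau_\alpha)$ to $W_{\CC}(\tau_\alpha')$ it is automatically an isomorphism.

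The verification that $\Phi$ preserves switch conditions reduces to a local check at the two new switches, since at every other vertex the tangential data is identical in $\tau_\alpha$ and $\tau_\alpha'$. At an endpoint $p$ of $\alpha$, let $a_p$ denote the half-branch of $\alpha$ incident to $p$. The two smoothing directions for $\alpha$ at $p$ correspond exactly to placing $a_p$ on the same side of the tangent line as $b_p^+$ or $b_p^-$. With a fixed labelling, the switch condition at $p$ in $\tau_\alpha$ takes the form $w(a_p) + w(b_p^+) = w(b_p^-)$, while flipping the smoothing reverses which of $b_p^\pm$ lies opposite $a_p$, giving the switch condition $w(b_p^+) = w'(a_p) + w(b_p^-)$ in $\tau_\alpha'$. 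These two equations are interconverted exactly by the substitution $w'(a_p) = -w(a_p)$. Since flipping $\alpha$ reverses the smoothing direction at both endpoints simultaneously, this sign change is consistent with the (unique) sign negation of the single $\alpha$-weight at both endpoints.

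Finally, the inclusion $W_{\CC}(\tau) \hookrightarrow W_{\CC}(\tau_\alpha)$ is identified with the locus where $w(\alpha) = 0$ and where $w(b_p^+) = w(b_p^-)$ at both endpoints (so that the subdivided branches recombine into a single branch of $\tau$ with a well-defined weight). Both of these conditions are preserved by $\Phi$, and $\Phi$ acts by negating $0$ on the $\alpha$-coordinate, so $\Phi$ restricts to the identity on $W_{\CC}(\tau)$ as claimed.

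There is no serious obstacle here: the proof is essentially bookkeeping about sign conventions in switch conditions at a trivalent vertex. The only thing requiring care is being consistent about which side of the local tangent line at each new switch hosts $a_p$ versus $b_p^\pm$, but the coordinated flipping at the two endpoints guarantees that a single global sign change on the $\alpha$-weight suffices to move between the two sets of switch equations.
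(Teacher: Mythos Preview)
Your proof is correct and takes essentially the same approach as the paper's own proof, which consists of a single sentence referencing a figure: the key observation in both is that flipping $\alpha$ at each endpoint moves the $\alpha$-half-branch to the opposite side of the switch, so negating its weight converts one switch condition into the other. Your write-up simply spells out explicitly what the paper leaves to the figure.
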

\begin{proof}
This follows from Figure \ref{fig:flip}; observe that at each of its endpoints, flipping the branch corresponding to $\alpha$ reverses which side of the switch at which it appears, so negating the weights means that the switch conditions in the flipped train track are fulfilled.
\end{proof}

\begin{figure}
    \centering
    \includegraphics[width=.7\linewidth]{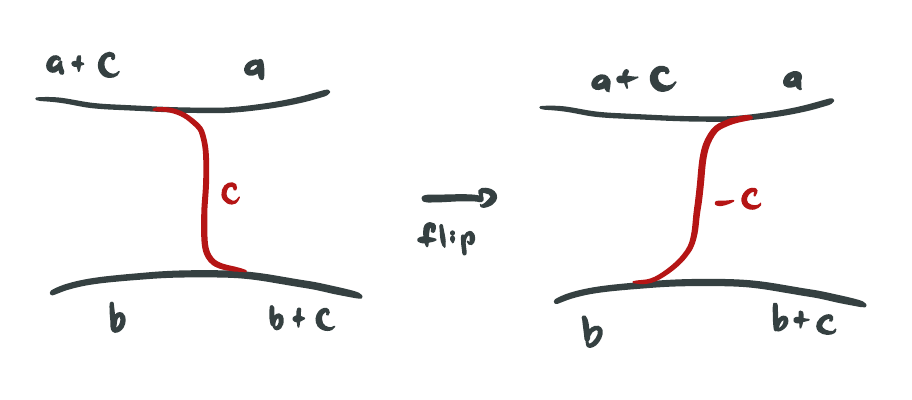}
    \caption{Flipping an edge and negating its weight.}
    \label{fig:flip}
\end{figure}

When an arc system has multiple arcs, flipping each of them is an independent move and preserves the weight system on the remainder of the train track.
In particular, we see that any smoothing of $\tau \cup \arc$ can be flipped to the standard one, inducing an isomorphism between their weight systems.

Recall from \S\ref{subsec:QD bckgrd} that a period coordinate chart near $q$ for its ambient stratum is modeled on $H^1(\widehat q; Z(\widehat q), \CC)^-$, the $-1$-eigenspace for the covering involution on the relative cohomology of the orientation cover $\widehat{q}$.

\begin{proposition}[Train track charts for strata]\label{prop:ttcharts_stratum}
Let $\mathsf{T}$ be a simply horizontally convex cellulation of a quadratic differential $q$ and set $\tau = \tau(q, {\mathsf{T}})$ and $\arc = \arc(q, \mathsf{T})$.
Let $\taua$ denote the standard smoothing of $\tau \cup \arc$, and let $B^{\cQ}_{\mathsf{T}}(q)$ be as in Lemma \ref{lem:simplehorizconvex_smooth}.
Then the $\CC$-linear map 
\[H^1(\widehat q; Z(\widehat q), \CC)^- \to \CC^{b(\taua)}\]
given by evaluating relative cycles dual to branches of $\taua$ composed with $[\cdot ]_+$ is an isomorphism onto the weight space $W_\CC(\taua) < \CC^{b(\taua)}$.
Furthermore,  $\Im [\hol(q')(e)]_+>0$ for every $q' \in B^{\cQ}_{\mathsf T}(q)$ and relative cycle $e$ dual to a branch $b\in \tau$.
\end{proposition}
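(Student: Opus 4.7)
The plan is to interpret everything through the duality between the cellulation $\mathsf{T}$ and the augmented smoothed train track $\taua$ set up in \S\ref{sec:dual cellulations}. Under Constructions \ref{constr:dualtt} and the standard smoothing procedure, the non-horizontal edges of $\mathsf{T}$ are dual to branches of $\tau$, while the horizontal edges are dual to arcs of $\arc$ that become additional branches of $\taua$. This gives a bijection between the branches of $\taua$ and the edges of $\mathsf{T}$. Under this identification, the evaluation map on relative $1$-cycles dual to branches of $\taua$ is nothing other than ``integrate $\sqrt{q'}$ along the corresponding saddle connection of $\mathsf{T}$'' (with the sign ambiguity resolved by $[\cdot]_+$ using the anti-invariance under the covering involution).

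Next, I will verify that the image lands in $W_\CC(\taua)$. Each switch $v$ of $\taua$ is dual to a (simply horizontally convex) polygon $P_v$ of $q\setminus\mathsf{T}$. The switch condition at $v$ says that the sum of weights on incoming half-branches equals the sum on outgoing half-branches. Up to the sign convention built into $[\cdot]_+$, this is exactly the cycle relation obtained by integrating a closed $1$-cocycle around $\partial P_v$: the horizontal convexity pins down which edges contribute with which sign, and the anti-invariance on $\widehat q$ reconciles the two choices of square root consistently across $\partial P_v$. Lemma~\ref{lem:ttwts_from_cell} is the special case where the cohomology class is $[\hol(q)]_+$ itself, and the same argument works verbatim for any class in $H^1(\widehat q;Z(\widehat q),\CC)^-$.

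To upgrade the resulting $\CC$-linear map $H^1(\widehat q;Z(\widehat q),\CC)^- \to W_\CC(\taua)$ to an isomorphism, I will compare dimensions and prove injectivity. Since $\mathsf{T}$ is a cellulation of $q$, the edges of (its lift to) $\widehat q$ form a basis of relative $1$-chains modulo the anti-invariant boundary relations, so a class in $H^1(\widehat q;Z(\widehat q),\CC)^-$ vanishing on all of them must be zero — this gives injectivity. For the dimension count, the complex dimension of $H^1(\widehat q;Z(\widehat q),\CC)^-$ equals the complex dimension of the stratum $\cQ$, and the same is true of $W_\CC(\taua)$: the switch relations at the vertices of $\taua$ are in bijection with the polygon-boundary relations of $\mathsf{T}$, so the Euler characteristic computation on both sides matches (compare \cite[\S10]{shshI} in the case of maximal $\lambda$). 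The main subtlety I expect is making sure the anti-invariance on $\widehat q$ interacts correctly with the $[\cdot]_+$ convention at switches where several arcs meet — but once one polygon is checked, the rest follow by propagating the sign choice along $\mathsf{T}$ using simple horizontal convexity.

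For the final positivity assertion, let $b$ be a branch of $\tau$ and let $e$ be the dual non-horizontal edge of $\mathsf{T}$. By Lemma~\ref{lem:simplehorizconvex_stable}, $\mathsf{T}$ persists as a simply horizontally convex cellulation throughout $B^{\cQ}_{\mathsf{T}}(q)$, so $e$ remains non-horizontal on every $q'$ in this neighborhood; since $[\cdot]_+$ is defined precisely so that $[z]_+$ has strictly positive imaginary part when $z\notin\RR$, we conclude $\Im[\hol_{q'}(e)]_+>0$. This part is essentially immediate from the definitions once the preceding identification is in place.
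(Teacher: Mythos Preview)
Your approach is essentially correct but takes a genuinely different route from the paper's. You argue abstractly: show the linear map lands in $W_\CC(\taua)$, prove injectivity (edges of a cellulation span relative homology), and match dimensions via an Euler-characteristic count. The paper instead argues geometrically: by Lemmas~\ref{lem:simplehorizconvex_smooth} and~\ref{lem:ttwts_from_cell}, each nearby $q'$ yields a weight system on \emph{some} smoothing of $\tau\cup\arc$, and this is a local PL homeomorphism because weights are periods. The new ingredient is the flipping move (Lemma~\ref{lem:flipiso}): given an arbitrary weight system on the \emph{standard} smoothing $\taua$ near $[\hol(q)]_+$, flip any arc-branches whose imaginary part went negative to land on a smoothing where all imaginary parts are positive, which then corresponds to an actual $q'$. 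This produces an explicit two-sided inverse without ever counting dimensions, and it sets up exactly the mechanism reused in Proposition~\ref{prop:ttcharts_Q*} for $\cQ^*$.

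Two points where your write-up is thinner than it should be. First, the dimension count hides an orientability dichotomy: for a train track with $b$ branches and $s$ switches, $\dim W = b-s$ unless the track is orientable, in which case one switch relation is redundant and $\dim W = b-s+1$. This exactly mirrors the square-of-abelian versus genuine-quadratic dichotomy for $\dim H^1(\widehat q;Z(\widehat q);\CC)^-$, but you need to say why $\taua$ is orientable precisely when $q$ is a global square; your gesture toward \cite[\S10]{shshI} covers the maximal case but not obviously this one. The paper's flipping argument sidesteps this entirely. Second, for the positivity clause you cite Lemma~\ref{lem:simplehorizconvex_stable}, but simple horizontal convexity alone does not prevent a non-horizontal edge from becoming horizontal. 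The correct reference is Lemma~\ref{lem:simplehorizconvex_smooth}: the neighborhood $B^{\cQ}_{\mathsf T}(q)$ is defined there precisely so that $\tau(q',\mathsf T)$ is a smoothing of $\tau\cup\arcb$, which forces every branch of $\tau$ to persist as a branch (not an arc), hence its dual edge stays non-horizontal.
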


In particular, if $(X, \lambda) \in \PM_g$ and $\taua$ is the standard smoothing of the augmentation of an equilateral train track by its visible arc system,
then any $q'$ near $\cO(X, \lambda)$ and in the same stratum may be represented by a weight system on $\taua$ with positive imaginary parts on the branches of $\tau$.

\begin{proof}
Lemmas \ref{lem:simplehorizconvex_smooth} and \ref{lem:ttwts_from_cell} combined imply that every $q' \in B^{\cQ}_{\mathsf{T}}(q)$ can be represented as a complex weight system on some smoothing of $\tau \cup \arc$; since the weights correspond to periods, this is also locally a PL homeomorphism.
It therefore remains to show that each weight system near $[\hol(q)]_+$ in $W_{\CC}(\taua)$ corresponds to a unique $q' \in B^{\cQ}_{\mathsf{T}}(q)$.

We deduce this from the fact that the imaginary parts of the weight systems $[\hol(q')]_+$ are always positive.
In particular, given any weight system $w$ on the standard smoothing $\taua$ near $[\hol(q)]_+$, the imaginary parts of the weights on branches of $\tau$ will remain positive, while the imaginary parts of the weights on branches corresponding to arcs of $\arc$ may be of any sign.
If any of these are negative, then we may flip those branches of $\taua$ to arrive at a different smoothing of $\tau \cup \arc$ on which all of the imaginary parts of $w$ are positive; this then corresponds to some $q'$ near $q$.
\end{proof}

We remark that since the real and imaginary parts of these weight systems correspond to the real and imaginary parts of periods, the foliations $\Fol^{ss} \cap \cQ$ and $\Fol^{uu} \cap \cQ$ (see \S\ref{subsec:MF backgrd}) are mapped to the foliations of $W_{\CC}(\taua)$ by weight systems with fixed real and imaginary part, respectively.

\subsection{Breaking up zeros}\label{subsec:nbhd_strata}
We now extend the results of the previous subsection to a neighborhood of the ambient stratum (component) $\cQ$; this will require some details about how strata fit together.

A small neighborhood of a non-principal stratum $\cQ$ in $\QT_g$ can be described by how one breaks up the zeros of $\cQ$, possibly subject to a residue constraint if $\cQ$ consists of squares of abelian differentials.
In particular, for any $q'$ near $\cQ$ we get a finite-to-one correspondence between the zeros of $q'$ and the zeros of some nearby $q\in \cQ$.
In period coordinates adapted to $\cQ'$, the stratum $\cQ$ corresponds to the linear subspace where the short saddles connecting close-by zeros of $q'$ are zero.

Another way to think about the correspondence between zeros $Z(q')$ of $q'$ and zeros $Z(q)$ of $q$ is that we have an identification of $q\setminus Z(q)$  with a subsurface $Y_q \subset q'\setminus Z(q')$, where the complement of $Y_q$ is a set of punctured disks.\label{ind:Yq}
Each component of $q'\setminus Y_q$ contains all the zeros of $q'$ that correspond to a given zero of $q$.
See \cite[Chapter IV, Section 1 and Lemma 4.8]{HubMas} as well as \cite{BCGGM_kdiffs} (to apply the latter reference, one should realize $\cQ$ in the boundary of each incident stratum).
Compare also \cite[Section 5.2]{KZstrata} as well as the discussion of neighborhoods of the principal boundary in \cite[Lemma 9.8]{EMZprincipal} and \cite[Theorem 4]{MZprincipal}.
\medskip 

Recall that the {\bf injectivity radius} $\injrad_q(x)$ at a point $x$ on a flat surface $q$ is the radius of the largest Euclidean ball embedded in $q$ and centered at $x$; in particular, this ball cannot contain any cone points.  The injectivity radius at $z\in Z(q)$ is $0$.\label{ind:flatthickthin}

The {\bf $t$-thick part} is \[\Thick_t(q) = \{x \in q: \injrad_q(x) >t\},\] and the {\bf $t$-thin part} $\Thin_t(q)$ is the complement.
If $t$ is less than half the length of the (flat) systole of $q$, then each component of $\Thin_t(q)$
is a topological disk containing some (positive) number of zeros of $q$.
Moreover, each component of $\Thin_t(q)$
is (locally) geodesically convex and has diameter at most $Ct$, where $C$ is a topological constant depending only on the genus $g$.

Suppose $q_n \to q\in \QT_g$.  Then the singular flat metrics on $q_n$ converge to the singular flat metric on $q$ in the following sense:
For every $\epsilon>0$ and large enough $n$, there are maps $g_n: q\setminus Z_n \to q_n$ that are $(1+\epsilon)$-bi-Lipschitz diffeomorphisms onto their images where $Z_n$ are neighborhoods of $Z(q)$ with $\cap_n Z_n = Z(q)$.  Moreover, $\injrad_{q_n}(x)$ goes to $0$ uniformly in $n$ for $x\notin \im g_n$.
The following lemma provides inverses to our $g_n$'s from above and follows from the proof of Proposition 2.4 in \cite{MW_boundary}.

\begin{lemma}\label{lem:collapsemaps}
For any $q \in \QT_g$, any $\epsilon>0$, and any $t>0$ smaller than half the length of the systole of $q$, there is a neighborhood $U\subset \QT_g$ of $q$ such that for any $q'\in U$, there is a   ``collapse map'' $f:q'\to q$ satisfying:\label{ind:geomcollapse}
\begin{enumerate}
    \item $\Thin_t(q')$ is a disjoint union of disks.
    \item For every $z\in Z(q)$ the set $f\inverse (z)$ is a component of $\Thin_{t/2}(q')$.
    \item The restriction of $f$ to $\Thick_t(q')$ is a $(1+\epsilon)$-bi-Lipschitz diffeomorphism onto its image.
\end{enumerate}
\end{lemma}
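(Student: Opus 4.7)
The plan is to extract the collapse map $f$ from the convergence description of $\QT_g$ recalled just before the Lemma statement, together with the breaking-of-zeros description of a neighborhood of a stratum from \S\ref{subsec:nbhd_strata}. By Proposition 2.4 of \cite{MW_boundary}, for any $\epsilon' > 0$ there is a neighborhood $U$ of $q$ in $\QT_g$ such that every $q' \in U$ admits a $(1+\epsilon')$-bi-Lipschitz diffeomorphism $g_{q'} : q \setminus Z_{q'} \to \im(g_{q'}) \subset q'$, where $Z_{q'}$ is a small open neighborhood of $Z(q)$ in $q$, and $\injrad_{q'}$ goes to zero uniformly off $\im(g_{q'})$ as $q' \to q$. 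I will define $f = g_{q'}^{-1}$ on $\im(g_{q'})$ and extend it to send each component of $q' \setminus \im(g_{q'})$ to its corresponding zero in $Z(q)$, as dictated by the breaking-of-zeros picture.

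Setting $\epsilon' = \epsilon$, I would then shrink $U$ so that, uniformly for $q' \in U$: (a) $Z_{q'}$ is contained in the union of disjoint Euclidean balls of some radius $\eta \ll t$ around the points of $Z(q)$, using that $t < \sys(q)/2$; (b) by \S\ref{subsec:nbhd_strata}, the components of $q' \setminus \im(g_{q'})$ are topological disks in bijection with $Z(q)$, each containing the cluster of zeros of $q'$ that collides to a single zero of $q$; and (c) $\injrad_{q'} < t/2$ on $q' \setminus \im(g_{q'})$. Continuity of $f$ then follows because the boundary of each complementary component is carried by $g_{q'}^{-1}$ into an arbitrarily small neighborhood of the corresponding zero of $q$.

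Property (3) is immediate: on $\Thick_t(q')$, the map $f$ agrees with $g_{q'}^{-1}$, which is $(1+\epsilon)$-bi-Lipschitz onto its image. Property (1) follows from the fact that, because $t < \sys(q)/2$, the $t$-thin part of $q$ is a disjoint union of disks around $Z(q)$; by bi-Lipschitz transport under $g_{q'}$ and by absorbing the (small) complementary components of $\im(g_{q'})$ into the $t$-thin disks, the $t$-thin part of $q'$ is also a disjoint union of topological disks, each containing a cluster of zeros of $q'$ degenerating to a single zero of $q$.

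The main obstacle, and where the most care is required, is property (2): the exact equality $f^{-1}(z) = $ (a component of $\Thin_{t/2}(q')$). One inclusion is item (c). For the reverse inclusion one must show $\im(g_{q'}) \cap \Thin_{t/2}(q') = \emptyset$. Since $Z(q') \subset q' \setminus \im(g_{q'})$, the bi-Lipschitz estimate forces $\injrad_{q'}(g_{q'}(x)) > t/2$ once $\operatorname{dist}_q(x, Z(q)) > (1+\epsilon)(t/2)$. By further shrinking $U$ so that $Z_{q'}$ sits inside the $\epsilon t$-ball around $Z(q)$, the remaining annular neighborhood of $\partial Z_{q'}$ inside $\im(g_{q'})$ is nonetheless sent into $\Thick_{t/2}(q')$, since any nearby zero of $q'$ must lie in a different complementary component and hence at $q'$-distance greater than $t/2$ (again invoking $t < \sys(q)/2$). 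Combined with (b), this gives the bijection of components asserted in (2), completing the plan.
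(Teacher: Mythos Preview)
The paper does not give a proof; it asserts the lemma follows from the proof of Proposition 2.4 in \cite{MW_boundary}. Your approach---invert the near-isometries $g_{q'}$ and crush complementary disks to zeros---is the right idea and matches what is being cited, but the particular $f$ you build does not work.

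As defined, $f = g_{q'}^{-1}$ on $\im(g_{q'})$ and $f \equiv z$ on the adjacent component of $q' \setminus \im(g_{q'})$; these do not agree on the common boundary (from one side $f$ limits to a point of $\partial Z_{q'}$, from the other to $z$), so $f$ is not continuous. Even setting that aside, property (2) is an \emph{exact} equality that your $f$ cannot satisfy: $f^{-1}(z)$ is a component of $q' \setminus \im(g_{q'})$, of diameter $O(\eta)$, while the component of $\Thin_{t/2}(q')$ containing it has diameter of order $t$. Your final paragraph attempts to show $\im(g_{q'}) \subset \Thick_{t/2}(q')$ by claiming that any zero of $q'$ near $g_{q'}(x)$ must lie in a \emph{different} complementary component, but this is backwards: a point $g_{q'}(x)$ just inside $\im(g_{q'})$ sits at $q'$-distance $O(\eta) \ll t/2$ from the zeros clustered in the \emph{adjacent} component, so $g_{q'}(x) \in \Thin_{t/2}(q')$.

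The repair is to build $f$ in three zones. On $\Thick_t(q')$ (where (3) lives), set $f = g_{q'}^{-1}$; this is defined since $q' \setminus \im(g_{q'}) \subset \Thin_{t/2}(q') \subset \Thin_t(q')$ by your item (c). On each component of $\Thin_{t/2}(q')$, set $f$ equal to the corresponding zero of $q$, which forces (2) by fiat. On the intermediate regions $\Thick_{t/2}(q') \cap \Thin_t(q')$---which are topological annuli by your argument for (1) applied at both scales $t$ and $t/2$---interpolate continuously, for instance by radial contraction toward $z$ in $q$. No Lipschitz control is claimed on this middle zone, so any continuous interpolation homotopic to the identity suffices.
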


In the sequel, we will say that \textbf{$q'$ collapses to $q$} to mean that 
there are suitable constants $\epsilon$ and $t$ and a collapse map $f: q'\to q$ satisfying the conclusions of the Lemma. 

We also want the following topological formulation of collapsing:\label{ind:topcollapse}

\begin{definition}\label{def:topcollapse}
Suppose $\Sigma_1$ and $\Sigma_2$ are finite sets on a closed surface $S$.  
A \textbf{topological collapse map} from $(S,\Sigma_1)$ to $(S,\Sigma_2)$ is a continuous map $f: S \to S$ homotopic to the identity 
such that $f(\Sigma_1) = \Sigma_2$.
Furthermore, we require that
$f$ is supported on a neighborhood of $\Sigma_1$ where each component of the support of $f$ is a disk containing one or more elements of $\Sigma_1$.
\end{definition}

These definitions allow us to extend the notion of having a common cellulation by saddle connections when $q'$ (topologically) collapses to $q$ but lives in a different stratum.

\begin{definition}\label{def:realizerefine}
Let $q\in \QT$ and let $e$ be a saddle connection on $q$.
Suppose that $q' \in \QT$ (topologically) collapses to $q$ with (topological) collapse map $f$.
We say that $q'$ {\bf realizes} $e$ if there is a saddle connection $e'$ on $q'$ such that $f(e')$ is isotopic to $e$ rel the zeros of $q$.\label{ind:rere}
Now suppose that $\mathsf{T}$ is a cellulation of $q$ by saddle connections. We say that a cellulation $\mathsf T'$ of $q'$ by saddle connections {\bf refines} $\mathsf{T}$ if every saddle connection of $\mathsf T$ is realized by saddle connection of $\mathsf T'$.  In other words, $f(\mathsf T')$ is isotopic to a cellulation of $q$ by saddle connections that contains $\mathsf T$.
\end{definition}

The way that one should think of a refinement $\mathsf{T}'$ is that it consists of (possibly multiple) realizations of the saddle connections of $\mathsf{T}$, together with short saddle connections between the zeros of $q'$ that are collapsed to a zero of $q$, along with possibly some extra saddles. 
See \cite[Definition 3.13]{Frankel} for an related notion of how cellulations of $q'$ degenerate to cellulations of $q$ that uses the geometry of the universal curve over Teichm{\"u}ller space.
Another way to think of a refinement is to consider $q' \setminus \mathsf{T}'$ as a set of polygons in the plane: shrinking the short edges of these polygons leaves us with a new collection of polygons that cellulate $q \setminus \mathsf{T}$.  
Compare Figure 
\ref{fig:refinecell}.

\begin{figure}
    \centering
    \includegraphics[width=.8\linewidth]{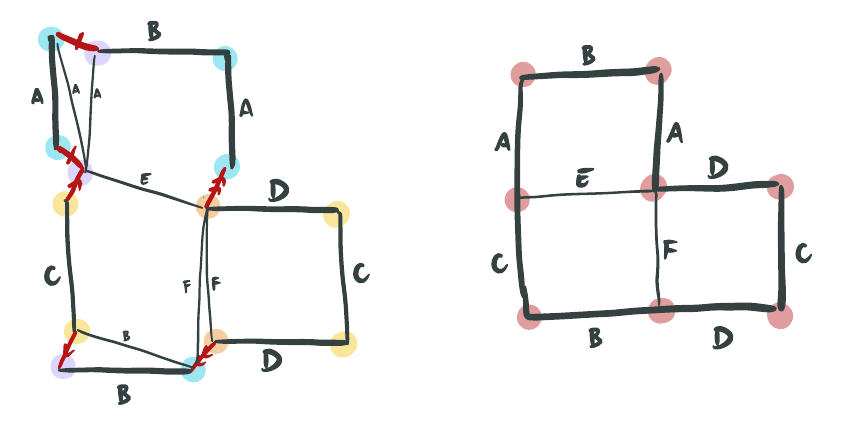}
    \caption{A refinement of a cellulation by squares. The surface $q'$ on the left is in the principal stratum of quadratic differentials, while the surface $q$ on the right is in the minimal stratum and is the square of an abelian differential. The short red edges of $q'$ are those that are sent to the zero of $q$ under a collapse map.}
    \label{fig:refinecell}
\end{figure}

\begin{lemma}\label{lem:cellspersist}
For every $q \in \QT_g$ and every horizontally convex cellulation $\mathsf{T}$ of $q$ by saddle connections, there is an open neighborhood $B_\mathsf{T}(q)$ of $q$ in $\QT_g$ \label{ind:BTq} such that every $q'$ in $B_\mathsf{T}(q)$ has a horizontally convex cellulation by saddle connections $\mathsf{T}'$ that refines $\mathsf{T}$.

Let $\tau'$ denote the dual train track $\tau(q', \mathsf{T}')$. Then $B_\mathsf{T}(q)$ can also be taken such that after removing any branches of $\tau'$ dual to the short edges of $\mathsf{T}'$ and collapsing a finite union of ``short'' branches, the resulting train track is isotopic to a smoothing of $\tau(q, \mathsf{T}) \cup \arcb$, where $\arcb \subset \arc(q, \mathsf{T})$.
\end{lemma}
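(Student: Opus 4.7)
The plan is to use the collapse maps from Lemma \ref{lem:collapsemaps} to transfer $\mathsf{T}$ from $q$ to a refined cellulation $\mathsf{T}'$ on every nearby $q' \in \QT_g$.  First, choose $t$ less than half the flat systole of $q$ and a small fraction of the minimum length of any saddle connection in $\mathsf{T}$. Applying Lemma \ref{lem:collapsemaps} produces a neighborhood $U$ of $q$ in $\QT_g$ such that for every $q' \in U$ there is a collapse map $f \colon q' \to q$ which is a $(1+\epsilon)$-bi-Lipschitz diffeomorphism on $\Thick_t(q')$ and collapses each component of $\Thin_{t/2}(q')$, a disk containing a cluster of zeros of $q'$, to a zero of $q$. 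For each $z \in Z(q)$ fix a distinguished zero $z_0$ in the cluster $f^{-1}(z)$.

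For each saddle connection $e$ of $\mathsf{T}$ joining $z, z' \in Z(q)$, the interior of $e$ (away from small disk neighborhoods of its endpoints) lies in $\Thick_t(q)$, so $f^{-1}$ transports $e$ to an almost-straight arc in $q'$ joining the thin disks around $z$ and $z'$; straightening and extending produces a genuine saddle connection $e'$ of $q'$ between the distinguished zeros $z_0$ and $z_0'$, thereby realizing $e$. Within each thin disk, we then add short saddle connections of length $O(t)$ to triangulate it, incorporating every remaining zero of $q'$ in the cluster as a vertex; let $\mathsf{T}'$ denote the resulting collection of saddle connections. By construction $\mathsf{T}'$ refines $\mathsf{T}$ in the sense of Definition \ref{def:realizerefine}, and each polygon of $q' \setminus \mathsf{T}'$ is either a small triangle inside a thin disk or a slight deformation of a polygon of $q \setminus \mathsf{T}$ with a short zig-zag segment replacing each vertex. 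Horizontal convexity of $\mathsf{T}'$ follows because triangles are trivially horizontally convex, and the deformed polygons inherit horizontal convexity from those of $\mathsf{T}$ since, by horizontal convexity of the original polygon, its leftmost and rightmost boundary arcs are monotone and the vertex-replacing segment does not destroy this property for small $t$.

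For the dual train track assertion, partition the branches of $\tau' = \tau(q', \mathsf{T}')$ into three types: (a) duals to realizations on $q'$ of non-horizontal edges of $\mathsf{T}$, which correspond naturally to branches of $\tau(q,\mathsf{T})$; (b) duals to realizations of horizontal edges of $\mathsf{T}$ that happen to be non-horizontal on $q'$, which smooth in some subset $\arcb \subseteq \arc(q,\mathsf{T})$ as in Lemma \ref{lem:dualtt_equitt}; and (c) duals to the short triangulation edges inside thin disks, which are exactly the branches dual to short edges of $\mathsf{T}'$ to be removed. After deleting the type-(c) branches one is left with a graph carrying extra vertices at the small triangles inside each thin cluster; collapsing a finite union of short residual branches incident to these extra vertices amalgamates them into the single vertex dual to the corresponding polygon of $\mathsf{T}$. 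The resulting train track is, up to isotopy (and up to the slide and flip moves of \S\ref{subsec:smoothings} accounting for the chosen smoothing direction, which may depend on the sign of the imaginary period of each arc of $\arcb$ on $q'$), the standard smoothing of $\tau(q,\mathsf{T}) \cup \arcb$.

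The principal obstacle is the combinatorial bookkeeping in the final step: one must specify exactly which residual short branches to collapse so that the tangential data at each resulting switch matches those of a smoothing of $\tau(q,\mathsf{T}) \cup \arcb$, and one must also verify that the triangulations within thin disks can be chosen uniformly across $q' \in U$ so as to preserve horizontal convexity of $\mathsf{T}'$, even when some pairs of split zeros lie on a common horizontal line on $q'$.
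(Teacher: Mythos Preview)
Your approach is essentially the paper's own --- use the collapse maps of Lemma~\ref{lem:collapsemaps}, realize each edge of $\mathsf{T}$ by a saddle connection on $q'$, add short saddles to incorporate the remaining zeros, then analyze the dual train track --- but there are two genuine gaps.

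First, you do not explain why the realizing saddle connections $e'$ are pairwise disjoint. Straightening the $f^{-1}$-image of an edge of $\mathsf{T}$ and extending into the thin disks does produce a saddle connection, but two such saddles could in principle cross near a thin component. The paper rules this out by taking the realizing path to be the geodesic in $\widetilde{q}'$ between chosen lifts and invoking the local CAT(0) property of the singular flat metric: if two such geodesics crossed near a thin disk, one would obtain two homotopic geodesic paths from a common zero to the crossing point, forcing them to coincide.

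Second, and more seriously, your claim that ``the vertex-replacing segment does not destroy horizontal convexity for small $t$'' is false in general. If a short zig-zag inserted at a vertex contains a segment whose vertical direction is opposite to that of the monotone boundary arc it sits on, the perturbed polygon fails to be horizontally convex no matter how small $t$ is; the paper's Figure~\ref{fig:dual_tt_deformation} depicts exactly this situation. The paper's fix is to add extra diagonal saddle connections to subdivide each such polygon into horizontally convex pieces (e.g.,\ by triangulating). The phrase ``collapsing a finite union of short branches'' in the lemma refers precisely to the branches of $\tau'$ dual to these extra diagonals: they join dual vertices lying inside a single cell of $\mathsf{T}$, and collapsing them recovers the single switch of $\tau(q,\mathsf{T})$. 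Your type-(c) deletions correspond to the ``removing branches dual to short edges'' clause, but you have no mechanism producing the branches that must be \emph{collapsed}, because you assumed the large polygons were already horizontally convex.
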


In particular, the complex weight space of the dual of $\mathsf T$ on $q$ may naturally be identified with a subspace of the complex weight space of the dual of $\mathsf T'$ on $q'$.

\begin{proof}
Given $q$ and $\mathsf T$, take $t$ much smaller than the length of the systole of $q$ and the minimal distance between any saddle connection $e$ of $\mathsf T$ and any zero of $q$ not contained in $\partial e$.
     Provided that $\epsilon$ is small enough, 
      every saddle $e$ of $\mathsf T$ in a $(1+\epsilon)$-bi-Lipschitz deformation of $\Thick_{t/2}(q)$ is contained in a band of Euclidean segments joining the boundary components corresponding to $\partial e$.
    Thus if $U$ is a neighborhood of $q$ guaranteed by Lemma \ref{lem:collapsemaps} with parameters $t$ and $\epsilon$ chosen as above,
    then every saddle connection in $\mathsf T$ is realized on $q'\in U$. 

    We would now like to argue that we can find a horizontally convex cellulation $\mathsf T'$ of $q'\in U$ refining $\mathsf T$.  For this,
    we enumerate the zeros $Z(q) = \{z_1, ..., z_k\}$ and choose $z_i'\in Z(q')$ satisfying $f(z_i') = z_i$ for all $i = 1, ..., k$.
    Let $e$ be a saddle connection of $\mathsf T$ with $\partial s = \{z_i, z_j\}$.  In the universal cover of $q'$, there is a unique geodesic path joining lifts of $z_i'$ and $z_j'$ whose projection $[z_i', z_j']$ to $q'$ satisfies that $f([z_i', z_j'])$ is isotopic rel marked points to $e$.  
    Then $[z_i',z_j']$ is a union of short saddle connections joining zeros in $f\inverse(z_i)$, a saddle connection $e'$ realizing $e$, and short saddle connections joining zeros in $f\inverse(z_j)$.
    
    Using the fact that the metric on $q'$ is locally CAT(0) and a small deformation of $q$, we know that the union of saddle connections in $q'$ obtained in this way do not cross.
    Indeed, if two saddles $e_1'$ and $e_2'$ meet in their interior, then they must do so near a component of $\Thin_t(q')$, because $f\inverse(e_i)$ is Hausdorff close to $e_i'$ in $\Thick_t(q')$, and $e_1$ does not cross $e_2$.  
    Then $e_1$ and $e_2$ are subsegments of geodesic paths joining $z_{i_0}'$ to distinct  $z_{i_1}'$ and $z_{i_2}'$, and  $e_1\cap e_2$ is near $z_{i_0}'$.  
    This gives two homotopic geodesic paths from $z_{i_0}'$ to $e_1\cap e_2$, but since the metric on $q'$ is locally CAT(0), this means that $e_1 = e_2$.  
    
    Thus we may add in additional short saddle connections as needed to complete this union to a cellulation by saddle connections $\mathsf T'$ that refines $\mathsf T$.
    Adding in even more saddles as needed, we can make $\mathsf T'$ horizontally convex, (e.g., because every triangulation by saddle connections is horizontally convex).
    By taking the neighborhood smaller as necessary, we can further ensure that every realization $e'$ in $\mathsf T'$ of a non-horizontal saddle connection $e$ is not horizontal.
    \medskip
    
    Let us examine how the dual train track of $\mathsf T'$ in $q'$ is related to the dual train track of $\mathsf T$ on $q$.
    Let $\{P_t\}_{t\ge 0}$ be a continuous deformation of horizontally convex $n$-gons with distinct vertices in the plane.  Then the isotopy class of the dual train track carrying the horizontal foliation from Construction \ref{constr:dualtt} changes only if the slope of a saddle connection changes from $0$ to nonzero (or vice versa) at some time $t$.

Suppose first that we are in the special case where $\{P_t\}_{t>0}$ are a continuous family of horizontally convex $n$-gons that degenerate to an $(n-k)$-gon at $t=0$.
In this case, there are $n-k$ distinguished edges of $P_t$ that correspond to edges of $P_0$.
If we further assume that none of these edges are horizontal for any $t \ge 0$, then we see that the dual train track to $P_0$ is obtained from the dual track of $P_t$ by removing the $k$ branches dual to the small edges of $P_t$ that get collapsed.

More generally, even if the $P_t$ have no horizontal edges, the situation can be more complicated, as we may need to add in extra edges to ensure the $P_t$ are horizontally convex.
Thus, we are led to considering the dual train tracks to a continuous family of $n$-gons $\{P_t\}_{t>0}$ that degenerate to a horizontally convex $(n-k)$-gon at $t=0$ and for $t>0$, a continuous choice of cellulation of $P_t$ into  horizontally convex polygons.
In this case, a similar statement to the previous paragraph is true, except that we must also collapse the branches dual to the extra interior edges of the cellulation of $P_t$ in order to obtain the dual track to $P_0$ from the dual track of $P_t$. 
See Figure \ref{fig:dual_tt_deformation}.

\begin{figure}[hb]
    \centering
    \includegraphics[width=.8\linewidth]{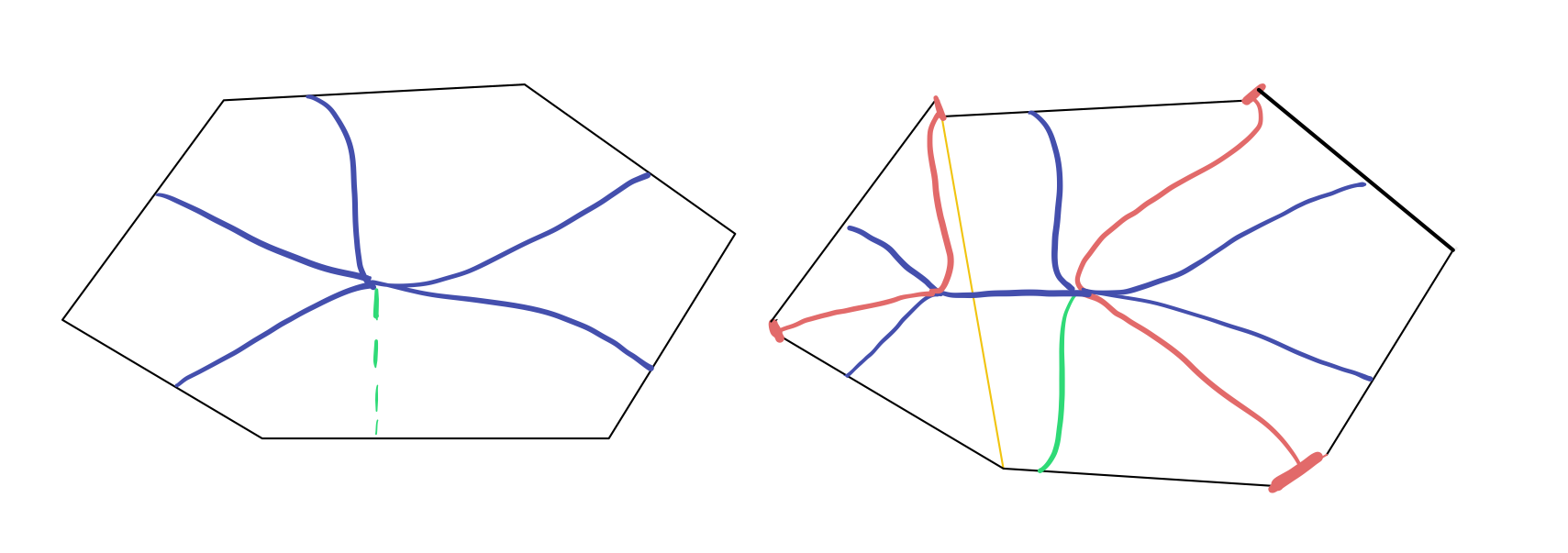}
    \caption{On the left, a horizontally convex $6$-gon $P_0$  with dual train track in blue and green arc dual to a horizontal edge. On the right, a continuous deformation $P_t$ that degenerates to $P_0$.  The red edges on the right collapse to vertices of $P_0$, and removing the dual branches results in a track that is slide equivalent to a smoothing of the track on the right. The yellow edge ensures that $P_t$ is a union of horizontally convex polygons.}
    \label{fig:dual_tt_deformation}
\end{figure}

We are left to consider the case where there is a horizontal saddle connection in $\mathsf T$ dual to an arc of $\arc(q,\mathsf T)$.
In this case, one of the polygons $P_0$ has a horizontal edge but the edge may not remain horizontal in $P_t$ for $t >0$. The dual train track $\tau(q', \mathsf T')$ will then contain a branch corresponding to (a smoothing of) the set of arcs $\arcb\subset \arc$ whose corresponding saddle connections do not remain horizontal.
\end{proof}

\subsection{Breaking up zeros horizontally}
In general, the horizontal foliations of those $q'$ obtained by breaking up the zeros of $q \in \cQ$ will have new trajectories that separate the newly-created zeros.
Equivalently, the dual train track to the refinement $\mathsf{T}'$ is an extension of the dual track to $\mathsf{T}$. 
However, if zeros of $q$ are broken up such that they only have horizontal saddle connections between them, the resulting zeros of $q'$ cannot be separated by horizontal trajectories, 
and so the dual track to $\mathsf{T}'$ does not have new branches, only new arcs (see Lemma \ref{lem:ttsaroundstrata} just below).
We collect all such $q'$ in a neighborhood of $\cQ$:

\begin{definition}\label{def:Q star}
Let $q' \in B_\mathsf{T}(q)$ and let $f$ be a (geometric) collapse map from $q'$ to $q$.
We say that $q' \in \cQ^*$ \label{ind:Q*} if for every pair of zeros $z_1'$ and $z_2'$ of $q'$ such that $f(z_1') = f(z_2')$, there is a path $\gamma$ of horizontal saddle connections connecting $z_1'$ to $z_2'$ such that $f(\gamma)=z$.
\end{definition}

\begin{remark}
It is useful to think of $\cQ^*$ as thickening $\cQ$ slightly in the unstable direction. Indeed, by our previous work in \cite{shshI}, for any leaf $\mathcal F$ of $\Fol^{uu}$, a neigborhood of $\cQ \cap \mathcal F$ in $\mathcal F$ can be described by breaking up zeros horizontally and changing the real parts of periods.
\end{remark}

A relatively open neighborhood of $q \in \cQ$ inside $\cQ^*$ can be covered by a union of reasonably nice sets.
Indeed, Lemma \ref{lem:cellspersist} above gives a family of period coordinate charts about $\cQ$. In these charts, $\cQ^*$ is a subspace cut out by 
$\Im(\hol_{q'}(e))=0$ for every short saddle connection $e$ obtained by breaking up a zero of $q$.
Alternatively, from the point of view of \cite{BCGGM_kdiffs}, $\cQ$ lies in the boundary of the principal stratum and a neighborhood of $\cQ$ can be described in terms of plumbing on meromorphic differentials in $\CC P^1$ (i.e., breaking up zeros).
The subspace $\cQ^*$ is then the subset of this neighborhood obtained by plumbing on real-normalized differentials in a way that the corresponding periods on the plumbed surface are still real. Thus we can locally describe points of $\cQ^*$ in terms of points in the stratum $\cQ$, tuples of real-normalized differentials on $\CC P^1$ (one for each zero of $q \in \cQ$), and compatible plumbing data.

In \cite[Section IV]{HubMas}, Hubbard and Masur prove that $\cQ^*$ is locally a submanifold of $\QT_g$ by analyzing the map from a neighborhood of $q \in \cQ$ to the space of possible ways to break up a zero (see Lemma 4.2 and Proposition 4.11 therein).
\footnote{Technically, \cite{HubMas} concerns a different set $V = f^{-1}(\prod E_k)$ which is a subset of $U \times \RR^n$, where $U \subset \QT_g$ is a small open set about $q$. This extra factor of $\RR^n$ is used to deal with the ``perverse'' tangential structure of $\cQ^*$, equivalently, with the fact that the space of transverse cocycles to the horizontal foliation is not locally constant. Since we will not concern ourselves with the tangential structure to $\cQ^*$ we have omitted this subtlety.}
Since $\cQ^*$ is a submanifold, we know {\em a posteriori} that the sets described above (either in terms of periods or plumbing) glue up nicely. This is reflected in Proposition \ref{prop:ttcharts_Q*} below, which demonstrates a natural polyhedral structure on $\cQ^*$ that agrees with the simplicial structure investigated in \cite[Chapter III, \S 6]{HubMas}.
\medskip 

From the definition (and Lemma \ref{lem:cellspersist}), it follows that a cellulation of $q \in \cQ$ can be refined to a cellulation of $q' \in \cQ^*$ by only adding in short horizontal saddles; this places strong restrictions on how the dual train track and arc system can look.

\begin{lemma}\label{lem:ttsaroundstrata}
Let $\mathsf{T}$ be a simply horizontally convex cellulation of a quadratic differential $q$ and set $\tau = \tau(q, {\mathsf{T}})$ and $\arc = \arc(q, \mathsf{T})$.
Then there is a relatively open neighborhood $B^*_\mathsf{T}(q)$\label{ind:BTq*} of $q$ in $\cQ^*$ such that 
\begin{enumerate}
    \item any $q' \in B^*_\mathsf{T}(q)$ has a horizontally convex cellulation $\mathsf{T'}$ that refines $\mathsf{T}$,
    \item the dual train track $\tau(q', \mathsf{T}')$ is a smoothing of $\tau \cup \arcb$ for $\arcb \subseteq \arc$, and 
    \item the arc system $\arc(q', \mathsf{T}')$ contains $\arc \setminus \arcb$.
\end{enumerate}
\end{lemma}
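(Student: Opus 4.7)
The plan is to take $B^*_\mathsf T(q) := B_\mathsf T(q) \cap \cQ^*$ with $B_\mathsf T(q)$ the neighborhood supplied by Lemma \ref{lem:cellspersist}, shrunk if necessary so that every saddle of $q'$ realizing a non-horizontal edge of $\mathsf T$ is itself non-horizontal. For $q' \in B^*_\mathsf T(q)$, Lemma \ref{lem:cellspersist} already produces a horizontally convex refinement $\mathsf T'$ of $\mathsf T$; the bulk of the argument consists of sharpening this construction so that every newly introduced short saddle is horizontal. Items (1)--(3) will then follow from a combinatorial analysis of the dual train track via Construction \ref{constr:dualtt}.

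First I would revisit the construction from the proof of Lemma \ref{lem:cellspersist}: after choosing $z_i' \in f^{-1}(z_i)$ for each $z_i \in Z(q)$, each geodesic $[z_i', z_j']$ on $q'$ decomposes as short saddles inside $f^{-1}(z_i)$, then a saddle realizing $e \in \mathsf T$, then short saddles inside $f^{-1}(z_j)$. The defining property of $\cQ^*$ in Definition \ref{def:Q star} supplies, for each cluster $f^{-1}(z_i)$, a path of horizontal saddles connecting all of its points; I would use these paths as the short saddles in the construction. Any polygon of the resulting refinement that fails to be horizontally convex can be subdivided further by adding short horizontal saddles between pairs of zeros in the same cluster, using that cutting a horizontally convex region by a horizontal segment yields horizontally convex pieces. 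This produces a horizontally convex refinement $\mathsf T'$ of $\mathsf T$ whose short saddles are all horizontal, establishing item (1).

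For items (2) and (3), I would feed $(q', \mathsf T')$ into Construction \ref{constr:dualtt}. Horizontal saddles of $\mathsf T'$ become arcs of $\arc(q', \mathsf T')$ rather than branches of $\tau(q', \mathsf T')$, so the short horizontal saddles introduced in the refinement contribute only arcs. Hence the branches of $\tau(q', \mathsf T')$ are dual to the non-horizontal saddles of $\mathsf T'$ realizing edges of $\mathsf T$, which split into two classes: those dual to edges of $\mathsf T$ that were already non-horizontal (yielding the branches of $\tau$) and those dual to a subset $\arcb \subseteq \arc$ of originally horizontal edges that happen to become non-horizontal on $q'$. The second class is exactly the smoothing data along $\arcb$, after reconciling slide/flip equivalences via Lemma \ref{lem:flipiso} and the collapsing combinatorics in Lemma \ref{lem:cellspersist}; this gives item (2). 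The remaining arcs $\arc \setminus \arcb$ persist as horizontal saddles in $\mathsf T'$ and so appear in $\arc(q', \mathsf T')$, yielding item (3).

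The main obstacle is the geometric step of choosing a horizontally convex refinement using only horizontal short saddles. For $q'$ sufficiently close to $q$, each cluster $f^{-1}(z_i)$ lies in a small geodesically convex flat disk inside which the horizontal saddles provided by $\cQ^*$ already connect all the zeros; the freedom to add further horizontal saddles between any two zeros of a cluster then reduces horizontal convexity to cutting horizontally convex polygons by horizontal segments, which preserves the property. This is precisely the step where the restriction to $\cQ^*$ pays off: in the full neighborhood $B_\mathsf T(q)$ the short saddles inside a cluster may be non-horizontal, thereby introducing new branches into the dual train track and destroying the smoothing picture asserted in item (2).
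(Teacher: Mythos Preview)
Your overall strategy matches the paper's: set $B^*_{\mathsf T}(q)=B_{\mathsf T}(q)\cap\cQ^*$, observe that the short saddles arising from breaking up zeros are all horizontal, and then read off items (2) and (3) from Construction~\ref{constr:dualtt}. The one substantive point you miss is the reason the hypothesis is \emph{simply} horizontally convex. The paper's argument is that if $P_0$ is a simply horizontally convex cell of $\mathsf T$ and you replace each vertex by a chain of short \emph{horizontal} edges to form $P_t$, then $P_t$ is already horizontally convex for small $t$; hence no extra diagonals are ever needed, the cells of $\mathsf T'$ are in bijection with the cells of $\mathsf T$, and the switches of $\tau(q',\mathsf T')$ correspond exactly to the switches of $\tau$. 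Items (2) and (3) then follow on the nose.

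Your step of ``subdividing further by adding short horizontal saddles between pairs of zeros in the same cluster'' is therefore unnecessary, and as written it is also problematic. First, the justification (``cutting a horizontally convex region by a horizontal segment yields horizontally convex pieces'') does not address the case you are trying to handle, namely a region that is \emph{not} horizontally convex. Second, zeros in the same cluster all lie at the same height, so any saddle between them is along the same horizontal line as the boundary edges already present; such a segment is not an interior diagonal of $P_t$ in any useful sense. Third, if extra diagonals really were added and produced new cells, $\tau(q',\mathsf T')$ would acquire extra switches not present in $\tau$, and it would no longer literally be a smoothing of $\tau\cup\arcb$; your appeal to ``collapsing combinatorics'' does not repair this, since Lemma~\ref{lem:cellspersist} collapses \emph{branches}, not arcs. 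The clean fix is simply to invoke simple horizontal convexity of $\mathsf T$ to conclude that $P_t$ is horizontally convex without any subdivision, which is exactly what the paper does.
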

\begin{proof}
Take $B_{\mathsf T}^*(q) = B_{\mathsf T}(q) \cap \cQ^*$, where $B_{\mathsf T}(q)$ is obtained as in Lemma \ref{lem:cellspersist}.
We are only allowing ourselves to break up higher order zeros of $q$ horizontally and every complementary $(n-k)$-gon $P_0$ of $\mathsf T$ in $q$ is strictly horizontally convex; let $\{P_t\}_{t>0}$ be a continuous family of  $n$-gons $\{P_t\}_{t>0}$ degenerating to $P_0$ such that that all of the $k$ ``short'' edges are horizontal.
Then it is not difficult to see that $P_t$ is horizontally convex for small enough values of $t$,
and in particular, we do not have to add in extra diagonals to $P$.

Our description of the dual track and arc system from Construction \ref{constr:dualtt} follow from considering the dual notions to being horizontal.
Since we have added in no new diagonal edges, there is a correspondence between the cells of $\mathsf{T}'$ and the cells of $\mathsf{T}$, hence a correspondence between the switches of their dual train tracks.
Since all of the new (short) edges in $P_t$ are horizontal, the corresponding dual objects are in $\arc(q', \mathsf T') \setminus \arc(q, \mathsf T)$.
When a horizontal edge of $\mathsf T$ with dual arc $\alpha \in \arc(q,\mathsf T)$ becomes non-horizontal in $q'$, the corresponding dual branch of $\tau(q',\mathsf T')$ is a smoothing of $\alpha$, which is what we wanted.
\end{proof}

Combined with our discussion of flipping smoothings, we get the following extension of Proposition \ref{prop:ttcharts_stratum}.

\begin{proposition}\label{prop:ttcharts_Q*}
Any $q' \in B^*_{\mathsf{T}}(q)$ may be represented  as a complex weight system on a standard smoothing of $\tau \cup \arc'$, where $\arc'$ is some extension of $\arc$. 

Moreover, consider the closed $\RR_{\ge 0}$-cone in $W_{\CC}(\taua')$ cut out by the condition that the weights of $\arc'\setminus \arc$ must be real and nonnegative.
Then recording periods induces a PL map from a neighborhood of $[\hol (q)]_+$ in this cone to $B^*_\mathsf{T}(q)$, and $B^*_\mathsf{T}(q)$ is covered by the images of such.
\end{proposition}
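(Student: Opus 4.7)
The strategy is to combine Lemma \ref{lem:ttsaroundstrata} (which produces a refined cellulation of $q'$ whose dual is close to a smoothing of $\tau$), Lemma \ref{lem:ttwts_from_cell} (which turns cellulations into weight systems), and Lemma \ref{lem:flipiso} (which lets us transport weight systems between different smoothings), and then glue the period coordinate charts of Proposition \ref{prop:ttcharts_stratum} applied to the various strata adjacent to $\cQ$.

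I would start by representing each $q' \in B^*_{\mathsf T}(q)$ as a weight system. Lemma \ref{lem:ttsaroundstrata} furnishes a horizontally convex refinement $\mathsf T'$ of $\mathsf T$ whose dual train track is a smoothing of $\tau \cup \arcb$ for some $\arcb \subseteq \arc$, and whose arc system is $(\arc \setminus \arcb) \cup \arc_{\mathrm{new}}$, where $\arc_{\mathrm{new}}$ denotes the collection of short horizontal saddles that arise from breaking up the zeros of $q$. Set $\arc' := \arc \cup \arc_{\mathrm{new}}$ (an extension of $\arc$) and let $\taua'$ be the standard smoothing of $\tau \cup \arc'$. By Lemma \ref{lem:ttwts_from_cell}, $[\hol(q')]_+$ is a complex weight system satisfying the switch conditions on the standard smoothing of $\tau(q', \mathsf T') \cup \arc(q', \mathsf T')$; the underlying 1-complex of this smoothing agrees with that of $\taua'$ up to the smoothing directions of the $\arcb$-branches (which in $\tau(q', \mathsf T')$ are determined by the actual slopes of their saddles on $q'$, rather than the standard left-turning convention). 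Iterated application of Lemma \ref{lem:flipiso} to the disagreeing branches produces a $\CC$-linear identification of the two complex weight spaces, negating weights on flipped branches, and transports $[\hol(q')]_+$ to a weight system on $\taua'$ that represents $q'$.

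For the second claim, I would fix an extension $\arc' \supseteq \arc$ and consider the closed cone $C \subset W_\CC(\taua')$ where the weights on $\arc' \setminus \arc = \arc_{\mathrm{new}}$ are real and nonnegative. The weight system $[\hol(q)]_+$ lies on the face of $C$ where every $\arc_{\mathrm{new}}$-weight vanishes (since those arcs do not exist on $q$), and on this face the period map coincides with the standard period chart for $\cQ$ near $q$ of Proposition \ref{prop:ttcharts_stratum}. On a relatively open face where a specified subset of the $\arc_{\mathrm{new}}$-weights is strictly positive and the remainder vanish, the map coincides with the period chart (from Proposition \ref{prop:ttcharts_stratum}) for the adjacent stratum reached by realizing exactly those short horizontal saddles; here the real-nonnegativity of the $\arc_{\mathrm{new}}$-weights precisely encodes both the horizontality of the corresponding saddles and the combinatorial compatibility of a collapse map $q'' \to q$ as in Lemma \ref{lem:collapsemaps}, which together characterize membership in $\cQ^*$. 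This gives a continuous map that is linear on each face of $C$, hence PL. Finally, each $q'' \in B^*_\mathsf T(q)$ determines (via its combinatorial pair $(\arcb, \arc_{\mathrm{new}})$) a specific extension $\arc'$ and hence lies in the image of the corresponding cone; there are only finitely many such combinatorial types in bounded genus, giving the finite cover.

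\textbf{The main obstacle} will be verifying that these face-by-face period charts glue continuously across the strata boundaries inside $C$, and in particular near the vertex $[\hol(q)]_+$ where many faces meet. The underlying fact is that $\cQ^*$ is a genuine submanifold of $\QT_g$ in the sense of \cite[\S IV]{HubMas}, but reconciling that smooth model with our combinatorial description requires checking that as a weight on some $\alpha \in \arc_{\mathrm{new}}$ shrinks to zero, the corresponding horizontal saddle on $q''$ collapses and the dual branch of $\taua'$ disappears, so that the period chart for the smaller stratum (on the limiting face) is exactly the restriction of the period chart on the ambient face. This is essentially the content of Lemma \ref{lem:cellspersist} applied to the codimension-one transitions, together with the observation that the simplicial complex of extensions of $\arc$ by subsets of candidate horizontal saddles matches the stratification of a neighborhood of $q$ in $\cQ^*$ by ``combinatorial type of horizontal break-up.''
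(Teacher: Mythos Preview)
Your approach is essentially the same as the paper's: the paper treats this proposition as an immediate consequence of Lemma \ref{lem:ttsaroundstrata}, the flipping discussion (Lemma \ref{lem:flipiso}), and Proposition \ref{prop:ttcharts_stratum}, stating it with no explicit proof beyond the sentence ``Combined with our discussion of flipping smoothings, we get the following extension of Proposition \ref{prop:ttcharts_stratum}.'' Your write-up unpacks exactly these ingredients, and the obstacle you flag about gluing across faces is precisely what the paper delegates to the Hubbard--Masur structure theory for $\cQ^*$ discussed just before Lemma \ref{lem:ttsaroundstrata}.
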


\part{Continuity of orthogeodesic foliations}\label{part:continuity}

In the next three sections, we prove that the equilateral train tracks and dual cellulations constructed above are locally constant in $X$ and $\lambda$ (Propositions \ref{prop:ttstable} and \ref{prop:cellstable}, respectively).
These results give us the framework to state a quantified version of our continuity result in terms of weight systems on train tracks, equivalently, in terms of period coordinates for the corresponding quadratic differentials. See Theorem \ref{thm:contcell}.
The basic geometric estimates we develop to prove these Propositions are also the main ideas needed to prove the Theorem.

Throughout this part of the paper, we fix a base hyperbolic surface $X$ and a chain-recurrent lamination $\lambda$. We use $\arc$ to denote the (unweighted) geometric arc system of $X$ with respect to $\lambda$, that is, the multi-arc on $X \setminus \lambda$ consisting of (isotopy classes of) leaves of the orthogeodesic foliation.

For functions $f_1$ and $f_2$, we say $f_2= O_s(f_1)$ to mean that $f_2= O(f_1)$ where the implicit constant depends only on the systole $s$ of the base surface $X$ on which we are making our estimates.\label{ind:Os}

\section{Persistent arc systems}\label{sec:persistent}
In this section, we analyze how the arc system $\arc(X, \lambda)$ changes under small deformations of the pair.
This is our first step in building a common framework in which to compare the shear-shape cocycles for nearby pairs.
Along the way, we will also prove an estimate on how the weights of the arcs of $\arc(X, \lambda)$ change, which is the main geometric estimate needed to prove continuity of the ``shape'' part of shear-shape coordinates.

One of the main obstacles we must overcome is determining how to actually compare arc systems on subsurfaces with different topological types;
this will be accomplished with the aid of the combinatorial structure of train track carrying maps.

\subsection{(In)visible arcs}\label{subsec:invisarcs}
In order to discuss how the arcs of $\arc(X, \lambda)$ vary as $(X, \lambda)$ does, we first need to understand how the complementary subsurface $X \setminus \lambda$ varies.
Of course, its topological type depends on that of $\lambda$, and since the set of multicurves is dense in $\mathcal{GL}^\text{cr}$, the topological type of $X \setminus \lambda$ can vary wildly as the support of $\lambda$ varies in the Hausdorff topology.

However, when one also remembers the {\em geometry} of the complementary subsurface there is a stronger correspondence between $X \setminus \lambda$ and $X \setminus \lambda'$ for $\lambda'$ Hausdorff close to $\lambda$.
For example, if $\lambda$ is  maximal and chain-recurrent, then $X \setminus \lambda$ is a union of ideal triangles. 
If $\gamma$ is a non-separating simple closed curve that is very close to $\lambda$ in the Hausdorff metric on $X$, then $X \setminus \gamma$ is a surface of genus $g-1$ with two boundary components that looks like a union of nearly-ideal triangles, glued along short arcs corresponding to their vertices.
Compare with \cite[Lemma 3.5]{spine}.
Moreover, at the appropriate scale $\delta$, there is a correspondence between the triangles of $X \setminus \lambda$ and those of $X$ minus a (uniform/equilateral) $\delta$ neighborhood of $\gamma$.

We now establish a general version of this correspondence and develop language to discuss it in detail.

\para{Invisible arcs}
We recall from Definition \ref{def:geometric visibility} that if $\cE$ is an equilateral train track neighborhood of $\lambda$, then some segments of $\Ol(X)|_{\cE}$ may be isotopic to arcs of $\arc$.
These segments are collapsed when one takes the quotient to the leaf space, and the corresponding arcs are said to be invisible with respect to $\cE$.
It is immediate from the definitions that the components of $X \setminus \cE$ are in bijection with the pieces obtained by cutting $X \setminus \lambda$ along the invisible arcs.

In order to use this observation to set up a correspondence between pieces of subsurfaces of $X \setminus \lambda$ and those of $X' \setminus \lambda'$ for nearby pairs, we must give a topological reformulation of invisibility.
Indeed, as we vary $\lambda$ a fixed neighborhood $\cE \subset X$ may no longer be foliated by segments of $\cO_{\lambda'}(X)$, and as we vary $X$ there is not even a consistent neighborhood to which to refer.

\begin{remark}
In fact, we show in Proposition \ref{prop:ttstable} that for close enough pairs $(X', \lambda')$, the equilateral train tracks $\tau(X, \lambda, \delta)$ and $\tau(X', \lambda', \delta)$ are isotopic, so the new topological definition of invisible arcs given below will eventually be superseded by our original geometric one.
However, the proof of Proposition \ref{prop:ttstable} requires a correspondence between arc systems (Proposition \ref{prop:persistent}), which in turn is phrased in terms of (in)visible arcs.
The topological definition of invisible arcs is therefore necessary to avoid circularity in our argument.
\end{remark}

We first record a useful definition which captures one of the salient features of geometric train tracks.

\begin{definition}\label{def:fullcarry}
A train track $\tau$ {\bf fully} carries\label{ind:fully} $\lambda$ and we write $\lambda \preceq \tau$ if any carrying map takes $\lambda$ onto $\tau$.
\end{definition}

This condition can also be phrased in terms of weight systems on $\tau$.
If $\tau$ is bi-recurrent and $\lambda$ supports a transverse measure, then $\lambda \preceq \tau$ if and only if the transverse measure lies in the interior of the positive cone $W^{>0}(\tau)$.
For general chain-recurrent $\lambda$, the density of multicurves in $\GLcr$ implies that $\lambda$ is fully carried on $\tau$ if and only if it is the Hausdorff limit of measured laminations fully carried on $\tau$.

If $\lambda \preceq \tau$, then each branch of $\tau$ corresponds to two ``outermost'' boundary geodesics of $\lambda$ (which are possibly the same, in case $\lambda$ has an isolated leaf).
Indeed, given any carrying map $\lambda \rightarrow \tau$, the preimage of a small transversal to a branch is a transverse arc to $\lambda$, and the corresponding boundary geodesics are the first and last leaves of $\lambda$ encountered by this arc.
Compare with the discussion of proto-spikes and spikes of $\tau$ appearing in \S\ref{subsec:geominvis}.

As demonstrated by the following lemma, the property of being fully carried by an (equilateral) train track is stable as one varies $\lambda$ in the Hausdorff metric.
Its (easy) proof is left to the reader.

\begin{lemma}\label{lem:Hausdfullcarry}
If $\lambda'$ is Hausdorff close enough to $\lambda$, then $\lambda'$ is fully carried on $\tau(X,\lambda, \delta)$.
\end{lemma}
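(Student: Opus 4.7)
The plan is to verify the two defining properties of $\lambda' \preceq \tau$ for $\tau = \tau(X, \lambda, \delta)$: that some carrying map $\lambda' \to \tau$ exists, and that every branch of $\tau$ is in its image.

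For the existence of a carrying map, Proposition~\ref{prop:ttdefs_comp} guarantees $\cN_{\eta}(\lambda) \subset \cE_\delta(\lambda)$ for $\eta = w_{\ref{prop:ttdefs_comp}}^{-1}\delta$, so if $d_X^H(\lambda, \lambda') < \eta$, then $\lambda' \subset \cE_\delta(\lambda)$ and every leaf of $\lambda'$ can be tracked along the tie foliation of this neighborhood. Because the ties are segments of $\Ol(X)$ meeting $\lambda$ orthogonally, a compactness argument yields that the angles between leaves of $\lambda$ and ties are uniformly bounded below. Combined with the standard hyperbolic fact that a geodesic arc sufficiently Hausdorff-close to a leaf of $\lambda$ on a ball of definite radius is $C^1$-close to that leaf on compact subsegments (essentially the content of Corollary~\ref{cor:projectionestimate2}), this implies, after shrinking $\eta$ if necessary, that the leaves of $\lambda'$ cross every tie of $\cE_\delta(\lambda)$ transversely and with angle bounded below. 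Collapsing along ties thus yields a well-defined carrying map $\lambda' \to \tau$.

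For surjectivity of this carrying map, it is enough to show that every tie of $\cE_\delta(\lambda)$ meets $\lambda'$. Pick a tie $t$ crossing each branch of $\tau$ with $t \cap \lambda$ in the interior of $t$, fix $p \in t \cap \lambda$, and let $\ell$ be a leaf of $\lambda$ through $p$; Hausdorff closeness provides $p' \in \lambda'$ near $p$, and the leaf $\ell'$ of $\lambda'$ through $p'$ is $C^1$-close to $\ell$ on a neighborhood of $t$ by the same closeness principle as above. Since $\ell$ crosses $t$ transversely through its interior, so does $\ell'$, proving $t \cap \lambda' \neq \emptyset$.

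The only mildly delicate point is the uniform $C^1$ control on Hausdorff-close geodesic arcs that guarantees both transversality and non-tangency of $\lambda'$ to the tie foliation; this is a purely local $\HH^2$ argument using the data of the fixed parameters $(X, \lambda, \delta)$ to select an adequately small $\eta$. Once this is in place, both steps of the verification follow formally from the fact that $\lambda$ itself is fully carried on $\tau$.
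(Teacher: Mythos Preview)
The paper does not actually prove this lemma; the sentence immediately preceding it reads ``Its (easy) proof is left to the reader.'' Your sketch correctly supplies the intended argument: Hausdorff closeness forces $\lambda' \subset \cE_\delta(\lambda)$, local $C^1$-closeness of nearby geodesics gives transversality to the tie foliation (hence a carrying map via the collapse $\pi$), and every branch of $\tau$ is hit because each leaf of $\lambda$ running through a given branch is shadowed by a nearby leaf of $\lambda'$. One small quibble: the appeal to Corollary~\ref{cor:projectionestimate2} is slightly off-target, since that statement is about nearest-point projections rather than angles; the fact you actually need---that geodesics which are Hausdorff-close on a ball of definite radius are $C^1$-close on a slightly smaller ball---is more elementary and follows directly from the visual-metric estimate in the proof of Lemma~\ref{lem:triple_centers} (or just from convexity of the distance function in $\HH^2$). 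With that adjustment the sketch is complete and matches what the authors evidently had in mind.
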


We therefore see that $\lambda, \lambda' \preceq \tau$ is a combinatorial analogue of $\lambda$ and $\lambda'$ being Hausdorff-close.
Using this definition, we can now give a topological notion of invisible arcs using the carrying map $\lambda \rightarrow \tau$.

\begin{definition}[Topologically invisible arcs]\label{def:invisarcs}
Suppose that $\lambda \preceq \tau$. 
Then the preimages of points in $\tau$ by any carrying map are arcs which run between consecutive boundary leaves of $\lambda$, which break up into finitely many proper isotopy classes of arcs on $S \setminus \lambda$.
Throwing out those which are isotopic into a spike, we are left with the 
{\bf invisible arc system} of $\lambda$ with respect to $\tau$, denoted by $\arc_\circ^\tau (\lambda)$ or $\arc_\circ$ if the context is clear.\label{ind:topinvis}
\end{definition}

This definition is designed so that if $\lambda \preceq \tau$, then there is a natural identification of $S \setminus (\lambda \cup \arc_\circ)$ with $S \setminus \tau$.
This allows us to identify the pieces of $S \setminus (\lambda \cup \arc_\circ^\tau(\lambda))$ and $S \setminus (\lambda' \cup \arc_\circ^\tau(\lambda'))$ whenever $\lambda'\preceq \tau$.

Moreover, if $\cE$ is an equilateral neighborhood of $\lambda$ and $\tau$ the corresponding train track, then it follows directly from the definitions and the structure of the collapse map that this notion of invisible arcs agrees with the one from Definition \ref{def:geometric visibility}.
Thus, if a leaf in the isotopy class of $\arc(X, \lambda)$ is short enough compared to the defining parameter $\delta$ of an equilateral train track, then it is contained in the invisible arc system $\arc_\circ^\tau(\lambda)$.

We now prove a converse: if an arc is invisible with respect to a geometric train track then it must be short.
In particular, the (topologically defined) invisible arc system is actually a part of the natural (geometric) arc system recording the complementary hyperbolic structure.

\begin{lemma}[Invisible arcs geometric]\label{lem:invisiblesubarc}
Let $\tau = \tau(X, \lambda, \delta)$ be an equilateral train track.
Then for any $X'\in \T_g$ close to $X$ and any $\lambda'\in \GLcr$ Hausdorff close enough to $\lambda$, we have that $\arc_\circ^\tau(\lambda') \subset \arc(X', \lambda')$.
\end{lemma}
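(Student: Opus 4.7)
The plan is to show that each invisible arc class $[\alpha] \in \arc_\circ^\tau(\lambda')$ admits a geometric representative in $\arc(X',\lambda')$. I would do this in three steps: identify $\alpha$ combinatorially as a segment of a tie of $\cE_\delta(\lambda)$, bound its length in $X'$, and then apply Lemma \ref{lem:closethenarc} to produce an orthogeodesic realization and compare it to $\alpha$.

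First, I would use the fact that geodesic laminations are intrinsic topological objects (Lemma \ref{lem:lamfacts}(\ref{item:gl_topological})) and that the Hausdorff metrics on $X$ and $X'$ are H\"older-equivalent on compact sets (Lemma \ref{lem:lamfacts}(\ref{item:Haus_equiv})) to realize $\lambda'$ as an $X$-geodesic lamination at small $X$-Hausdorff distance from $\lambda$. Lemma \ref{lem:Hausdfullcarry} then gives $\lambda'\preceq \tau$, and by continuity of the geodesic realization the representative of $\lambda'$ in $X$ sits inside $\cE_\delta(\lambda)$. Identifying the carrying map $\lambda'\to \tau$ with the restriction of the collapse $\cE_\delta(\lambda)\to \tau$, each invisible arc $\alpha$ then arises as a connected component of $t\setminus \lambda'$, where $t$ is a tie of $\cE_\delta(\lambda)$ and $\alpha$ joins two distinct boundary leaves $g_1, g_2$ of $\lambda'$ (and is \emph{not} isotopic into a spike).

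Next, Lemma \ref{lem:equittwidth} bounds the $X$-length of $t$, hence of $\alpha$, by $W_{\ref{lem:equittwidth}}\delta$, so $d_X(g_1,g_2)\le W_{\ref{lem:equittwidth}}\delta$. Applying H\"older equivalence of Hausdorff metrics again, for $X'$ sufficiently close to $X$ I obtain $d_{X'}(g_1,g_2)=O_s(\delta^{\beta})$ for some $\beta>0$ depending only on the compact neighborhood of $X$ in which $X'$ lives. Choosing $\delta$ small enough yields $d_{X'}(g_1,g_2)<\log\sqrt{3}$, so Lemma \ref{lem:closethenarc} produces an arc $\alpha'$ of $\Ol_{\lambda'}(X')$ joining $g_1$ to $g_2$; this $\alpha'$ represents an element of $\arc(X',\lambda')$. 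It remains to verify that $[\alpha]=[\alpha']$ as proper isotopy classes on the relevant component $Y$ of $X'\setminus \lambda'$. The proof of Lemma \ref{lem:closethenarc} produces $\alpha'$ as the unique orthogeodesic from $g_1$ to $g_2$ whose midpoint is the center of a circle tangent only to $g_1$ and $g_2$; no other boundary leaf of $Y$ enters this circle. The arc $\alpha$, being $O_s(\delta^\beta)$-short in $X'$, must lie in a tubular neighborhood of this circle and thus cannot cross any other boundary leaf, forcing $[\alpha]=[\alpha']$.

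The main obstacle I anticipate is the last step: quantitatively controlling the position of the endpoints of $\alpha$ on $g_1,g_2$ relative to those of $\alpha'$ and checking that $\alpha$ truly lies in the ``correct'' region of $Y$, using the bi-Lipschitz comparison between $X$ and $X'$ on the relevant compact region. The estimates from Section \ref{sec:triplecenters} (particularly Lemma \ref{lem:triple_centers} and Corollary \ref{cor:triplebasepoints}) should supply the quantitative control on centers and basepoints needed to promote the qualitative ``short arc, no other leaf in between'' picture into a rigorous matching of proper isotopy classes.
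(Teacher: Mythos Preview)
Your strategy is essentially the same as the paper's: realize the invisible arc as a subsegment of a tie of $\cE_\delta(\lambda)$, bound its length via Lemma~\ref{lem:equittwidth}, and invoke Lemma~\ref{lem:closethenarc}. The core argument is correct.

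However, you are overcomplicating the final step. Your ``anticipated obstacle'' of matching the isotopy classes $[\alpha]$ and $[\alpha']$ is not actually an obstacle, and you do not need the estimates of Section~\ref{sec:triplecenters}. Working in the universal cover, the tie segment $\alpha$ singles out two specific lifts $g_1,g_2$ of boundary geodesics of $\lambda'$. Between these two geodesics there is a \emph{unique} common perpendicular, and it is the orthogeodesic representative of $\alpha$ on $X'$. Since the tie segment is a path from $g_1$ to $g_2$ in the complement of $\lambda'$, the orthogeodesic representative has length at most that of the tie segment. Now the \emph{proof} of Lemma~\ref{lem:closethenarc} shows precisely that when this orthogeodesic is shorter than $\log\sqrt{3}$, its midpoint lies on the spine and it is itself a leaf of $\cO_{\partial Y}(Y)$. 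So the geometric arc you obtain \emph{is} the orthogeodesic representative of $\alpha$; there is nothing further to match. The paper's proof is accordingly a single paragraph.

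Two smaller remarks. First, your invocation of Lemma~\ref{lem:lamfacts}(\ref{item:Haus_equiv}) to transfer $d_X(g_1,g_2)\le W_{\ref{lem:equittwidth}}\delta$ to $X'$ is slightly off: that lemma is about Hausdorff distance between \emph{laminations}, not hyperbolic distance between points or geodesics. The paper instead observes that the argument uses only the systole-dependent width bound of Lemma~\ref{lem:equittwidth}, which applies equally to the equilateral neighborhood of $\lambda$ in $X'$ once $X'$ is close enough to $X$. Second, there is no need to ``choose $\delta$ small enough'' after the fact: the hypothesis that $\tau=\tau(X,\lambda,\delta)$ is an equilateral train track already forces $\delta\le\delta_{\ref{lem:equittswork}}$, and the bound $W_{\ref{lem:equittwidth}}\delta<\log\sqrt{3}$ can be arranged at that stage.
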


In particular, the pieces of  $S \setminus (\lambda' \cup \arc_\circ^{\tau}(\lambda'))$ are actually realized as unions of hexagons of $X' \setminus (\lambda'\cup \arc(X',\lambda'))$.

\begin{proof}
We prove this first for a fixed hyperbolic metric.
Consider an invisible arc $\alpha$ connecting two boundary geodesics $g$ and $h$ of $\lambda'$ on $X$. 
Even though the orthogeodesic representative of $\alpha$ may not be contained in the $\delta$--equilateral neighborhood of $\lambda$, Lemma \ref{lem:equittwidth} implies that there is a path of length at most $W_{\ref{lem:equittwidth}}\delta$ connecting $g$ and $h$ that is otherwise disjoint from $\lambda'$, where $W_{\ref{lem:equittwidth}}$ depends only on the systole of $X$. 
Thus, the orthogeodesic representative of $\alpha$ has length at most $W_{\ref{lem:equittwidth}}\delta$, so by Lemma \ref{lem:closethenarc} must appear in the geometric arc system.

We conclude by noting that this argument does not use the geometry of $\lambda$ on $X$ in any particular way: we only invoke the Hausdorff closeness of $\lambda$ and $\lambda'$ and the bound on width of equilateral neighborhoods.
Taking $X'$ close enough to $X$ allows us to use the same bounds and therefore the same proof.
\end{proof}

\para{Visible arcs}
In Section \ref{subsec:geominvis}, we defined the visible arc system with respect to an equilateral neighborhood $\cE$ to be the complement of the invisible arcs.
More generally, if $\lambda$ is fully carried on some train track $\tau$ then we can also define the {\bf visible arc system} with respect to $\tau$ in the same way:\label{ind:topvis}
\[\arc_\bullet^\tau(X,\lambda) := \arc(X, \lambda) \setminus \arc_\circ^\tau(\lambda).\]
If context is clear we will sometimes denote the visible arc system just by $\arc_{\bullet}$.
When $\tau = \tau(X, \lambda, \delta)$ is an equilateral train track, since our two notions of invisible arcs agree (see the discussion right after Definition \ref{def:invisarcs}), so do our two notions of visible arcs. Thus we always have that have $\arc_\bullet^\tau(X,\lambda) = \gvarc$.

When $\tau$ is an equilateral train track and $\arc_\bullet$ is the visible arc system for $(X, \lambda)$ with respect to $\tau$,
Lemma \ref{lem:visiblearcsfill} implies that $\arc_\bullet$ specifies a filling arc system on $S \setminus \tau$.
Then for nearby pairs $(X', \lambda')$, Lemma \ref{lem:invisiblesubarc} allows us to identify $\arc_\bullet$ with an arc system on $S \setminus \lambda'$.
Our goal in the next subsection is to show that these arcs are geometric for the pair $(X',\lambda')$.

\para{Configurations of geodesics}
The setup from above allows us to compare isometry classes of finite configurations of geodesic leaves of $\tlambda\subset \tX$ with corresponding configurations in $\tlambda'\subset \tX'$, when $(X',\lambda')$ is close enough to $(X,\lambda)$.
Our discussion of visible arcs above gives us a dictionary between the components of $S\setminus (\tau\cup \arc_\bullet)$ and components of $X'\setminus (\lambda'\cup \arc_\circ^\tau(\lambda') \cup \arc_\bullet)$.

We record below some notation for the tuples of geodesics of $\lambda'$ specified by $\tau \cup \arc_\bullet$; we emphasize that the combinatorial data of $\tau \cup \arc_\bullet$ is specified by the geometry of the pair $(X, \lambda)$ and the parameter $\delta$.

\begin{definition}[Boundary configurations]\label{def:configurations of geodesics}
Let $P$ be a component of $S\setminus (\tau\cup \arc_\bullet)$.
Note that $P$ is simply connected (Lemma \ref{lem:visiblearcsfill}); choose a lift $\widetilde P$ to $\tS$.\label{ind:config}
For $(X',\lambda')$ as above, denote by $G_P(X',\lambda')$ the corresponding configuration of boundary geodesics of $\tlambda'$ in $\tX'$.
\end{definition}

Mainly, we will be interested in comparing  $G_P(X',\lambda)$ directly to $G_P(X',\lambda')$ in $\tX'$.

\subsection{A correspondence between hexagons}
We now analyze how the visible arc system varies as we vary $X$ and $\lambda$. For the rest of this subsection, we choose once and for all a $\delta \le \delta_{\ref{lem:equittswork}}(\sys(X))$
such that the equilateral train track $\tau:=\tau(X, \lambda, \delta)$ is trivalent
and set $\arc_{\bullet}:= \arc_\bullet^\tau(X,\lambda) = \gvarc$.
The reader may find it helpful to restrict to the case where $\delta$ is small enough such that $\tau$ is snug; this implies that every arc of $\arc$ is visible, which slightly simplifies the scenario at hand, but is still useful.

If there is a vertex of the spine of $(X,\lambda)$ of valence $4$ or higher, then most deformations of the pair will break up this vertex, introducing new arcs. Two different deformations may introduce arcs that cross one another.
See Figure \ref{fig:persistent}.
We must therefore work directly with the nearly-equidistant configurations of geodesics arising from perturbing an equidistant one.

Recall from 
\S\ref{subsec:arc systems}
that for any arc $\alpha \in \arc(X, \lambda)$, its weight $c_\alpha(X, \lambda)$ is the length of the projection to $\lambda$ of the packet of leaves of $\Ol(X) \setminus \lambda$ isotopic to $\alpha$.

\begin{proposition}[Persistent arcs]\label{prop:persistent}
For every $(X, \lambda)$, there are $M_{\ref{prop:persistent}}>0$ and  $\zeta_{\ref{prop:persistent}} >0$ such that the following hold. 
For every $\zeta \le \zeta_{\ref{prop:persistent}}$, there is a neighborhood $B_{\ref{prop:persistent}}(\zeta)$ of $X$ such that for any
$(X', \lambda')$ with $X' \in B_{\ref{prop:persistent}}(\zeta)$ and $d_X^H(\lambda, \lambda') < \zeta$, we have that:
\begin{enumerate}
    \item $\arc_\bullet':= \arc_\bullet^\tau(X', \lambda')$ contains $\arc_\bullet$.
    \item For every arc $\alpha \in \arc_\bullet$,
\[\left|c_{\alpha}(X, \lambda) - c_{\alpha}(X', \lambda') \right| \le M_{\ref{prop:persistent}}\zeta^{3/2}.\]
    \item For every $P\subset S\setminus (\tau\cup \arc_\bullet)$, the configuration $G_P(X',\lambda')$ is $M_{\ref{prop:persistent}}\zeta^{3/2}$-equidistant.
\end{enumerate}
\end{proposition}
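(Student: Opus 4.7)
The plan is to first establish the topological correspondence between the arc systems on $S\setminus\lambda$ and $S\setminus\lambda'$, and then to quantify the discrepancies using the center and basepoint estimates of Section \ref{sec:triplecenters}. For the setup, Lemma \ref{lem:Hausdfullcarry} gives that for small enough $\zeta$, any $\lambda'$ with $d_X^H(\lambda,\lambda')<\zeta$ satisfies $\lambda'\preceq\tau$, so the topologically defined invisible arc system $\arc_\circ^\tau(\lambda')$ is available. Lemma \ref{lem:invisiblesubarc} then shows that after further shrinking $\zeta$ and restricting $X'$ to a small neighborhood of $X$, we have $\arc_\circ^\tau(\lambda')\subset\arc(X',\lambda')$. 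The remaining content of the proposition concerns the complementary arcs, namely the visible ones, and the configurations on their boundaries.

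Next, I would transport the geodesic configurations $G_P(X,\lambda)$ and $G_P(X',\lambda')$ into a common universal cover $\tX'$ via a lift of a near-isometry $X\to X'$. Using the H\"older equivalence of Hausdorff metrics (Lemma \ref{lem:lamfacts}(\ref{item:Haus_equiv})) on a compact neighborhood of $X$, and choosing $B_{\ref{prop:persistent}}(\zeta)$ small enough that the relevant H\"older exponent is at least $3/4$, I obtain that each geodesic of $G_P(X',\lambda')$ is $O(\zeta^{3/4})$-Hausdorff close to the corresponding geodesic of $G_P(X,\lambda)$ on a ball of radius $\log(1/\zeta^{3/4})-e$ around any center of the latter. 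Applying Lemma \ref{lem:triple_centers} to each triple in $G_P(X',\lambda')$ yields triple-centers within $O((\zeta^{3/4})^2)=O(\zeta^{3/2})$ of the original, establishing (3). Here I use that $\tau$ being trivalent means $G_P(X,\lambda)$ is itself $0$-equidistant (in degenerate cases with $|G_P|\geq 4$ arising from coalescing hexagons, the configuration is exactly equidistant; in the generic $|G_P|=3$ case the statement is vacuous).

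For (2), the weight $c_\alpha(X',\lambda')$ is the signed distance along $\lambda'$ between basepoints on the two sides of $\alpha$'s dual edge of the spine. Corollary \ref{cor:triplebasepoints} gives that each such basepoint is $O(\zeta^{3/2})$ close to its $(X,\lambda)$-counterpart, producing the weight estimate. For (1), visibility of each $\alpha\in\arc_\bullet$ relative to $\tau$ is a combinatorial property of $\tau\cup\alpha$ that persists automatically. To show $\alpha\in\arc(X',\lambda')$, observe that its dual edge of the spine of $(X,\lambda)$ lies at distance at least $\log\sqrt 3$ from any non-adjacent geodesic of $\tlambda$ (Lemma \ref{lem:closethenarc}); this margin survives the $O(\zeta^{3/2})$ perturbation above, so the endpoints of $\alpha$ remain adjacent in the spine of $(X',\lambda')$. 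In the delicate scenario where a $4$-tuple in $G_P(X',\lambda')$ admits a crossing dual arc, Lemma \ref{lem:4_centers_close} supplies $O(\zeta^{3/2})$-equidistance and hence consistency of (3).

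The main technical obstacle will be carefully managing the identification of arcs on $S\setminus\lambda$ with arcs on $S\setminus\lambda'$ via the common deformation retract to $S\setminus N(\tau)$: when $\lambda'$ has extra leaves (for instance, when $\lambda$ has isolated leaves that split upon perturbation), this identification is not one-to-one, and one must verify that visible arcs project consistently. The secondary obstacle is the loss from the naive $O(\zeta^2)$ bound to $O(\zeta^{3/2})$, which comes from the intrinsically H\"older (rather than bi-Lipschitz) comparison of Hausdorff metrics across different hyperbolic structures; this forces us to shrink $B_{\ref{prop:persistent}}(\zeta)$ with $\zeta$ in order to secure a workable H\"older exponent.
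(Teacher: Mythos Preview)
Your overall strategy---reduce to the center/basepoint estimates of Section~\ref{sec:triplecenters}---is right, and is essentially what the paper does for the fixed-metric case. But there is a genuine gap in how you handle the change of metric $X\to X'$.

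You invoke the H\"older equivalence of Hausdorff metrics (Lemma~\ref{lem:lamfacts}\eqref{item:Haus_equiv}) and a ``near-isometry'' transport to conclude that $G_P(X',\lambda')$ is $O(\zeta^{3/4})$-close to $G_P(X,\lambda)$. But the H\"older equivalence only tells you that $d_{X'}^H(\lambda,\lambda') = O(\zeta^{3/4})$; it says nothing about how the geodesic realization of a \emph{fixed} $\lambda$ moves when the metric changes from $X$ to $X'$. Moreover, Lemma~\ref{lem:triple_centers} requires both triples to consist of complete geodesics in the \emph{same} copy of $\HH^2$. Curves transported from $\tX$ via a near-isometry are not geodesics in $\tX'$, so the lemma does not apply to them. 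You would need to straighten first, and then still control $G_P(X,\lambda)$ versus $G_P(X',\lambda)$---which is precisely the metric-variation step you have not addressed.

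The paper handles this by separating the two variations. First, Lemma~\ref{lem:fixX persistent} treats fixed $X$ and varying $\lambda$ (this is where your use of Lemmas~\ref{lem:triple_centers}, \ref{lem:4_centers_close} and Corollary~\ref{cor:triplebasepoints} is correct, and yields $O_s(\zeta^2)$). Then, for $X\to X'$ with $\lambda$ fixed, the paper uses the structure theorem for shear-shape space \cite[Theorem~8.1]{shshI}: arc weights $\{c_\alpha(\cdot,\lambda)\}$ are analytic coordinates on $\T(S\setminus\lambda)$, so the center map is Lipschitz in these coordinates with some constant $L=L(X,\lambda)$. One chooses $V(\zeta)$ so that arc weights change by less than $\zeta^{3/2}$, giving $G_P(X',\lambda)$ is $L\zeta^{3/2}$-equidistant. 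Finally, one applies Lemma~\ref{lem:fixX persistent} \emph{on $X'$} using $d_{X'}^H(\lambda,\lambda')<K\zeta^{3/4}$, obtaining $O_s(\zeta^{3/2})$ for the $\lambda\to\lambda'$ step. The exponent $3/2$ thus arises from two sources you have partially conflated: the explicit choice of $V(\zeta)$, and squaring the H\"older-degraded $\zeta^{3/4}$.

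A secondary point: your argument for (1) (``the margin $\log\sqrt 3$ survives'') is not the paper's and is incomplete---you need to rule out that some arc of $\arc_\bullet^\tau(X',\lambda')$ \emph{crosses} $\alpha$, which the paper does by contradiction via Lemma~\ref{lem:4_centers_close} (forcing $c_\alpha=O_s(\zeta^2)$, hence small).
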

We note that $\zeta_{\ref{prop:persistent}}$ can be arbitrarily small even for thick pairs $(X, \lambda)$; this follows because arcs can have arbitrarily small weight when $\cO(X, \lambda)$ is close to a non-principal stratum.
\begin{remark}
The exponent $3/2$ is an arbitrary number (strictly) between $1$ and $2$.
\end{remark}

The proof of the Proposition is essentially a combination of the results from Section \ref{sec:triplecenters} about centers and basepoints of triples, used to prove the result for a fixed $X$, together with the structure theory of shear-shape coordinates, used to bootstrap to variable $X$.
A technical complication is that we need uniformity in the first step in order for our bootstrapping argument to apply to a neighborhood of $(X, \lambda)$.

\begin{lemma}\label{lem:fixX persistent}
There is a $\zeta_{\ref{lem:fixX persistent}}>0$ depending only on the minimum weight of an arc of $\arc(X, \lambda)$ and the systole $s$ of $X$ such that for any $\lambda'$ with
$d_X^H(\lambda, \lambda') < \zeta_{\ref{lem:fixX persistent}}$, the following hold:
\begin{enumerate}
    \item $\arc_\bullet^\tau(X, \lambda')$ contains $\arc_\bullet$.
    \item For every arc $\alpha \in \arc_\bullet$, we have that 
\[\left|c_{\alpha}(X, \lambda) - c_{\alpha}(X, \lambda') \right| = O_s(d_X^H(\lambda, \lambda')^2).\]
    \item For every $P\subset S\setminus (\tau\cup \arc_\bullet)$, the configuration $G_P(X,\lambda')$ is $O_s(d_X^H(\lambda,\lambda')^2)$-equidistant.
\end{enumerate}
\end{lemma}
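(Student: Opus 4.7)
The plan is to work in the universal cover $\widetilde X$ and, for each component $P$ of $S \setminus (\tau \cup \arc_\bullet)$, fix a lift $\widetilde P \subset \widetilde X$ and analyze how the boundary configuration $G_P(X,\lambda')$ of leaves of $\widetilde{\lambda'}$ differs from $G_P(X,\lambda)$. For $\zeta = d_X^H(\lambda,\lambda')$ small enough, Lemma \ref{lem:Hausdfullcarry} ensures $\lambda' \preceq \tau$, so the configuration $G_P(X,\lambda')$ is well-defined and the natural identification (via $\tau$) of topologically (in)visible arcs for $\lambda$ and $\lambda'$ is unambiguous. Moreover, Lemma \ref{lem:radius_bounded} provides a uniform upper bound $r = r(s,g)$ on the distance from any center of $G_P(X,\lambda)$ to its constituent geodesics, depending only on $s = \sys(X)$. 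Since $\lambda$ and $\lambda'$ are $\zeta$-Hausdorff close on $X$, the corresponding lifted geodesics are $\zeta$-Hausdorff close on balls of radius $\log(1/\zeta) - O_s(1)$ about any center of $G_P(X,\lambda)$, so the estimates of Section \ref{sec:triplecenters} apply uniformly in $P$.

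Item (3) is then immediate: if $P$ corresponds to a trivalent spine vertex then $G_P(X,\lambda)$ has a unique center, and Lemma \ref{lem:triple_centers} gives $O_s(\zeta^2)$-equidistance of $G_P(X,\lambda')$; if $P$ corresponds to a higher-valence vertex then $G_P(X,\lambda)$ is $0$-equidistant and Lemma \ref{lem:4_centers_close} (together with its evident extension to $n$-tuples for $n \geq 4$) bounds the diameter of the set of centers of triples in $G_P(X,\lambda')$ by $O_s(\zeta^2)$. For item (2), the weight $c_\alpha(X,\lambda)$ is realized as a signed distance along a leaf of $\lambda$ between the basepoints of the two configurations incident to $\alpha$. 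Corollary \ref{cor:triplebasepoints} controls each such basepoint's motion along its ($\zeta$-moving) supporting leaf by $O_s(\zeta^2)$, so $|c_\alpha(X,\lambda) - c_\alpha(X,\lambda')| = O_s(\zeta^2)$.

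The main obstacle is item (1). A perturbation at a higher-valence spine vertex can split a $0$-equidistant configuration into trivalent pieces whose dual arcs may not match those arising from any particular splitting for $\lambda$, so $\arc(X,\lambda')$ could carry new arcs crossing those of $\arc(X,\lambda)$. My strategy is to interpolate continuously in the space of $n$-tuples between $G_P(X,\lambda)$ and $G_P(X,\lambda')$; Lemma \ref{lem:4_centers_close} guarantees that all intermediate configurations remain $O_s(\zeta^2)$-equidistant, so any newly appearing dual arc has weight at most $O_s(\zeta^2)$, and every arc $\alpha \in \arc_\bullet$ persists as a geometric arc of $(X,\lambda')$ provided its weight stays strictly positive throughout the interpolation. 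By item (2), this requires only $C_s \zeta^2 < c_{\min}$, where $c_{\min}$ is the smallest weight of an arc in $\arc_\bullet$, which yields a threshold $\zeta_{\ref{lem:fixX persistent}} > 0$ depending only on $c_{\min}$ and $s$, as desired. Since topological invisibility with respect to $\tau$ is a combinatorial property of the fixed pair $(\tau, \arc_\bullet)$ and hence preserved under the identification above, the persistent arcs actually lie in $\arc_\bullet^\tau(X,\lambda')$, completing the argument.
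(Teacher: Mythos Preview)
Your argument is essentially correct and relies on the same estimates from Section~\ref{sec:triplecenters} as the paper, but two points diverge. First, you invoke Lemma~\ref{lem:4_centers_close} for the equidistance in item~(3) and again in your interpolation, but that lemma carries a crossing-dual-arcs hypothesis which you never verify; the direct tool is Lemma~\ref{lem:triple_centers} applied to each triple of $G_P(X,\lambda')$ against the $0$-equidistant $G_P(X,\lambda)$, which is exactly what the paper does. Second, for item~(1) the paper argues by contradiction: if some $\alpha \in \arc_\bullet$ crossed an arc $\beta \in \arc_\bullet^\tau(X,\lambda')$, one lifts to a pair of $4$-tuples whose dual arcs genuinely cross and \emph{then} legitimately applies Lemma~\ref{lem:4_centers_close} to force $c_\alpha(X,\lambda) = O_s(\zeta^2)$, contradicting the threshold. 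This isolates non-crossing as the sole obstruction and dispatches it in one stroke; your interpolation route reaches the same conclusion but leaves both the non-crossing step and the continuity of the weighted arc system along the path implicit.
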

\begin{proof}
Throughout our proof, we will set $\zeta := d_X^H(\lambda, \lambda')$ for brevity.

We begin by proving that $\arc_\bullet$ and $\arc_\bullet^\tau(X, \lambda')$ are disjoint so long as $\zeta$ is small enough.
Suppose towards contradiction that $\alpha$ crosses some arc $\beta\in \arc_\bullet^\tau(X,\lambda')$.
We lift the situation to $\tX$, where lifts $\widetilde\alpha$ and $\widetilde\beta$ intersect in a point, each joining  a pair of geodesics corresponding to boundary leaves of both $\tlambda$ and $\tlambda'$.  
Let $G$ and $G'$ denote the $4$-tuples of boundary geodesics in $\tlambda$ and $\tlambda'$, respectively.
By Lemma \ref{lem:equittwidth} and the construction of equilateral train tracks, each geodesic of $G$ fellow-travels a geodesic of $G'$ at distance $W_{\ref{lem:equittwidth}}\zeta$ along a segment of length at least 
\[2 D_{\zeta} \approx 2\log(1/\zeta) - 2\log 2.\]
We conclude that $G$ is $O_s(\zeta)$-close to $G'$ on a ball of radius at least $D_\zeta$ around any center of $G$.

By Lemma \ref{lem:radius_bounded}, the distance between $G$ and any center of a triple of $G$ is bounded by some uniform $r(s)$.
Applying Lemma \ref{lem:4_centers_close}, we see that both $G$ and $G'$ are $O_s(\zeta^2)$-equidistant.
As the edge of the spine dual to $\alpha$ connects two of the four centers of $G$, its length is at most $O_s(\zeta^2)$, and since its weight $c_\alpha(X, \lambda)$ is the length of the projection of this arc to a boundary leaf of $\lambda$, we can conclude that
\begin{equation}\label{eqn:arcwtbd}
c_\alpha(X, \lambda) \le O_s(\zeta^2).
\end{equation}
Taking our threshold $\zeta_{\ref{lem:fixX persistent}}$ small enough compared to all positive arc weights $\{c_\alpha(X,\lambda)\}_{\alpha\in \arc_\bullet}$ would lead to a contradiction, hence for small $\zeta$ no arc of $\arc_\bullet$ can be crossed by any arc of $\arc_\bullet^\tau(X,\lambda')$. \medskip

With this established, we now prove (3). 
We first note that $G_P(X,\lambda)$ is $0$-equidistant for all $P$.
\footnote{This is trivial when $P$ meets only $3$ branches of $\tau$ and follows from the definition/construction of the geometric arc system $\arc(X,\lambda)$, otherwise.}
By the same reasoning as above, $G_P(X,\lambda)$ is $O_s(\zeta)$-close to $G_P(X,\lambda')$ on a ball of radius at least $D_\zeta$ 
about the center of $G_P(X,\lambda)$.
By Lemma \ref{lem:triple_centers}, all centers of $G_P(X,\lambda')$ are $O_s(\zeta^2)$-close to the center of $G_P(X,\lambda')$, hence to each other, and thus $G_P(X,\lambda')$ is $O_s(\zeta^2)$-equidistant.

Now that (3) is proved, we move on to item (2).
By Lemma \ref{lem:radius_bounded} again, the distance from any center of $P$ to the corresponding basepoint is bounded by $r(s)$.
Thus Corollary \ref{cor:triplebasepoints} implies that any basepoint of $G_P(X,\lambda')$ is $O_s(\zeta^2)$ close to the corresponding basepoint of $G_P(X, \lambda)$.
Since the weight $c_\alpha$ is the distance along $\lambda$ between two basepoints on either side of $\alpha$, this gives a uniform estimate on how much the weight of $\alpha$ can change, proving (2).

To conclude (1), that $\arc_\bullet^\tau(X, \lambda')$ actually contains $\arc_\bullet$ instead of just being disjoint, we simply take our threshold $\zeta_{\ref{lem:fixX persistent}}$ small enough to ensure that the weights on the arcs of $\arc$ change only by half, say.
\end{proof}

In the sequel, it will also be convenient to find a filling arc subsystem of $\arc_\bullet$ consisting of arcs of definite weight that are guaranteed to persist at some uniform scale over the thick part of moduli space.

\begin{figure}
    \centering
    \includegraphics[width=\linewidth]{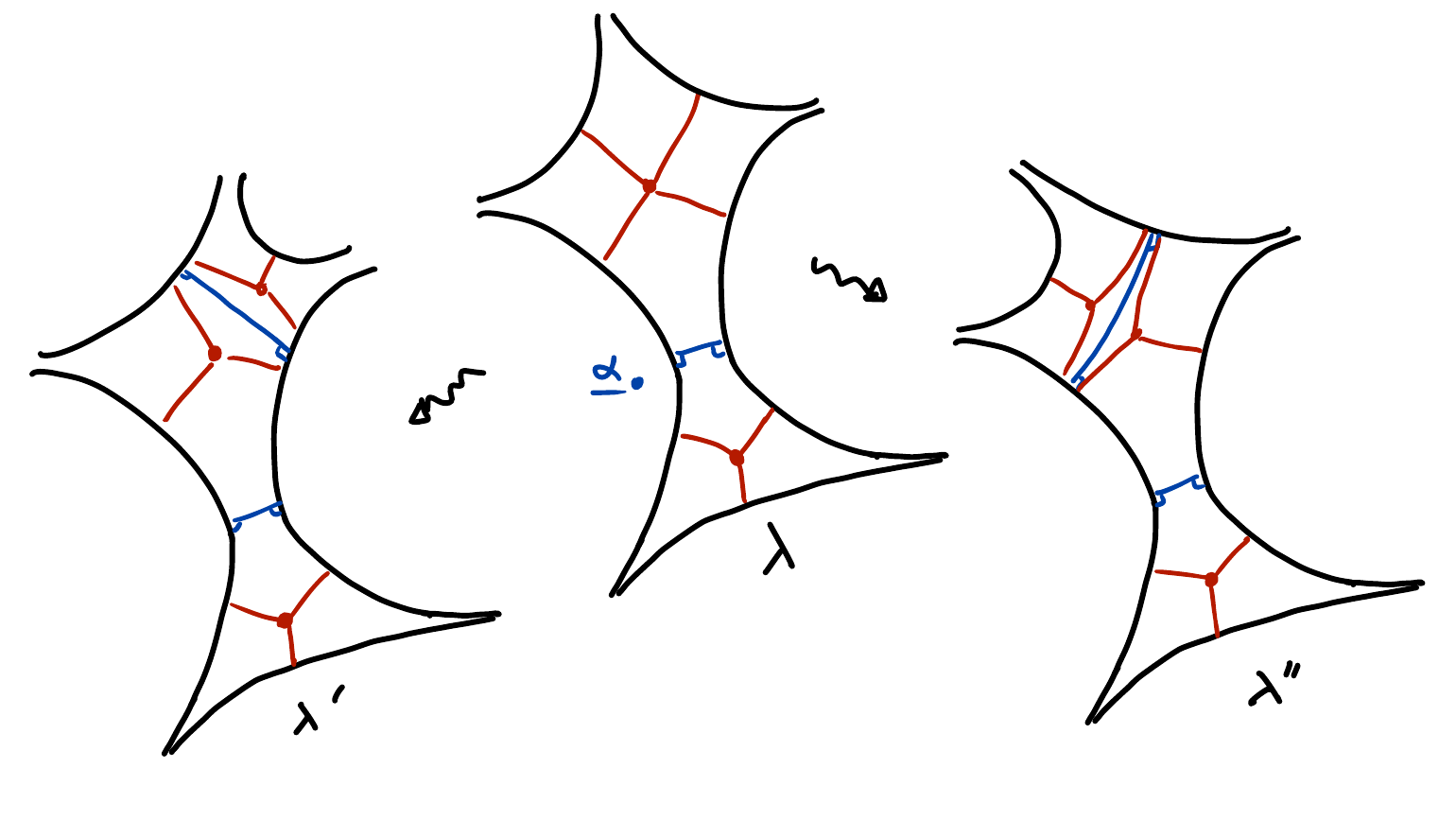}
    \caption{A persistent arc. Here, four of the geodesics in $\lambda$ are in an equilateral configuration, so nearby $\lambda'$ and $\lambda''$ may have crossing geometric arc systems.}
    \label{fig:persistent}
\end{figure}

\begin{corollary}\label{cor:persist_notallarcs}
There is a $\zeta_{\ref{cor:persist_notallarcs}}>0$ depending only on $s$ such that for any $s$-thick $X$ and any chain-recurrent $\lambda$ the following holds.
For any $\zeta \le \zeta_{\ref{cor:persist_notallarcs}}$, there is a filling subsystem $\arc(\zeta) \subset \arc_\bullet$ \label{ind:persistnotallarcs}
such that for any $\lambda'$ with $d_X^H(\lambda, \lambda') \le  \zeta$:
\begin{enumerate}
    \item $\arc_\bullet^{\tau}(X, \lambda')$ contains $\arc(\zeta)$.
    \item For every arc $\alpha \in \arc(\zeta)$, we have that
\[\left|c_{\alpha}(X, \lambda) - c_{\alpha}(X, \lambda') \right| = O_s(\zeta^2).\]
    \item For every $P\subset S\setminus (\tau\cup \arc(\zeta))$, the configuration $G_P(X,\lambda')$ is $O_s(\zeta^2)$-equidistant.
\end{enumerate}
\end{corollary}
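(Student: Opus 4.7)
The strategy is to strengthen Lemma \ref{lem:fixX persistent} by discarding those arcs of $\arc_\bullet$ whose weights are too small to reliably persist under a Hausdorff perturbation of size $\zeta$. Examining the proof of Lemma \ref{lem:fixX persistent}, the dependence on the minimum weight of arcs was used only to force a contradiction via the bound \eqref{eqn:arcwtbd}: if an arc $\alpha \in \arc_\bullet$ is crossed by some arc of $\arc_\bullet^\tau(X, \lambda')$, then $c_\alpha(X, \lambda) \le K_0 \zeta^2$ for an implicit constant $K_0 = K_0(s)$ that depends only on the systole (via Lemma \ref{lem:radius_bounded} and the constants in Lemma \ref{lem:4_centers_close}). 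We therefore define
\[\arc(\zeta) := \{\alpha \in \arc_\bullet : c_\alpha(X, \lambda) \ge 2K_0 \zeta^2 \}.\]
Conclusion (1) is then immediate: if some $\alpha \in \arc(\zeta)$ were crossed, the bound above would force $c_\alpha(X, \lambda) \le K_0 \zeta^2 < 2K_0 \zeta^2$, contradicting $\alpha \in \arc(\zeta)$. The uniform threshold $\zeta_{\ref{cor:persist_notallarcs}}$ will depend only on $s$, since every input to this argument does.

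With (1) in hand, the arcs of $\arc(\zeta)$ lie in both visible arc systems $\arc_\bullet$ and $\arc_\bullet^\tau(X, \lambda')$, so the basepoint-comparison step of Lemma \ref{lem:fixX persistent}(2) applies verbatim to yield (2). For (3), each component $P \subset S \setminus (\tau \cup \arc(\zeta))$ is a union of hexagons of $S \setminus (\tau \cup \arc_\bullet)$ fused along arcs of $\arc_\bullet \setminus \arc(\zeta)$, each of weight at most $2K_0 \zeta^2$. Every such fusion joins two centers of triples in $G_P(X, \lambda)$ by a spine segment of length equal to the corresponding weight, so $G_P(X, \lambda)$ is already $O_s(\zeta^2)$-equidistant in the sense of Definition \ref{def:equidistant}. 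The argument of Lemma \ref{lem:4_centers_close} then generalizes verbatim to $n$-tuples: applying Lemma \ref{lem:triple_centers} to every triple of geodesics in $G_P$, we see that all centers of triples in $G_P(X, \lambda')$ are $O_s(\zeta^2)$-close to a common point, giving (3).

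The principal obstacle is verifying that $\arc(\zeta)$ is genuinely a filling subsystem. Removing an arc whose two sides abut distinct hexagons fuses them into a single disk and thus preserves fillingness, so the only potential issue arises when a discarded arc has both sides in the same hexagon---a non-generic configuration corresponding to a ``loop'' in the dual graph of the spine. In such exceptional cases we simply retain the arc in $\arc(\zeta)$ regardless of its weight: persistence (1) for such a retained arc can be verified directly by applying Lemma \ref{lem:4_centers_close} to the boundary configuration of the single enclosing hexagon, and (2) follows since a small-weight arc can change weight by at most its own magnitude, which is already $O_s(\zeta^2)$. With these adjustments, $\arc(\zeta)$ is a filling subsystem of $\arc_\bullet$ satisfying all three conclusions, with constants depending only on $s$.
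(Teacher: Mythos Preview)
Your argument for (1) and (2) on the arcs of weight $\ge 2K_0\zeta^2$ is essentially the paper's observation, and your treatment of (3) is fine. The gap is in showing that $\arc(\zeta)$ \emph{fills}.

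Your combinatorial criterion does not do the job. Filling fails exactly when the spine edges dual to the \emph{discarded} arcs contain a cycle; a self-loop is only the length-$1$ case. Several non-loop arcs of small weight can perfectly well have dual spine edges forming a longer cycle in $\Sp$, and removing them all creates an annulus even though each individually bounds distinct hexagons. Your fix of retaining ``loop'' arcs therefore misses the real obstruction. Moreover, even for genuine loop arcs, the patch does not work: a retained arc of weight $O_s(\zeta^2)$ sits in a near-equidistant $4$-tuple, which is precisely the unstable situation where the arc of $(X,\lambda')$ may be the \emph{other} diagonal---so (1) can fail for it. Lemma~\ref{lem:4_centers_close} only tells you the configuration is near-equidistant; it does not force the arc to persist. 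And ``the weight can change by at most its own magnitude'' is not a valid bound.

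The paper closes this gap with a short geometric argument you are missing: any essential curve $\gamma$ in $X\setminus\lambda$ has $\ell_X(\gamma)\ge s$, so its closest-point projection to $\lambda$ has length at least $s/\cosh(r(s))$ by Lemma~\ref{lem:radius_bounded}. But that projected length is exactly $\sum_\alpha i(\gamma,\alpha)\,c_\alpha(X,\lambda)$, a sum over at most $6|\chi(S)|$ arcs. Hence $\gamma$ must cross some arc of weight at least $c:=s/\big(\cosh(r(s))\cdot 6|\chi(S)|\big)$, so the subsystem $\{\alpha: c_\alpha\ge c\}$ fills. Taking $\zeta_{\ref{cor:persist_notallarcs}}$ so that $2K_0\zeta^2<c$ makes your $\arc(\zeta)$ contain this filling subsystem, and everything else goes through with constants depending only on $s$.
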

\begin{proof}
The proof of Lemma \ref{lem:fixX persistent} actually shows that all arcs of weight at least $O_s(d_X^H(\lambda, \lambda')^2)$ persist after deformations of $\lambda$.
Similarly, all of the geometric estimates are uniform as they require only fellow-traveling estimates and the uniform bound on $r(s)$, the maximum distance from any point on the spine to the base lamination, and always yield estimates at scale $O_s(d_X^H(\lambda, \lambda')^2)$.

Thus, it remains to show that we can find a $c>0$ depending only on $s$ such that the arcs of $\arc$ with weight $\ge c$ necessarily fill $S \setminus \tau$.
To prove this, we note that any curve disjoint from $\lambda$ must have length at least $s$ (since $s$ is a bound on the systole), and so its projection to $\lambda$ must also have length at least $s/\cosh(r)$. But the total length of the projection is just the sum of the weights of the arcs that the curve crosses, so if $c < s/\cosh(r)(6g-6)$ then any essential curve in $X \setminus \lambda$ must cross an arc of weight $c$.
Thus the sub-arc system consisting of those of weight $c$ must be filling, and we can take our threshold $\zeta_{\ref{cor:persist_notallarcs}}$ small enough to ensure that all of these must persist.
\end{proof}

We now deduce Proposition \ref{prop:persistent} from Lemma \ref{lem:fixX persistent}.
Our threshold $\zeta_{\ref{lem:fixX persistent}}$ from above is uniform in the geometry of $X$ and the visible arc system, so as we vary $X$ we need only add in a small correction factor.

\begin{proof}[Proof of Proposition \ref{prop:persistent}]
Recall that we have fixed a uniform lower bound $s$ on the systole of $X$ and a scale $\delta \le \delta_{\ref{lem:equittswork}}(s)$ for our equilateral train track.

Choose a neighborhood $U$ about $X$ such that for every $X' \in U$, the Hausdorff metrics on $X$ and $X'$ are H{\"o}lder comparable with exponent $3/4$ and some uniform constant $K$.
Fix some $c$ smaller than the minimum weight of an arc of $\arc_\bullet$ and fix $\zeta_{\ref{lem:fixX persistent}}$ to be the threshold from Lemma \ref{lem:fixX persistent}, taken for thickness $s/2$ and arc weight $c$.
Choose $\zeta_{\ref{prop:persistent}}$ small enough such that
\[K \zeta_{\ref{prop:persistent}}^{3/4} < \min\{\zeta_{\ref{lem:fixX persistent}},
\sqrt{c}\}.
\]

For any $\zeta\le \zeta_{\ref{prop:persistent}}$, we now build our desired neighborhood $B_{\ref{prop:persistent}}(\zeta)$ of $X$.
Invoking the structure theorem of shear-shape coordinates
\cite[Theorem 8.1]{shshI} (see also \S\ref{subsec:shsh}),
there is an open neighborhood $V(\zeta) \subset \T_g$ of $X$ such that for every $X' \in V(\zeta),$
\begin{enumerate}[label=(\roman*)]
    \item $\arc(X', \lambda)$ contains $\arc(X, \lambda)$,
    \item For every arc $\alpha \in \arc_\bullet$, we have $c_{\alpha}(X', \lambda)> c$, and
    \item For every arc $\beta \in \arc_\bullet^{\tau}(X', \lambda)$
    we have 
    \[\left| c_{\beta}(X', \lambda) - c_{\beta}(X, \lambda) \right|< \zeta^{3/2}\]
    where if $\beta \notin \arc_\bullet$ then it is assigned weight $0$.
\end{enumerate}
Finally, set $B_{\ref{prop:persistent}}(\zeta)$ to be the intersection of $U \cap V(\zeta)$ with the $s/2$-thick part of Teichm{\"u}ller space. 

Using item (iii), we can conclude
\begin{claim}
There is a constant $L=L(X,\lambda)$ such that for every component $P\subset S\setminus (\tau\cup \arc_\bullet)$ and $X' \in B_{\ref{prop:persistent}}(\zeta)$, the configuration $G_P(X',\lambda)$ is $L\zeta^{3/2}$-equidistant.
\end{claim}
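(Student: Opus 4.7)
The strategy is to leverage item (iii)'s bound on arc weight changes together with the piecewise real-analytic identification of a hexagon's hyperbolic geometry with its weighted arc system furnished by \cite[Theorem 8.1]{shshI}.

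Since $P$ corresponds to a single hexagon $H_v$ of $X\setminus(\lambda\cup\arc)$ with unique center $v$, the configuration $G_P(X,\lambda)$ is $0$-equidistant. At $X' \in B_{\ref{prop:persistent}}(\zeta)$, the hexagon $H_v$ may be subdivided by arcs in $\arc_\bullet^\tau(X',\lambda)\setminus\arc_\bullet$ which are internal to $P$, and the centers of the resulting smaller hexagons are precisely the centers of triples of $G_P(X',\lambda)$. These centers are joined by spine edges within $P$, each dual to such an internal arc, so it suffices to bound the length of each such edge.

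By item (iii), the weight at $X'$ of any internal arc is strictly less than $\zeta^{3/2}$ (since its weight at $X$ vanishes). Piecewise real-analyticity of the inverse $\arcwt^{-1}: \Base \to \T(S\setminus\lambda)$ implies that every spine edge length within $P$ is a real-analytic function of the arc weights on each cell of $\Base$, vanishing exactly when its dual weight does. Since $V(\zeta)$ projects to a relatively compact neighborhood of $\arcwt(X,\lambda)$ in $\Base$, bounded derivatives give a linear estimate: each internal spine edge has length at most $C_P\zeta^{3/2}$ for some constant $C_P=C_P(X,\lambda)$. Summing over the uniformly (in the genus $g$) bounded number of spine edges within $P$ and maximizing over the finitely many components produces a constant $L=L(X,\lambda)$ satisfying the claim.

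The main subtlety is in ensuring that spine edge length depends at most linearly (rather than with higher-order vanishing) on the dual arc weight across the various cells of $\Base$ adjacent to the one containing $\arcwt(X,\lambda)$. This is the nondegeneracy of the parameterization of crowned surfaces by weighted arc systems at cell boundaries, i.e., that one-sided derivatives of $\arcwt^{-1}$ remain finite; continuity of these derivatives on a compact neighborhood then furnishes the uniform constant $C_P$.
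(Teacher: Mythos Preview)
Your approach is close to the paper's in spirit (both use analyticity of the arc--weight parametrization and a Lipschitz bound on a compact neighborhood), but there is a genuine gap in how you conclude $\zeta$-equidistance.

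The definition of $\zeta$-equidistant (Definition~\ref{def:equidistant}) requires that the set of centers of \emph{all} triples of geodesics in $G_P(X',\lambda)$ have diameter at most $\zeta$. For a configuration of $n$ geodesics, this is $\binom{n}{3}$ centers. Your argument bounds only the lengths of spine edges, hence only the diameter of the $n-2$ spine vertices --- which are the centers of those particular triples consisting of the three geodesics closest to a given vertex. The assertion that ``the centers of the resulting smaller hexagons are precisely the centers of triples of $G_P(X',\lambda)$'' is false as stated: spine vertices are centers of \emph{some} triples, not all of them. For $n\ge 5$ there are strictly more triple-centers than spine vertices, and you give no argument that the remaining centers are close to the spine. (It is true that control on the spine vertices implies control on all triple-centers, but establishing this requires an additional continuity/Lipschitz argument of exactly the sort you are trying to avoid.)

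The paper sidesteps this by applying the Lipschitz argument directly to the function ``maximum distance between any pair of centers of triples of $G_P(\,\cdot\,,\lambda)$,'' viewed as a function of the arc--weight coordinates: this function is analytic (as a finite maximum of analytic functions of the configuration, which itself varies analytically in the weights), vanishes at $X$, and hence is Lipschitz on the compact closure $\overline{B_{\ref{prop:persistent}}(\zeta_{\ref{prop:persistent}})}$. Your detour through spine edge lengths is more concrete but does not reach the required conclusion without either invoking this same argument anyway, or proving the separate geometric fact that spine diameter controls all triple-centers on compact families of configurations.
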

\begin{proof}[Proof of the claim]
    Complete $\arc(X,\lambda)$ to a maximal arc system $\arc'$ by formally adding in arcs of zero weight to $\arc(X,\lambda)$ arbitrarily. 
    Then Theorem 6.4 and Lemma 6.9 of \cite{shshI} assert that the weights $\{c_\alpha(\bullet,\lambda)\}_{\alpha \in \arc'}$ are analytic coordinates on $\T(S\setminus \lambda)$.
    Thus the configuration of geodesics $G_P(\bullet,\lambda)$ varies analytically as a function of these coordinates.

    Now, the function that records the center in $\HH^2$ from a triple of geodesics that do not separate each other is analytic, and for a fixed point $x\in \HH^2$, we know that $y\mapsto d(x,y)$ is $1$-Lipschitz.
    Therefore, the maximum distance between any pair of centers of $G_P(X',\lambda)$ is a Lipschitz function of the arc weight coordinates on any compact set of metrics on $S\setminus \lambda$.  Let  $L$ be a Lipschitz constant on the compact set of metrics  $\{X'\setminus \lambda : X' \in \overline{B_{\ref{prop:persistent}}(\zeta_{\ref{prop:persistent}})}
    \}$.
    This function is $0$ at $X$ and the metric on $X'\setminus \lambda$ is obtained by changing the weight parameters on arcs contained in $P$ by at most $\zeta^{3/2}$, so $G_P(X',\lambda)$ is at most $L\zeta^{3/2}$-equidistant.
    \end{proof}

The Proposition is now proved for for pairs of the form $(X',\lambda)$, where $X' \in B_{\ref{prop:persistent}}$, while Lemma \ref{lem:fixX persistent} proved the Proposition for pairs of the form $(X,\lambda')$, with $\lambda'$ close enough to $\lambda$. We finish by indicating how to combine the two.

So suppose $X' \in B_{\ref{prop:persistent}}(\zeta)$ and $\lambda'$ is $\zeta \le \zeta_{\ref{prop:persistent}}$ close to $\lambda$ on $X$.
Since $X' \in V(\zeta)$, we know that every arc $\alpha \in \arc_\bullet$ persists on $(X', \lambda)$ and has weight at least $c$.
Since $X' \in U$, we have that
\[d_{X'}^H(\lambda, \lambda') < K \zeta^{3/4} < \zeta_{\ref{lem:fixX persistent}}\]
and so we can apply part (1) of Lemma \ref{lem:fixX persistent} to deduce that $\alpha$ appears as an arc of 
$\arc_\bullet' = \arc_\bullet^\tau(X', \lambda')$, proving part (1) of the Proposition.
Moreover, part (2) of the same lemma implies that the weight of $\alpha$ changes by $O_s(\zeta^{3/2})$, proving proving (2) for every arc of $\arc_\bullet'$ that comes from an arc of $\arc_\bullet$.

Finally, the Claim tells us that the centers of $G_P(X', \lambda)$ are all $O(\zeta^{3/2})$ close, while the proof of Lemma \ref{lem:fixX persistent} proves that each center of $G_P(X', \lambda')$ is $O_s(\zeta^{3/2})$ close to the corresponding center of $G_P(X', \lambda)$.
Thus $G_P(X',\lambda')$ is 
\begin{equation}\label{eqn:non-uniform estimate persistent}
    O_{s/2}(\zeta^{3/2}) + O(\zeta^{3/2}) \le O(\zeta^{3/2})
\end{equation}
-equidistant, proving (3).
We remark that since we lost uniformity of our estimates in the Claim, we have lost uniformity in the equation above (hence why there is no $s$ subscript in our big $O$), so the implicit constant $M_{\ref{prop:persistent}}$ in \eqref{eqn:non-uniform estimate persistent} depends on both $X$ and $\lambda$ (but not $\zeta$).
\end{proof}

\begin{remark}
    Note that the threshold $\zeta_{\ref{prop:persistent}}$ depends on the pair $(X,\lambda)$, as does the constant $L$ from the Claim.  However, all the other estimates in the proof of Proposition \ref{prop:persistent} only depend on the systole $s$.
    Additionally, the constant $L$ can also be chosen only to depend on $s$ by invoking a compactness argument viewing certain Teichm\"uller spaces of surfaces with spikes as lying at infinity of other Teichm\"uller spaces (by pinching very short orthogeodesic arcs to zero length). Compare the arguments in \cite[\S6]{shshI}.
    Such an argument is omitted since we will not use this result in the sequel.
\end{remark}

\section{Comparing ties}\label{sec:ties}
We now analyze the structure of the orthogeodesic foliation near $\lambda$. The main result of this section is that, at least in a neighborhood of $\lambda$, the leaves of $\Ol(X)$ vary (H{\"o}lder) continuously as $\lambda$ varies in the Hausdorff metric.
This is essentially equivalent to proving continuity of the ``shear'' part of shear-shape coordinates as we let $\lambda$ vary (the result for variable $X$ is already contained in \cite{shshI}).

In this section, we fix an $s$-thick hyperbolic surface $X$ and a geodesic lamination $\lambda$.
For every $\delta \le \delta_{\ref{lem:equittswork}}$, the $\delta$--equilateral neighborhood $\cE_\delta(\lambda)$ defines a train track $\tau = \tau(X, \lambda, \delta)$.
In particular, we recall from the previous section that if $\lambda'$ is fully carried on $\tau$ then we have a natural identification between pairs of boundary geodesics corresponding to the same branch. Note that the correspondence between geodesics depends on $\tau$, hence on the defining parameter $\delta$.

Given a branch of $\tau$, let $g$ and $h$ denote the corresponding boundary geodesics of $\lambda$ and let $t$ be any tie of the equilateral neighborhood $\cE_{\delta}(\lambda)$ connecting $g$ and $h$.
Let $g'$ and $h'$ denote the boundary geodesics of $\lambda'$ corresponding to the same branch of $\tau$; then the main result of this section (Proposition \ref{prop:tiestable} below) yields an estimate on the Hausdorff distance between $t$ and segments of $\cO_{\lambda'}(X)$ connecting $g'$ and $h'$.

Let us first prove that there actually exist such segments, so long as we take the train track inducing the correspondence between $(g,h)$ and $(g', h')$ to be defined at a small enough scale.

\begin{lemma}\label{lem:change base ties still meet}
For any $s>0$, there is a $\delta_{\ref{lem:change base ties still meet}}(s)$ such that for any $\delta \le \delta_{\ref{lem:change base ties still meet}}(s)$, any $s$-thick $(X,\lambda)$, and any $\lambda'$ close enough to $\lambda$ (also depending only on $s$), any leaf of $\cO_{\lambda'}(X)$ meeting a tie $t$ of $\cE_\delta(\lambda)$ connecting $g$ and $h$ must meet the corresponding geodesics $g'$ and $h'$.
\end{lemma}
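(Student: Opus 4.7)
The plan is to show that for $\delta$ and $\zeta := d_X^H(\lambda, \lambda')$ both small enough in terms of $s$, the orthogeodesic leaf of $\cO_{\lambda'}(X)$ through any $p\in t$ extends (across $\lambda'$) to contain orthogonal crossings of both $g'$ and $h'$. First, I would invoke Lemmas \ref{lem:equittwidth} and \ref{lem:equittswork} to bound the length of $t$ by $W_{\ref{lem:equittwidth}}\delta$, taking $\delta \le \delta_{\ref{lem:equittswork}}(s)$ so that $\tau = \tau(X,\lambda,\delta)$ is a bona fide equilateral train track. Then Lemma \ref{lem:Hausdfullcarry} ensures that for $\zeta$ sufficiently small, $\lambda'$ is fully carried on $\tau$, so the ``corresponding geodesics'' $g',h'$ are defined as the outermost boundary leaves of $\lambda'$ on the branch of $\tau$ containing $t$. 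By definition of the carrying map and the width bound, $g',h'$ are $O(\zeta)$-Hausdorff close to $g,h$ on balls around $p$ of any prescribed radius (allowed to grow as $\zeta$ shrinks).

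Next I would apply Corollary \ref{cor:projectionestimate2} to see that the closest-point projection from $p$ to $g'$ is $O(\zeta)$-close to its projection to $g$, and similarly for $h'$. Thus the geodesic segment from $p$ orthogonal to $g'$ is Hausdorff close to the portion of $t$ running from $p$ to $g$, with comparable (bounded) length, and analogously on the $h$ side. This shows that the two half-leaves of $\cO_{\lambda'}(X)$ through $p$ in the component of $X\setminus\lambda'$ containing $p$ track the two halves of $t$, perhaps with an additional kink near the spine vertex of $\Sp(X\setminus\lambda)$ if $t$ contained one. If no intermediate leaves of $\lambda'$ lie between $p$ and $g',h'$, we are done.

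The main subtle point, and where I expect the most care will be needed, is the possibility that $\lambda'$ has additional leaves carried on the same branch, strictly between $g'$ and $h'$, which the leaf of $\cO_{\lambda'}(X)$ would encounter before reaching $g'$ or $h'$. In that case I would extend the leaf continuously across each such intermediate leaf, noting that each crossing is orthogonal and changes the direction by only $O(\zeta)$ (since any two leaves of $\lambda'$ on the branch are $O(\zeta)$-parallel near $t$, by Hausdorff closeness to $g$ or $h$). Because the branch region has transverse width $O(\delta)$ and the direction deviations accumulate to at most $O(\zeta)$, the extended leaf traverses all intermediate leaves and reaches $g'$ in one direction and $h'$ in the other. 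Ensuring that the leaf cannot exit the branch region and hit a ``wrong'' boundary geodesic of $\lambda'$ before arriving at $g'$ or $h'$ uses the embedded structure of $\tau$: leaves of $\lambda'$ outside this branch are a definite positive distance from the branch in $X$, and the small controlled direction deviations keep the extended leaf inside the branch.
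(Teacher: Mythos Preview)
Your approach would work but is considerably more involved than the paper's. The paper's proof rests on a single preliminary observation, stated for an \emph{arbitrary} lamination $\mu$ on an $s$-thick surface: if a point $x$ lies within $\delta_{\ref{lem:equittswork}}/w_{\ref{prop:ttdefs_comp}}$ of a leaf $g$ of $\mu$, then by Proposition~\ref{prop:ttdefs_comp} one has $x\in\cN_{\delta_{\ref{lem:equittswork}}/w_{\ref{prop:ttdefs_comp}}}(\mu)\subset\cE_{\delta_{\ref{lem:equittswork}}}(\mu)$, and the tie of that equilateral neighborhood through $x$ (which is a segment of $\cO_\mu(X)$) must meet $g$. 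With this in hand, the paper simply checks that any $x\in t$ is within $W_{\ref{lem:equittwidth}}\delta$ of both $g'$ and $h'$: once $\lambda'\subset\cE_\delta(\lambda)$, the leaves $g'$ and $h'$ run along the same branch and so cross $t$, giving $d(x,g'),\,d(x,h')\le \ell(t)\le W_{\ref{lem:equittwidth}}\delta$. Choosing $\delta\le\delta_{\ref{lem:equittswork}}/(w_{\ref{prop:ttdefs_comp}}W_{\ref{lem:equittwidth}})$ and applying the observation with $\mu=\lambda'$ finishes the proof in two lines.

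Your inductive extension across intermediate leaves of $\lambda'$, together with the direction-deviation bookkeeping and the appeal to Corollary~\ref{cor:projectionestimate2}, is essentially an inline re-derivation of that preliminary observation; it is correct in spirit but unnecessary. Two minor imprecisions: the direction changes at kinks are controlled by the thickness of the intervening complementary regions of $\lambda'$, which is $O(\delta)$ (the branch width) rather than $O(\zeta)$; and your claim that leaves of $\lambda'$ outside the branch are ``a definite positive distance from the branch'' fails near switches. Neither issue is fatal, but both disappear once you argue via the equilateral neighborhood of $\lambda'$ itself, as the paper does, instead of tracking the leaf along the branch of $\tau(X,\lambda,\delta)$.
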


\begin{proof}
We begin by observing that for any $s$-thick $(X, \lambda)$, any point $x \in X$ and any geodesic $g$ of $\lambda$, we have that if $x$ is within $\delta_{\ref{lem:equittswork}} /w_{\ref{prop:ttdefs_comp}}$ of $g$,
then Proposition \ref{prop:ttdefs_comp} implies that $x \in \mathcal E_{\delta_{\ref{lem:equittswork}}}(\lambda)$ and hence the tie through $x$ (in particular, the leaf of $\Ol(X)$ through $x$) must meet $g$.

Now for any $\delta \le \delta_{\ref{lem:equittswork}}$ and any tie $t$ of $\cE_\delta(\lambda)$, we know by Lemma \ref{lem:equittwidth}
that any point $x \in t$ is at most $W_{\ref{lem:equittwidth}}\delta$ far from $g$ and $h$. Similarly, if $\lambda' \subset \cE_\delta(\lambda)$ then $x \in t$ is at most $W_{\ref{lem:equittwidth}}\delta$ far from the corresponding boundary geodesics $g'$ and $h'$ as well.
Therefore, so long as we take
$\delta \le \delta_{\ref{lem:equittswork}}/w_{\ref{prop:ttdefs_comp}}W_{\ref{lem:equittwidth}}=:\delta_{\ref{lem:change base ties still meet}}$
and take $\lambda'$ close enough to $\lambda$ such that it lies entirely in $\cE_\delta(\lambda)$, then we can apply the reasoning from the paragraph above to ensure that the leaf of $\cO_{\lambda'}(X)$ through $x$ must meet both $g'$ and $h'$.
\end{proof}

We now state the main estimate of this section. Fix $\delta$ smaller than the threshold from Lemma \ref{lem:change base ties still meet} and let $t'$ be any segment of $\cO_{\lambda'}(X)$ that meets $t$ and runs between the boundary geodesics $g'$ and $h'$ corresponding to $g$ and $h$.

\begin{proposition}[Ortho segments Hausdorff close]\label{prop:tiestable}
Given any $s>0$, any $\delta \le \delta_{\ref{lem:change base ties still meet}}(s)$, and any $a \in (0,1)$,
there is a $\zeta_{\ref{prop:tiestable}}>0$ such that 
for any $s$-thick $X$ and any $\lambda, \lambda'$ with $d_X^H(\lambda, \lambda') < \zeta_{\ref{prop:tiestable}}$,
\[d_X^H(t, t') = O_s(d_X^H(\lambda, \lambda')^a).\]
\end{proposition}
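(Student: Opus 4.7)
The plan is to work in the universal cover $\widetilde X$ and to decompose both $t$ and $t'$ into piecewise geodesic paths, reducing the Hausdorff distance bound to controlling the distances between corresponding ``kink points'' (spine points) and their projections to the boundary geodesics.

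First, lift $t$ to a path $\widetilde t$ in $\widetilde X$ connecting lifts $\widetilde g, \widetilde h$ of $g, h$.  Since $t \cap t' \ne \emptyset$, we may choose a lift $\widetilde t'$ sharing a point $\widetilde x$ with $\widetilde t$; this fixes lifts $\widetilde g', \widetilde h'$ of $g', h'$.  For $\zeta$ sufficiently small, $\lambda'$ is fully carried by $\tau = \tau(X, \lambda, \delta)$ (Lemma \ref{lem:Hausdfullcarry}), and the branch identification matches the nearest-leaf correspondence.  It follows that $\widetilde g'$ is $\zeta$-Hausdorff close to $\widetilde g$ (and $\widetilde h'$ to $\widetilde h$) on a ball of radius $R = \log(1/\zeta) - O_s(1)$ about $\widetilde x$.

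Next, we use the structure of $\Ol(X)$: each leaf is piecewise geodesic, with each leg a fiber of a closest-point projection to a boundary geodesic of $\widetilde X \setminus \widetilde\lambda$, and kinks occurring at the spine.  Accordingly,
\[
\widetilde t = \overline{\widetilde p \,\widetilde u_1} \cup \overline{\widetilde u_1 \widetilde u_2} \cup \cdots \cup \overline{\widetilde u_k \,\widetilde q},
\]
where $\widetilde p = \pi_{\widetilde g}(\widetilde u_1) \in \widetilde g$, $\widetilde q = \pi_{\widetilde h}(\widetilde u_k) \in \widetilde h$, and the $\widetilde u_i$ are the spine points of $\widetilde X \setminus \widetilde\lambda$ through which $\widetilde t$ passes.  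Generically $k = 1$; larger $k$ corresponds to $\widetilde t$ crossing the boundaries of regions dual to invisible arcs.  Lemma \ref{lem:fixX persistent} (the fixed-$X$ version of Proposition \ref{prop:persistent}) guarantees that the visible-arc structure persists, so $\widetilde t'$ decomposes analogously as $\overline{\widetilde p' \, \widetilde u'_1} \cup \cdots \cup \overline{\widetilde u'_k \, \widetilde q'}$ with the same number of kinks.

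The main estimate is that $d(\widetilde u_i, \widetilde u'_i) = O_s(\zeta^a)$ for any $a \in (0, 1)$.  Each $\widetilde u_i$ is the center of an equidistant configuration of (at least two) boundary geodesics of $\widetilde\lambda$ and lies in a bounded region of $\widetilde X$, since $\widetilde t$ is contained in the preimage of $\cE_\delta(\lambda)$ in $\widetilde X$ and this has uniformly bounded width (Lemma \ref{lem:equittwidth}).  When $\widetilde u_i$ is a spine vertex---equidistant from three or more geodesics---Lemma \ref{lem:triple_centers} directly gives $d(\widetilde u_i, \widetilde u'_i) = O_s(\zeta^2)$.  When $\widetilde u_i$ is an interior point of a spine edge, it arises as the transverse intersection of the perpendicular $\ell_g$ to $\widetilde g$ at $\pi_{\widetilde g}(\widetilde u_i)$ with the equidistant locus of $\widetilde g$ and $\widetilde h$ (or an analogous pair of geodesics); both curves are $O(\zeta^{1+b})$-close to the perturbed counterparts on the relevant ball by Lemma \ref{lem:projectionestimate}, and they meet at an angle bounded below in terms of $s$ and $\delta$.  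The sensitivity of the intersection to these perturbations yields $d(\widetilde u_i, \widetilde u'_i) = O_s(\zeta^a)$, where $a$ approaches $1$ as $b \to 1$ but cannot reach it.  Applying Corollary \ref{cor:projectionestimate2} then gives $d(\widetilde p, \widetilde p'), d(\widetilde q, \widetilde q') = O_s(\zeta^a)$.

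To conclude, each leg of $\widetilde t$ and the corresponding leg of $\widetilde t'$ are geodesic segments whose endpoints are $O_s(\zeta^a)$-close.  Since the legs lie in a bounded region, negative curvature forces the Hausdorff distance between corresponding legs to be controlled by the distance between their endpoints, giving $d^H(\widetilde t, \widetilde t') = O_s(\zeta^a)$, which descends to $X$.  The principal technical difficulty will be the spine-point estimate, particularly in handling configurations of near-equidistant geodesics where the spine structure is unstable; the loss of exponent from the naive $O(\zeta)$ to $O(\zeta^a)$ reflects the trade-off between fellow-traveling radius and closeness scale in Lemma \ref{lem:projectionestimate}.
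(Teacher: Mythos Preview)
Your decomposition of $\widetilde t$ into finitely many geodesic legs is not available. A tie $t$ of $\cE_\delta(\lambda)$ running from $g$ to $h$ crosses the entire packet of leaves of $\lambda$ collapsing to that branch; for a non-isolated leaf the intersection $t\cap\lambda$ is a Cantor set, and in \emph{each} complementary gap the leaf of $\Ol(X)$ has a kink on the spine. So there are countably infinitely many $\widetilde u_i$, not finitely many, and it is not true that ``generically $k=1$'' --- the extra kinks are not tied to invisible arcs but to every proto-spike the tie traverses. For the same reason, the assertion that $\widetilde t'$ has ``the same number of kinks'' is unfounded: $\lambda'$ can have completely different topology (e.g.\ a simple closed curve Hausdorff-approximating a minimal lamination), and there is no natural bijection between kinks of $\widetilde t$ and kinks of $\widetilde t'$. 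Your leg-by-leg comparison therefore never gets off the ground.

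The paper handles exactly this difficulty by a multi-scale argument. One passes to the finer train track $\tau_\zeta=\tau(X,\lambda,\zeta)$ with $\zeta=d_X^H(\lambda,\lambda')$, so that $\lambda'$ is carried and one can set up a \emph{finite} correspondence: $t$ is broken into $N$ sub-ties $t_i$ (one per branch of $\tau_\zeta$ crossed) and spike-segments $s_i$. Lemma~\ref{lem:branches_bounded} bounds $N=O_s(\log(\delta/\zeta))$. Each piece is then shown to be $O_s(\zeta)$-Hausdorff close to a suitable piece of $\cO_{\lambda'}(X)$ (Lemmas~\ref{lem:Hcloseest_branch} and~\ref{lem:Hcloseest_spike}), and an intermediate path $t''$ is assembled from these pieces joined by short arcs along $\lambda'$. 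Finally one compares $t''$ to the actual tie $t'$ via Lemma~\ref{lem:orthodiv}, which controls Hausdorff distance between parallel leaves of $\cO_{\lambda'}(X)$ by the offset along $\lambda'$; this offset accumulates additively across the $N$ pieces, giving $N\cdot O_s(\zeta)=O_s(\zeta\log(1/\zeta))=O_s(\zeta^a)$. So the H\"older loss comes from the \emph{accumulation of $\log(1/\zeta)$ many $O(\zeta)$ errors}, not from the fellow-traveling trade-off in Lemma~\ref{lem:projectionestimate} as you suggest.
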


\subsection{Breaking into pieces}
Throughout our proof we work in the universal cover, but will mostly suppress this notation for clarity.

For any defining parameter $\zeta < \delta$, a tie of the $\delta$--equilateral train track $\tau$ will cut through a sequence of branches of $\tau_\zeta := \tau(X, \lambda, \zeta)$.
Equivalently, if one considers the equilateral neighborhoods $\cE_\delta:=\cE_\delta(\lambda)$ and $\cE_\zeta:=\cE_\zeta(\lambda)$, then there is a natural carrying map witnessing $\tau_\zeta \prec \tau$ induced by collapsing the ties of $\cE_\delta$. 

We begin with an estimate on how many branches of $\tau_\zeta$ can run through a branch of $\tau$.

\begin{lemma}\label{lem:branches_bounded}
For any tie $t$ of $\cE_\delta$, the number of components of $t \cap \cE_\zeta$ is $O_s(\log(\delta/\zeta))$.
\end{lemma}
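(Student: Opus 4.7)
The plan is to exploit the piecewise geodesic structure of $t$ together with the exponential decay of orthogeodesic leaves inside proto-spikes that powered the proof of Lemma \ref{lem:equittwidth}.

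First, I would decompose the tie $t$ into the sub-segments lying between consecutive transverse intersections with $\lambda$. Each such sub-segment lies in a single component $Y$ of $X\setminus\lambda$ and is a leaf of $\cO_{\partial Y}(Y)$ of some length $\ell$; the crucial geometric point is that the maximum hyperbolic distance from such a sub-segment to $\lambda$ equals $\ell/2$, attained at the spine point through which the leaf passes.

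Next, using Proposition \ref{prop:ttdefs_comp} to sandwich $\cE_\zeta$ between the uniform neighborhoods $\cN_{\zeta/w_{\ref{prop:ttdefs_comp}}}(\lambda)$ and $\cN_{w_{\ref{prop:ttdefs_comp}}\zeta}(\lambda)$, I would classify a sub-segment as \emph{short} when $\ell\le 2\zeta/w_{\ref{prop:ttdefs_comp}}$ (so it lies entirely inside $\cE_\zeta$) and as \emph{long} otherwise (so it exits $\cE_\zeta$ somewhere in its interior). Adjacent short sub-segments are joined through their shared $\lambda$-crossing, which lies in $\lambda\subset\cE_\zeta$; consequently, each component of $t\cap\cE_\zeta$ is a maximal run of consecutive short sub-segments, together with the terminal pieces of the (at most two) bounding long sub-segments. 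In particular, the number of components of $t\cap\cE_\zeta$ is at most one more than the number of long sub-segments of $t$.

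Finally, I would bound the number of long sub-segments by a pigeonhole argument over the at most $6|\chi(S)|$ proto-spikes of $\cE_\delta$. The same train-path recurrence mechanism as in the proof of Lemma \ref{lem:equittwidth} should show that the sub-segments of $t$ inside a fixed proto-spike $V$ have lengths bounded by $2w_{\ref{prop:ttdefs_comp}}\delta e^{-rs}$ for $r=0,1,2,\ldots$; so the long ones correspond to indices with $r< s^{-1}\log(w_{\ref{prop:ttdefs_comp}}^2\delta/\zeta)=O_s(\log(\delta/\zeta))$. Summing over the uniformly bounded collection of proto-spikes yields the desired bound. The main subtlety to verify will be that the exponential decay carries over from the collection of all ties (as in Lemma \ref{lem:equittwidth}) to the sub-segments of the \emph{single} leaf $t$; this should follow because $t$ itself defines a train path in $\tau(X,\lambda,\delta)$, so that its successive recurrences into $V$ traverse homotopically non-trivial loops of length at least the systole $s$.
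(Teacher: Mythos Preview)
Your proposal is correct and follows essentially the same strategy as the paper: both arguments reduce to bounding, for each of the at most $6|\chi(S)|$ proto-spikes, the number of times it can recur to the tie $t$, using that consecutive recurrences are separated by essential loops of length at least $s$. The paper phrases this as counting returns of each proto-spike to $t$ during the unzipping from scale $\delta$ to scale $\zeta$ (the unzipping distance being $D_\zeta - D_\delta \approx \log(\delta/\zeta)$), while you phrase it by thresholding the exponentially decaying sub-segment lengths at $2\zeta/w_{\ref{prop:ttdefs_comp}}$; these are equivalent bookkeepings of the same estimate. Two minor points: your claim that ``long'' implies ``exits $\cE_\zeta$'' is not quite right (a long sub-segment lying in $\text{Thin}_\zeta$ stays inside $\cE_\zeta$), but this only makes your count an overcount, so the bound survives; and your closing remark is slightly garbled---it is the proto-spike, not $t$, that traces a train path whose recurrences to $t$ are $s$-separated, exactly as in Lemma~\ref{lem:equittwidth}.
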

\begin{proof}
Consider the train path traced out by a single proto-spike as $\delta$ decreases to $\zeta$.
Such a proto-spike returns some number of times $M$ to any given tie $t$ of $\cE_\delta$, forming an essential loop in $X$ each time.
The length of this train path therefore is at least $Ms$.

On the other hand, the distance traversed by the (boundary of the) proto-spike is equal to $D_\zeta - D_\delta$, which up to a universal additive error $C$ is equal to $\log(1/\zeta) - \log(1/\delta)$; see Equation \eqref{eqn:defDdelta} and Lemma \ref{lem:boundedlengthspikes}.
Thus 
\[M\le \frac{1}{s} \left( \log(1/\zeta) - \log(1/\delta)+C\right).\]
This spike contributes at most $(2M+1)$ branches of $\tau_\zeta$ running through $t$
(one for each side of the spike each time it meets $t$),
and there are at most $6|\chi(S)|$ spikes of $\tau_\zeta$.
So there are at most 
\[12|\chi(S)| (M+1) = O_s(\log(\delta/\zeta))\]
branches of $\tau_\zeta$ running through any branch of $\tau$.
\end{proof}

Number the branches of $\tau_\zeta$ that $t$ meets by $b_1, \ldots, b_N$ (equivalently, number the components of $t \cap \cE_\zeta$).
Each $b_i$ corresponds to a pair of components of $\tX \setminus \tlambda$ that are adjacent over $b_i$; denote by $g_i$ and $h_i$ the boundary geodesics of these components that run along $b_i$.
In the case that $b_i$ corresponds to an isolated leaf of $\lambda$, then $g_i = h_i$.
Thus, our choice of $\zeta$ gives rise to a sequence of boundary geodesics of $\lambda$
\begin{equation}\label{eqn:geoseq}
g = g_1, h_1, g_2, h_2, \ldots, g_N, h_N = h.
\end{equation}
Note that each geodesic in this sequence separates those that came before from those that come after.
Using \eqref{eqn:geoseq}, we may write $t$ as a concatenation of subsegments $t_i$ and $s_i$, where $t_i$ connects $g_i$ to $h_i$ (i.e., it is a tie of $\tau_\zeta$) and $s_i$ connects $h_i$ to $g_{i+1}$ (i.e., it traverses a spike of $S \setminus \tau_\zeta$).
See Figure \ref{fig:geoseq}, and compare with the discussion of ``admissible routes'' from \cite[\S 14.5]{shshI}.

\begin{figure}
    \centering
    \includegraphics[scale=.6]{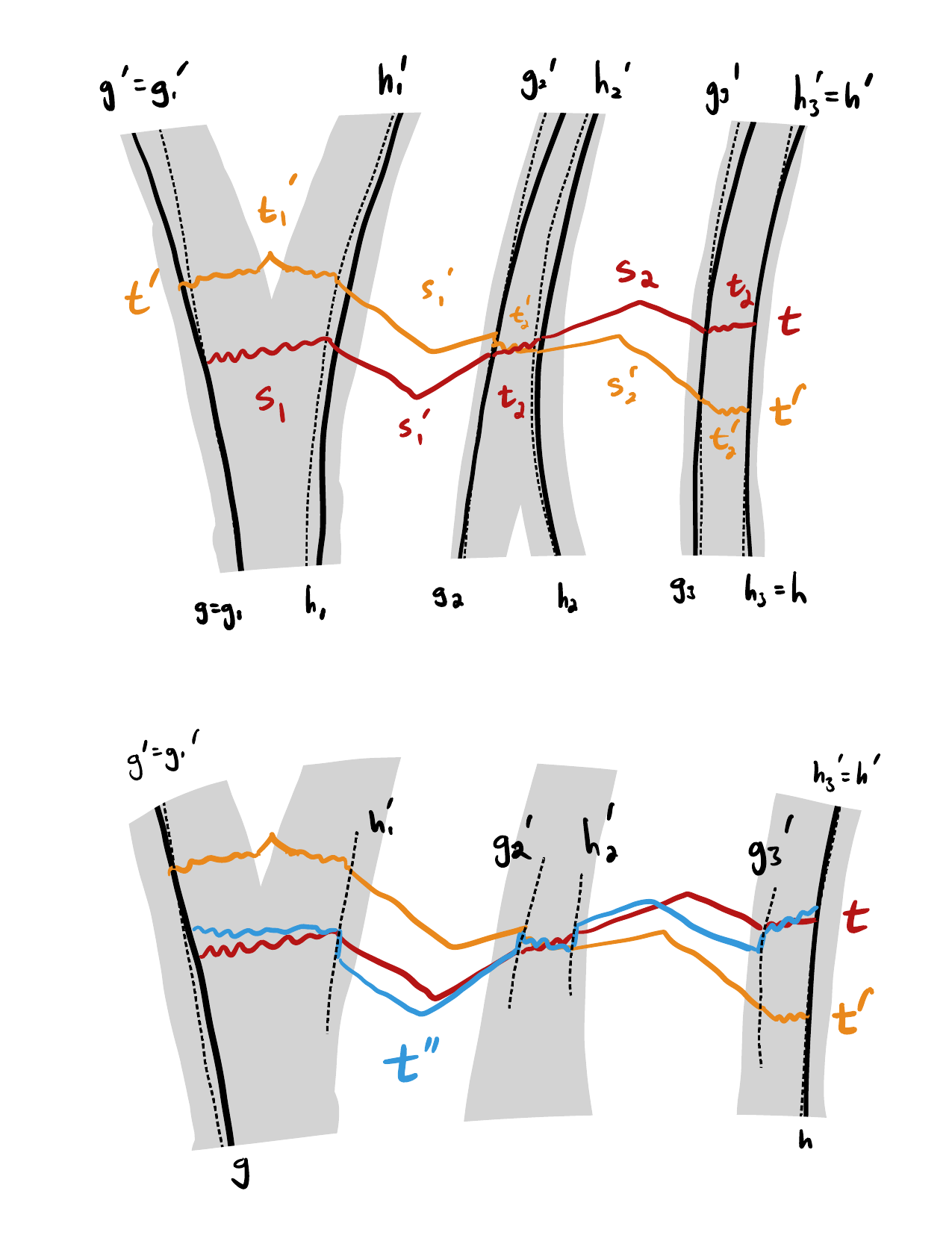}
    \caption{The sequence of boundary geodesics specified by a $\zeta$ track.}
    \label{fig:geoseq}
\end{figure}

Now for any $\lambda'$ sufficiently close, we have that $\lambda' \preceq \tau_\zeta$.
We may therefore form the sequence
\[g' = g_1', h_1', g_2', h_2', \ldots, g_N', h_N' = h'\]
of boundary geodesics of $\lambda'$ corresponding to the branches of $\tau_\zeta$ met by $t$,
and can similarly express the tie $t'$ connecting $g'$ and $h'$ as a concatenation of segments $t_i'$ and $s_i'$ corresponding to ties and spikes of $\tau_\zeta$.

\begin{remark}
We note that $t'$ does not have to meet the same branches of $\tau_\zeta$ as $t$ does; for example, this can happen if $t$ meets a branch near a switch.
However, since each geodesic of \eqref{eqn:geoseq} separates $g$ from $h$ we know $t'$ must meet the corresponding boundary geodesics $g_i'$ and $h_i'$ of $\lambda'$.
\end{remark}

\subsection{Estimates on pieces}
We now estimate the Hausdorff distance between small segments of $\Ol(X)$ and $\cO_{\lambda'}(X)$ connecting corresponding consecutive pairs of boundary geodesics.
We first prove an estimate that allows us to control the Hausdorff distance of different leaves of $\Ol(X)$ by their intersections with $\lambda$.

\begin{lemma}\label{lem:orthodiv}
For any $(X, \lambda)$ and any two transversely isotopic segments $t_1$ and $t_2$ of $\Ol(X)$ connecting the same boundary geodesics of $\lambda$,
\[d_X^H(t_1, t_2)  = O_s(d(t_1 \cap g, t_2 \cap g))\]
where $g$ is any geodesic of $\lambda$ meeting $t_1$.
\end{lemma}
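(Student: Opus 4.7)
Let $\epsilon := d(t_1 \cap g, t_2 \cap g)$. My approach is to decompose each $t_i$ into orthogeodesic subsegments according to its crossings with the leaves of $\lambda$ and with the spines of the complementary components of $X \setminus \lambda$, bound the Hausdorff distance between corresponding pieces, and then conclude by concatenation.

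The key input is the measure-preserving property of the orthogeodesic foliation recalled in \S\ref{subsec:orthofoliation}: the length along a leaf of $\lambda$ equals the transverse measure of $\cO_\lambda(X)$, and is preserved by transport along leaves of $\cO_\lambda(X)$. Because $t_1$ and $t_2$ are transversely isotopic segments of $\cO_\lambda(X)$ connecting the same boundary geodesics, they cross the same sequence of leaves of $\lambda$ in the same combinatorial order; at each leaf $g'$ of $\lambda$ met by both,
\[
d(t_1 \cap g',\, t_2 \cap g') \;=\; d(t_1 \cap g,\, t_2 \cap g) \;=\; \epsilon.
\]
In particular, within each component $Y$ of $X \setminus \lambda$ traversed by the two segments, the entry and exit points of $t_1|_Y$ and $t_2|_Y$ on $\partial Y$ are pairwise $\epsilon$ apart.

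Inside such a $Y$, each $t_i|_Y$ is a leaf of $\cO_{\partial Y}(Y)$, hence a concatenation of at most two orthogeodesic subsegments joined at a kink on the spine. Each subsegment is a fiber of the nearest-point projection to $\partial Y$, and Lemma~\ref{lem:radius_bounded} bounds its length by $r(s)$. I then invoke the standard hyperbolic estimate: two orthogeodesics perpendicular to a common geodesic at points distance $\epsilon$ apart are related by translation along that geodesic, and corresponding points at hyperbolic height $h$ are distance $\epsilon \cosh(h)$ apart. This bounds the Hausdorff distance between corresponding orthogeodesic subsegments of length $\le r(s)$ by $\epsilon \cosh(r(s)) = O_s(\epsilon)$. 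The kink points themselves are endpoints of these bounded subsegments (whose opposite endpoints on $\partial Y$ are $\epsilon$ apart), so they inherit the same $O_s(\epsilon)$ bound. Concatenating these pointwise bounds over all traversed components yields $d_X^H(t_1, t_2) = O_s(\epsilon)$.

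I expect the only mild obstacle to be clean bookkeeping at the spine kinks --- verifying that the combinatorial pattern of spine crossings in each $Y$ is the same for $t_1|_Y$ and $t_2|_Y$ --- but this is forced by transverse isotopy, which prevents any extra or missing spine or $\lambda$-crossings.
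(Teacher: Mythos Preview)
Your overall strategy matches the paper's: reduce to a single complementary component $Y$, use the measure-preserving property to pin the endpoints on each boundary leaf exactly $\epsilon$ apart, and control the orthogeodesic subsegments using the uniform radius bound $r(s)$ from Lemma~\ref{lem:radius_bounded}. Your translation argument --- perpendiculars to $g$ with feet $\epsilon$ apart have corresponding points at height $h$ separated by $\epsilon\cosh(h)$ --- is exactly the trirectangle trigonometry the paper invokes from \cite{Buser}.

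There is, however, a genuine gap at the kink. The two ``first'' subsegments (perpendicular to $g$ at $p_1$ and $p_2$) have lengths $h_1$ and $h_2$ that need not agree, since the spine is not at constant distance from $g$. Your translation bound only matches points up to height $\min(h_1,h_2)$; the excess of length $|h_1-h_2|$ on the longer segment is not yet shown to lie within $O_s(\epsilon)$ of the other tie. The sentence ``the kink points \ldots\ inherit the same $O_s(\epsilon)$ bound'' asserts $d(\mathrm{kink}_1,\mathrm{kink}_2)=O_s(\epsilon)$, but the stated reason --- that the opposite endpoints on $\partial Y$ are $\epsilon$ apart --- does not give this: two perpendiculars to $g$ with feet $\epsilon$ apart and tips at heights $h_1\neq h_2$ have tips at distance roughly $\epsilon\cosh(h_1)+|h_1-h_2|$, and nothing you have written bounds $|h_1-h_2|$. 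The paper closes exactly this gap by splitting the quadrilateral $p_1\,p_2\,\mathrm{kink}_2\,\mathrm{kink}_1$ into a trirectangle (handled by your estimate) plus a small residual triangle at $\mathrm{kink}_2$, and then showing the residual side is $O_s(\epsilon)$ using that the interior angle there is bounded away from~$0$; this last fact is supplied by Fact~\ref{fact:kink angle} (the kink angle of a leaf of $\cO_{\partial Y}(Y)$ is bounded below in terms of the area of $Y$, and the spine edge bisects it). Equivalently, what is missing from your argument is a lower bound on the angle between the spine edge and the perpendiculars to $g$, which is what upgrades $\mathrm{kink}\mapsto\pi_g(\mathrm{kink})$ from a contraction to a bi-Lipschitz map. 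Once you insert this input, your proof closes.
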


Note that $d(t_1 \cap g, t_2 \cap g)$ does not actually depend on our choice of $g$ thanks to 
the fact that transport along the leaves of $\Ol(X)$ preserves length along $\lambda$. 
See \S\ref{subsec:orthofoliation}.

\begin{remark}
As in Lemma \ref{lem:radius_bounded} and the discussion following it, the implicit constant in $O_s(\cdot)$ necessarily depends on the thickness of $X$. Indeed, if some segment $t$ of $\Ol(X) \setminus \lambda$ is very long, then the exponential divergence of geodesics in $\mathbb{H}^2$ implies that small perturbations to $t \cap \lambda$ can have dramatic impact on $t$.
\end{remark}

\begin{proof}
It suffices to prove the estimate piece by piece; so without loss of generality suppose that $t_1$ and $t_2$ meet no leaves of $\lambda$ except at their endpoints.
In this case, they bound a hexagon which can be decomposed into two isometric quadrilaterals, each with two right angles where they meet $\lambda$.
Fix one of these quadrilaterals and label its vertices by $A$, $B$, $C$, and $D$; see Figure \ref{fig:quadslide}.
We have therefore reduced to bounding the Hausdorff distance between $\overline{AD}$ and $\overline{BC}$ in terms of the length of $\overline{AB}$.

We may further assume that the hexagon bounded by $t_1$ and $t_2$ does not contain the orthogeodesic representative $\alpha$ of the isotopy class: otherwise, we can decompose $ABCD$ into two further quadrilaterals, each with three right angles, and use these to estimate the distance from each $t_i$ to $\alpha$.
In particular, up to relabeling, we may assume that $\angle ADC$ is at least $\pi/2$.

\begin{figure}[ht]
    \centering
    \includegraphics[scale=.8]{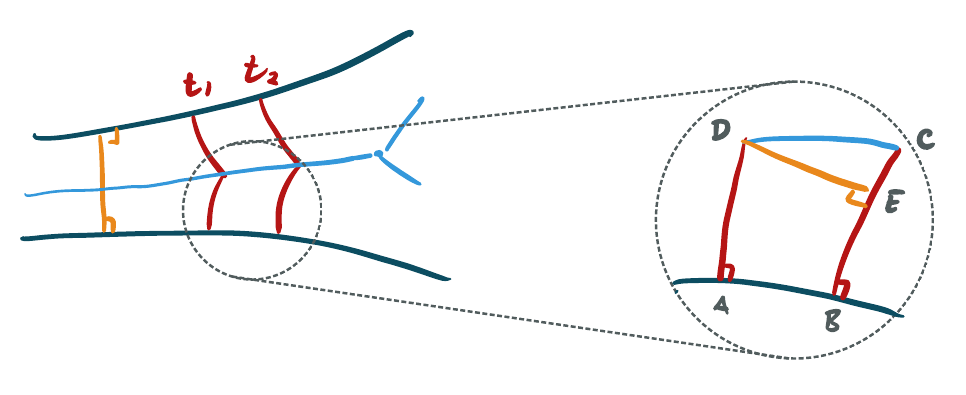}
    \caption{Right-angled quadrilaterals in the orthogeodesic foliation.}
    \label{fig:quadslide}
\end{figure}

The desired estimate now follows from elementary hyperbolic geometry considerations. 
Drop a perpendicular from $D$ to $\overline{BC}$ (this must be contained in the quadrilateral by our assumption on $\angle ADC$) and label its intersection $E$.
Now the lengths of $\overline{AD}$ and $\overline{BE}$ are both bounded above by $\delta_{\ref{lem:change base ties still meet}}(s)$, and so the hyperbolic trigonometry of trirectangles \cite[Theorem 2.3.1]{Buser} implies that the length of $\overline{DE}$ is at most some uniform constant $C_1$ times the length of $\overline{AB}$.
Since the distance between a point on a geodesic and its projection to another geodesic is a convex function, this implies that every point of $\overline{AD}$ is within $C_1 \ell(\overline{AB})$ of some point of $\overline{BE}$ and vice versa.
It remains to show that every point of $\overline{CE}$ is close to $D$; this is in turn a consequence of our above estimate on the length of $\overline{DE}$ plus a little more hyperbolic trigonometry (in particular, \cite[Theorem 2.2.2, (iii)]{Buser}) and the fact that $\angle BCD$ is bounded away from $0$ (Fact \ref{fact:kink angle}).
This completes the proof of the Lemma.
\end{proof}

We now consider how segments of $\Ol(X)$ at scale $\zeta$ change as one varies the lamination at that scale.

\begin{lemma}\label{lem:Hcloseest_branch}
For any $s >0$ and any small enough $\zeta$, the following holds.
Fix an $s$-thick $(X, \lambda)$ and suppose that $g_i$ and $h_i$ are two boundary geodesics of $\lambda$ corresponding to a branch of $\tau_\zeta$ and let $t_i$ be any tie of $\cE_\zeta$ connecting $g_i$ to $h_i$.
Then for any $\lambda'$ with
$d_X^H(\lambda, \lambda') < \zeta/w_{\ref{prop:ttdefs_comp}}$, we have
\[d_X^H(t_i, t_i') = {O_s}(\zeta),\]
where $t_i'$ is any segment of $\cO_{\lambda'}(X)$ that meets $t_i$ and connects the corresponding boundary geodesics $g_i'$ and $h_i'$ of $\lambda'$.
\end{lemma}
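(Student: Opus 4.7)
The plan is to show that both $t_i$ and $t_i'$ have length $O_s(\zeta)$, and then to deduce the Hausdorff bound from the fact that the two segments share the point $p := t_i \cap t_i'$. To begin, I would use Proposition \ref{prop:ttdefs_comp} to upgrade the Hausdorff bound $d_X^H(\lambda,\lambda')<\zeta/w_{\ref{prop:ttdefs_comp}}$ to $\lambda'\subset \cE_\zeta(\lambda)$. In particular, the leaves $g_i',h_i'$ and all other boundary leaves of $\lambda'$ in the branch of $\tau_\zeta$ containing $t_i$ are confined to this branch, which is a strip of transverse width $O_s(\zeta)$ by Lemma \ref{lem:equittwidth}. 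The same lemma gives $\operatorname{length}(t_i)=O_s(\zeta)$ directly, so $p$ lies within $O_s(\zeta)$ of each of the four geodesics $g_i,h_i,g_i',h_i'$.

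The crux is the length estimate for $t_i'$, which is a piecewise-orthogeodesic segment of $\cO_{\lambda'}(X)$ from $g_i'$ to $h_i'$ confined to the strip. I would decompose $t_i'$ according to the sub-strips of the branch cut out by the (possibly many) leaves of $\lambda'$ between $g_i'$ and $h_i'$. In each such sub-strip, bounded by two consecutive leaves of $\lambda'$ at transverse distance $w$, the corresponding segment of $t_i'$ is piecewise orthogeodesic with a single kink at the local spine and has total length comparable to $w$, by the near-Euclidean geometry of the thin region (the bounding leaves of $\lambda'$ fellow-travel $\lambda$, so perpendicularity to $\lambda'$ is nearly transverse to the strip). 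Summing over all sub-strips and using that their widths sum to the total transverse width $O_s(\zeta)$ of the branch yields $\operatorname{length}(t_i')=O_s(\zeta)$. The desired bound then follows from $d_X^H(t_i,t_i') \le \diam(t_i\cup t_i') \le \operatorname{length}(t_i)+\operatorname{length}(t_i')=O_s(\zeta)$.

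The hard part will be controlling the contribution to $\operatorname{length}(t_i')$ from each sub-strip: one must rule out the possibility that the piecewise-orthogeodesic path accrues extra length by oscillating along the strip rather than crossing it transversely. This requires a careful appeal to the geometry of the orthogeodesic foliation in the near-Euclidean regime, leveraging the fact that all leaves of $\lambda'$ in the branch fellow-travel $\lambda$ at distance $O_s(\zeta)$ and hence look essentially like parallel lines at small separation.
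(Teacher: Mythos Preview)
Your overall strategy matches the paper's exactly: bound the lengths of both $t_i$ and $t_i'$ by $O_s(\zeta)$ and use that they share a point to control $\diam(t_i\cup t_i')$. Your treatment of $t_i$ via Lemma~\ref{lem:equittwidth} is identical to the paper's.

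For $t_i'$, however, the paper takes a much cleaner route that entirely sidesteps your ``hard part.'' Rather than decomposing $t_i'$ into sub-strips between consecutive leaves of $\lambda'$ and summing, the paper observes that $t_i'$ is itself a subsegment of a tie of an equilateral neighborhood of $\lambda'$ at a comparable scale, so that Lemma~\ref{lem:equittwidth} can be applied \emph{again}, this time to $\lambda'$. Concretely: by Proposition~\ref{prop:ttdefs_comp}, the tie $t_i$ lies in $\cN_{w\zeta}(\lambda)$ where $w = w_{\ref{prop:ttdefs_comp}}$; since $d_X^H(\lambda,\lambda') < \zeta/w$, this gives $t_i \subset \cN_{(w+1/w)\zeta}(\lambda') \subset \cE_{(w^2+1)\zeta}(\lambda')$ (Proposition~\ref{prop:ttdefs_comp} once more). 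Hence any segment $t_i'$ of $\cO_{\lambda'}(X)$ meeting $t_i$ is a subsegment of a tie of $\tau(X,\lambda',(w^2+1)\zeta)$, which is a genuine train track for $\zeta$ small enough depending only on $s$, and Lemma~\ref{lem:equittwidth} gives $\operatorname{length}(t_i') \le W_{\ref{lem:equittwidth}}(w^2+1)\zeta = O_s(\zeta)$.

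Your sub-strip argument could likely be pushed through, but the oscillation control you flag as the hard part is essentially the content of the exponential-decay-over-proto-spikes argument already carried out in the proof of Lemma~\ref{lem:equittwidth}. The paper's point is that this work applies uniformly to any lamination on an $s$-thick surface, so there is no need to redo it for $\lambda'$; one only has to verify that $t_i'$ sits inside the right equilateral neighborhood.
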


Observe that the hypothesis on Hausdorff distance implies that $\lambda' \subset \cE_\zeta$ and so $\lambda'$ is fully carried on $\tau_\zeta$.
Thus, it makes sense to use $\tau_\zeta$ to set up a correspondence between pairs of boundary geodesics of $\lambda$ and $\lambda'$.

\begin{proof}
We will in fact show that the diameter of $t_i \cup t_i'$ is $O_s(\zeta)$.
Since $t_i$ and $t_i'$ meet, it suffices to bound the length of each.
As $t_i$ is a tie of a $\zeta$--equilateral train track, Lemma \ref{lem:equittwidth} tells us its length is $O_s(\zeta)$.

On the other hand, $t_i'$ may not necessarily be contained in $\cE_\zeta(\lambda')$.
It is, however, a subsegment of a tie for a slightly larger defining parameter.
Indeed, by Proposition \ref{prop:ttdefs_comp} we know that $t_i$ is contained within a uniform $w \zeta$ neighborhood of $\lambda$ for $w=w_{\ref{prop:ttdefs_comp}}$.
Now since $\lambda'$ and $\lambda$ are $\zeta/w$ Hausdorff close, this implies that $t_i$ is completely contained in the $(w+ 1/w)\zeta$ uniform neighborhood of $\lambda'$, which is in turn contained in the $(w^2+1)\zeta$--equilateral neighborhood of $\lambda'$ (Proposition \ref{prop:ttdefs_comp} again).
Thus, any $t_i'$ as in the statement of the lemma must be subsegment of a tie of $\tau(X, \lambda', (w^2+1)\zeta)$
which is a train track for small enough $\zeta$ (depending only on $s$).
Lemma \ref{lem:equittwidth} then implies its ties have length 
\[W_{\ref{lem:equittwidth}}(w^2+1)\zeta = O_s(\zeta),\]
completing the proof of the lemma.
\end{proof}

We now turn to the spikes of $\tau_\zeta$ and the leaves of $\Ol(X)$ connecting them.

\begin{lemma}\label{lem:Hcloseest_spike}
Suppose that $h$ and $g$ are two boundary geodesics of $\lambda$ corresponding to a spike of $S \setminus \tau_\zeta$ and let $\ell$ be any segment of $\Ol(X)$ running from $h$ to $g$ of length at most $\log(3)$
that cuts off the corresponding spike of $S \setminus \tau_\zeta$ (see Figure \ref{fig:Hcloseest_spike}).
Then
\[d_X^H(\ell, \ell') = {O}_s(\zeta),\]
where $\ell'$ is any segment of $\cO_{\lambda'}(X)$ that meets $\ell$ and connects the corresponding geodesics $h'$ and $g'$ of $\lambda'$.
\end{lemma}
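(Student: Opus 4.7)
The plan is to parametrize both $\ell$ and $\ell'$ by a shared point of intersection and compare the resulting piecewise geodesic paths using the closest-point projection estimates from Section~\ref{sec:triplecenters}. First I would argue that both $\ell$ and $\ell'$ lie in a ball $B \subset X$ of bounded hyperbolic radius. Since $\ell$ has length at most $\log(3)$ and $\ell' \cap \ell \neq \emptyset$, the segment $\ell'$ lies in a bounded neighborhood of any chosen point $p \in \ell \cap \ell'$; following the argument of Lemma~\ref{lem:Hcloseest_branch} (via Proposition~\ref{prop:ttdefs_comp} and Lemma~\ref{lem:equittwidth}), the bounding geodesics $h', g'$ of the spike of $\lambda'$ are $O_s(\zeta)$-Hausdorff close to $h, g$ on any bounded region, which is enough to force $\ell'$ to have length $O_s(1)$.

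With the location of both segments controlled, I would exploit the following parametrization. Since $\ell$ is a leaf of $\Ol(X)$ in the spike, it decomposes as a concatenation of two geodesic arcs meeting at a bend point $v$ equidistant from $h$ and $g$; its endpoints are $p_h = \pi_h(v) \in h$ and $p_g = \pi_g(v) \in g$. From the shared point $p$, one recovers $\ell$ by dropping a perpendicular to the nearer geodesic, extending along this perpendicular to reach the spine (the locus equidistant from $h, g$) to locate $v$, and finally dropping a perpendicular to the other geodesic. Running the same construction for $(h', g')$ starting from the same point $p$ yields $\ell'$ with endpoints $q_h, q_g$ and bend point $v'$.

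Corollary~\ref{cor:projectionestimate2} gives $d(\pi_h(x), \pi_{h'}(x)) = O_s(\zeta)$ for any $x$ in the bounded region of interest, and similarly for $g$ and $g'$. A short argument (using that the equidistance locus from two geodesics is itself a geodesic that varies continuously in the pair) then implies $d(v, v') = O_s(\zeta)$, and pushing this through the final projection step yields $d(p_h, q_h) = O_s(\zeta)$ and $d(p_g, q_g) = O_s(\zeta)$. Finally, $\ell$ and $\ell'$ are each concatenations of two geodesic segments of length $O(1)$ with corresponding endpoints and bend points within $O_s(\zeta)$ of each other, and with bend angles controlled by Fact~\ref{fact:kink angle}; standard hyperbolic trigonometry (in the spirit of Lemma~\ref{lem:orthodiv}) then yields $d_X^H(\ell, \ell') = O_s(\zeta)$.

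The main obstacle will be the case analysis in the parametrization step: depending on which segment of $\ell$ the point $p$ sits in versus which segment of $\ell'$ contains it, the perpendicular-then-equidistance-then-perpendicular procedure must be run starting from different sides, and one must verify that the composition of projection estimates does not degrade the $O_s(\zeta)$ bound across these cases. Care must also be taken when the spike degenerates (e.g.\ when $h = g$ is an isolated leaf, so the ``spike'' is really a cusp-shaped region), although in that case the piecewise geodesic structure of $\ell$ becomes a single arc and the analysis simplifies.
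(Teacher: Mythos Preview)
Your approach is sound but takes a genuinely different route from the paper. You directly compare the piecewise-geodesic structure of $\ell$ and $\ell'$ by matching up endpoints and bend points. The paper instead observes that $\ell$ lies in a band of parallel leaves of $\cO_{\lambda'}(X)$ running from $g'$ to $h'$, and then invokes Lemma~\ref{lem:orthodiv} (applied to $\cO_{\lambda'}$) to reduce the Hausdorff estimate to bounding the diameter of the closest-point projection of $\ell$ onto $\lambda'$. With that reduction, the paper splits $\ell = \ell_g \cup \ell_h$ and shows: (i) $\pi_{g'}(\ell_g)$ has diameter $O_s(\zeta)$ by Corollary~\ref{cor:projectionestimate2} (since $\pi_g(\ell_g)$ is a single point and $g,g'$ are $O_s(\zeta)$-close); and (ii) the portion of $\ell_h$ that is closer to $g'$ than to $h'$ has length $O_s(\zeta)$, via a direct distance comparison. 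Symmetry in $g$ and $h$ finishes the argument.

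The advantage of the paper's route is that it avoids your bend-point estimate $d(v,v') = O_s(\zeta)$ entirely: there is no need to track how the equidistance locus of $(h,g)$ moves to that of $(h',g')$, nor to handle the case analysis you flag (which side of the spine $p$ sits on for $\lambda$ versus $\lambda'$). Your approach does work, but carrying out the bend-point comparison quantitatively is more delicate than you suggest, since $v$ is not simply $\pi_m(p)$ for the equidistance geodesic $m$; rather $v$ is the intersection of $m$ with the perpendicular to $h$ through $p$, so one must control both the perturbation of $m$ and the perturbation of that perpendicular line simultaneously. The paper's reduction via Lemma~\ref{lem:orthodiv} packages all of this into a one-dimensional projection estimate.
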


\begin{figure}[ht]
    \centering
    \includegraphics[width=.6\linewidth]{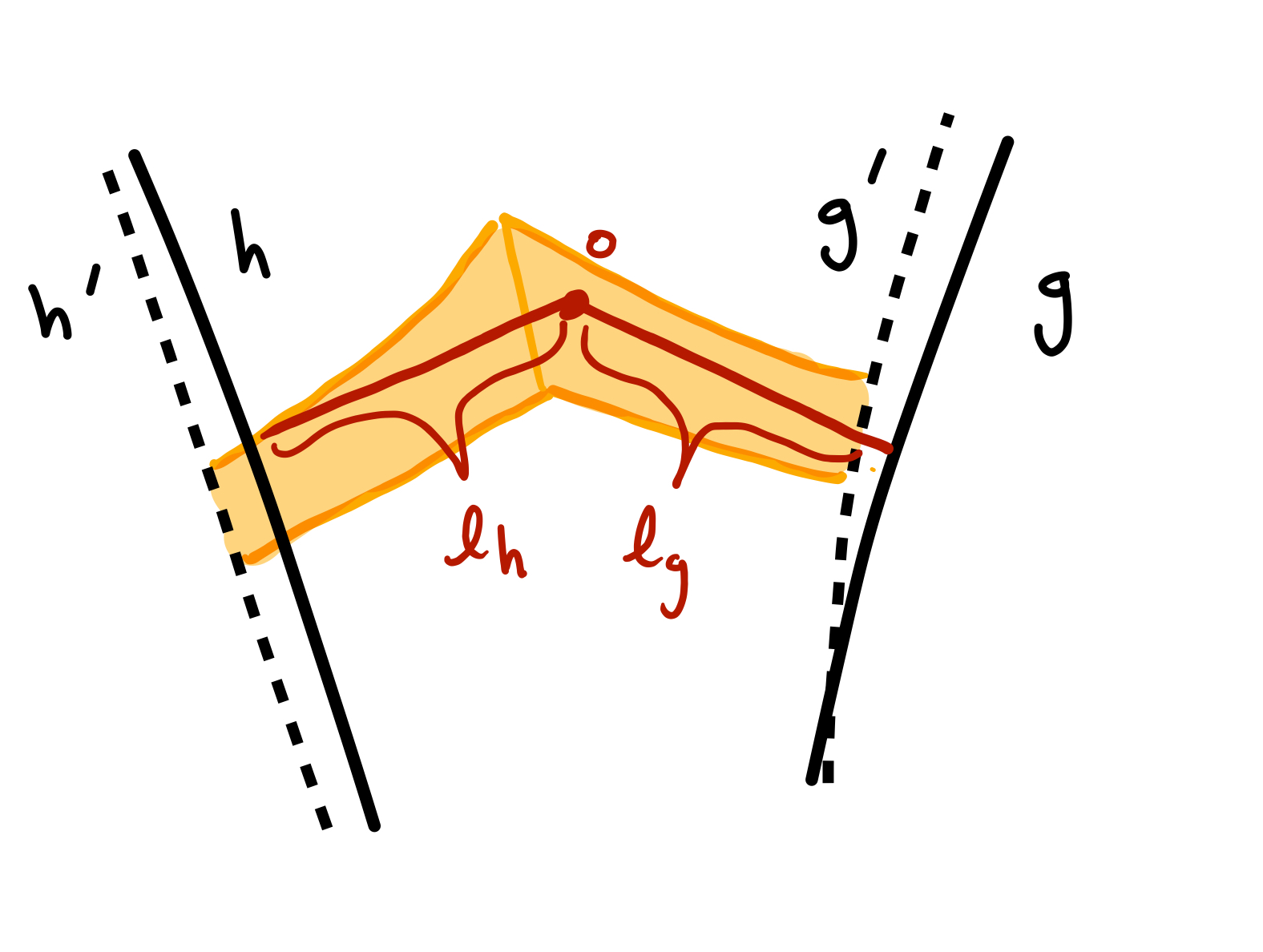}
    \caption{The projection of a segment of the orthogeodesic foliation $\Ol(X)$ to a Hausdorff-close lamination $\lambda'$. The shaded region is foliated by segments of $\cO_{\lambda'}(X)$.}
    \label{fig:Hcloseest_spike}
\end{figure}

\begin{proof}
The leaf $\ell$ lies within a band of parallel leaves of $\cO_{\lambda'}(X)$ running from $g'$ and $h'$, so by Lemma \ref{lem:orthodiv} above it suffices to bound the size of the closest-point projection of $\ell$ to $\lambda'$. Compare Figure \ref{fig:Hcloseest_spike}.
We first observe that if the endpoint of $\ell$ is within $W_{\ref{lem:equittwidth}} \zeta$ of the corresponding switch delimiting the spike of $S \setminus \tau_\zeta$, then $\ell$ itself has length at the order of $\zeta$ (by some basic hyperbolic geometry) and so we can apply Lemma \ref{lem:Hcloseest_branch} to get the desired statement.

So assume that the endpoints of $\ell$ are not within $W_{\ref{lem:equittwidth}} \zeta$ of the corresponding switch. The leaf $\ell$ is comprised of two geodesic segments: one, which we denote by $\ell_g$, whose closest-point projection to $\lambda$ is a point on $g$, and another, $\ell_h$, defined similarly. These segments have the same length $L$ and meet in an angle of at least  $2\pi/3$ at a point that we will call $o$ (Fact \ref{fact:kink angle}).

Let us first bound the size of the projection of $\ell_g$ to $g'$.
For any point $p \in \ell_g$, consider the ball $B$ of radius $d(p, g) + W_{\ref{lem:equittwidth}}\zeta$ centered at $p$.
Since $\lambda' \prec \tau_\zeta$, which in turn has width bounded by $W_{\ref{lem:equittwidth}} \zeta$, we see that $B$ meets both $g'$ and $h'$.
Since we assumed that the endpoints of $\ell$ were at least $W_{\ref{lem:equittwidth}} \zeta$ far away from the corresponding switch of $\tau_\zeta$, we have that every tie of $\cN_\zeta(\lambda)$ through any point of $g \cap B$ also meets $g'$. 
Thus, we have proven that $g$ and $g'$ are $W_{\ref{lem:equittwidth}}\zeta$-fellow travelers inside of $B$.

We now apply Corollary \ref{cor:projectionestimate2}.
Recall that the implicit constant in the statement of the Corollary depend only on the radius of $B$, which is bounded by $\log 3$ by assumption. Therefore, we get that the projections of $p$ to $g$ and $g'$ are $O(W_{\ref{lem:equittwidth}}\zeta)$ close with universal constants, and since $W_{\ref{lem:equittwidth}}$ depends only on the thickness $s$, this is $O_s(\zeta)$.
Since the projection of $\ell_g$ to $g$ is a single point, we thus have that
\begin{equation}\label{eqn:diambd tieproj}
\text{diam}(\pi_g(\ell_g) \cup \pi_{g'}(\ell_g)) = O_s(\zeta).
\end{equation}

It remains to estimate how much of $\ell_h$ is closer to $g'$ than to $h'$.
Let $p \in \ell_h$; since $\ell_h$ is a geodesic, we have that $d(p, h) = L - d(o,p)$
and since the angle between $\ell_g$ and $\ell_h$ is obtuse, we have that $d(p, g) > L.$
Again, since $\lambda' \prec \tau_\zeta$ and we can bound the width of $\tau_\zeta$, we know that the leaves $h$ and $h'$ are $W_{\ref{lem:equittwidth}}\zeta$-close around the endpoint of $\ell$, and the same for $g$ and $g'$.
We therefore get that
\[
d(p,h') \le L - d(o,p) + W_{\ref{lem:equittwidth}}\zeta
\text{ and }
d(p, g') > L - W_{\ref{lem:equittwidth}}\zeta.
\]
Taking these inequalities together, we see that if $p$ is closer to $g'$ than $h'$, then $d(o,p) < 2W_{\ref{lem:equittwidth}}\zeta$. Thus the subsegment of $\ell_h$ that projects to $g'$ has length at most $2W_{\ref{lem:equittwidth}}\zeta = O_s(\zeta)$; combined with \eqref{eqn:diambd tieproj} this completes the proof of the Lemma.
\end{proof}

\subsection{Putting the pieces together}
We now use the estimates of the previous section to build a path out of segments of $\cO_{\lambda'}(X)$ and $\lambda'$ that remains close to $t$. 
The previous Lemma \ref{lem:orthodiv} then lets us estimate the distance between $t'$ and this path, completing the proof of Proposition \ref{prop:tiestable}.

\begin{proof}[Proof of Proposition \ref{prop:tiestable}]
Reference to Figure \ref{fig:t''} will be helpful throughout this proof.

For each $i=1, \ldots, N$ choose a segment $t_i''$ of $\cO_\lambda'(X)$ that meets $t_i$ and runs between $g_i'$ and $h_i'$.
Likewise, choose segments $s_i''$ that meet $s_i$ and 
traverse the corresponding spike of $S \setminus \tau_\zeta$.
By Lemmas \ref{lem:Hcloseest_branch} and \ref{lem:Hcloseest_spike}, we know that each $t_i''$ and $s_i''$ remains $O_s(\zeta)$ close to the corresponding subsegment of $t$.
In particular, this implies that the endpoints of consecutive segments are also $O_s(\zeta)$ close, i.e.,
\[d(t_i'' \cap h_i', s_i'' \cap h_i') = O_s(\zeta) 
\text{ and }
d(s_i'' \cap g_{i+1}', t_{i+1}'' \cap g_{i+1}') = O_s(\zeta).\]
Connecting up the endpoints of these segments with $O_s(\zeta)$-short geodesic segments running along the corresponding leaves of $\lambda'$, we arrive at a path $t''$ that runs from $g'$ to $h'$ and by construction remains within $O_s(\zeta)$ of our original segment $t$ of $\Ol(X)$.

In light of Lemma \ref{lem:orthodiv} above, it suffices to bound the distance between the intersections of $t''$ and $t'$ with each geodesic separating $g'$ from $h'$ to complete the proof of the Proposition.

\begin{figure}[ht]
    \centering
    \includegraphics[scale=.6]{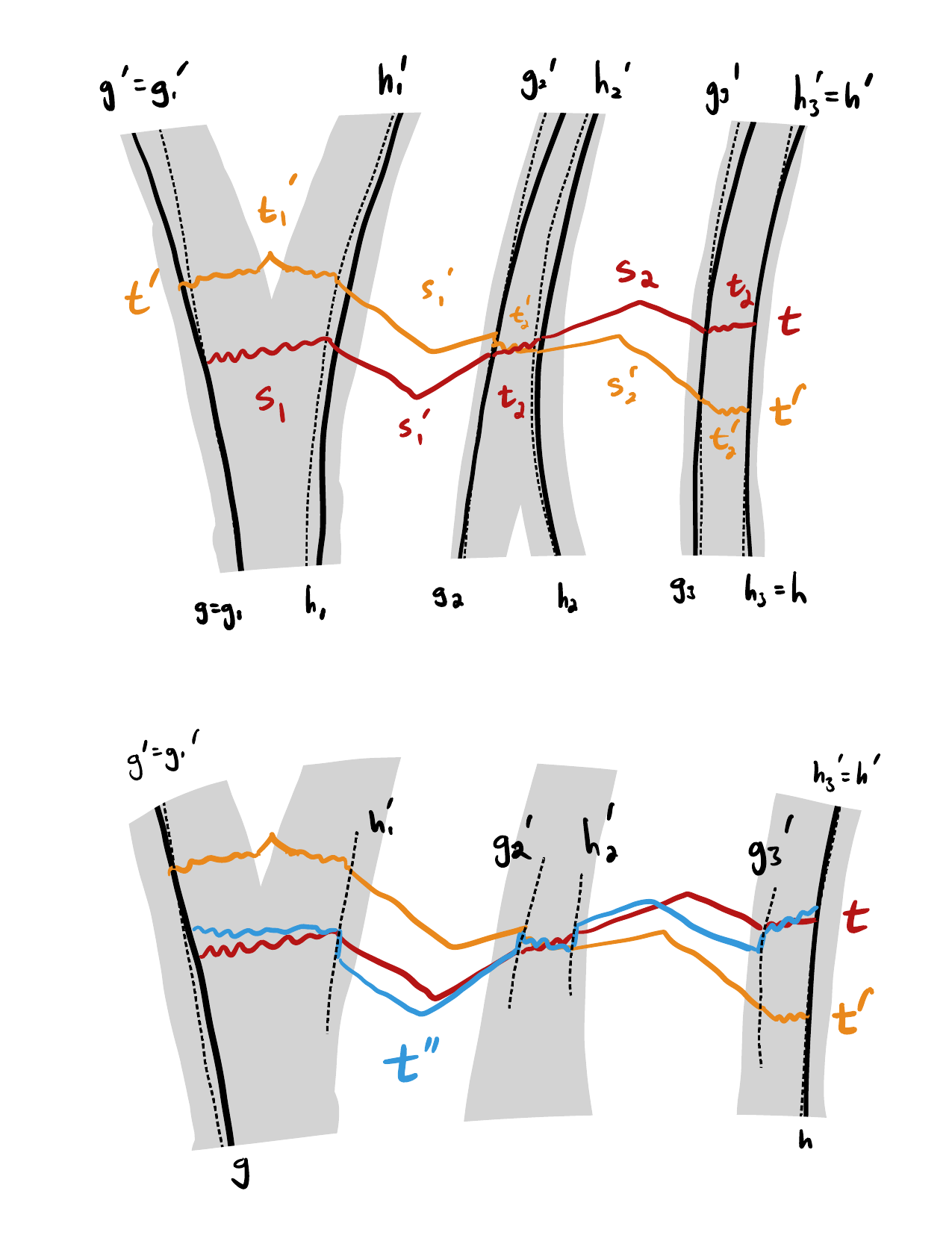}
    \caption{Building a path $t''$ that tracks $t$ out of segments of $\lambda'$ and $\cO_{\lambda'}(X)$.}
    \label{fig:t''}
\end{figure}

Without loss of generality, let us assume that $t$ meets the first subsegment $t_1'$ of $t'$ at a point $p$ (if not, then the same proof holds up to reindexing).
In particular, since $t$ and $t''$ are close, this implies that $p$ is within $O_s(\zeta)$ of $t_1''$.
Now consider the closest point projection of $p$ to the closest leaf $\ell$ of $\lambda'$; since $p \in t_1'$ this is the same as $t_1' \cap \ell$, so we have that
\[d(t_1' \cap \ell, t_1'' \cap \ell) = O_s(\zeta)\]
since closest point projection is distance-decreasing.
Since transport along the leaves of $\cO_{\lambda'}(X)$ preserves length along $\lambda$,
\[d(t_1' \cap g_1', t_1'' \cap g_1')
=d(t_1' \cap h_1', t_1'' \cap h_1')
= O_s(\zeta).\]

The path $t''$ then continues with a geodesic segment along $\lambda'$ of length $O_s(\zeta)$ connecting $t_1'' \cap h_1'$ to $s_1'' \cap h_1'$; we therefore see that
\[d(s_1' \cap h_1', s_1'' \cap h_1')
=d(s_1' \cap g_2', s_1'' \cap g_2')
= 2 O_s(\zeta).\]
Continuing on, $t''$ follows $g_2'$ for a distance $O_s(\zeta)$ in order to meet up with the endpoint of $t_2''$. Hence
\[d(t_2' \cap g_2', t_2'' \cap g_2')
=d(t_2' \cap h_2', t_2'' \cap h_2')
= 3 O_s(\zeta).\]
Iterating this argument, we see that both
\[\text{diam}(t'\cap g_i', t'' \cap g_i') = 2i O_s(\zeta)
\text{ and }
\text{diam}(t'\cap h_i', t'' \cap h_i') = 2i O_s(\zeta)
\]

Invoking Lemma \ref{lem:branches_bounded}, there are only $O_s(\log(\delta/\zeta))$ many pairs of geodesics $(g_i', h_i')$ (corresponding to branches of the $\zeta$--equilateral train track for $\lambda$), and so finally we see that for any leaf $\ell'$ of $\lambda'$ separating $g'$ from $h'$, we have that
\[\text{diam}(t'\cap \ell', t'' \cap \ell') = O_s(\zeta \log (\delta/\zeta))\]
which, for any $a
\in (0,1)$, is $O_s(\zeta^a)$.
Lemma \ref{lem:orthodiv} now tells us that the entire paths $t'$ and $t''$ must be $O_s(\zeta^a)$ Hausdorff-close, and therefore so are $t'$ and $t$.
\end{proof}

\section{Proof of continuity}\label{sec:continuity proof}

In this section, we use our work from above to prove 
that for nearby pairs $(X, \lambda)$ and $(X', \lambda')$, the corresponding differentials $\cO(X, \lambda)$ and $\cO(X', \lambda')$ have comparable cellulations and the periods of corresponding saddle connections are close.
We essentially already have the estimates on periods that we need; the main obstacle we still need to overcome is to show that we can piece these estimates together in a coherent way, making sense of what it means for saddle connections to ``correspond.''

Recall that we have fixed an $s$-thick hyperbolic surface $X$, a chain-recurrent lamination $\lambda$, and a $\delta$ such that the equilateral train track $\tau:=\tau(X, \lambda, \delta)$ is trivalent.
The requirement of trivalence will simplify some of our arguments, but will also prevent the thresholds in this section from being uniform for all $s$-thick surfaces.
Throughout the section $\arc$ will denote the entire geometric arc system $\arc(X, \lambda)$, while $\arc_\bullet$ will be used for the visible arcs of $\arc$ (with respect to either $\tau$ or the corresponding equilateral neighborhood $\cE$).

\subsection{Stability of train tracks}\label{subsec:ttstab}
As a first step towards our main theorem, we first prove that the assignment of a pair $(X, \lambda)$ to an equilateral train track $\tau$ is locally constant.

\begin{proposition}[Stability of train tracks]\label{prop:ttstable}
If $X'$ is sufficiently close to $X$ and $\lambda'$ is sufficiently Hausdorff-close to $\lambda$,
then the equilateral train track $\tau(X', \lambda', \delta)$ is isotopic to $\tau$.
\end{proposition}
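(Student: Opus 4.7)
The plan is to show that, for $(X',\lambda')$ sufficiently close to $(X,\lambda)$, the equilateral neighborhoods $\cE_\delta(\lambda)$ and $\cE_\delta(\lambda')$ have matching combinatorial structures, so that their leaf-space train tracks are canonically isotopic. The work of the previous two sections already accomplishes most of this; it remains to assemble the pieces and verify that no new visible arcs appear when $\lambda$ is perturbed.

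I would begin by applying Proposition \ref{prop:persistent} at a scale $\zeta \ll \delta$ (to be determined). This yields a neighborhood of $(X,\lambda)$ on which every arc of $\arc_\bullet$ persists as a visible arc for $(X',\lambda')$ with respect to $\tau$, with weights varying by at most $M_{\ref{prop:persistent}}\zeta^{3/2}$, and on which each configuration $G_P(X',\lambda')$ is $M_{\ref{prop:persistent}}\zeta^{3/2}$-equidistant. Combining with Lemma \ref{lem:invisiblesubarc} gives the inclusion $\arc_\bullet \cup \arc_\circ^\tau(\lambda') \subseteq \arc(X',\lambda')$. Any additional arc in $\arc(X',\lambda')$ is dual to a compact edge of the spine of $(X',\lambda')$ lying inside some component $P$ of $S \setminus (\tau \cup \arc_\bullet)$ whose centers cluster within $O(\zeta^{3/2})$; its weight is therefore $O(\zeta^{3/2})$. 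By Lemma \ref{lem:bounded_neighborhood}, each proto-spike of $\cE_\delta(\lambda')$ has $\Delta_V(\delta)\ge w_{\ref{prop:ttdefs_comp}}^{-1}\delta$, so choosing $\zeta$ small enough that $M_{\ref{prop:persistent}}\zeta^{3/2} < w_{\ref{prop:ttdefs_comp}}^{-1}\delta$ forces each such new arc to be properly isotopable into a thin band of an adjacent proto-spike, hence invisible with respect to $\cE_\delta(\lambda')$. Meanwhile, the arcs of $\arc_\bullet$ have weights bounded below by a positive constant depending on $(X,\lambda)$ and therefore remain visible after perturbation. Thus $\arc_\bullet^{\tau(X',\lambda',\delta)}(X',\lambda') = \arc_\bullet$, and the polygonal decompositions of $S$ induced by $\lambda\cup \arc_\bullet$ and $\lambda' \cup \arc_\bullet$ correspond canonically.

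Having matched the arc systems, Corollary \ref{cor:triplebasepoints} shows that basepoints on corresponding boundary geodesics move only by $O(\zeta^{3/2})$, and hence that the $D_\delta$-distance turning points determining the endpoints of the $\delta$-equilateral neighborhoods sit close along $\lambda$ and $\lambda'$. Proposition \ref{prop:tiestable} then ensures that the switch leaves of $\cE_\delta(\lambda')$ are Hausdorff close to those of $\cE_\delta(\lambda)$. Because $\tau$ is assumed trivalent, the combinatorial incidence at every switch of $\tau'$---which two of the three meeting branches sit on a common boundary geodesic---is determined by the same matching as for $\tau$, so one can build a small ambient isotopy of $S$ carrying $\cE_\delta(\lambda)$ to $\cE_\delta(\lambda')$ and respecting the tie foliations; this descends to an isotopy $\tau \to \tau'$ of train tracks. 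The main obstacle is controlling the short visible arcs that can spontaneously appear when a higher-valence feature of the spine breaks up, and showing they are absorbed into the thin part of $\cE_\delta(\lambda')$; this is precisely why the neighborhood on which $\tau' \cong \tau$ must shrink with $\delta$.
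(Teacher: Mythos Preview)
Your argument contains a genuine error in the visibility criterion. An arc $\alpha \in \arc(X',\lambda')$ is invisible with respect to $\cE_\delta(\lambda')$ precisely when its packet of leaves contains one lying in $\Thin_\delta$, i.e., one whose projection to the spine lies more than $D_\delta$ from every vertex. Since the dual spine edge $e$ has length $c_\alpha$ in the spine metric, this happens exactly when $c_\alpha > 2D_\delta$. Thus \emph{small-weight arcs are visible, not invisible}. The new arcs you describe---those arising when a higher-valence spine vertex of $(X,\lambda)$ breaks up under perturbation---have weight $O(\zeta^{3/2}) \ll 2D_\delta$ and hence land in $\arc_\bullet(X',\lambda',\delta)$; they are certainly not absorbed into a proto-spike, since their orthogeodesic length is bounded below by $\log\sqrt3$ (Lemma~\ref{lem:closethenarc}) while proto-spike leaves have length $O(\delta)$. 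Your comparison $M_{\ref{prop:persistent}}\zeta^{3/2} < w_{\ref{prop:ttdefs_comp}}^{-1}\delta$ conflates a tangential quantity (weight, measured along $\lambda$) with a transversal one ($\Delta_V(\delta)$, measured across leaves), so it does not yield the conclusion you draw. Consequently $\arc_\bullet(X',\lambda',\delta)$ can strictly contain $\arc_\bullet$, as indeed Proposition~\ref{prop:persistent} and Corollary~\ref{cor:augstable} anticipate.

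More importantly, matching the visible arc systems is the wrong intermediate target: the train track $\tau'$ is determined by the Thick/Thin decomposition of $\cE_\delta(\lambda')$, which depends on basepoints and the cutoff $D_\delta$, not on which arcs of $\arc(X',\lambda')$ happen to be visible. The paper's proof instead works directly at the level of branches. It introduces the buffer quantity $\ell(X,\lambda,\delta) = \min_b\{\ell_\lambda(b)/2,\, D_\delta - d_\lambda(b)\}$ and shows (Lemma~\ref{lem:ttstablefixX}) that for each branch $b$ of $\tau$, the mid-tie $t$ has a counterpart $t'$ in $\cE_\delta(\lambda')$ whose endpoints land in the $\delta$-thick parts of the corresponding complementary regions: the distance from $t'$ to the wrong basepoints is at least $D_\delta + \ell - O_s(\zeta^a)$ and to the right ones at most $D_\delta - \ell + O_s(\zeta^a)$, using Proposition~\ref{prop:tiestable} and Corollary~\ref{cor:persist_notallarcs}. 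This shows every branch of $\tau$ persists in $\tau'$. To rule out extra branches the paper compares dual complexes: since $\tau$ is trivalent its dual is a triangulation, and $\tau'$ has the same vertex set and contains every dual edge of $\tau$, hence can have no more. Your closing appeal to ``building a small ambient isotopy'' would need exactly this kind of combinatorial bookkeeping to be made rigorous.
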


This statement allows us to tie up the loose end from our discussion of (in)visible arcs.
Indeed, for $(X', \lambda')$ satisfying the hypotheses of the Proposition, we have that $\arc_\bullet(X', \lambda', \delta) = \arc_\bullet^{\tau}(X', \lambda')$. That is, our geometric and topological notions of the (in)visible arcs of $(X', \lambda')$ at scale $\delta$ agree.
This allows us to conclude that the extensions of $\taua$ form a nice family of coordinate charts around $(X, \lambda)$; see Corollary \ref{cor:augstable} below.

As with the persistence of arcs (Proposition \ref{prop:persistent}), we first prove a uniform version stability of $\tau$ in the case where only $\lambda$ is allowed to vary, then use this uniformity to allow $X$ to vary as well.

Recall that each branch $b$ of the equilateral train track $\tau = \tau(X, \lambda, \delta)$ has a well-defined length $\ell_\lambda(b)$ as measured along $\lambda$.
Denote by $d_\lambda(b)$\label{ind:dlb} the minimum distance from the midpoint of the branch $b$ to any basepoint on any leaf of $\lambda$ that runs through $b$.
Because $\tau$ is an equilateral train track this is strictly less than $D_\delta$.
Finally, we define the following auxiliary quantity:\label{ind:lxdb}
\[\ell(X, \lambda, \delta) := \min_b \left\{ \frac{\ell_\lambda(b)}{2}, D_\delta - d_\lambda(b) \right\}\]
where the minimum is taken over all the branches of $\tau$.

\begin{lemma}\label{lem:ttstablefixX}
There is a constant $\zeta_{\ref{lem:ttstablefixX}}>0$ such that for any $\lambda'$ with $d_X^H(\lambda, \lambda') < \zeta_{\ref{lem:ttstablefixX}},$
the equilateral train track $\tau(X, \lambda', \delta)$ is isotopic to $\tau$.
This constant depends only on the thickness of $X$ and $\ell=\ell(X, \lambda, \delta)$.
\end{lemma}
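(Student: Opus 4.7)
The plan is to choose $\zeta_{\ref{lem:ttstablefixX}}>0$ small enough (depending on $s$ and $\ell = \ell(X,\lambda,\delta)$) so that all combinatorial data defining $\tau = \tau(X,\lambda,\delta)$ persists. The combinatorial type of an equilateral train track is encoded by (i) the visible/invisible dichotomy for its associated arc system, and (ii) the partition of each boundary geodesic of $\lambda$ into intervals of length $\le 2D_\delta$ separated by basepoints, which determines the branches and switches. I will ensure both (i) and (ii) are preserved under small Hausdorff perturbations.

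For (i), I would first apply Lemma \ref{lem:fixX persistent} with threshold $\zeta_{\ref{lem:fixX persistent}}$: this guarantees $\arc_\bullet \subseteq \arc_\bullet^\tau(X,\lambda')$, that persistent arc weights change by $O_s(\zeta^2)$, and that each configuration $G_P(X,\lambda')$ for $P \subset S\setminus(\tau\cup\arc_\bullet)$ is $O_s(\zeta^2)$-equidistant. To rule out new visible arcs $\beta \in \arc_\bullet^\tau(X,\lambda')\setminus \arc_\bullet$, observe that any such $\beta$ would arise from a $0$-equidistant tuple $G_P(X,\lambda)$ breaking into an $O_s(\zeta^2)$-equidistant tuple on $\lambda'$; the dual edge of the spine then has length $O_s(\zeta^2)$, and by comparing with $\delta$ via Proposition \ref{prop:ttdefs_comp}, $\beta$ lies in $\text{Thin}_\delta(X\setminus\lambda')$ provided $\zeta$ is small compared to $\delta$. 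Hence $\arc_\bullet^\tau(X,\lambda') = \arc_\bullet$ as arc systems.

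For (ii), each basepoint on a geodesic of $\lambda$ shifts to a corresponding basepoint on the Hausdorff-nearby geodesic of $\lambda'$ by an amount $O_s(\zeta^2)$, by Corollary \ref{cor:triplebasepoints} applied to the triples defining the centers of hexagons. The definition of $\ell(X,\lambda,\delta)$ was engineered precisely to measure the two ways the branch/switch structure can fail: $\ell_\lambda(b)/2$ bounds how much a basepoint along $\lambda$ can shift before the midpoint of branch $b$ leaves $b$, while $D_\delta - d_\lambda(b)$ bounds how much a basepoint can shift before the midpoint of $b$ crosses the $D_\delta$--threshold into a different branch or an adjacent thin component. Requiring $O_s(\zeta^2) < \ell(X,\lambda,\delta)$ thus preserves the full branch/switch combinatorics. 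Taking $\zeta_{\ref{lem:ttstablefixX}}$ smaller than $\zeta_{\ref{lem:fixX persistent}}$ and the square root of a small enough multiple of $\ell$ (where the multiple depends only on $s$), combining (i) and (ii), gives matching combinatorics for $\tau(X,\lambda',\delta)$ and $\tau$. An isotopy is then assembled from the carrying map $\lambda'\to \tau$ together with Proposition \ref{prop:tiestable} to align corresponding ties.

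The main obstacle I anticipate is the bookkeeping in step (ii): the thresholds in $\ell(X,\lambda,\delta)$ are measured along $\lambda$, whereas the basepoints after perturbation live on $\lambda'$, so comparing them requires carefully transporting distances along the orthogeodesic foliation from one lamination to the other. This is possible because orthogeodesic transport preserves length along $\lambda$, but the identification between corresponding segments of $\lambda$ and $\lambda'$ requires the stability of ties established in Proposition \ref{prop:tiestable}. Some care is also needed to avoid circular dependence: Proposition \ref{prop:tiestable} does not require the conclusion of Lemma \ref{lem:ttstablefixX}, but only the topological correspondence between pairs of boundary geodesics furnished by the carrying relation $\lambda'\preceq\tau$ (Lemma \ref{lem:Hausdfullcarry}).
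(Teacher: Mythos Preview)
Your overall strategy---combine basepoint stability (Corollary \ref{cor:triplebasepoints}) with tie stability (Proposition \ref{prop:tiestable})---matches the paper's, but the execution differs in ways that leave two genuine gaps.

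First, your threshold in step (ii) is off by the dominant term. You require $O_s(\zeta^2) < \ell$, accounting only for basepoint displacement. But to decide whether a candidate tie $t'$ of $\cO_{\lambda'}(X)$ lies in $\cE_\delta(\lambda')$ and terminates in the correct thick components, you must measure distances from $t'$ (not from $t$) to the \emph{new} basepoints. The paper makes this explicit: it chooses $t$ to be the midpoint tie of a branch $b$, so that $t$ is within $D_\delta - \ell$ of the basepoints of its two adjacent hexagons and at least $D_\delta + \ell$ from all others; after perturbation the corresponding segment $t'$ is within $D_\delta - \ell + O_s(\zeta^a + \zeta^2)$ of the first and at least $D_\delta + \ell - O_s(\zeta^a + \zeta^2)$ from the second. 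The controlling error is $O_s(\zeta^a)$, not $O_s(\zeta^2)$, so the correct threshold is (roughly) $\zeta < \ell^{1/a}$, not $\sqrt{\ell}$. You flag this issue in your ``obstacle'' paragraph but never fold it back into the argument.

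Second, your step (i) is misdirected, and you have no mechanism to rule out \emph{new} branches. The visible arc system is part of the \emph{augmented} track $\tau \cup \arc_\bullet$, not of $\tau$ itself; showing $\arc_\bullet^\tau(X,\lambda') = \arc_\bullet$ is neither necessary nor sufficient for $\tau(X,\lambda',\delta) \cong \tau$. (Your argument that any new arc $\beta$ lies in $\mathrm{Thin}_\delta$ is also incorrect when it applies: the packet of leaves isotopic to $\beta$ sits between two \emph{new} basepoints that are $O_s(\zeta^2)$ apart, hence well within $D_\delta$ of a basepoint, so $\beta$ is visible. This scenario is ruled out only because $\tau$ is assumed trivalent.) The paper instead handles ``no new branches'' by a dual-complex argument: since $\tau$ is trivalent, its dual $\mathsf{C}$ is a triangulation; the dual $\mathsf{C}'$ of $\tau'$ has the same vertex set and contains every edge of $\mathsf{C}$ (by the tie persistence just established), so $\mathsf{C}' = \mathsf{C}$ and hence $\tau' = \tau$. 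Your proposal has no analogue of this step.
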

\begin{proof}
Throughout the proof, we will work in the universal cover but will mostly suppress this for economy.
Thus, we will use $\cE$ to denote the full preimage under the covering projection of the $\delta$--equilateral neighborhood of $\lambda$ to $\tX$ and $\cO$ to denote the preimage of $\Ol(X)$, and similarly will use $\cE'$ and $\cO'$ for the preimages of $\cE_{\delta}(\lambda')$ and $\cO_{\lambda'}(X)$.

We recall that there is a correspondence between the complements of $\lambda \cup \arc_\circ^{\tau}(\lambda)$, of $\tau$, and of $\lambda' \cup \arc_\circ^{\tau}(\lambda')$.
Our goal is to show that if two 
$P$ and $Q$ complementary to $\lambda \cup \arc_\circ^{\tau}(\lambda)$ are joined by a tie $t$ of $\cO|_{\cE}$,
then the corresponding components $P'$ and $Q'$ 
complementary to 
$\lambda' \cup \arc_\circ^{\tau}(\lambda')$
are joined by a tie $t'$ of $\cO'|_{\cE'}$.
Note that by Lemma \ref{lem:invisiblesubarc}, the regions $P$ and $Q$ are unions of hexagons of $X \setminus (\lambda \cup \arc)$, and an analogous statmenet holds for $P'$ and $Q'$.

The main point of our proof is that $t$ is close to basepoints on $\partial P$ and $\partial Q$ and far away from all other basepoints (c.f. Proposition \ref{prop:dual cell veering}).
Since basepoints and ties change continuously as $\lambda$ varies slightly in the Hausdorff metric 
(Corollary \ref{cor:triplebasepoints} and Proposition \ref{prop:tiestable}, respectively)
this will imply that $t'$ is close to basepoints on $\partial P'$ and $\partial Q'$ and far from all others. Thus $t'$ will actually be a tie of $\cO'|_{\cE'}$ joining $P'$ to $Q'$.

To implement this strategy, let us first specify our candidate $t'$.
The plaques $P$ and $Q$ are adjacent over some branch of $\tau$. 
Consider the leaf of $\cO|_\cE$ projecting down to the midpoint of this branch, and define $t$ to be the subsegment of this leaf that runs between $\partial P$ and $\partial Q$.
That is, we have trimmed off the ends of the leaf that run into the interiors of $P$ and $Q$. 
In the case that $P$ and $Q$ are adjacent over an isolated leaf, we simply take $t$ to be the point on $\lambda$ corresponding to the midpoint of the branch.

Lemma \ref{lem:change base ties still meet} ensures that each leaf of $\cO'$ meeting $t$ also meets both $\partial P'$ and $\partial Q'$; let $t'$ denote the subsegment of any such leaf running between the two geodesics.
In order to use this segment to demonstrate the adjacency of $P'$ and $Q'$ over $\tau(X, \lambda', \delta)$, we must show the following:
\begin{enumerate}
    \item The segment $t'$ is contained in $\cE'$.
    \item It terminates in the $\delta$-thick parts of $P'$ and $Q'$.
\end{enumerate}

To show that $t'$ is contained in $\cE'$, it suffices to prove that for any component $R'$ of $\tX \setminus \tlambda'$ meeting the interior of $t'$, the distance from $t'$ to any basepoint of $R'$ is at least $D_\delta$.
Now by our choices of $t$ and $\ell$, we have that the distance from $t$ to any basepoint of any component $R$ of $\tX \setminus \lambda$ meeting the interior of $t$ is at least $D_\delta + \ell$.
Fix $a \in (0,1)$.
By Proposition \ref{prop:tiestable}, the tie $t'$ is within $O_s(\zeta^a)$ of $t$, and by Corollary \ref{cor:persist_notallarcs}, every basepoint of $\lambda$ is within $O_s(\zeta^2)$ of a corresponding basepoint of $\lambda'$ so long as $\zeta$ is taken less than the cutoff $\zeta_{\ref{cor:persist_notallarcs}}(s)$.
\footnote{To be more concrete, the correspondence is between basepoints of the hexagons of $X \setminus (\lambda \cup \arc)$ and $X \setminus (\lambda' \cup \arc')$ making up the same piece of $S \setminus (\tau \cup \arc(\zeta))$, where $\arc(\zeta)$ is the filling persistent subsystem of $\arc_\bullet$ guaranteed by Corollary \ref{cor:persist_notallarcs}.}
Therefore, we have that the distance from $t'$ to any basepoint of any  $R'$ that meets its interior is at least
\[ D_\delta + \ell - O_s(\zeta^a + \zeta^2),\]
which is clearly greater than $D_\delta$ for $\zeta$ small enough.

A similar argument shows that $t'$ terminates in the $\delta$-thick parts of $P'$ and $Q'$.
Indeed, since $t$ is a tie of $\cE$, it is within $D_\delta$ of some basepoint $p \in \partial P$, and by our choice of $t$ and $\ell$ is actually within $D_\delta - \ell$ of $p$.
The tie $t'$ is within $O_s(\zeta^a)$ of $t$, and there is a corresponding basepoint $p'$ of $P'$ within $O_s(\zeta^2)$ of $p$, so altogether we see that $t'$ is within 
\[D_\delta - \ell + O_s(\zeta^a + \zeta^2)\]
of $p'$, which again is less than $D_\delta$ for small enough $\zeta.$

Thus, we have shown that there is a tie of $\cE'$ connecting any two complementary components of $X \setminus (\lambda' \cup \arc_\circ^\tau(\lambda'))$, that is, every branch of $\tau$ persists in $\tau':=\tau(X, \lambda', \delta)$.
To show that $\tau'$ has no new branches, consider the dual complex $\mathsf{C}$ to $\tau$ on $X$. If $\tau$ is not filling, then $\mathsf{C}$ is not a cellulation of $X$, but rather of the quotient space obtained by collapsing each component of $X \setminus \cE$ to a point (if $\tau$ is filling then this is just a homotopy equivalence).
In any case, since $\tau$ is trivalent, $\mathsf{C}$ is a triangulation. 
If we consider the dual complex $\mathsf{C}'$ to $\tau'$ on $X$, since there is a correspondence between complementary components we see that $\mathsf{C}'$ is a cellulation of the same space as $\mathsf{C}$ with the same vertex set. 
Since each branch of $\tau$ persists in $\tau'$, we also see that $\mathsf{C}'$ must contain every edge of $\mathsf{C}$.
But now since $\mathsf{C}$ is a triangulation 
(and no switches of $\tau'$ have valence $<3$) it must be that $\mathsf{C}'=\mathsf{C}$.
Therefore $\tau'$ has no more branches than $\tau$ and the two train tracks are isotopic as claimed.
\end{proof}

In light of the uniform threshold obtained in Lemma \ref{lem:ttstablefixX}, it suffices to produce a neighborhood of $X$ wherein the geometry of the equilateral neighborhood of $\lambda$ varies in a controlled way.

\begin{lemma}\label{lem:ttstablevarX}
Fix $\ell < \ell(X, \lambda, \delta)$. 
Then there is an open neighborhood $B_{\ref{lem:ttstablevarX}}(\ell)$ of $X$ such that for every $X' \in B_{\ref{lem:ttstablevarX}}(\ell)$,
the equilateral train track $\tau(X', \lambda, \delta)$ is isotopic to $\tau$
and has $\ell(X', \lambda, \delta) > \ell$.
\end{lemma}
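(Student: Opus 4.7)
The plan is to exploit the analytic dependence of the hyperbolic geometry of $X \setminus \lambda$ on the metric $X$ (with $\lambda$ fixed) provided by the arc-weight coordinates of \cite[\S6]{shshI}. Since $\tau$ is trivalent by hypothesis and $\ell(X, \lambda, \delta) > 0$, a small perturbation of $X$ should not alter the combinatorial type of the equilateral neighborhood. The proof is therefore essentially a continuity statement for a finite set of geometric quantities attached to the branches of $\tau$.

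First I would invoke the structure theorem for weighted arc complexes (see Theorem~6.4 of \cite{shshI} and \S\ref{subsec:arc systems}): completing $\arc(X, \lambda)$ to a maximal arc system by formally adjoining arcs of weight zero, the arc weights $\{c_\alpha\}$ are real-analytic coordinates on $\T(S \setminus \lambda)$. Pre-composing with the continuous cutting map $\T_g \to \T(S \setminus \lambda)$ (which amounts to replacing $\lambda$ by its geodesic realization in $X'$), we obtain a neighborhood $U_1$ of $X$ on which the geometric arc system $\arc(X, \lambda)$ persists as a subsystem of $\arc(X', \lambda)$ and all weights depend continuously on $X'$. Consequently, the isometry type of each hexagon of $X' \setminus (\lambda \cup \arc(X', \lambda))$ varies continuously, and with it the positions of basepoints along the leaves of $\lambda$ and the locations of centers.

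Each branch $b$ of $\tau$ corresponds to a pair of adjacent hexagons of $X \setminus (\lambda \cup \arc_\circ^\tau(\lambda))$, and the quantities $\ell_\lambda(b)$ and $d_\lambda(b)$ are computed from the positions of the basepoints of these hexagons on the incident leaves of $\lambda$; hence both depend continuously on $X' \in U_1$. Since $\tau$ has only finitely many branches, the finite minimum $\ell(X', \lambda, \delta)$ is itself continuous at $X$, so on a smaller neighborhood $U_2 \subset U_1$ we may assume $\ell(X', \lambda, \delta) > \ell$. Positivity of $\ell(X', \lambda, \delta)$ guarantees that each branch of $\tau$ survives as a branch of the equilateral neighborhood $\cE_\delta(\lambda) \subset X'$: the tie through the midpoint of $b$ remains inside $\cE_\delta(\lambda)$ and meets the two corresponding boundary geodesics of $\lambda$ well within the $D_\delta$-thick parts of the adjacent hexagons. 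To rule out new branches I would argue as in the last paragraph of the proof of Lemma~\ref{lem:ttstablefixX}: the dual complex to $\tau$ is a triangulation (of the quotient of $X$ collapsing components of $X \setminus \cE$), and since the dual complex of $\tau(X', \lambda, \delta)$ has the same vertex set and contains every edge of this triangulation, the two complexes must coincide, so $\tau(X', \lambda, \delta)$ is isotopic to $\tau$. Setting $B_{\ref{lem:ttstablevarX}}(\ell) := U_2$ finishes the proof. The only potential obstacle is checking that continuity of arc weights does propagate to continuity of basepoints and branch lengths; this is a piece of book-keeping inside the (analytic) hexagon geometry rather than a genuinely difficult step, and no thickness-uniformity is needed because we are allowed to shrink $U_2$ around the given $X$.
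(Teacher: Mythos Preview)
Your overall strategy is the same as the paper's: show that the branch lengths $\ell_\lambda(b)$ and the quantities $d_\lambda(b)$ vary continuously with $X'$, then argue as in Lemma~\ref{lem:ttstablefixX} that no new branches appear. However, there is a genuine gap in your justification of that continuity.

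You argue via the cutting map $\T_g \to \T(S\setminus\lambda)$ and the arc-weight coordinates of \cite[\S6]{shshI}: these yield continuous variation of the isometry type of each hexagon of $X'\setminus(\lambda\cup\arc(X',\lambda))$, hence of the basepoints \emph{within} each hexagon. But $\ell_\lambda(b)$ and $d_\lambda(b)$ are distances along $\lambda$ between switches (or between the midpoint of $b$ and basepoints) coming from \emph{different} hexagons, possibly on opposite sides of $\lambda$. Those relative positions are governed by the shear data, not by the shape of each piece; two surfaces $X,X'$ with identical $X\setminus\lambda$ geometry but different shear along $\lambda$ will have different branch lengths. The cutting map discards exactly this information, so your ``hence both depend continuously on $X'\in U_1$'' does not follow from what you established on $U_1$.

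The paper closes this gap by working with the full shear-shape cocycle: it observes that each $\ell_\lambda(b)$ is an explicit linear combination of the values of $\sigl(X)$ (thought of as a weight system on $\taua$) and the constant $D_\delta$, and similarly for $d_\lambda(b)$. Continuity then comes from continuity of $\sigma_\lambda:\T_g\to\SH^+(\lambda)$, and the neighborhood $B_{\ref{lem:ttstablevarX}}(\ell)$ is taken as the preimage under $\sigma_\lambda$ of a small ball in $W(\taua)$. Your argument is easily repaired by invoking $\sigl$ rather than (only) the arc weights, but as written it omits precisely the shear dependence that the paper's use of shear-shape coordinates is designed to capture.
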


\begin{proof}
This is a consequence of the fact that the geometry of an equilateral train track depends continuously on the complementary subsurface $X \setminus \lambda$ and the shearing data, which both depend continuously on $X$.

Using shear-shape coordinates, we can be more explicit. 
It turns out that the length of a branch $b$ of $\tau$ can be recorded as a simple linear combination of the values of the shear-shape cocycle $\sigl(X)$ and the number $D_\delta$; the precise formula depends on the type of the branch (small, mixed, or large) and the configuration of plaques around $b$.
We have illustrated the possible cases in Figure \ref{fig:branches_length}.
When $\lambda$ is maximal, this is equivalent to the linear isomorphism from shear coordinates to tangential coordinates (quotiented by the ``switch vectors''); compare with \cite[pp. 44-45]{Th_stretch}.

\begin{figure}[ht]
\includegraphics[width=\linewidth]{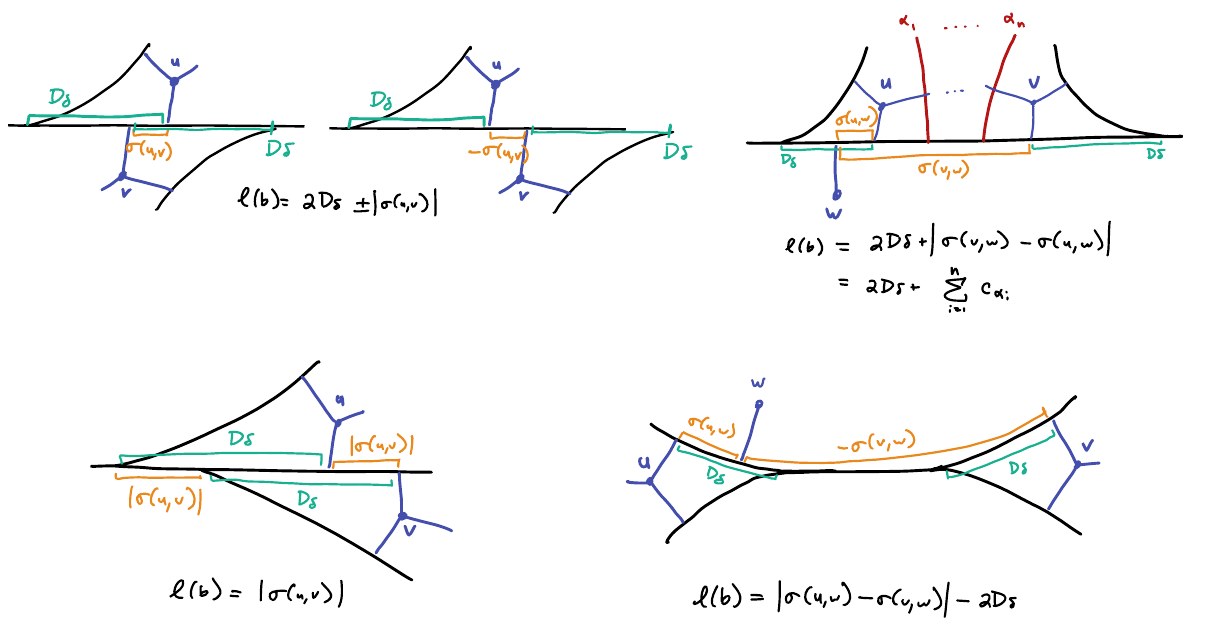}
\caption{The length of a branch $b$ in terms of $D_\delta$ and shear parameters. The top row expresses the possibilities when $b$ is small, while the bottom row covers the cases where $b$ is mixed or large. The precise formulas do not matter; what is important is just that they are continuous in the shear and arc data.}
\label{fig:branches_length}
 \end{figure}

In particular, the length of $b$ depends continuously on the values of $\sigl(X)$, thought of as a weight system on $\taua$.
Similarly, the distance between the midpoint of a branch $b$ of $\tau$ and the basepoints on $\lambda$ can also be computed in terms of these data.
We may therefore take $B_{\ref{lem:ttstablevarX}}(\ell)$ to be the preimage by $\sigma_\lambda$ of a suitably small neighborhood of $\sigma_\lambda(X)\in W(\taua)$, chosen such that it does not meet the boundary of the image of Teichm\"uller space in shear-shape coordinates and such that no branch of $b$ becomes shorter than $2\ell$, hence persists in $\tau(X', \lambda, \delta)$.
That $\tau(X', \lambda, \delta)$ is isotopic to $\tau$ follows the same argument as in Lemma \ref{lem:ttstablefixX}.
\end{proof}

The Proposition quickly follows by combining Lemmas \ref{lem:ttstablefixX} and \ref{lem:ttstablevarX}.

\begin{proof}[Proof of Proposition \ref{prop:ttstable}]
Fix $\ell < \ell(X, \lambda, \delta)$ and consider the neighborhood $B_{\ref{lem:ttstablevarX}}(\ell)$ of $X$.
By shrinking $B_{\ref{lem:ttstablevarX}}(\ell)$ to a smaller neighborhood, we may also ensure that for any $X'$ in this neighborhood,
\begin{enumerate}
    \item $X'$ is $s/2$ thick, and
    \item the Hausdorff metrics on $\mathcal{GL}_X$ and $\mathcal{GL}_{X'}$ are H{\"older} comparable with uniform constants.
\end{enumerate}
Fix any $X'$ from the neighborhood above.
By Lemma \ref{lem:ttstablevarX}, we know that
$\tau(X', \lambda, \delta)$ is isotopic to $\tau$ and $\ell (X', \lambda, \delta) > \ell$.
Now for any $\lambda'$ Hausdorff-close enough to $\lambda$, the uniformity in Lemma \ref{lem:ttstablefixX} together with the conditions (1) and (2) above guarantees that $\tau(X', \lambda', \delta)$ is isotopic to $\tau(X, \lambda', \delta)$.
\end{proof}

Since nearby pairs $(X, \lambda)$ and $(X', \lambda')$ have the same equilateral train track (Proposition \ref{prop:ttstable}) and the same visible arc systems (up to adding in arcs of weight 0, Proposition \ref{prop:persistent}), we would like to conclude that the standard smoothings of the augmented train tracks $\taua(X, \lambda, \delta)$ and $\taua(X', \lambda', \delta)$ are the same.
While this is morally true, multiple arcs of $\arc_\bullet$ can share the same endpoint on $\tau$,
which is an unstable configuration.
Instead, we appeal to the notion of slide-equivalence of smoothings developed in Section \ref{subsec:smoothings}.

\begin{corollary}\label{cor:augstable}
If $X'$ is sufficiently close to $X$ and $\lambda'$ is sufficiently Hausdorff-close to $\lambda$,
then the standard smoothings of 
$\tau(X,\lambda,\delta) \cup \arc_\bullet (X',\lambda',\delta)$
and 
$\tau(X',\lambda', \delta) \cup \arc_\bullet(X',\lambda', \delta)$
are slide-equivalent.
\end{corollary}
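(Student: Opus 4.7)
The plan is to deduce the corollary by combining the topological identification of equilateral train tracks from Proposition~\ref{prop:ttstable}, the persistence of visible arcs from Proposition~\ref{prop:persistent}, and the observation that any two standard smoothings of a train track together with a fixed proper isotopy class of arc system are slide-equivalent.

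First, I would shrink the neighborhood of $(X,\lambda)$ so that two conclusions hold simultaneously for every $(X',\lambda')$ in it: (i) the equilateral train tracks $\tau := \tau(X,\lambda,\delta)$ and $\tau' := \tau(X',\lambda',\delta)$ are isotopic on $S$, by Proposition~\ref{prop:ttstable}; and (ii) the visible arc systems satisfy $\arc_\bullet(X,\lambda,\delta) \subseteq \arc_\bullet(X',\lambda',\delta) =: \arc_\bullet'$ as sets of proper isotopy classes of arcs on $S\setminus\lambda$, by Proposition~\ref{prop:persistent}. Recall that, by Lemma~\ref{lem:invisiblesubarc} and the discussion in~\S\ref{subsec:invisarcs}, the geometric notion of (in)visibility and the topological notion associated to a train track agree whenever both are defined, so $\arc_\bullet'$ is also a system of proper isotopy classes of arcs on $S\setminus\tau'$.

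Next, I would fix an ambient isotopy $\Phi$ of $S$ carrying $\tau'$ to $\tau$. Pushing the geometric realization of $\arc_\bullet'$ on $X'$ through $\Phi$, we obtain a properly embedded arc system on $S\setminus\tau$ in the same proper isotopy class as $\arc_\bullet'$ viewed on $S\setminus\tau$. By Definition~\ref{def:smoothing} and the discussion of slide-equivalence following it, the standard smoothing of $\tau\cup\arc_\bullet'$ is determined uniquely up to slide-equivalence by the proper isotopy class of $\arc_\bullet'$ in $S\setminus\tau$: any two choices of endpoints for the feet of the arcs on $\tau$ differ by shifts, each of which is a slide. Applying this both to the smoothing coming from $(X,\lambda)$ (with arcs placed according to the $\delta$-equilateral geometry on $X$) and to the smoothing coming from $(X',\lambda')$ (transported by $\Phi$ from the $\delta$-equilateral geometry on $X'$), we conclude that the two standard smoothings are slide-equivalent.

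The only mild obstacle is verifying that these really are the \emph{standard} smoothings: the condition is that every arc turns left when encountering $\tau$, which is a purely local tangential condition at each foot and so persists under small deformations of the underlying pair and under the isotopy $\Phi$. Since both smoothings in the statement are standard by definition, this is automatic, and the proof of the corollary is complete.
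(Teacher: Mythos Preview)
Your proof is correct and follows essentially the same approach as the paper, which treats the corollary as immediate from Proposition~\ref{prop:ttstable} (isotopy of the two equilateral tracks), Proposition~\ref{prop:persistent}, and the slide-equivalence of standard smoothings discussed after Definition~\ref{def:smoothing}. One small remark: your item~(ii), the inclusion $\arc_\bullet(X,\lambda,\delta)\subseteq\arc_\bullet'$, is not actually needed for this corollary as stated, since both objects in the statement use the \emph{same} arc system $\arc_\bullet'$; the only issue is that the two standard smoothings of $\tau\cup\arc_\bullet'$ and $\tau'\cup\arc_\bullet'$ arise from different placements of the feet of $\arc_\bullet'$ on isotopic tracks, and your argument via $\Phi$ and slide-equivalence handles exactly that.
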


In particular, if we let $\taua$ denote the standard smoothing  of $\tau(X, \lambda, \delta) \cup \arc_\bullet(X, \lambda, \delta)$ and $\taua'$ the standard smoothing of $\tau(X', \lambda', \delta) \cup \arc_\bullet(X',\lambda', \delta)$, 
then there is a natural identification of $W(\taua)$ with the subspace of $W(\taua')$ corresponding to setting the weights on all arcs of $\arc_\bullet' \setminus \arc_\bullet$ equal to 0.
Since $\sigma_{\lambda'}(X')$ and $\lambda'$ are both carried on $\taua'$, this means that every nearby pair $(X', \lambda')$ can be represented by a pair of weight systems carried on one of the finitely many (slide-equivalence classes of) extensions of $\taua$.

\subsection{Common cellulations}\label{subsec:cellstab}
Corollary \ref{cor:augstable} already realizes the shear-shape cocycles $\sigl(X)$ and $\sigma_{\lambda'}(X')$ in a common space, so we could (and do) phrase our main continuity theorem in terms of the difference between the corresponding weight systems on the augmented geometric train track $\taua(X', \lambda', \delta)$.

However, in order to show a more direct connection with flat geometry, we now upgrade our argument from above to show that the quadratic differentials $q:=\cO(X, \lambda)$ and $q':=\cO(X', \lambda')$ have comparable cellulations by saddle connections, also allowing us to phrase our continuity theorem in terms of their periods.
As in \S \ref{subsec:dualcell}, let $\mathsf{T}$ denote the cellulation of $\cO(X, \lambda)$ dual to $\taua$.

\begin{proposition}[Stability of cellulations]\label{prop:cellstable}
If $X'$ is sufficiently close to $X$ and $\lambda'$ is sufficiently Hausdorff-close to $\lambda$,
then $\cO(X', \lambda')$ has a cellulation $\mathsf{T}'$ by saddle connections that refines $\mathsf{T}$.
\end{proposition}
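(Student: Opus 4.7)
My plan is to invoke the three stability results of this part in order, and then use them to orchestrate a topological collapse map. First, for $(X',\lambda')$ sufficiently close to $(X,\lambda)$, Proposition \ref{prop:ttstable} gives that the equilateral train track $\tau' := \tau(X',\lambda',\delta)$ is isotopic to $\tau$, and Proposition \ref{prop:persistent}(1) gives that $\arc_\bullet' := \arc_\bullet^\tau(X',\lambda',\delta)$ contains $\arc_\bullet$. Let $\taua'$ denote the standard smoothing of $\tau' \cup \arc_\bullet'$; by Corollary \ref{cor:augstable} this is slide-equivalent to the standard smoothing of $\tau \cup \arc_\bullet'$, so I may unambiguously view $\taua'$ as an extension of $\taua$. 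Applying Proposition \ref{prop:dual cell veering} to $(X',\lambda')$ then produces the desired cellulation $\mathsf{T}'$ of $q' := \cO(X',\lambda')$, dual to $\taua'$ and realized by veering saddle connections.

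Next I would construct a topological collapse map $f : (S, Z(q')) \to (S, Z(q))$ in the sense of Definition \ref{def:topcollapse}. Since $\tau \cup \arc_\bullet$ is filling (Lemma \ref{lem:visiblearcsfill}), each component $P$ of $S \setminus \taua$ is a topological disk containing a unique vertex of the spine of $(X,\lambda)$ and hence a unique zero $z_P \in Z(q)$. Because $\arc_\bullet \subset \arc_\bullet'$, the arcs of $\arc_\bullet' \setminus \arc_\bullet$ subdivide $P$ into components of $S \setminus \taua'$, each carrying a unique zero of $q'$; let $Z_P \subset Z(q')$ denote the resulting cluster of zeros, and note that the $Z_P$'s partition $Z(q')$. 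I would then choose, for each $P$, a small open disk $U_P \subset P$ containing $Z_P$ and define $f$ to be the identity on $S \setminus \bigsqcup_P U_P$ and a standard collapse of $Z_P$ onto $z_P$ inside each $U_P$.

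To verify that $\mathsf{T}'$ refines $\mathsf{T}$, I would observe that every saddle connection $e \in \mathsf{T}$ is dual either to a branch $b$ of $\tau$ or to an arc $\alpha \in \arc_\bullet$; in either case the corresponding edge persists in $\taua'$ and thus has a dual saddle $e' \in \mathsf{T}'$. Both $e$ and $e'$ connect the same pair of adjacent components $P_1, P_2$ of $S \setminus \taua$, so $f$ sends the endpoints of $e'$ (which lie in $Z_{P_1} \sqcup Z_{P_2}$) to $z_{P_1}$ and $z_{P_2}$, the endpoints of $e$. Since $f$ is supported on small disks around the clusters of zeros, and both paths traverse the same edge of $\taua$, it follows that $f(e')$ is isotopic to $e$ rel $Z(q)$.

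The main obstacle I anticipate is orchestrating the three stability results so that the combinatorial correspondences among $\tau$, $\tau'$, $\arc_\bullet$, $\arc_\bullet'$, $\taua$, and $\taua'$ are unambiguous; once this is in place, the existence of $\mathsf{T}'$ and the verification of refinement are essentially topological and require no further quantitative input. In particular, the sharper estimates of Proposition \ref{prop:persistent}(2)--(3) and Proposition \ref{prop:tiestable} are not needed here, but will become crucial when passing from the existence of $\mathsf{T}'$ to the control of periods in the forthcoming quantitative continuity theorem.
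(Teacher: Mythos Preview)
Your proposal is correct and follows essentially the same route as the paper. Both arguments establish the topological collapse map in the same way (this is the Claim in the paper's proof), and both end up with $\mathsf{T}'$ being the cellulation dual to $\tau \cup \arc_\bullet'$.

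The one notable difference is packaging: you invoke Proposition~\ref{prop:dual cell veering} for $(X',\lambda')$ as a black box to get $\mathsf{T}'$ directly, whereas the paper instead goes back to the geometric estimates established inside the proofs of Lemmas~\ref{lem:ttstablefixX} and~\ref{lem:ttstablevarX} (that a tie $t'$ terminates within $D_\delta$ of the relevant basepoints) and then appeals to Corollary~\ref{cor:moresaddles} to produce each realizing saddle individually. Your route is cleaner for this proposition in isolation; the paper's more explicit hexagon-by-hexagon matching has the side benefit of setting up the precise correspondences $(u,v) \leftrightarrow (u',v')$ that get reused immediately in the proof of Theorem~\ref{thm:contcell}, where one must actually compare the periods of $e$ and $e'$. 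Your final paragraph correctly anticipates this.

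One small wording issue: when you write ``Both $e$ and $e'$ connect the same pair of adjacent components $P_1, P_2$ of $S \setminus \taua$,'' what you mean (and what your collapse map correctly encodes) is that $e'$ connects components of $S \setminus \taua'$ lying \emph{inside} $P_1$ and $P_2$; the segment of $\taua$ dual to $e$ may be further subdivided in $\taua'$ by feet of new arcs, so there may be several choices of $e'$. This does not affect the argument, since any such $e'$ realizes $e$ under $f$.
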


As an immediate consequence, there is a finite set of period coordinate charts, each of which meets the chart corresponding to $\mathsf{T}$ (possibly containing it as a boundary subspace), such that $\cO(X', \lambda')$ is contained in one of these charts for all nearby $X'$ and all Hausdorff-close $\lambda'$.

\begin{proof}
In order to talk about cellulations on the different surfaces, we must first establish that we have a consistent identification between them.
Recall from Definition \ref{def:topcollapse} the notion of topological collapse map.
\footnote{We cannot invoke our geometric notion of collapse maps (Lemma \ref{lem:collapsemaps}), as that presupposes that the two differentials have comparable geometry.}

\begin{claim}\label{claim:topcollapsestable}
The carrying maps $\lambda \prec \tau$ and $\lambda' \prec \tau$ induce a natural isotopy class of topological collapse from $q' = \cO(X', \lambda')$ to $q = \cO(X, \lambda)$.
\end{claim}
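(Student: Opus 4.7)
The plan is to construct the collapse map using the combinatorial refinement of cellulations provided by $\arc_\bullet \subseteq \arc_\bullet'$ (Proposition \ref{prop:persistent}), and then to upgrade this combinatorial data to an isotopy class of maps $S \to S$ in the prescribed normal form.

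First I would recall the bijections from \S\ref{sec:dual cellulations}: the zeros $Z(q)$ are in natural correspondence with the components of $S \setminus (\tau \cup \arc_\bullet)$, and likewise $Z(q')$ with components of $S \setminus (\tau \cup \arc_\bullet')$.  Because $\arc_\bullet \subseteq \arc_\bullet'$, every component of $S \setminus (\tau \cup \arc_\bullet')$ is contained in a unique component of $S \setminus (\tau \cup \arc_\bullet)$, yielding a well-defined surjection $\Phi: Z(q') \to Z(q)$.  By Proposition \ref{prop:persistent}, the extra arcs of $\arc_\bullet' \setminus \arc_\bullet$ arise from the breakup of higher-valence vertices of the spine of $(X,\lambda)$ into trivalent vertices; hence for each zero $z \in Z(q)$ with corresponding plaque $P \subset S \setminus (\tau \cup \arc_\bullet)$, the preimage $\Phi^{-1}(z)$ is precisely the set of zeros of $q'$ corresponding to the sub-plaques of $P$ with respect to the finer decomposition.

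Next I would upgrade $\Phi$ to a topological collapse map in the sense of Definition \ref{def:topcollapse}.  For each zero $z \in Z(q)$, choose an open topological disk $D_z \subset S$ such that the following hold: (i) $D_z$ is contained in the interior of the corresponding plaque $P$, (ii) $D_z$ contains every zero of $q'$ in $\Phi^{-1}(z)$, and (iii) the closures $\overline{D_z}$ are pairwise disjoint.  Such disks exist because each $\Phi^{-1}(z)$ is a finite set of interior points of $P$; this is the one place where we implicitly use that the relevant train track combinatorics is trivalent off of $P$.  Define $f:S \to S$ to be the identity on the complement of $\bigcup_z D_z$, and on each $D_z$ take $f$ to be any continuous map homotopic rel $\partial D_z$ to the identity that sends $\Phi^{-1}(z) \cap D_z$ to $\{z\}$; this is possible since every continuous self-map of a disk fixing the boundary and sending a prescribed finite set to a single interior point is unique up to isotopy rel boundary.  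The resulting $f$ is a topological collapse from $(S, Z(q'))$ to $(S, Z(q))$ realizing $\Phi$.

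For naturality I would argue that the isotopy class of $f$ depends only on $\Phi$ (and not on the choices of $D_z$ or of $f|_{D_z}$).  Given two collections of disks $\{D_z\}$ and $\{D_z'\}$ satisfying (i)-(iii), a straightforward isotopy supported in a larger system of disks interpolates between the two choices, and then uniqueness up to isotopy rel boundary of disk self-maps with prescribed singular behavior yields an ambient isotopy between the corresponding $f$ and $f'$.  The main technical point that I expect to require the most care is verifying condition (i), namely that one can take disjoint disk neighborhoods of $Z(q)$ each containing the appropriate subset of $Z(q')$; this is essentially guaranteed by the fact that $\arc_\bullet' \setminus \arc_\bullet$ consists of arcs strictly interior to the plaques $P$, but writing it out cleanly requires picking a reference combinatorial model (e.g. the dual cellulation $\mathsf T$) in which both zero sets embed disjointly from $\tau \cup \arc_\bullet$, after which everything is forced.
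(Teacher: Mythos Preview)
Your proposal is correct and rests on the same combinatorial input as the paper: stability of $\tau$ (Proposition \ref{prop:ttstable}) together with the inclusion $\arc_\bullet \subseteq \arc_\bullet'$ (Proposition \ref{prop:persistent}) yields the surjection $\Phi: Z(q') \to Z(q)$, from which the collapse map is built. One small omission: your condition (ii) should also require $z \in D_z$, since $f$ is the identity on $\partial D_z$ and must hit $z$.

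The paper reaches the same conclusion by a slightly more streamlined route. Rather than choosing disks $D_z$ by hand and invoking uniqueness of disk self-maps, it uses Lemma \ref{lem:dualtt_equitt} to observe that the standard smoothing of $\tau \cup \arc_\bullet$ is a deformation retract of $q \setminus Z$, and likewise (via Proposition \ref{prop:ttstable} and Corollary \ref{cor:augstable}) that the smoothing of $\tau \cup \arc_\bullet'$ is a deformation retract of $q' \setminus Z'$. The inclusion of smoothings then directly identifies $q \setminus Z$ with a subsurface of $q' \setminus Z'$ whose complementary disks are exactly the supports of the collapse. This bypasses your verification of conditions (i)--(iii) and the naturality argument, since the identification is canonical once the train-track data is fixed. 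The paper also records a geometric fact you do not need for the claim itself but which is used immediately afterward: the zeros in each fiber $\Phi^{-1}(z)$ are connected by a tree of short \emph{horizontal} saddle connections on $q'$, one for each arc of $\arc_\bullet' \setminus \arc_\bullet$.
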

\begin{proof}
Let $Z$ denote the set of zeros of $q$ and $Z'$ the zeros of $q'$. 
By Lemma \ref{lem:dualtt_equitt}, a standard smoothing of train track $\tau = \tau(X, \lambda, \delta)$ together with its visible arc system $\arc_{\bullet}$ is a deformation retract of $q \setminus Z$.
By Lemma \ref{lem:ttstablevarX}, we know that $\tau(X', \lambda', \delta) = \tau$ and via Proposition \ref{prop:persistent}, its visible arc system contains $\arc_\bullet$.
Thus, we have an identification of $q \setminus Z$ with a subsurface of $q' \setminus Z'$ (compare Corollary \ref{cor:augstable}). 

The multiple-to-one correspondence between the hexagons of
$X' \setminus (\lambda' \cup \arc')$ and $X \setminus (\lambda \cup \arc)$ induces a multiple-to-one correspondence between $Z'$ and $Z$, where 
the zeros in each fiber are connected by a tree of short horizontal saddle connections, each corresponding to an arc of $\arc_\bullet' \setminus \arc_\bullet$. The collapse map is then obtained by collapsing these saddles (dually, forgetting the extra arcs).
\end{proof}

Consider a non-horizontal saddle connection of $\mathsf{T}$; there is a dual tie $t$ running between two 
boundary geodesics $g$ and $h$ of $\lambda$ and connecting regions of $S \setminus \taua$ corresponding to hexagons $u$ and $v$ of $X \setminus (\lambda \cup \arc)$.
Since $\lambda'$ is close to $\lambda$, Lemma \ref{lem:Hausdfullcarry} allows us to pick out corresponding boundary geodesics $g'$ and $h'$ of $\lambda'$.
Proposition \ref{prop:persistent} allows us to identify $u$ with a union of hexagons of $X' \setminus (\lambda' \cup \arc')$; let $u'$ denote any of these hexagons with boundary on $g'$, and similarly define $v'$. 

In the proofs of Lemmas \ref{lem:ttstablefixX} and \ref{lem:ttstablevarX} we proved that there is a tie $t'$ of the equilateral neighborhood of $\lambda'$ on $X'$ that terminates within $D_\delta$ of the basepoints of $u'$ and $v'$. 
Corollary \ref{cor:moresaddles} now implies that the corresponding arc between zeros of $\cO(X', \lambda')$ is the diagonal of a nonsingular quadrilateral, and in particular a saddle connection.
Thus every saddle connection of $\mathsf{T}$ is realized on $\cO(X', \lambda')$. Adding in the horizontal saddles dual to the visible arcs $\arc_\bullet(X', \lambda', \delta)$ completes this collection to a cellulation that refines $\mathsf{T}$.
\end{proof}

Now that we have established the existence of a period coordinate chart containing both $q'$ and $q$, it is not hard to show that the (real parts of the) periods of $q$ and $q'$ are close.

\begin{theorem}\label{thm:contcell}
For every $X \in \T_g$, every $\lambda \in \GLcr$, and every $a \in (0,1)$, there is a threshold $\zeta_{\ref{thm:contcell}} >0$ such that the following holds.
For any $\zeta < \zeta_{\ref{thm:contcell}}$, there is an open neighborhood $B_{\ref{thm:contcell}}(\zeta)$ of $X$
such that for any $X' \in B_{\ref{thm:contcell}}(\zeta)$ and any $\lambda'$ with $d_X^H(\lambda, \lambda') < \zeta$, we have that
\[\| \sigl(X) - \sigma_{\lambda'}(X') \|_{\taua'} 
= O_s(\zeta^a),\]
where $\taua'$ denotes the standard smoothing of $\tau(X',\lambda',\delta) \cup \arc_\bullet(X',\lambda', \delta)$.
\end{theorem}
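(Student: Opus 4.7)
The plan is to assemble the estimates accumulated in the preceding sections. First, by shrinking to a small enough neighborhood of $X$ and taking $\zeta_{\ref{thm:contcell}}$ small, Proposition \ref{prop:ttstable} and Corollary \ref{cor:augstable} guarantee that the equilateral train track $\tau' := \tau(X',\lambda',\delta)$ coincides with $\tau$ (up to isotopy) and that the standard smoothing $\taua'$ is slide-equivalent to an extension of $\taua := \taua(X,\lambda,\delta)$, where the extra arc branches correspond to $\arc_\bullet(X',\lambda',\delta) \setminus \arc_\bullet(X,\lambda,\delta)$. Both cocycles $\sigma_\lambda(X)$ and $\sigma_{\lambda'}(X')$ are then realized as weight systems in $W(\taua')$, with $\sigma_\lambda(X)$ assigning weight zero to each new arc branch. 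It therefore suffices to bound, branch-by-branch, the difference $|\sigma_\lambda(X)(t_b) - \sigma_{\lambda'}(X')(t_b)|$, where $t_b$ is a standard transversal.

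For branches $b$ of $\taua'$ that are dual to an arc of $\arc_\bullet(X',\lambda',\delta)$, the weight is just the length along $\lambda$ (resp.\ $\lambda'$) of the edge of the spine dual to the arc, that is, the arc weight $c_b$. Persistence of arcs and the bound on how arc weights change is exactly Proposition \ref{prop:persistent}(2), which gives $O_s(\zeta^{3/2})$ on the common visible arcs. For the new arcs in $\arc_\bullet(X',\lambda',\delta) \setminus \arc_\bullet(X,\lambda,\delta)$, Proposition \ref{prop:persistent}(3) bounds the weight by the diameter of the equidistant configuration, also at scale $O_s(\zeta^{3/2})$.

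The remaining case, where $b$ is a branch of $\tau'$, is the main one and controls the ``shear'' part of the cocycle. Using the triangle inequality I split
\[|\sigma_\lambda(X)(t_b) - \sigma_{\lambda'}(X')(t_b)| \;\le\; |\sigma_\lambda(X)(t_b) - \sigma_\lambda(X')(t_b)| \;+\; |\sigma_\lambda(X')(t_b) - \sigma_{\lambda'}(X')(t_b)|.\]
The first summand involves varying only the hyperbolic metric; by the continuity of the shear-shape homeomorphism $\sigl\colon \T_g \to \SH^+(\lambda)$ (\cite[Theorem 12.1]{shshI}) and the fact that $\sigl$ is evaluated on finitely many fixed transversals, this term can be made at most $\zeta^a$ by shrinking $B_{\ref{thm:contcell}}(\zeta)$. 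The second summand is where Proposition \ref{prop:tiestable} enters: the shear $\sigma_\lambda(X')(t_b)$ is a signed length along $\lambda$ between two basepoints of hexagons of $X'\setminus\lambda$ (after transport by $\cO_\lambda(X')$), while $\sigma_{\lambda'}(X')(t_b)$ records the analogous signed length for $\lambda'$. Proposition \ref{prop:tiestable} says the relevant orthogeodesic segments for $\lambda$ and $\lambda'$ are $O_s(\zeta^a)$-Hausdorff close; combining this with Corollary \ref{cor:triplebasepoints} (which bounds the motion of basepoints by $O_s(\zeta^{3/2})$) and Lemma \ref{lem:orthodiv} yields an $O_s(\zeta^a)$ bound on the difference.

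Since the number of branches of $\taua'$ is bounded in terms of the topology, summing (or taking the maximum) over branches preserves the $O_s(\zeta^a)$ bound and proves the theorem. The main obstacle is the shear estimate: both the basepoints \emph{and} the transporting foliation change simultaneously, so we must track how signed distances along $\lambda$ transform into signed distances along $\lambda'$. This is precisely why Proposition \ref{prop:tiestable} was proved at the weaker H{\"o}lder scale $\zeta^a$ rather than $\zeta$ or $\zeta^{3/2}$, and why the exponent $a$ appears in the theorem statement.
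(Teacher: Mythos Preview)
Your overall strategy matches the paper's: split through the intermediate $\sigma_\lambda(X')$, use the continuity of $\sigl$ from \cite{shshI} for the first summand, and combine Proposition \ref{prop:tiestable} with Corollary \ref{cor:triplebasepoints} for the second. The arc-branch case is handled exactly as you say, via Proposition \ref{prop:persistent}(2) and (3).

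There is, however, a subtlety the paper flags explicitly and that your writeup slides past. Your triangle inequality presumes that $\sigma_\lambda(X')$ can be realized as a weight system on $\taua'$, but this need not hold: the visible arc systems $\arc_\bullet(X',\lambda,\delta)$ and $\arc_\bullet(X',\lambda',\delta)$ can \emph{cross} one another (this happens precisely when a nearly-equidistant configuration in $G_P(X',\lambda)$ resolves into a different diagonal than the corresponding configuration in $G_P(X',\lambda')$). In that situation there is no common augmented train track on which both $\sigma_\lambda(X')$ and $\sigma_{\lambda'}(X')$ live, so the formal difference $|\sigma_\lambda(X')(t_b)-\sigma_{\lambda'}(X')(t_b)|$ is not {\it a priori} meaningful. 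The paper resolves this by never writing $\sigma_\lambda(X')$ as a weight system on $\taua'$; instead it works directly with signed distances between basepoints, choosing a hexagon $u'$ of $X'\setminus(\lambda\cup\arc(X',\lambda))$ inside the piece $P$, passing to the center $(u')'$ of the \emph{corresponding triple} of geodesics in $\lambda'$, and then using Proposition \ref{prop:persistent}(3) to bound $d(p'',(p')')$ by the $O(\zeta^{3/2})$-diameter of the equidistant configuration $G_P(X',\lambda')$. This extra step is exactly what bridges the two potentially incompatible hexagon decompositions.

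A minor point: the $\zeta^{3/2}$ bounds coming from Proposition \ref{prop:persistent} carry an implicit constant $M_{\ref{prop:persistent}}$ depending on $(X,\lambda)$, not just on $s$, so they are $O(\zeta^{3/2})$ rather than $O_s(\zeta^{3/2})$. The paper absorbs this non-uniformity by shrinking the threshold $\zeta_{\ref{thm:contcell}}$ so that $\zeta^{3/2}\ll\zeta^a$; you should note this explicitly. Also, Lemma \ref{lem:orthodiv} compares two leaves of $\cO_\lambda$ for a \emph{fixed} $\lambda$, so it does not directly give the cross-lamination comparison you want; the relevant input is Proposition \ref{prop:tiestable} alone (which already converts Hausdorff closeness of ties into closeness of their intersection points with $\lambda'$).
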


The cellulation $\mathsf T'$ of $\cO(X',\lambda')$ by saddle connections dual to $\taua(X',\lambda', \delta)$ is a refinement of the cellulation $\mathsf T$ of $\cO(X,\lambda)$ by saddle connections dual to $\taua(X,\lambda,\delta)$.
Using duality between train track coordinate and period coordinate charts developed in \S\ref{sec:ttperiod coords}, Theorem \ref{thm:contcell} can also be phrased by saying that
\[\left| \Re[\hol_{\cO(X, \lambda)}(e)]_+ - \Re[\hol_{\cO(X', \lambda')}(e')]_+ \right| = O_s(\zeta^a)\]
for every saddle connection $e$ of $\mathsf{T}$ and any saddle connection $e'$ of $\mathsf{T}'$ that realizes $e$.

\begin{proof}
For $\lambda = \lambda'$, this is just the content of the main theorem of \cite{shshI}.

We will further specify our thresholds later in the proof, but throughout we take $\zeta_{\ref{thm:contcell}}$ and a neighborhood of $X$ small enough to be able to apply Corollary \ref{cor:augstable}, Proposition \ref{prop:tiestable}, and Proposition \ref{prop:persistent}.

Let $t$ be a tie of an equilateral train track connecting two boundary geodesics $g$ and $h$ of $\lambda$. 
Let $u$ and $v$ be hexagons of $X \setminus (\lambda \cup \arc)$ with basepoints $p$ on $g$ and $q$ on $h$, respectively.
Then the shear $\sigl(X)(u,v)$ may be computed by adding the (signed) distances from $p$ to $t$ and from $t$ to $q$ along $\lambda$ (see \cite[Remark 13.3]{shshI} for a discussion of sign conventions).

There are boundary leaves $g''$ and $h''$ of $\lambda'$ on $X'$ corresponding to $g$ and $h$, respectively. 
By Proposition \ref{prop:persistent} item (1), we can pick a component $u''$ of $X'\setminus(\lambda'\cup \arc(X',\lambda'))$ corresponding to $u$ and bordering $g''$ (there may be multiple options if the geodesics of $u$ are in an unstable configuration). Similarly pick $v''$ bordering $h''$.
Let $p''$ and $q''$ denote the basepoints of these hexagons on $g''$ and $h''$, respectively, and choose a segment $t''$ of a leaf of $\cO_{\lambda'}(X')$ that runs between $g''$ and $h''$ (in the universal cover).

We would like to simply invoke the estimates of Propositions \ref{prop:persistent} and \ref{prop:tiestable} to compare $d(u, t)$ with $d(u'', t'')$ and similarly $d(t,v)$ with $d(t'',v'')$, as in the proof of Lemma \ref{lem:ttstablefixX}.
The main theorem of \cite{shshI} tells us that $
\sigl(X)$ and $\sigl(X')$ are close, while the argument just above gives us that $\sigl(X')$ is close to $\sigma_{\lambda'}(X')$.

Unfortunately, we cannot immediately conclude the Theorem, because the arc systems for $\lambda$ on $X'$ and $\lambda'$ on $X'$ may cross, preventing us from realizing $\sigl(X')$ and $\sigma_{\lambda'}(X')$ in the same train track chart. 
We will briefly explain how to use Proposition \ref{prop:persistent} to obtain the desired bounds, regardless.
\medskip

Item (1) of Proposition \ref{prop:persistent} allows us to conclude that for close enough pairs, the visible arc systems $\arc_\bullet(X', \lambda, \delta)$ and 
$\arc_\bullet(X', \lambda', \delta)$ both contain $\arc_\bullet$ as a filling sub-arc system. 
Let $P$ be the complementary component of $S\setminus(\tau\cup \arc_\bullet)$ corresponding to $u$.
By item (3) of Proposition \ref{prop:persistent}, 
the configurations of geodesics
$G_P(X',\lambda)$ and $G_P(X',\lambda')$ are both $O(\zeta^{3/2})$-equidistant.
Thus the diameter of the collection of centers of circles inscribed in the triples of those geodesics project to any boundary geodesic with diameter at most $O(\zeta^{3/2})$.
This also gives us a dictionary between centers of triples in $G_P(X,\lambda)$,  $G_P(X',\lambda)$, and $G_P(X',\lambda')$, hence their projections to the corresponding boundary leaves of $\lambda$ and $\lambda'$, respectively.
\medskip

Now we are ready to apply the strategy we outlined earlier in the proof.
Let $g'$ and $h'$ be the boundary leaves of $\lambda$ corresponding to $g$ and $h$, respectively, but realized in the metric $X'$.
Let $u'$ be one of the hexagons complementary to $\lambda\cup \arc(X',\lambda)$ on $X'$ contained in the nearly equidistant configuration $G_P(X',\lambda)$ whose boundary meets $g'$, and let $p'$ be the projection of $u'$ to $g'$.
There are two boundary geodesics of $\lambda\subset X'$ other than $g'$ that are closest to $u'$.  
Let $(u')'$ denote the center in $X'$ of the corresponding configuration in $\lambda'$, and let $(p')'\in g''$ from the projection of $(u')'$.
Define $v'$, $q'\in h'$, $(v')'$, and $(q')' \in h''$ analogously for $v$. 
See Figure \ref{fig:refinements}.

\begin{figure}
    \centering   
    \includegraphics[width=\linewidth]{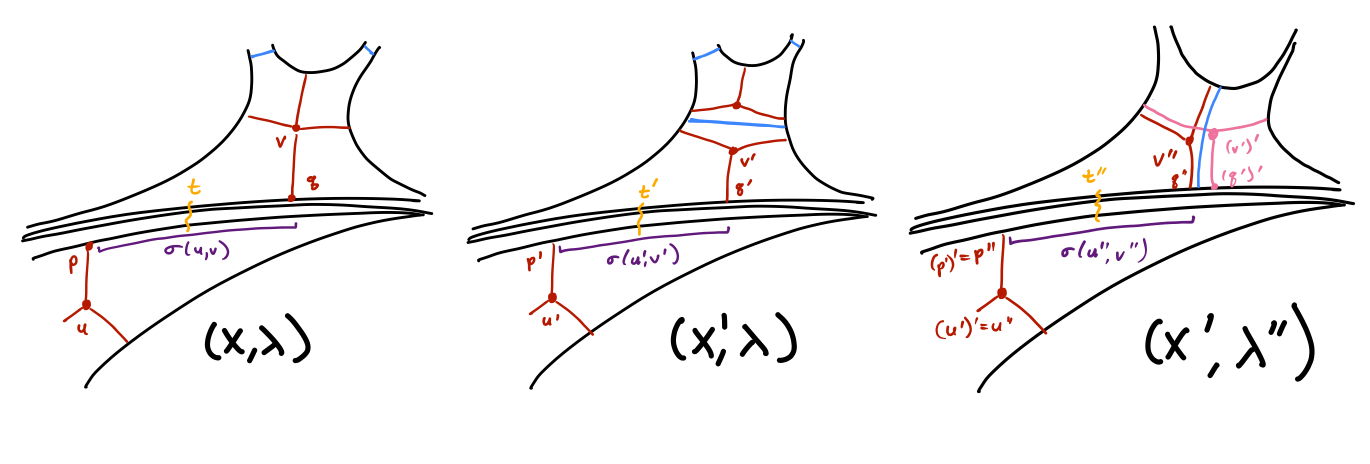}
    \caption{Corresponding hexagons and basepoints in the proof of Theorem \ref{thm:contcell}.}
    \label{fig:refinements}
\end{figure}

Using the main result of \cite{shshI}, shrinking $B_{\ref{prop:persistent}}(\zeta)$ as necessary, we can ensure that 
\begin{equation}\label{eqn:contest1}
| \sigl(X)(u,v) - \sigl(X')(u', v')| \le \zeta^a
\end{equation}
for every $X' \in B_{\ref{prop:persistent}}(\zeta)$.
Choose a tie $t'$ of $\tau(X', \lambda, \delta)$ that meets the tie $t''$ of $\tau(X', \lambda', \delta)$.
As described above, we can write
\[\sigl(X')(u', v') = \vec{d}(p', t' \cap g') + \vec{d}(t'\cap h', q')\]
where $\vec{d}$ denotes the signed distance.
Similarly, we can express 
\[\sigma_{\lambda'}(X')(u'', v'') = \vec{d}(p'', t'' \cap g'') + \vec{d}(t'' \cap h'', q'').\]
By Proposition \ref{prop:tiestable}, we know that the Hausdorff distance between $t'$ and $t''$ is $O_s(\zeta^a)$.
By Corollary \ref{cor:triplebasepoints}, we know that $p'$ and $(p')'$ are $O_s(\zeta^2)$ close, and as discussed earlier,  $d(p'', (p')')$ is $O(\zeta^{3/2})$. 

The triangle inequality now implies that
\begin{equation}\label{eqn:contest2}
|\sigl(X')(u', v') - \sigma_{\lambda'}(X')(u'', v'')| = O_s(\zeta^\alpha) + O_s(\zeta^2) + O(\zeta^{3/2}).
\end{equation}
Although the implicit constant in $O(\zeta^{3/2})$ depends on $X$ and $\lambda$, 
we can take $\zeta_{\ref{thm:contcell}}$ small enough (depending on $X$ and $\lambda$) to ensure that $\zeta^{3/2}$ is much smaller than $\zeta^{a}$ for any $\zeta < \zeta_{\ref{thm:contcell}}$ and $a\in (0,1)$.
Thus for small enough $\zeta$ the quantity in \eqref{eqn:contest2} is $O_s(\zeta^a)$.
The main result now follows by combining \eqref{eqn:contest1} and \eqref{eqn:contest2}.
\end{proof}

One direction of Theorem \ref{maintheorem:O and Oinv continuous} is an immediate corollary.

\begin{proof}[Proof of the continuity of $\cO$]
Suppose that $(X, \lambda)$ and $(X', \lambda')$ are close in $\PoM_g$ and that $\lambda$ and $\lambda'$ are Hausdorff-close on $X$.
Proposition \ref{prop:cellstable} ensures that there is a single period coordinate chart containing the corresponding quadratic differentials $q$ and $q'$.
Theorem \ref{thm:contcell} gives an estimate on the difference between the real parts of each period, while the imaginary parts are computed by the weights deposited by $\lambda$ and  $\lambda'$ on the branches of $\tau$. Since $\lambda$ and $\lambda'$ are close in $\ML$ these weights are close, completing the proof.
\end{proof}

\begin{remark}\label{rmk:cont_unif}
As with Proposition \ref{prop:persistent}, it is possible to get a uniform threshold in Theorem \ref{thm:contcell} for all $s$-thick surfaces, but to do so one must allow for a much wider variety of changes to the structure of the train track (or the dual cellulation).
These issues can all be ameliorated by only remembering a sub-cellulation corresponding to long branches and more carefully tracking how cellulations collapse to each other, but we have chosen to forgo such discussions as they require a great deal of delicacy (significantly more than what appears here!) and are not needed for the main results of this paper.
\end{remark}

Thurston proved that the space of chain recurrent geodesic laminations with its Hausdorff metric coming from a hyperbolic metric $X$ has Hausdorff dimension 0 \cite{Th_stretch}.
Using Theorem \ref{thm:contcell} and the fact that an $a$-H\"older map can only increase Hausdorff dimension by a factor of $1/a$ we obtain the following.

\begin{corollary}\label{cor:Hdim O(X)}
For any $X \in \T_g$, the set 
\[ \{ \eta \in \MF \,: \, \eta = \Ol(X)
\text{ for some }
\lambda \in \GLcr\}\]
has Hausdorff dimension $0$.
\end{corollary}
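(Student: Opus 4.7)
The plan is to realize $F : \lambda \mapsto \Ol(X)$ as a locally H\"older continuous map $\GLcr(X) \to \MF$, where the domain carries its Hausdorff metric and the codomain carries any locally compatible metric. Then I combine two facts: Thurston's theorem \cite{Th_stretch} that $\GLcr(X)$ has Hausdorff dimension zero, and the elementary principle that an $a$-H\"older map with $a\in(0,1)$ increases Hausdorff dimension by at most a factor $1/a$, so in particular maps dimension-zero sets to dimension-zero sets.

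First I would produce the H\"older estimate from Theorem \ref{thm:contcell} applied with $X' = X$. Fix $\lambda_0\in\GLcr$ and $a\in(0,1)$; the theorem yields a Hausdorff neighborhood $U$ of $\lambda_0$ and a standard smoothing $\taua$ of an equilateral train track carrying every $\lambda\in U$ such that
\[
\|\sigl(X) - \sigma_{\lambda_0}(X)\|_{\taua}
 \;=\; O_s\!\left(d_X^H(\lambda,\lambda_0)^a\right).
\]
By Proposition \ref{prop:cellstable}, each $\cO(X,\lambda)$ with $\lambda\in U$ admits a cellulation by saddle connections refining the cellulation $\mathsf T$ of $\cO(X,\lambda_0)$ dual to $\taua$. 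Under the duality of Proposition \ref{prop:dual cell veering}, the real parts of $[\hol_{\cO(X,\lambda)}(\cdot)]_+$ along the edges of $\mathsf T$ are precisely the weights $\sigl(X)$ reads off of branches of $\taua$. These real parts encode the horizontal foliation $\Ol(X)$ as an element of $\MF$, and the map from the relevant (real) period coordinates to any fixed compatible local metric on $\MF$ is linear, hence Lipschitz. Composing, $F|_U$ is $a$-H\"older.

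Now I conclude using compactness: by Lemma \ref{lem:lamfacts}(i)--(ii), $\GLcr(X)$ is compact in the Hausdorff topology, so it is covered by finitely many such H\"older neighborhoods $U_1,\dots,U_k$. Each $U_i$ has Hausdorff dimension zero by Thurston, so each $F(U_i)$ has Hausdorff dimension at most $0/a = 0$, and the finite union $F(\GLcr(X))$ still has Hausdorff dimension zero, as desired. The only nontrivial step is the H\"older estimate, which is essentially a direct repackaging of Theorem \ref{thm:contcell}; the one point requiring care is that the ambient metric on $\MF$ be interpreted consistently across the finitely many local train-track charts, which is handled by Corollary \ref{cor:augstable} (and the fact that Hausdorff dimension is a local invariant, stable under finite unions).
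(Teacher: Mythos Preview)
Your argument is correct and follows essentially the same route as the paper's own proof, which is stated in a single sentence just before the corollary: combine Thurston's result that $\GLcr(X)$ has Hausdorff dimension zero with the H\"older estimate of Theorem \ref{thm:contcell} and the standard fact that $a$-H\"older maps increase Hausdorff dimension by at most $1/a$. You have simply spelled out the details the paper leaves implicit --- in particular the passage from the shear-shape estimate to a local metric on $\MF$ via period coordinates, and the use of compactness to patch finitely many local charts.
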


In sharp contrast, the theorem of Hubbard and Masur \cite{HubMas} states that every measured foliation is the vertical foliation of some quadratic differential that is holomorphic on $X$.

\part{Continuity of the inverse}\label{part:inverse continuity}

We now prove continuity of $\cO^{-1}$, assuming Hausdorff converegence of the supports of the horizontal foliations (with respect to some hyperbolic structure).
The main theorem of \cite{shshI} establishes continuity along leaves of the unstable foliation; the goal of this part is therefore to control the geometry of $\cO^{-1}$ in the stable direction.
Complicating our proof is the fact that the product structures of $\PT_g$ and $\QT_g$ are not mapped to each other via $\cO$. However, given Hausdorff closeness of their horizontals, we can make quantitative estimates comparing them; see Theorem \ref{thm:cont_inverse_stable} below.

The other thing we must be careful about is that, in order to build estimates in a neighborhood of a point using the local product structure, we need estimates in the unstable leaves that are uniform as we vary in the stable leaf.
Obtaining this control requires retreading some of the arguments in \cite{shshI}, but now using the language developed in Part \ref{part:continuity} to make uniform, quantitative statements.

\section{Estimates on shape-shifting}
\label{sec:shapeshift_est}
For a pair $(X,\lambda)$, a {\bf combinatorial deformation}\label{ind:combdef} $\ac$ is an element of $T_{\sigma_\lambda(X)}\SH^+(\lambda)$ such that $\sigl(X) + \ac \in \SH^+(\lambda)$; this can also be thought of as a weight system on a train track carrying $\sigl(X)$ (see \S \ref{subsec:reference_tts} and \cite[\S 14]{shshI}).
In \cite[\S\S14-15]{shshI}, for small enough $\ac$ we constructed a \emph{shape-shifting cocycle}\label{ind:shapeshift}
$\varphi_\ac: \pi_1(X) \to \PSL_2 \RR$, which encodes a small deformation of the hyperbolic structure of $X$.
More precisely, if one chooses $x\in X\setminus \lambda$ and identifies $\pi_1(X,x)$ as a subgroup of $\PSL_2 \RR$ via the holonomy representation $\rho$,
then the rule 
\[\rho_\ac: \gamma \in \pi_1(X,x) \to \varphi_\ac(\gamma) \cdot \rho(\gamma)\]
defines a group homomorphism and a new complete hyperbolic structure $X_\ac = \widetilde X/\rho_\ac$ satisfying $\sigma_\lambda(X_{\ac}) = \sigma_\lambda(X) + \ac$.
In this section, we recall the construction of $\varphi_\ac$ and extract some estimates
bounding the size of the difference between $\rho$ and $\rho_{\ac}$.
The main result of this section is Theorem \ref{thm:shapeshift_distance_small}, which we state after establishing a few preliminaries and setting notation.

\subsection{Left-invariant metrics on $\PSL_2 \RR$}\label{subsec:metrics on PSL_2R}
Given a point $\hat x\in \HH^2$, there is a complete left invariant metric $d_{\hat x}$ on $\PSL_2 \RR$ given by the rule\label{ind:PSL2Rmetric}
\[d_{\hat x}(A, B ) = \sup_{x \in \HH^2} d(A(x),B(x)) e^{-d(\hat x, x)},\]
where $d$ is the hyperbolic metric on $\HH^2$.
For a fixed basepoint $\hat x$ and $A \in \PSL_2 \RR$, denote by $\|A\| : = d_{\hat x} (id, A)$.
This metric has the following nice geometric properties \cite[Lemma 3.14]{PapadopTheret:Teich_Thurston}:
for a parabolic element $P\in \PSL_2 \RR$, 
\begin{equation}\label{eqn:parabolic}
        \|P\|\le \ell(h)
\end{equation}
where $h$ is the horocyclic segment joining $\hat x$ and $P\hat x$ based at the parabolic fixed point of $P$.
For a different basepoint $\hat y$, we have the inequality 
\begin{equation}\label{eqn: PSL2R metric basepoint}
d_{\hat x} (A,B) \le d_{\hat y} (A,B) e^{d(\hat x, \hat y)}.    
\end{equation}
A short computation using the definition and the previous property therefore yields
\begin{equation}\label{eqn:transvection_estimate}
    \|T_g^\ell\| \le \ell e^{d(\hat x, g)},
\end{equation}
where $T_g^\ell$ is the translation distance $\ell$ along the (oriented) geodesic $g$.
\medskip

For a given finite generating set $\Gamma\subset \pi_1(S)$ 
and $X, X'\in \T(S)$, say that $d_\Gamma(X,X')<\epsilon$\label{ind:dGamma}
if there are holonomy representations $\rho, \rho' : \pi_1(S) \to \PSL_2\RR$ compatible with the markings such that
\[\max_{\gamma\in \Gamma} d(\rho(\gamma), \rho'(\gamma)) < \epsilon.\]
The topology for $\T(S)$ generated by sets of the form $B_\Gamma(X,\epsilon) = \{X' : d_\Gamma(X, X')<\epsilon\}$ is equivalent to the usual topology \cite[Proposition 10.2]{FarbMarg}.
It is also not difficult to see that if $X_n \to X$, then  $B_\Gamma(X_n,\epsilon) \subset B_\Gamma(X,2\epsilon)$ for all $\epsilon>0$ and $n$ large enough.

For any generating set $\Gamma$ of $\pi_1(S)$, define\label{ind:gensetlength}
\[L_{\rho}(\Gamma)= \max_{\gamma \in \Gamma}\{\|\rho(\gamma)\|\}\]
 and $L_X(\Gamma)$ as the infimum of $L_{\rho}(\Gamma)$ as $\rho$ varies in the $\PSL_2 \RR$ conjugacy class of holonomy representations for $X$.

\subsection{Reference train tracks}\label{subsec:reference_tts}
Let us fix a thickness parameter $s>0$ and choose $\delta\le \delta_{\ref{lem:equittswork}}(s)$.
Let $(X,\lambda)$ be an $s$-thick chain recurrent pair and set $\tau = \tau(X,\lambda,\delta)$ to be the $\delta$--equilateral train track. 
Here are the relevant arc systems that we will consider:
\begin{itemize}
    \item The visible arc system $\arc_\bullet = \arc_\bullet(X,\lambda,\delta)$
    \item The invisible arc system $\arc_\circ = \arc_\circ(X,\lambda,\delta)$
    \item A completion $\arc_*$\label{ind:arccompletion} of $\arc_\bullet$ to a maximal filling arc system in the complement of $\tau$.
\end{itemize}
Let $\taua$ denote a standard smoothing of $\tau\cup \arc_*$.
Then $W( \taua )$ identifies a linear fragment of $T_{\sigma_\lambda(X)}\SH^+(\lambda)$,
in that any $\ac\in T_{\sigma_\lambda(X)}\SH^+(\lambda)$ can be represented as a weight system on some such $\taua$.
For brevity and because our choice of $\taua$ is fixed throughout this section, we will abbreviate $\|\ac\|_{\taua}$ to $\|\ac\|$.

With all notation as in \S\ref{subsec:metrics on PSL_2R}, we can now state our main result in this section.

\begin{theorem}\label{thm:shapeshift_distance_small}
Given $s>0$ there is a $\xi_{\ref{thm:shapeshift_distance_small}}$ such that the following holds.
For any $s$-thick $(X, \lambda)$, any finite generating set $\Gamma$ for $\pi_1(S)$, any combinatorial deformation $\ac$ with $\|\ac\|\le\xi_{\ref{thm:shapeshift_distance_small}}$, and any $a \in (0,1)$,
we have 
\[d_\Gamma (X, X_\ac) = O(\|\ac\|^a),\]
where $\sigma_{\lambda}(X_\ac) = \sigma_{\lambda}(X)+\ac$ and the implicit constant depends on $s$, $\delta$, $a$, and $L_X(\Gamma)$.
\end{theorem}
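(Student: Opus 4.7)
The plan is to exploit the explicit construction of the shape-shifting cocycle $\varphi_\ac$ from \cite[\S\S14--15]{shshI}, which expresses $\varphi_\ac(\gamma)$ as a product of elementary $\PSL_2\RR$ elements indexed by crossings of a representative path for $\gamma$ with the (augmented) train track $\taua$. For each generator $\gamma \in \Gamma$, I would fix a basepoint $\hat x \in \widetilde X$ inside a hexagon of $X \setminus (\lambda \cup \arc)$ and choose an efficient path $p_\gamma$ from $\hat x$ to $\rho(\gamma)(\hat x)$ of length $O(L_X(\Gamma))$. From the construction, decompose
\[ \varphi_\ac(\gamma) = E_N \cdot E_{N-1} \cdots E_1, \]
where each $E_i \in \PSL_2\RR$ is an elementary deformation: a transvection of magnitude $\ac(b_i)$ along a lift of a leaf of $\lambda$ when $p_\gamma$ crosses a branch $b_i$ of $\tau$, or a parabolic displacement of magnitude $O(\ac(\alpha_i))$ when $p_\gamma$ crosses an arc of $\arc_*$. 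Invoking \eqref{eqn:parabolic} and \eqref{eqn:transvection_estimate}, with basepoints compared via \eqref{eqn: PSL2R metric basepoint}, each factor satisfies $\|E_i\| \le C\|\ac\|$ for a constant $C = C(s, L_X(\Gamma))$ that absorbs the uniformly bounded distances from $\hat x$ to the crossings.

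Next I would combine these elementary bounds using the submultiplicative-type inequality $\|AB\| \le \|A\| + e^{\|A\|} \|B\|$ for the left-invariant metric on $\PSL_2\RR$, which yields inductively
\[ \|\varphi_\ac(\gamma)\| \le \Bigl(\textstyle\sum_i \|E_i\|\Bigr) \exp\Bigl(\textstyle\sum_i \|E_i\|\Bigr) = O\Bigl(\textstyle\sum_i \|E_i\|\Bigr) \]
as long as $\sum_i \|E_i\|$ remains bounded (which we may ensure by choosing $\xi_{\ref{thm:shapeshift_distance_small}}$ small enough). The problem then reduces to estimating the total weight of $\ac$ picked up along $p_\gamma$, which is $O(N\|\ac\|)$, where $N$ counts the branch and arc crossings of $p_\gamma$ with $\tau \cup \arc_*$.

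The main obstacle will be bounding $N$: generic branches of $\tau$ have length comparable to $\delta$, but branches inside proto-spikes can be arbitrarily short, and a single segment of $p_\gamma$ that passes through a proto-spike may traverse many such short branches in succession. By Lemma \ref{lem:branches_bounded}, however, the number of sub-branches at any smaller scale $\zeta$ grows only like $O_s(\log(\delta/\zeta))$; refining our bookkeeping to scale $\zeta \asymp \|\ac\|$ therefore gives $N = O_s\bigl(L_X(\Gamma) + \log(1/\|\ac\|)\bigr)$, and hence
\[ \|\varphi_\ac(\gamma)\| = O\bigl(\|\ac\|\log(1/\|\ac\|)\bigr) = O(\|\ac\|^a) \]
for every $a \in (0,1)$, with implicit constants depending on $s$, $\delta$, $a$, and $L_X(\Gamma)$. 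Applying this uniform estimate to each generator of $\Gamma$ and using $\rho_\ac(\gamma) = \varphi_\ac(\gamma)\cdot\rho(\gamma)$ yields the desired bound $d_\Gamma(X, X_\ac) = O(\|\ac\|^a)$.
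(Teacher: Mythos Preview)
Your proposal has a genuine gap in the description of the elementary factors $E_i$. You write that when $p_\gamma$ crosses a branch $b_i$ of $\tau$, the contribution is ``a transvection of magnitude $\ac(b_i)$ along a lift of a leaf of $\lambda$.'' This is not what the shape-shifting cocycle does. For a transversal $k_b$ to a branch $b$, the cocycle is an \emph{infinite} product
\[
\varphi(k_b) = \Bigl(\prod_{V \in \mathcal V} \varphi(V)\Bigr) \circ T_g^{\ac(k_b)},
\]
where $\mathcal V$ ranges over all proto-spikes of $\lambda$ met by $k_b$ (a Cantor set of leaves is being crossed). Each $\varphi(V)$ is a conjugated shaping transformation, and the key point is that for a proto-spike at combinatorial depth $r(V)$ one only has $|\ac(k_V)| \le r(V)\|\ac\|$, so the individual factors are \emph{not} uniformly $O(\|\ac\|)$; deep spikes contribute terms of order $r(V)\|\ac\|$ before the width factor $\ell(k_b\cap V)$ intervenes. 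Your inequality $\|E_i\| \le C\|\ac\|$ therefore fails, and with it the whole reduction to counting crossings.

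The paper's argument (Lemma \ref{lem:basic_estimate_product}) is precisely about taming this infinite product. One sets $\varepsilon = \|\ac\|$ and splits $\mathcal V$ into a finite set $\mathcal V_\varepsilon$ of ``thick'' proto-spikes and an infinite tail. For the tail one uses the exponential decay $\ell(k_b\cap V) \le \delta e^{-Dr(V)}$ to sum a geometric series, which requires $\|\ac\| < D/2$ (this is where the threshold $\xi_{\ref{thm:shapeshift_distance_small}}$ comes from). For the thick part one shows $|\mathcal V_\varepsilon| = O(\log(1/\|\ac\|))$ \emph{and} each term has size $O(\|\ac\|\log(1/\|\ac\|))$, giving a total of $O(\|\ac\|\log(1/\|\ac\|)^2)$ per branch crossing. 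Your invocation of Lemma \ref{lem:branches_bounded} to ``refine bookkeeping to scale $\zeta\asymp\|\ac\|$'' is a gesture in the right direction---it is morally the same count as $|\mathcal V_\varepsilon| = O(\log(1/\varepsilon))$---but that lemma concerns ties of equilateral neighborhoods, not the shape-shifting product, and in any case you still need both the tail estimate and the correct (non-uniform) bound on the thick terms.
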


\begin{proof}
    This follows directly from Proposition \ref{prop:shapeshift}, which for each $\gamma \in \Gamma$ gives a bound 
    \[d_{\hat x}(\rho(\gamma),\rho_\ac(\gamma)) = O(\|\ac\|^a) \]
    with implicit constant depending on the parameters $s$, $\delta$, $a$, and $\| \rho(\gamma)\|$. 
\end{proof}

The rest of the section is devoted to formulating and proving Proposition \ref{prop:shapeshift}.

\subsection{Shapeshifting in the proto-spikes}
We  work in the universal cover and lift all relevant objects, but as usual we do not decorate with tildes.\footnote{Also note that our notation is a slightly simplified from \cite{shshI}; the goal is to make this section more readable without going into too much technical detail.}
Let $b$ be a branch of $\tau$ 
and let $k_b$ be a geodesic oriented transversal,\label{ind:transversal} i.e., $k_b$ meets $b$ transversely and meets no other branch of $\taua$.
In this subsection, we explain how to associate a shape-shifting transformation $\varphi_{\ac}(k_b)$ to $k_b$.

There is a packet of geodesics of $\lambda$ collapsing to $b$. 
Let $(\mathcal V, \le )$\label{ind:branchprotospikes} denote the linearly oriented set of complementary proto-spikes of $\lambda$ encountered along $k_b$.
There is a finite subset of $\mathcal V$ corresponding to the proto-spikes bounded by an arc of $\arc_\circ$.
Each proto-spike $V\in \mathcal V$ is comprised of a pair of geodesics $g_-$ and $g_+$ which may or may not be connected by
an arc $\alpha \in \arc_\circ$.
To each $V$, we associate the elementary shaping deformation\label{ind:elemshaping}
\begin{equation}\label{eqn:shaping}
A(V) = \begin{cases}
T_{g_-}^{f_{X,\ac}(V)} \circ T_{g_+}^{-f_{X,\ac}(V)}, & \text{$V$ is a spike} \\
T_{g_-}^{f_{X,\ac}(V)} \circ T_{\alpha}^{\ell_\ac(\alpha)} \circ T_{g_+}^{-f_{X,\ac}(V)}, & \text{$V$ is bounded by an arc $\alpha\in \arc_\circ$},
\end{cases}
\end{equation}

The functions $f_{X,\ac}(V)$ relate the ``sharpness''\label{ind:sharpnessfunctions} of the proto-spike $V$ on $X$ and the deformed proto-spike on $X_\ac$ and are defined in \cite[\S14]{shshI} and the function $\ell_{\ac}(\alpha)$ records the difference in the lengths of the orthogeodesic representatives of $\alpha$ on $X$ and $X_{\ac}$.
The shaping deformations $A(V)$ are continuous on the space of proto-spikes:  as the length of an arc tends to zero, the shaping deformations converge to that of a spike. For details, consult  Lemmas 14.6 and 14.15 and Remark 14.17 of  \cite{shshI} as well as the surrounding discussion.

We remark that the following Lemma holds for all arcs of $\arc$, and we will use it twice: once for the invisible arcs (Lemma \ref{lem:basic_estimate_protospikes}) and once for the visible ones (Lemma \ref{lem:estimate_spine}).

\begin{lemma}\label{lem:uniform_spikes}
For any $s$-thick $(X, \lambda)$ and any $\xi>0$, if $\ac$ is a combinatorial deformation with $\|\ac\|\le\xi$, we have that 
\[|f_{X, \ac}(V)| = O(\|\ac\|) \text{ and } |\ell_{\ac}(\alpha)| = O(\|\ac\|)\]
for every proto-spike $V$ and every $\alpha \in \arc_*$, 
where the implicit constant depends on $\xi$, $s$, and $\delta$.
\end{lemma}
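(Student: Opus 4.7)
The plan is to treat both $f_{X,\ac}(V)$ and $\ell_\ac(\alpha)$ as first-order values of smooth functions of the combinatorial deformation $\ac$ that vanish at $\ac=0$. Compactness of the moduli of $s$-thick pairs, together with the bound $\|\ac\|\le\xi$, will furnish uniform control on the linearization; the one subtle wrinkle is that a transversal crosses infinitely many proto-spikes, so uniformity must be supplemented by a geometric-decay estimate deep in the spikes of $\lambda$.

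For the arc-length estimate, I will use the fact that the hyperbolic metric on each complementary piece $Y$ of $X_\ac\setminus\lambda$ is an analytic function of its weighted filling arc system $\arcwt(\sigl(X)+\ac)|_Y$ (see \cite[Theorem 6.4 and Lemma 6.9]{shshI}); hence so is the length $\ell_{X_\ac}(\alpha)$ of the orthogeodesic representative of any $\alpha\in\arc_*$, viewed as an analytic function of weights on $\taua$. The quantity $\ell_\ac(\alpha)$ is, by construction, the first-order change in this length under $\sigl(X)\mapsto \sigl(X)+\ac$, which vanishes at $\ac=0$. A Taylor expansion, combined with compactness of the parameter region $\{(X,\lambda) : X\text{ is }s\text{-thick}\}/\Mod_g \times \{\ac:\|\ac\|\le\xi\}$, gives $|\ell_\ac(\alpha)|=O(\|\ac\|)$ with constant depending only on $s,\delta,\xi$.

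For $f_{X,\ac}(V)$ I would handle two regimes separately. When $V$ is bounded by an arc of $\arc_\circ$, or is the outermost proto-spike along a given spike of $\lambda$, the same strategy applies: by \cite[Lemma 14.6 and Remark 14.17]{shshI}, $f_{X,\ac}(V)$ is a smooth function of $\ac$ vanishing at $\ac=0$, and the number of combinatorial types of such $V$ is controlled by $|\chi(S)|$, so uniform Taylor bounds over the compact parameter region give $|f_{X,\ac}(V)|=O(\|\ac\|)$ with the claimed dependence of constants.

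The main obstacle is the remaining regime: the transversal $k_b$ crosses infinitely many proto-spikes nested inside a single spike of $\lambda$, and this family is not compact. For such a $V$ sitting at depth $d$ along the spike (measured along $\lambda$ from the outermost proto-spike), horocyclic arcs in an ideal hyperbolic spike have length $\asymp e^{-d}$, and the definition of $f_{X,\ac}$ from \cite[\S 14]{shshI} expresses it as a horocyclic-type displacement whose magnitude is dominated by the horocyclic width at depth $d$ times the shear-shape perturbation $\ac$ restricted to branches feeding into this spike. Combining this with the previous paragraph's uniform bound at $d=0$ yields $|f_{X,\ac}(V)|=O(e^{-d}\|\ac\|)$ with implicit constant uniform in $V$ and depending only on $s,\delta,\xi$; since $d\ge 0$, this is $O(\|\ac\|)$ as required.
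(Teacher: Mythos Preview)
Your treatment of $\ell_\ac(\alpha)$ and your ``first regime'' for $f_{X,\ac}(V)$ are essentially the paper's argument: both quantities are explicit analytic functions of the geometry of $X\setminus\lambda$ and of $\ac$, vanishing at $\ac=0$, and compactness of the relevant parameter space (the $s$-thick part of moduli, together with $\|\ac\|\le\xi$) yields a uniform Lipschitz constant.

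Your ``second regime'' is based on a misreading of what $f_{X,\ac}(V)$ is. The sharpness function $f_{X,\ac}(V)$ depends only on the \emph{isometry type} of the proto-spike $V$---that is, on the pair of bounding geodesics $(g_-,g_+)$ and the arc $\alpha$ of $\arc_\circ$ between them (if there is one). In particular, all of the infinitely many lifts of a fixed downstairs proto-spike that $k_b$ crosses have \emph{the same} value of $f_{X,\ac}$. So your claimed decay $|f_{X,\ac}(V)|=O(e^{-d}\|\ac\|)$ cannot hold: the left side is constant in $d$. What decays with depth is the width $\ell(k_b\cap V)$ of the crossing, and that decay is precisely what is exploited in the next lemma (Lemma~\ref{lem:basic_estimate_product}) to sum the infinite product $\prod_V\varphi(V)$; it plays no role in bounding $f_{X,\ac}(V)$ itself.

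The paper handles the apparent non-compactness more directly: the space of \emph{basepointed} proto-spikes, taken in the geometric topology as $(X,\lambda)$ ranges over the $s$-thick part, is itself compact (a proto-spike bounded by a very short invisible arc limits onto an honest spike, cf.\ the proof of Lemma~\ref{lem:bounded_neighborhood}), and $V\mapsto f_{X,\ac}(V)$ is continuous on this space. So a single compactness argument covers all proto-spikes at once, with no need to split into regimes.
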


\begin{proof} 
The functions $f_{X,\ac}$ and $\ell_\ac$ are explicit analytic functions of the geometry of $X\setminus \lambda$ and $\ac$, as exhibited in \cite[Lemma 14.6 and Lemma 14.15]{shshI} and \cite[\S2.4]{Mondello}.
The lemma is thus a consequence of compactness of the geometry of $X\setminus \mathcal E_\delta(\lambda)$ as $(X,\lambda)$ varies over the $s$-thick part of moduli space and continuity of $V\mapsto f_{X,\ac}(V)$ in the geometric topology on basepointed proto-spikes $V$, which is also compact. 
Compare with the proof of Lemma \ref{lem:bounded_neighborhood}.
\end{proof}

To each proto-spike, we now associate the {\bf elementary shape-shift} \label{ind:elemshsh} deformation 
\[\varphi(V) = T_{g_-}^{\ac(k_V)}\circ A(V) \circ T_{g_+}^{-\ac(k_V)} \]
where $k_V$ is an initial subsegment of $k_b$  terminating in the interior of $V$.

The following bound uses Lemma \ref{lem:uniform_spikes}, but can otherwise be extracted from the proof of Lemma 14.10 of \cite{shshI}.
We emphasize that it is valid for both spikes and proto-spikes bounded by invisible arcs.

\begin{lemma}\label{lem:basic_estimate_protospikes}
Let all notation be as above and let $\hat x \in \HH^2$ be the initial point of $k_V$.
For the metric $d_{\hat x}$, there is a constant $B = B(\xi, s, \delta)$ such that 
\[\|\varphi(V)\| \le Be^{\ell(k_V)}\ell(k_b\cap V)\left( e^{|\ac(k_V)| + O(\|\ac\|)} - 1\right ). \]  
\end{lemma}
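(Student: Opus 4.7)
The strategy is to reduce the bound to an elementary hyperbolic computation via basepoint change, exploiting the near-parabolic nature of compositions of transvections along the nearly asymptotic sides $g_-, g_+$ of the protospike $V$.

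First, I would shift the basepoint from $\hat x$ to a point $\hat y$ lying at the end of $k_V$ (hence in the interior of $V$). By the basepoint change inequality~\eqref{eqn: PSL2R metric basepoint}, this incurs at most a multiplicative factor of $e^{d(\hat x,\hat y)} \le e^{\ell(k_V)}$, which accounts for this factor in the claim. Using commutativity $T_{g_\pm}^a T_{g_\pm}^b = T_{g_\pm}^{a+b}$, the definitions of $\varphi(V)$ and $A(V)$ in~\eqref{eqn:shaping} collapse into
\[
\varphi(V) = T_{g_-}^{\,t} \,\Psi\, T_{g_+}^{-t},
\]
where $t := \ac(k_V) + f_{X,\ac}(V)$ satisfies $|t| \le |\ac(k_V)| + O(\|\ac\|)$ by Lemma~\ref{lem:uniform_spikes}, and $\Psi$ is either the identity (spike case) or $T_\alpha^{\ell_\ac(\alpha)}$ with $|\ell_\ac(\alpha)| = O(\|\ac\|)$ (arc case).

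The core of the estimate is to show $\|T_{g_-}^{\,t} T_{g_+}^{-t}\|_{\hat y} \lesssim \ell(k_b\cap V) \cdot (e^{|t|} - 1)$. I would establish this by a direct computation in the upper half-plane, normalizing $g_-, g_+$ as two vertical geodesics (meeting at a common ideal point when $V$ is a spike) and placing $\hat y$ on the chord $k_b \cap V$ between them. An explicit matrix calculation shows that $T_{g_-}^{\,t} T_{g_+}^{-t}$ is parabolic (in the spike case) or near-parabolic, displacing $\hat y$ along a horocyclic segment of length proportional to $d_{\hat y}(g_-, g_+) \cdot (e^{|t|} - 1)$. The parabolic estimate~\eqref{eqn:parabolic} converts this into the claimed hyperbolic bound once one notes that $d_{\hat y}(g_-, g_+) \asymp \ell(k_b \cap V)$, with uniform constants coming from compactness of the moduli of pointed protospikes at scale $\delta$ in the $s$-thick part (cf. Lemma~\ref{lem:uniform_spikes}). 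To handle $\Psi$, I would rewrite $\varphi(V) = (T_{g_-}^{\,t} T_{g_+}^{-t}) \cdot (T_{g_+}^{\,t} \Psi T_{g_+}^{-t})$: the first factor is bounded as above, while the second is conjugate to a transvection of amount $O(\|\ac\|)$ along an arc near $\hat y$, and its contribution is absorbed into $e^{|t| + O(\|\ac\|)} - 1$ using the elementary inequality $e^{a} - 1 + O(b) \le C(e^{a + O(b)} - 1)$ valid for small $a, b$.

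The main technical obstacle is carrying out the $\Psi$-absorption uniformly as $\ell(k_b \cap V)$ degenerates in the arc case: one must verify that the norm of $T_{g_+}^{\,t}\Psi T_{g_+}^{-t}$ at $\hat y$ scales with $\ell(k_b \cap V)$ rather than merely being $O(\|\ac\|)$. This follows from a careful examination of the explicit formulas for $f_{X,\ac}$ and $\ell_\ac$ (\cite[\S14]{shshI}, \cite[\S2.4]{Mondello}) near the far end of a thin protospike, combined with the observation that the relevant geometric quantities depend continuously on the pointed isometry type of $V$, so compactness upgrades all estimates to uniform ones over the $s$-thick part of moduli space.
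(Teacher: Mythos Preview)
Your approach to the spike case is essentially the same as the paper's: both reduce to an explicit upper-half-plane matrix computation with $g_-,g_+$ normalized as vertical geodesics meeting at $\infty$, yielding the parabolic translation $(a_+-a_-)(e^t-1)$. The paper does not change basepoint to $\hat y$; instead, the factor $e^{\ell(k_V)}$ arises when relating the Euclidean horizontal distance $a_+-a_-$ (at the level of $\hat x=i$) to $\ell(k_b\cap V)$ via the height of $k_b\cap V$. This is equivalent to your basepoint change.

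Where your treatment genuinely diverges is the arc case, and this is where there is a gap. Your decomposition $\varphi(V)=\bigl(T_{g_-}^{t}T_{g_+}^{-t}\bigr)\cdot\bigl(T_{g_+}^{t}\Psi\,T_{g_+}^{-t}\bigr)$ fails for two related reasons. First, when $g_-,g_+$ are ultraparallel they cannot both be taken vertical, so your ``direct computation'' for the first factor is only available in the spike case; you would need a separate argument that $\|T_{g_-}^{t}T_{g_+}^{-t}\|_{\hat y}\lesssim \ell(k_b\cap V)(e^{|t|}-1)$ for ultraparallel $g_\pm$. Second, and more seriously, the second factor is a transvection along (essentially) $\alpha$ by $s=\ell_\ac(\alpha)$, so the transvection estimate gives $\|T_{g_+}^{t}\Psi\,T_{g_+}^{-t}\|_{\hat y}\asymp |s|\,e^{d(\hat y,\alpha)}\asymp |s|\cdot \ell(k_b\cap V)/L_0$ where $L_0=\ell(\alpha)$. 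Absorbing this into $\ell(k_b\cap V)\cdot O(\|\ac\|)$ would force $|\ell_\ac(\alpha)|=O(L_0\|\ac\|)$, which does \emph{not} follow from Lemma~\ref{lem:uniform_spikes} (that lemma gives $O(\|\ac\|)$, uniform in $L_0$), nor from compactness of pointed protospikes (the family degenerates as $L_0\to 0$ with $\hat y$ far from $\alpha$). Your appeal to ``explicit formulas'' does not resolve this: $\ell_\ac(\alpha)$ depends only on $\alpha$ and $\ac$, not on the position of $\hat y$ along the protospike.

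The paper's route avoids this obstacle entirely. Rather than splitting $\varphi(V)$, it compares the \emph{whole} product to a single parabolic $\varphi'(V)=T_{g_-}^{t}T_{h_+}^{-t}$, where $h_+$ is obtained from $g_+$ by a small rotation at $\alpha\cap g_+$ making it asymptotic to $g_-$. The explicit parabolic computation then applies verbatim to $\varphi'(V)$, and the bound $\|\varphi(V)\|\le B'\cdot(\text{horocycle for }\varphi'(V))$ is obtained from the continuity of $A(V)$ in the protospike geometry as $L_0\to 0$ (asserted with reference to \cite[Lemmas 14.6, 14.15 and Remark 14.17]{shshI}). This keeps the cancellation between the three factors intact and never isolates a term with the bad $1/L_0$ scaling.
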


\begin{proof}
If $V$ is a spike, then $\varphi(V)$ is a parabolic element whose fixed point is the end of $V$, so by \eqref{eqn:parabolic} it suffices to bound the length of the horocyclic arc 
between $\hat x$ and $\varphi(V)\hat x$ based at the end of $V$.

Otherwise, $V$ is a proto-spike bounded by an arc $\alpha\in \arc_\circ$.  
As in the proof of Lemma \ref{lem:invisiblesubarc} we know that $\alpha$ must be short; let $h_+$ be the geodesic asymptotic to $g_-$ obtained from $g_+$ by applying a small rotation at $\alpha \cap g_+$.
Then $\varphi(V)$ is close in $\PSL_2 \RR$ to the parabolic isometry 
\[\varphi'(V) = T_{g_-}^{\ac (k_V)+f_{X,\ac}(V)} \circ T_{h_+}^{-\ac(k_V)-f_{X,\ac}(V)}. \] 
Indeed, using property \eqref{eqn: PSL2R metric basepoint}, continuity of the metric $d_{\hat x}$, and Lemma \ref{lem:uniform_spikes} bounding the size of $\ell_{\ac}$, there is a constant $B' = B'(s,\|\ac\|)$ such that $\|\varphi (V)\|$ is also bounded above by $B'$ times the length of a horocycle between $\hat x$ and $\varphi'(V)\hat x$ based at the fixed point of $\varphi'(V)$.
We have thus reduced the case of proto-spikes to a computation for spikes.

To bound the length of the horocycle between $\hat x$ and $\varphi(V)\hat x$ based at $V$, we compute in the upper half plane.
Let $t = \ac(k_V) +f_{X,\ac}(V)$, which by by Lemma \ref{lem:uniform_spikes} is $ \ac(k_V) +O(\|\ac\|)$.
Normalizing so that $\hat x = i$ and $g_\pm$ are geodesics from $a_+>a_- >0$ to $\infty$, 
we have
\begin{align*}
    \varphi(V) &= 
\begin{pmatrix} 1 & a_- \\
& 1
\end{pmatrix}
\begin{pmatrix} e^{t/2} & \\
& e^{-t/2}
\end{pmatrix}
\begin{pmatrix} 1 & a_+-a_- \\ & 1
\end{pmatrix}
\begin{pmatrix} e^{-t/2} & \\
& e^{t/2}
\end{pmatrix}
\begin{pmatrix} 
1& -a_+ \\
& 1
\end{pmatrix} \\
\\
&= \begin{pmatrix} 1 & (a_+-a_-) (e^{\ac(k_V) + O(\|\ac\|)} - 1) \\ & 1\end{pmatrix}, 
\end{align*}
and so the length of the horocyclic arc based at $\infty$ through $\hat x $ and $\varphi(V) \hat x$ is $(a_+-a_-)(e^{\ac(k_V) + O(\|\ac\|)} - 1)$.

We now estimate the size of $a_+-a_-$, which is the length of a horocyclic segment contained in $V$ through $\hat x$ and based at the tip of $V$.
There is a constant $B''$ depending on $\delta$ and the angles that $k_b$ makes with $\lambda$ such that $\ell(k_b\cap V)$ is bounded above the by the length of any horocycle in $V$ meeting $k_b$ times $B''$ and below by $1/B''$ times the same horocycle length.
Using our coordinates in the upper half plane, let $y = \sup \{\Im (k_b\cap V)\}$. 
Then $y\le e^{\ell(k_V)}$, so that $a_+-a_-\le  e^{\ell(k_V)}B''\ell(k_b\cap V)$.

Let $B = B'B''$; recall that $B'$ depends on $s$ and $\|\ac\|$, while $B''$ depends both on the angle that $k_b$ makes with $\lambda$ and on $\delta$.  
We are free to replace $k_b$ by a different geodesic transversal (as long as we only do so once, rather than change it for every proto-spike $V$).
Suppose $k_b'$ meets some leaf $g$ of $\lambda$ running through $b$ at a right angle in a point $p\in g$.
We claim that $k_b'$ meets every leaf $h$ of $\lambda$ running through $b$ and makes angle in $[\pi/2-\theta, \pi/2+\theta]$ for some $\theta\in (0,\pi/2)$.  
Indeed, by our choice of $\delta \le \delta_{\ref{lem:equittswork}}$, 
every $h$ running through $b$ meets the $1/2$-ball centered at $p$ (since the width of $\cE_\delta(\lambda)$ is at most $1/2$).
A short computation shows that $k_b'$ meets $h$ and makes angle with $h$ in an interval bounded away from $0$ and $\pi$.
This shows that $B$ depends only on $\delta$, $s$, and $\|\ac\|$.\footnote{In the next Lemma \ref{lem:basic_estimate_product}, we will restrict to $\|\ac\|\le \xi_{\ref{thm:shapeshift_distance_small}}$, which further removes the dependence of $B'$ on $\|\ac\|$.}
Combining everything proves the lemma.
\end{proof}

We can now define the shape-shifting transformation\label{ind:shapeshifttrans}
\[\varphi(k_b) = \left ( \prod _{V\in \mathcal V} \varphi (V)\right)\circ  T_{g}^{\ac(k_b)}, \]
where the product in $\PSL_2 \RR$ is taken according to the linear order $\le $ on spikes $\mathcal V$ in the universal cover, and $g$ is the last geodesic of $\lambda$ that $k_b$ crosses.
Convergence was proved in \cite[Lemma 14.10]{shshI} as long as $\|\ac\|< s/9|\chi(S)|$ using the ideas of \cite{Bon_SPB,Th_stretch}; see also \cite{PapadopTheret:Teich_Thurston}.

The following is the main technical estimate in this section and is where we obtain the threshold $\xi_{\ref{thm:shapeshift_distance_small}}$ appearing in Theorem \ref{thm:shapeshift_distance_small}.

\begin{lemma}\label{lem:basic_estimate_product}
There is a $\xi_{\ref{thm:shapeshift_distance_small}} >0$ such that if $(X,\lambda)$ is $s$-thick and $\|\ac\|\le \xi_{\ref{thm:shapeshift_distance_small}}$, then for any $a \in (0,1)$,
    \[\|\varphi(k_b)\| \le O(e^{d(\hat x, b)}\|\ac\|^a) \]
    where the implicit constant depends on $s$, $\delta$, and $a$.
\end{lemma}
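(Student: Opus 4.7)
The plan is to estimate $\|\varphi(k_b)\|$ by applying the triangle inequality for the left-invariant metric $d_{\hat x}$ to the (possibly infinite) product defining $\varphi(k_b)$. Since $d_{\hat x}(id, AB) \le d_{\hat x}(id, A) + d_{\hat x}(id, B)$, we have
\[\|\varphi(k_b)\| \le \sum_{V \in \mathcal V} \|\varphi(V)\| + \|T_g^{\ac(k_b)}\|.\]
Absolute convergence of the sum will follow from the estimates below, matching the convergence of the product established in Lemma 14.10 of \cite{shshI}.

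First I would bound each $\|\varphi(V)\|$ via Lemma \ref{lem:basic_estimate_protospikes}. For the factor $e^{|\ac(k_V)| + O(\|\ac\|)} - 1$ to be well controlled, I need $|\ac(k_V)|$ to be bounded (uniformly in $V$) by $O(\|\ac\|)$. Since $k_V$ is an initial subsegment of $k_b$ and $k_b$ meets only the branch $b$ of $\taua$, the only other branches of $\taua$ that $k_V$ can cross are the invisible arcs of $\arc_\circ$ lying between leaves of the packet collapsing to $b$; there are only topologically finitely many of these (bounded by a function of $|\chi(S)|$). Applying the axioms (SH1)--(SH3) and the definition of $\|\ac\|$ on $\taua$ gives $|\ac(k_V)| \le C_{\mathrm{top}} \|\ac\|$, so $e^{|\ac(k_V)| + O(\|\ac\|)} - 1 = O(\|\ac\|)$ provided $\|\ac\|$ is smaller than some $\xi_{\ref{thm:shapeshift_distance_small}}$ depending on $s$ and $\delta$.

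Next I would bound $\sum_V e^{\ell(k_V)} \ell(k_b \cap V)$ by a Riemann-integral comparison. The initial subsegments $k_V$ are nested (their lengths increase monotonically as $V$ moves along $k_b$), the intervals $k_b \cap V$ are pairwise disjoint, and $\sum_V \ell(k_b \cap V) = \ell(k_b)$ since $\lambda \cap k_b$ has $1$-dimensional Lebesgue measure zero (Lemma \ref{lem:lamfacts} item (\ref{item:measurezero})). Therefore
\[\sum_V e^{\ell(k_V)} \ell(k_b \cap V) \le \int_0^{\ell(k_b)} e^t\, dt \le e^{\ell(k_b)}.\]
To convert this into $e^{d(\hat x, b)}$, I would invoke the final paragraph of the proof of Lemma \ref{lem:basic_estimate_protospikes}: $k_b$ can be chosen to meet every leaf of $\lambda$ running through $b$ at an angle bounded away from $0$ and $\pi$ by a function of $\delta$ and $s$, so $\ell(k_b) \le d(\hat x, b) + O_{s,\delta}(1)$ and hence $e^{\ell(k_b)} = O_{s,\delta}(e^{d(\hat x, b)})$.

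Combining the previous steps with the transvection bound \eqref{eqn:transvection_estimate} applied to $T_g^{\ac(k_b)}$ (noting $d(\hat x, g) \le d(\hat x, b) + O(\delta)$) gives
\[\|\varphi(k_b)\| = O_{s,\delta}\bigl(\|\ac\| \cdot e^{d(\hat x, b)}\bigr).\]
Since $\|\ac\| \le \xi_{\ref{thm:shapeshift_distance_small}} \le 1$, for any $a \in (0,1)$ we have $\|\ac\| \le \xi_{\ref{thm:shapeshift_distance_small}}^{1-a} \|\ac\|^a$, giving the desired bound with implicit constant depending on $s$, $\delta$, and $a$. The hardest step, and the main obstacle, is the uniform control on $|\ac(k_V)|$: one must carefully track the accumulation of shears and arc-lengths that $\ac$ contributes as the transversal passes through successively deeper proto-spikes, and then choose $\xi_{\ref{thm:shapeshift_distance_small}}$ small enough (uniformly in the $s$-thick part) that the linearizations $e^x - 1 = O(x)$ are valid on every proto-spike encountered by $k_b$, so that the termwise bounds can be aggregated without loss.
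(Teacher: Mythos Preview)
Your argument has a genuine gap at the linearization step. You claim $|\ac(k_V)| \le C_{\mathrm{top}}\|\ac\|$ uniformly over all proto-spikes $V$, reasoning that $k_V$ crosses only the branch $b$ of $\taua$ (plus finitely many invisible arcs). This is not how the shear-shape cocycle is evaluated on a \emph{partial} transversal. Although the full tie $k_b$ crosses only one branch of $\taua$, the sub-arc $k_V$ terminates in a plaque at combinatorial depth $r(V)$: to reach that plaque one must follow a train path of length $r(V)$ through $\tau$, and each branch traversed contributes up to $\|\ac\|$ to the cocycle value. The correct bound (Bonahon \cite[Lemma~6]{Bon_SPB}, \cite[Lemma~14.3]{shshI}) is $|\ac(k_V)| \le r(V)\,\|\ac\|$, and $r(V)$ is unbounded as $V$ gets deeper in the packet over $b$. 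Consequently $e^{|\ac(k_V)|+O(\|\ac\|)} - 1$ is \emph{not} uniformly $O(\|\ac\|)$; for deep spikes it is of order $e^{r(V)\|\ac\|}-1$, which can be arbitrarily large.

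This is exactly why the paper's proof cannot be collapsed into a single Riemann-sum estimate. One must split $\mathcal V$ at a scale $\varepsilon$ (taken to be $\|\ac\|$): the finitely many shallow spikes with $r(V)\lesssim \log(1/\|\ac\|)$ are handled by the linearization (picking up a $\log(1/\|\ac\|)^2$ factor, hence the loss from $\|\ac\|$ to $\|\ac\|^a$), while for the deep spikes the exponential growth $e^{r(V)\|\ac\|}$ is beaten by the exponential decay $\ell(k_b\cap V)\le \delta e^{-Dr(V)}$ with $D=s/9|\chi(S)|$, provided $\|\ac\|<D/2$; this is where the threshold $\xi_{\ref{thm:shapeshift_distance_small}}$ actually comes from. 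Your Riemann-sum bound on $\sum_V e^{\ell(k_V)}\ell(k_b\cap V)$ is a pleasant observation, but it cannot absorb the missing $e^{r(V)\|\ac\|}$ factor.
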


\begin{proof}
    First we note that $\varphi(k_b)$ depends only on the geodesics that $k_b$ crosses (not the arc itself). 
    Our basepoint $x$ is boundedly far  from any leaf of $\lambda$  (Lemma \ref{lem:radius_bounded}) and the width of the $\delta$--equilateral neighborhood of $\lambda$ is $O(\delta)$ (Lemma \ref{lem:equittwidth}) with both bounds depending only on $s$.
    So, we can assume that $\ell(k_b) \le C_1$,  for some $C_1$ depending only on the thickness $s$ of $X$. 

    To each $V\in \mathcal V$, there is a trainpath of length $r(V)\in \mathbb N$\label{ind:depth} starting at a (proto-)spike $V_0$ of $\tau$ and ending at 
    $V\cap k_b$ 
    (see \cite[\S14.1]{shshI}).
    From \cite[Lemma 14.5]{shshI}, since $X$ is $s$-thick, we have
\begin{equation}\label{eqn:length_spike_upper_bound}
         \ell(k_b\cap V)\le
     \delta\exp\left({-\frac{s}{9|\chi|}r(V)}\right).
     \end{equation}
    
    We have similar length bounds from below as follows.
    The total geometric length of the train path of combinatorial length $r(V)$ is at most $Er(V)$, where $E $ is the total length of $\tau$ and is bounded by a topological constant times $D_\delta\approx \log(1/\delta)$.
    This gives
    \begin{equation}\label{eqn:length_spike_lower_bound}
          \ell(k_b\cap V)\ge \delta e^{-E r(V)} \ge c_2 e^{-r(V)},
     \end{equation}
     for a constant $c_2 $ depending on $s$, $\delta$ and the topology of $S$.
    
    For any positive $\varepsilon$, there is a finite collection $\mathcal V_{ \varepsilon}$\label{ind:protospike_eps} of proto-spikes $V$ satisfying 
    \[\varepsilon \le e^{\ell(k_V)}\ell(k_b\cap V).\]
    Using the triangle inequality for our left invariant metric $d_{\hat x}$, we have 
    \begin{equation}\label{eqn:spikes_triangle}
        \|\varphi(k_b)\| \le 
        \sum_{V\in \mathcal V \setminus \mathcal V_\varepsilon}\|\varphi(V)\| + 
        \sum_{V\in \mathcal V_\varepsilon} \|\varphi (V)\| + 
        \|T_g^{\ac(k_b)}\|.
    \end{equation} 
Our goal is to bound each term in this sum. We observe first that \eqref{eqn:transvection_estimate} gives a bound of 
\begin{equation}\label{eqn:lastterm}
\|T_g^{\ac(k_b)}\|\le e^{C_1}|\ac(k_b)| \le e^{C_1} \|\ac\|
\end{equation}
on the size of the last term.

\begin{claim}\label{claim:spikes_thin}
        If $\|\ac\|<D/2 = \frac{s}{18 |\chi(S)|}$, then
        \[\sum_{V\in \mathcal V\setminus \mathcal V_{\varepsilon}} \|\varphi(V)\|\le C_2 \varepsilon   \]
        where $C_2 = C_2 (s,\delta, S)$.
\end{claim}
\begin{proof}[Proof of the claim]
For a given proto-spike $V$ downstairs, the intersection with $k_b$ is an at most countable collection of arcs with exponentially decreasing length.
Upstairs, this gives us at most countably many lifts $V_1$, $V_2$,... of $V$ listed so that $\ell(k_b\cap V_i)>\ell(k_b\cap V_{i+1})$. 
There is some $I$ such that for $i \ge I$, then $V_i \not \in \mathcal V_\varepsilon$.  Using the lower length bounds \eqref{eqn:length_spike_lower_bound} we can deduce that 
\[r(V_i) \ge \log(c_2/\varepsilon) +m(V_i)\]
       for some $m(V_i) \ge 0$ that goes to infinity with $i$.
        
        The basic estimate (Lemma \ref{lem:basic_estimate_protospikes}) gives 
        \begin{align*}
            \|\varphi(V_i)\| &\le  Be^{\ell(k_{V_i})}\ell(k_b\cap V_i) e^{|\ac(k_{V_i})| +O(\|\ac\|)}\\
            &\le Be^{C_1} \exp\left( -Dr(V_i)+r(V_i)\|\ac\|+O(\|\ac\|)\right)\\
            & \le \varepsilon C_2' e^{-m(V_i)(D-\|\ac\|)}
        \end{align*}
        for $V_i\in \mathcal V\setminus \mathcal V_\varepsilon$, where $C_2' = C_2' (s, \delta, S)$. 

        The number of spikes of $\tau$ is bounded above in terms of the topology of $S$,
        so we have 
        \begin{align*}
            \sum_{V \in \mathcal V \setminus \mathcal V_\varepsilon}\|\varphi(V)\| \le \varepsilon C_2'' \sum_{m= 0}^\infty e^{-m(D-\|\ac\|)} \le \varepsilon C_2'' \frac{1}{1-e^{(\|\ac\| - D)}}, 
        \end{align*}
        where $C_2'' = C_2''(s,\delta, S)$.
        For $\|\ac\|< D/2$, the ratio $1/(1-e^{\|\ac\|-D})$ is bounded from above by a constant depending only on $s$ and $S$.  
        Taking $C_2$ accordingly proves the claim.
        \end{proof}

\begin{claim}
We have bounds \[|\mathcal V_{\varepsilon}| \le C_3 \log (\delta/\varepsilon), \]
and for $V\in \mathcal V_\varepsilon$,  \[r(V) \le C_3 \log (\delta/\varepsilon)\]
if $\varepsilon <\delta/2$, where $C_3=C_3(s,\delta,S)$.
\end{claim}
\begin{proof}[Proof of the claim]  
Consider a spike of $S \setminus \tau$ and again enumerate the intersections of the corresponding proto-spike with $k_b$ as $V_1, V_2, ...$ in order so that the length decreases.

By definition that $V\in \mathcal V_{\varepsilon}$, we have  $\varepsilon \le e^{2\ell(k_V)}\ell(k_b\cap V)$.  Using the upper length bound \eqref{eqn:length_spike_upper_bound} and taking $D = \frac{s}{9|\chi(S)|}$, we find that
     \[r(V_i) \le D\inverse(\log(\delta/\varepsilon)+ C_1).\]
     If $\varepsilon\le \delta/2$, then we can write
     \[r(V_i) \le D\inverse(1+C_1)\log(\delta/\varepsilon).\]

    There are at most  $6|\chi(S)|$ proto-spikes of $\tau$ and by our labeling convention, $i\le r(V_i)$.
  Taking $C_3=6|\chi(S)|D\inverse(1+C_1)$ proves the claim.
     \end{proof}

    Recall from \cite[Lemma 14.3]{shshI} or \cite[Lemma 6]{Bon_SPB} that \[|\ac(k_V)|\le r(V) \|\ac\|.\]

  \begin{claim}
    There is a
    $\xi'= \xi'(s,\delta)$ such that 
      for $V\in \mathcal V_{\|\ac\|}$, we have \[\|\varphi(V)\|\le C_4\log(1/\|\ac\|)\|\ac\|,\]
      where $C_4= C_4(s,\delta, S)$ as long as $\|\ac\|\le\xi'$.
  \end{claim}
  \begin{proof}[Proof of the claim]
     Applying Lemma \ref{lem:basic_estimate_protospikes}, for each (proto-)spike $V$, we have 
     \[\|\varphi(V)\| \le Be^{C_1}\delta \left( e^{(r(V)+1)O(\|\ac\|)}-1\right).\]
     Now for $V \in \mathcal V_{\|\ac\|}$, we have that $r(V)$ is bounded by $C_3 \log(\delta/\|\ac\|)$.
     Since $e^x-1 = O(|x|)$ for $x$ close to $0$, if  
     \[C_3\log(\delta/\|\ac\|) \|\ac\| \ll 1\] 
     is small enough (defining $\xi'$), we  get 
     \[\|\varphi(V)\|\le C_4 \log(1/\|\ac\|)\|\ac\|\]
    where $C_4$ depends on $s$, $\delta$, and $S$.  
    \end{proof}

The previous two claims give that, for $\varepsilon = \|\ac\| \le \zeta',$
\[
\sum_{V\in \mathcal V_\varepsilon}\|\varphi(V)\| 
\le
C_5 \log(1/\|\ac\|)^2\|\ac\|
\]
where $C_5 = C_5(s, \delta,  S)$.
Putting this together with Claim \ref{claim:spikes_thin} (for $\varepsilon = \|\ac\|$) and equation \eqref{eqn:lastterm} yields
\begin{align*}
\|\varphi(k_b) \| \le C_2 \|\ac\| + C_5 \log(1/\|\ac\|)^2 \|\ac\| + e^{C_1}\|\ac\|.
\end{align*}
Since $\log(1/\|\ac\|)^2 \|\ac\|$ is $a$-H\"older for any $a \in (0,1)$, combining the inequalities proves the lemma, as long as $\|\ac\|\le\xi_{\ref{thm:shapeshift_distance_small}}= \min\{D/2, \xi', \delta/2\}$.
\end{proof}

\subsection{Shapeshifting along the spine}
We have described and bounded the shape-shifting cocycle $\varphi(k_b)$ for transversals to branches of $\tau$. 
Let $k$ now denote a \emph{spinal path}. That is, $k$ is non-backtracking and only meets the branches of $\taua$ that correspond to the completion $\arc_*$ of the visible arc system $\arc_\bullet$.

For $\alpha \in \arc_*$, define a shaping transformation $A(\alpha)$ by the formula in \eqref{eqn:shaping}. \label{ind:shapingspine}
Whereas for proto-spikes this transformation was best thought of as approximating a parabolic, here we think of it as sliding the complementary components of $X \setminus \lambda$ along the shortest representative of $\alpha$ (even if it does not appear as a leaf of $\Ol(X)$) and the boundary geodesics adjacent to it. Compare Section 14.3 of \cite{shshI}.
In Section 14.4 of \cite{shshI}, we defined shape-shifting deformations along spinal paths \[\varphi(k)= A_1\circ A_2 \circ ... \circ A_{n(k)}. \]   
Each $A_i$ is of the form $A(V)$ for some spike $V$ of $\tau$, or $A(\alpha)$ for some $\alpha \in \arc_*$, or $T_g^{\ac(\alpha)}$, where $g$ is a boundary geodesic of the subsurface of $X\setminus( \lambda \cup \arc_\circ)$ corresponding to the component of $X\setminus \tau$ containing $k$.
Moreover, $n(k)$ is at most $3$ times the number of arcs that $k$ crosses.
Since we are only interested in the size of $\varphi(k)$, we will not go into further details about the construction.

\begin{lemma}\label{lem:estimate_spine}
    Given $s>0$ and $\|\ac\|\le \xi_{\ref{thm:shapeshift_distance_small}}$, the shape-shifting cocycle $\varphi(k)$ corresponding to a spinal path $k$ satisfies \[\|\varphi(k)\| \le  e^{\diam(\hat x \cup k)}|k|O(\|\ac\|)\]
    where $|k|$ is the number of arcs crossed and the implicit constant depends on $s$, $\delta$, and the topology of $S$.
\end{lemma}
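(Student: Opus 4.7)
The plan is to decompose $\varphi(k) = A_1 \circ \cdots \circ A_{n(k)}$ and bound each factor individually using the triangle inequality for the left-invariant metric $d_{\hat x}$. Since each arc crossed by $k$ contributes at most three factors to $\varphi(k)$, we have $n(k) \le 3|k|$, and the desired linear dependence on $|k|$ will be immediate once we show each $\|A_i\|_{d_{\hat x}}$ is bounded by $e^{\diam(\hat x \cup k)} O(\|\ac\|)$. The multiplicative factor $e^{\diam(\hat x \cup k)}$ will come from the basepoint-change formula \eqref{eqn: PSL2R metric basepoint}, applied to move the natural basepoint for each $A_i$ (a point near the arc or spike where it is defined) to the global basepoint $\hat x$.

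The core task is therefore to show that each $A_i$ has norm $O(\|\ac\|)$ with respect to a basepoint $\hat y_i$ lying a bounded distance (depending only on $s$ and $\delta$) from some point of $k$. The three cases are:
\begin{itemize}
    \item If $A_i = T_g^{\ac(\alpha)}$, then $\alpha$ is a single arc in $\arc_*$ crossed by $k$, hence $|\ac(\alpha)| \le \|\ac\|$. Applying \eqref{eqn:transvection_estimate} with $\hat y_i$ on $\alpha$ (which lies at distance $O_s(1)$ from $g$ by Lemma \ref{lem:radius_bounded}) yields the bound.
    \item If $A_i = A(\alpha)$ is the shaping at an arc, it factors as $T_{g_-}^{f_{X,\ac}(V)} \circ T_\alpha^{\ell_\ac(\alpha)} \circ T_{g_+}^{-f_{X,\ac}(V)}$. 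Each parameter is $O(\|\ac\|)$ by Lemma \ref{lem:uniform_spikes}, so by the triangle inequality and \eqref{eqn:transvection_estimate} (with $\hat y_i$ on $\alpha$), $\|A_i\|_{d_{\hat y_i}} = O(\|\ac\|)$, where the implicit constant absorbs the uniformly bounded distances between $\alpha$ and the adjacent geodesics $g_\pm$.
    \item If $A_i = A(V)$ for a spike or proto-spike $V$ at the corner of a hexagon crossed by $k$, then $A(V)$ is (for a spike) a parabolic with shift $f_{X,\ac}(V) = O(\|\ac\|)$, or (for a proto-spike) a close-to-parabolic element, as analyzed in the proof of Lemma \ref{lem:basic_estimate_protospikes}. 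Choosing $\hat y_i$ at bounded distance from the vertex of the spine corresponding to $V$, the relevant horocyclic arc (or its proto-spike analogue) has length $O(\|\ac\|)$ by \eqref{eqn:parabolic} and Lemma \ref{lem:uniform_spikes}.
\end{itemize}

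In every case, $d(\hat x, \hat y_i) \le \diam(\hat x \cup k) + C$ for a constant $C=C(s,\delta)$, so by \eqref{eqn: PSL2R metric basepoint} we obtain $\|A_i\|_{d_{\hat x}} \le e^{\diam(\hat x \cup k)}\, O(\|\ac\|)$. Summing over the $n(k) \le 3|k|$ factors via the triangle inequality gives the stated estimate. The main potential obstacle is verifying that the geometry of each proto-spike adjacent to the spinal polygon traversed by $k$ is controlled uniformly, so that the horocyclic length estimate in Lemma \ref{lem:basic_estimate_protospikes} applies with constants depending only on $s$ and $\delta$; this is where the compactness argument behind Lemma \ref{lem:uniform_spikes} is essential, but since the spike factors $A(V)$ arising here are only those at corners of hexagons met by $k$ (rather than the infinitely many encountered by a transverse arc), no delicate summation of the kind needed in Lemma \ref{lem:basic_estimate_product} is required.
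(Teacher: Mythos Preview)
Your proposal is correct and follows essentially the same approach as the paper. The paper's proof is a terse two-sentence sketch that cites exactly the same ingredients you use---Lemma \ref{lem:uniform_spikes} to bound $f_{X,\ac}$ and $\ell_\ac$, the triangle inequality, the transvection estimate \eqref{eqn:transvection_estimate}, and Lemma \ref{lem:basic_estimate_protospikes}---so your write-up is effectively a fleshed-out version of what the paper leaves implicit, including the basepoint-change argument via \eqref{eqn: PSL2R metric basepoint} that produces the $e^{\diam(\hat x\cup k)}$ factor.
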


\begin{proof}
Applying Lemma \ref{lem:uniform_spikes} with $\|\ac\|\le \xi_{\ref{thm:shapeshift_distance_small}}$,
we get that the functions $f_{X,\ac}$ and $\ell_\ac$ are all $O(\|\ac\|)$.
The Lemma then follows from the definition of $\varphi(k)$ using the triangle inequality, \eqref{eqn:transvection_estimate}, and Lemma \ref{lem:basic_estimate_protospikes}.
\end{proof}

\subsection{Uniform bounds on shape-shifting}

We have now defined and estimated the size of shape-shifting cocycles on both spinal paths and along transversals to branches of $\tau$.
For an arbitrary non-back-tracking path $k$ transverse to $\taua$, we have
\[k = k_1\cdot  ... \cdot k_{n(k)},\]
where $k_i$ is either a maximal spinal path or $k_i = k_b$ for some branch $b$ of $\tau$.
The shape-shifting cocycle $\varphi(k)$ is defined 
\[\varphi(k) = \varphi(k_1)\circ ... \circ \varphi(k_{n(k)}).\]

Now we can establish the uniform bound on shape-shifting needed to prove Theorem \ref{thm:shapeshift_distance_small}.

\begin{proposition}\label{prop:shapeshift}
With notation as in \S\ref{subsec:reference_tts}, 
given $\gamma \in \Gamma$ and $a \in (0,1)$, if
$\ac \in W(\taua)$
satisfies $\|\ac\| \le \xi_{\ref{thm:shapeshift_distance_small}}$,
then
\[d_{\hat x}(\rho(\gamma), \rho_{\ac}(\gamma)) = O(\|\ac\|^{a})\]
where the implicit constant in this statement depends only on $s$, $\delta$, $a$,  and $\|\rho(\gamma)\|$.
\end{proposition}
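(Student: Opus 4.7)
The plan is to reduce the desired estimate to a bound on the shape-shifting cocycle $\|\varphi_\ac(\gamma)\|$ and then to assemble this bound by applying Lemmas~\ref{lem:basic_estimate_product} and~\ref{lem:estimate_spine} piecewise along a representative path for $\gamma$ in the universal cover. First I would exploit the relation $\rho_\ac(\gamma) = \varphi_\ac(\gamma)\rho(\gamma)$: the change of variable $z = \rho(\gamma)y$ in the definition of $d_{\hat x}$ gives
\[d_{\hat x}(\rho(\gamma),\rho_\ac(\gamma)) \;=\; d_{\rho(\gamma)\hat x}(\mathrm{id},\varphi_\ac(\gamma)),\]
and applying \eqref{eqn: PSL2R metric basepoint} together with $d(\hat x,\rho(\gamma)\hat x)\le \|\rho(\gamma)\|$ (take $y=\hat x$ in the definition of $\|\cdot\|$) yields
\[d_{\hat x}(\rho(\gamma),\rho_\ac(\gamma)) \;\le\; e^{\|\rho(\gamma)\|}\,\|\varphi_\ac(\gamma)\|.\]
The factor $e^{\|\rho(\gamma)\|}$ is absorbed into the implicit constant, so it suffices to prove $\|\varphi_\ac(\gamma)\| = O(\|\ac\|^a)$.

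Next I would choose a path $k$ in the universal cover from $\hat x$ to $\rho(\gamma)\hat x$ close to the geodesic segment joining them and perturbed into general position transverse to the lift of $\taua$; its length satisfies $\ell(k)\le \|\rho(\gamma)\|+1$. I would decompose $k = k_1\cdot k_2\cdots k_n$ into maximal sub-arcs, each of which is either a branch transversal $k_{b_i}$ crossing a single branch of $\tau$ or a spinal path contained in a single complementary component of the lift of $\tau$. Since $\varphi_\ac(\gamma)=\varphi(k_1)\circ\cdots\circ\varphi(k_n)$ and $d_{\hat x}$ is left-invariant, iterating the triangle inequality yields
\[\|\varphi_\ac(\gamma)\| \;\le\; \sum_{i=1}^n \|\varphi(k_i)\|.\]
For each branch transversal, Lemma~\ref{lem:basic_estimate_product} gives $\|\varphi(k_{b_i})\|\le O(e^{d(\hat x,b_i)}\|\ac\|^a) \le O(e^{\ell(k)}\|\ac\|^a)$, while for each spinal piece Lemma~\ref{lem:estimate_spine} gives $\|\varphi(k_i)\|\le O(e^{\diam(\hat x\cup k_i)}|k_i|\|\ac\|)\le O(e^{\ell(k)}|k_i|\|\ac\|^a)$, where in the last inequality we used $\|\ac\|\le\xi_{\ref{thm:shapeshift_distance_small}}\le 1$ together with $a<1$ so that $\|\ac\|\le\|\ac\|^a$. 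Summing and using that the combinatorial counts $n$ and $\sum_i|k_i|$ admit bounds of the form $O(\ell(k))$ with constants depending only on $s$ and $\delta$ (see below), we obtain $\|\varphi_\ac(\gamma)\| \le O(\|\ac\|^a)$ with implicit constant depending on $s$, $\delta$, $a$, and $\|\rho(\gamma)\|$.

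The main obstacle is controlling the combinatorial counts in this last step. Branches of an equilateral train track can be arbitrarily short when $\lambda$ is close to a periodic leaf, and spinal regions can have intricate topology, so one needs to verify that a nearly geodesic transversal does not oscillate in and out of these pieces too rapidly. This is where the uniform defining parameter $\delta\le \delta_{\ref{lem:equittswork}}(s)$ is crucial: it forces the equilateral neighborhood to have width at most $W_{\ref{lem:equittwidth}}\delta$ via Lemma~\ref{lem:equittwidth}, and together with the $s$-thickness of $X$ this gives definite lower bounds on the length of $k$ consumed between successive crossings of branches of $\tau$ and between successive crossings of arcs of $\arc_*$. A secondary technical point is that the perturbation of the geodesic representative of $\gamma$ into general position must be chosen so as not to inflate the length or introduce spurious crossings, which is routine given the one-dimensionality of $\taua$.
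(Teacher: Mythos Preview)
Your approach is essentially the same as the paper's: decompose a representative path for $\gamma$ into branch transversals and spinal pieces, bound $\|\varphi(k_i)\|$ via Lemmas~\ref{lem:basic_estimate_product} and~\ref{lem:estimate_spine}, sum, and then pass from $\|\varphi_\ac(\gamma)\|$ to $d_{\hat x}(\rho(\gamma),\rho_\ac(\gamma))$. The one genuine difference is in this last step. You use the change of basepoint identity $d_{\hat x}(\rho(\gamma),\rho_\ac(\gamma))=d_{\rho(\gamma)\hat x}(\mathrm{id},\varphi_\ac(\gamma))$ together with \eqref{eqn: PSL2R metric basepoint}, picking up a factor $e^{\|\rho(\gamma)\|}$. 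The paper instead writes $\varphi\rho=\rho\cdot\varphi[\varphi,\rho]$, uses left-invariance to get $d_{\hat x}(\rho,\varphi\rho)=\|\varphi[\varphi,\rho]\|$, and then invokes a commutator bound $\|[\varphi,\rho]\|\le C'\|\varphi\|$ valid when $\|\varphi\|$ is small relative to $\|\rho\|$. Your route is more elementary (no commutator estimate needed) at the cost of a worse dependence on $\|\rho(\gamma)\|$; since the statement only asks that the implicit constant depend on $\|\rho(\gamma)\|$, both are acceptable.

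On the combinatorial count: you are right to flag this, and your sketch is in the correct direction. The paper is at least as informal here---it records the bound $\|\varphi(k_\gamma)\|\le 3|k_\gamma|e^{\diam(\hat x\cup k_\gamma)}O(\|\ac\|^a)$ and then simply asserts that the prefactor is controlled by $\|\rho(\gamma)\|$. So your treatment of this point is, if anything, more careful than the original.
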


\begin{proof}
Let $\gamma\in \pi_1(X,x)$, 
and consider a (geodesic) arc $k_\gamma$ joining $x$ to $\rho(\gamma). x$.  
We may perturb the data so that $k_\gamma$ is transverse to $\taua$ without backtracking.

First we estimate the size of $\varphi(k_{\gamma})$, as defined above.
Using Lemmas \ref{lem:basic_estimate_product} and \ref{lem:estimate_spine}, we get 
\[\|\varphi(k_\gamma)\| \le  3|k_\gamma| e^{\diam (\hat x \cup k_\gamma)} O(\|\ac\|^a) \]
with implicit constant depending on $s$, $\delta$, $a$, and $S$ as long as $\|\ac\|\le \xi_{\ref{thm:shapeshift_distance_small}}$.

Generally, if $\|B\|<C$ and $\|A \| \le \varepsilon\ll 1/C$, then there is a $C'$ such that the size of the commutator $[A,B] = A\inverse B\inverse A B$ is bounded by $C'\varepsilon$ \cite[Theorem 4.1.6]{Th_book}.
By definition, $\rho_\ac(\gamma) = \varphi(k_\gamma)\rho(\gamma)$, so 
\begin{align*}
    d_{\hat x}(\rho(\gamma) ,  \varphi(k_\gamma)\rho(\gamma)) & =  d_{\hat x}\left(\rho(\gamma) ,  \rho(\gamma)\varphi(k_\gamma) [\varphi(k_\gamma), \rho(\gamma)]\right)\\
    & = d_{\hat x}\left(id,  \varphi(k_\gamma) [\varphi(k_\gamma), \rho(\gamma)]\right)\\
    & \le \|\varphi(k_\gamma)\|(1+C') \\
    &\le |k_\gamma| e^{\diam(\hat x \cup k_\gamma)}O(\|\ac\|^a)
\end{align*}
Notice that $|k_\gamma|e^{\diam (\hat x\cup k_\gamma)}$ bounds $\|\rho(\gamma)\|$ from above. 
This gives \emph{uniform} bounds on $d_{\hat x}(\rho(\gamma), \rho_{\ac}(\gamma))$ with multiplicative factor depending only on $s,\delta, a$ and $d_{\hat x}(id, \rho(\gamma))$, completing the proof of the Proposition. 
\end{proof}

\section{Proof of inverse continuity}\label{sec:inverse continuity}
Our main theorem in this section establishes one of the implications stated in Theorem \ref{maintheorem:O and Oinv continuous} and gives a quantitative comparison between the product structures of $\PT_g = \T_g \times \ML_g$ and $\QT_g = \MF_g \times \MF_g \setminus \Delta$.

Recall from \S\ref{subsec:metrics on PSL_2R} the quantity $d_\Gamma(X,X')$ measures the largest distance in $\PSL_2\RR$ between the images of generators $\Gamma$ for $\pi_1(S)$ under the holonomy representations of $X$ and $X'$.

\begin{theorem}\label{thm:cont_inverse}
For every $L >0$, $\epsilon>0$, and $q = \cO(X, \lambda)$, there is a constant $\zeta$ and an open neighborhood $U$ of $q$ in $\QT$ and such that if
\[q'=\cO(X',\lambda')\in U \text{ and }
d_X^H(\lambda, \lambda')<\zeta,\]
then $d_\Gamma(X,X')<\epsilon$ for any finite generating set $\Gamma$ of $\pi_1(S)$ with $L_X(\Gamma) \le L$.
\end{theorem}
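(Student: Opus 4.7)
The plan is to realize $X' = X_\ac$ as a shape-shift of $X$ by a small combinatorial deformation $\ac$ whose norm can be controlled in terms of the hypotheses, and then apply Theorem \ref{thm:shapeshift_distance_small}. Since $\sigl : \T_g \to \SH^+(\lambda)$ is a homeomorphism (Theorem \ref{thm:ortho homeo} together with \cite[Theorem 12.1]{shshI}), the natural choice will be
\[\ac := \sigl(X') - \sigl(X) \in T_{\sigl(X)} \SH^+(\lambda);\]
by construction $X_\ac = X'$, so the remaining task is to bound $\|\ac\|_{\taua}$ on a standard augmented equilateral train track $\taua$ built from $(X, \lambda)$.

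First I will use Corollary \ref{cor:thinparts} to shrink $U$ so that every $q' \in U$ corresponds to a uniformly $s$-thick $X'$, then apply Propositions \ref{prop:ttstable}, \ref{prop:persistent}, \ref{prop:cellstable}, and Corollary \ref{cor:augstable} to further shrink $U$ and $\zeta$ so that, for any $(X', \lambda')$ satisfying the hypotheses, the cellulation $\mathsf{T}$ of $q$ dual to $\taua$ refines to a horizontally convex cellulation $\mathsf T'$ of $q'$ by saddle connections. All of $\lambda$, $\lambda'$, $\sigl(X)$, $\sigma_{\lambda'}(X')$, and $\sigl(X')$ can then be realized as weight systems on (a common extension of) $\taua$. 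By Lemma \ref{lem:ttwts_from_cell} and Proposition \ref{prop:ttcharts_Q*}, the complex weights on $\taua$ record the periods $[\hol_q(\cdot)]_+$ and $[\hol_{q'}(\cdot)]_+$, so closeness of $q$ and $q'$ in $\QT_g$ translates directly into closeness of the real-part weight systems; that is, $\|\sigma_{\lambda'}(X') - \sigl(X)\|_{\taua}$ will be small.

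The essential step will be to control $\|\sigl(X') - \sigma_{\lambda'}(X')\|_{\taua}$, which compares shear-shape cocycles of the \emph{same} hyperbolic surface $X'$ with respect to two Hausdorff-close laminations $\lambda$ and $\lambda'$. After converting $d_X^H$ to $d_{X'}^H$ via the H\"older equivalence of Hausdorff metrics over a compact subset of $\T_g$ (Lemma \ref{lem:lamfacts}\eqref{item:Haus_equiv}), this becomes exactly the content of Theorem \ref{thm:contcell} applied with the hyperbolic structure held fixed as $X'$ and only the lamination varying. The triangle inequality then yields
\[\|\ac\|_{\taua} \le \|\sigl(X') - \sigma_{\lambda'}(X')\|_{\taua} + \|\sigma_{\lambda'}(X') - \sigl(X)\|_{\taua},\]
and both terms can be made arbitrarily small by further shrinking $U$ and $\zeta$. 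Applying Theorem \ref{thm:shapeshift_distance_small} will conclude $d_\Gamma(X, X') = O(\|\ac\|^a) < \epsilon$ with implicit constant depending on $s$, $\delta$, $a$, and $L_X(\Gamma) \le L$.

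The main obstacle will be the comparison $\sigl(X') \leftrightarrow \sigma_{\lambda'}(X')$: a priori these cocycles live in different affine bundles $\SH(\lambda)$ and $\SH(\lambda')$, and only admit a meaningful quantitative comparison through the combinatorial duality between augmented equilateral train tracks and horizontally convex cellulations developed in Part \ref{part:tts and cellulations}. A secondary difficulty will be the bootstrap required to deploy Theorem \ref{thm:contcell} uniformly as the base surface $X'$ varies: the thresholds $\zeta_{\ref{thm:contcell}}$ and $B_{\ref{thm:contcell}}$ depend on the base point, so one must first qualitatively localize $X'$ to a fixed small neighborhood of $X$ (using thickness and closeness in $\QT_g$) before the uniform $s$-thick estimates apply. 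This is precisely the type of subtlety flagged in Remark \ref{rmk:cont_unif}.
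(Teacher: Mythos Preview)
Your triangle-inequality strategy has a genuine circularity that you flag as a ``secondary difficulty'' but do not actually resolve. To bound $\|\sigl(X') - \sigma_{\lambda'}(X')\|_{\taua}$ via Theorem \ref{thm:contcell}, you propose to apply it with base point $X'$, but the threshold $\zeta_{\ref{thm:contcell}}$ and the neighborhood $B_{\ref{thm:contcell}}(\zeta)$ depend on the base pair. You suggest ``first qualitatively localiz[ing] $X'$ to a fixed small neighborhood of $X$'' --- but that is exactly the conclusion of the theorem. Corollary \ref{cor:thinparts} only tells you $X'$ is $s'$-thick, i.e., lies in a cocompact part of moduli space; it does not place $X'$ near $X$ in $\T_g$. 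Remark \ref{rmk:cont_unif} explicitly says the threshold in Theorem \ref{thm:contcell} is not uniform over the $s$-thick part as stated, so you cannot simply absorb this into the implicit constant. The same issue obstructs even representing $\sigl(X')$ on (an extension of) $\taua$: that requires $\arc(X',\lambda)$ to be compatible with $\arc_\bullet$, which by Proposition \ref{prop:persistent} again needs $X' \in B_{\ref{prop:persistent}}(\zeta)$.

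The paper sidesteps this by shape-shifting with respect to $\lambda'$ rather than $\lambda$, together with the local product structure of $\QT_g$. One first treats the stable leaf (Theorem \ref{thm:cont_inverse_stable}): for $q' \in \Fol^{ss}(q)$ the real periods agree, giving the identity $\sigl(X) = \sigma_{\lambda'}(X')$ as weight systems. Now Theorem \ref{thm:contcell} is applied at the \emph{fixed} base $(X,\lambda)$ with the hyperbolic structure held at $X$, yielding $\|\sigl(X) - \sigma_{\lambda'}(X)\| = \|\sigma_{\lambda'}(X') - \sigma_{\lambda'}(X)\|$ small; then Theorem \ref{thm:shapeshift_distance_small} (shape-shifting along $\lambda'$, based at the $s'$-thick pair $(X,\lambda')$) gives $d_\Gamma(X,X')$ small. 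For general $q'$, one passes through the intermediate point $q'' = \cO(Z,\lambda') \in \Fol^{ss}(q) \cap \Fol^{uu}(q')$: the stable-leaf argument controls $d_\Gamma(X,Z)$, and since $q''$ and $q'$ share $\lambda'$, the difference $\sigma_{\lambda'}(Z) - \sigma_{\lambda'}(X')$ is just the difference of real periods, bounded by shrinking $U$. The key point is that every application of Theorem \ref{thm:contcell} and every train track is anchored at $X$, never at the unknown $X'$.
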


\begin{remark}
Given $s>0$, there is an $L$ such that any $s$-thick $X$ has a generating set $\Gamma$ with $L_X(\Gamma) \le L$.
\end{remark}

The proof of this Theorem relies on the local product structure of $\QT_g$. In \cite{shshI}, we proved that $\cO\inverse$ is very well behaved (in fact, is piecewise real-analytic) on leaves of the unstable foliation.
We prove in Theorem \ref{thm:cont_inverse_stable} below that $\cO^{-1}$ is H{\"o}lder on leaves of the stable foliation with respect to the Hausdorff metric on the supports of the horizontal laminations.

\begin{theorem}\label{thm:cont_inverse_stable}
For any $L >0$, any $a \in (0,1)$, any $A\ge 1$, and any $s$-thick $q \in \QT_g$ with $1/A\le\Area(q)\le A$, there is an open neighborhood $U_{\ref{thm:cont_inverse_stable}}^{ss}$ of $q$ in $\Fol^{ss}(q)$ and a $\zeta_{\ref{thm:cont_inverse_stable}} > 0$ such that for any 
$q' = \cO(X', \lambda') \in U_{\ref{thm:cont_inverse_stable}}^{ss}$ with
$d_X^H(\lambda, \lambda')<\zeta_{\ref{thm:cont_inverse_stable}},$
we have 
\[d_\Gamma(X,X') = 
O \left( d_X^H(\lambda, \lambda')^a \right)\] for any generating set $\Gamma$ with $L_X(\Gamma) \le L$, where
the implicit constant depends only on $s$, $a$, $A$, and $L$.
\end{theorem}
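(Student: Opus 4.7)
The strategy is to realize $X'$ as a shape-shift of $X$ along a combinatorial deformation $\ac$ of $\sigma_\lambda(X)$ whose norm is controlled by a power of $d_X^H(\lambda, \lambda')$; Theorem \ref{thm:shapeshift_distance_small} then yields the H\"older bound on $d_\Gamma(X, X')$.

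To set up a common combinatorial object, I note that by Corollary \ref{cor:thinparts} both $X$ and $X'$ are uniformly hyperbolically thick in terms of $s$ and $A$, so Part \ref{part:continuity} applies with uniform thresholds. For a sufficiently small $\delta$ and a sufficiently small neighborhood of $q$ in $\QT_g$ (intersected with $\Fol^{ss}(q)$ and a Hausdorff ball around $\lambda$), Corollary \ref{cor:augstable} and Proposition \ref{prop:cellstable} furnish a common horizontally convex cellulation $\mathsf T$ of $q$ and $q'$ dual to a common standard-smoothed augmented equilateral train track $\taua^*$. By Proposition \ref{prop:dual cell veering} and Lemma \ref{lem:ttwts_from_cell}, both $\sigma_\lambda(X)$ and $\sigma_{\lambda'}(X')$ are real weight systems on $\taua^*$ whose values on branches of $\tau$ are the real parts of $[\hol_q(\cdot)]_+$ and $[\hol_{q'}(\cdot)]_+$ on dual saddles, and whose values on arc branches are the lengths of the corresponding horizontal saddles. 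The hypothesis $q' \in \Fol^{ss}(q)$ is precisely that $\Re[\hol_q(e)]_+ = \Re[\hol_{q'}(e)]_+$ for every saddle $e$ of $\mathsf T$, so $\sigma_\lambda(X)$ and $\sigma_{\lambda'}(X')$ coincide as real weight systems on $\taua^*$.

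Next I compare $\sigma_{\lambda'}(X')$ to $\sigma_\lambda(X')$, both of which are shear-shape cocycles of the same hyperbolic structure $X'$ but with respect to the Hausdorff-close laminations $\lambda'$ and $\lambda$. Applying Theorem \ref{thm:contcell} with base pair $(X', \lambda)$, perturbed pair $(X', \lambda')$, and an exponent $a' \in (0,1)$ yields
\[\|\sigma_\lambda(X') - \sigma_{\lambda'}(X')\|_{\taua^*} = O_s\!\left(d_{X'}^H(\lambda, \lambda')^{a'}\right).\]
By Lemma \ref{lem:lamfacts}(\ref{item:Haus_equiv}), the metrics $d_X^H$ and $d_{X'}^H$ are H\"older comparable with an exponent that approaches $1$ as the ambient neighborhood of $X$ shrinks. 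Choosing $a'$ close enough to $1$ and the neighborhood small enough that the combined exponent exceeds any prescribed $a \in (0,1)$, and combining with the equality of the previous paragraph, I obtain
\[\|\sigma_\lambda(X') - \sigma_\lambda(X)\|_{\taua^*} = O\!\left(d_X^H(\lambda, \lambda')^{a}\right).\]

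The difference $\ac := \sigma_\lambda(X') - \sigma_\lambda(X)$ is therefore a combinatorial deformation of $\sigma_\lambda(X)$ supported on $\taua^*$ with $\|\ac\| = O(d_X^H(\lambda, \lambda')^{a})$; by injectivity of $\sigl$ on $\SH^+(\lambda)$ (Theorem 12.1 of \cite{shshI}), the shape-shifted surface $X_{\ac}$ is exactly $X'$. Applying Theorem \ref{thm:shapeshift_distance_small} with any exponent $a'' \in (0,1)$ then gives
\[d_\Gamma(X, X') = d_\Gamma(X, X_{\ac}) = O(\|\ac\|^{a''}) = O\!\left(d_X^H(\lambda, \lambda')^{a a''}\right),\]
and a final relabeling of exponents produces the theorem. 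The principal difficulty is ensuring that the three shear-shape cocycles $\sigma_\lambda(X), \sigma_{\lambda'}(X'), \sigma_\lambda(X')$ truly live on a single combinatorial object $\taua^*$ with consistent identifications, and that the thresholds in Part \ref{part:continuity} and Section \ref{sec:shapeshift_est} can be chosen uniformly over the compact region of moduli space determined by $s$ and $A$.
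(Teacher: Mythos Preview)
Your overall strategy matches the paper's: use the stable-leaf condition to identify $\sigma_\lambda(X)$ with $\sigma_{\lambda'}(X')$ as weight systems, bound a shear-shape difference via Theorem \ref{thm:contcell}, and feed that into Theorem \ref{thm:shapeshift_distance_small}. However, your argument is the mirror image of the paper's---you work in shear-shape coordinates for $\lambda$ and compare $\sigma_\lambda(X')$ to $\sigma_\lambda(X)$---and this introduces a genuine circularity that the paper's orientation avoids.

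The problem is that you invoke Corollary \ref{cor:augstable}, Proposition \ref{prop:cellstable}, and Theorem \ref{thm:contcell} with the \emph{variable} surface $X'$ as base. All three of these results have thresholds depending on the base pair, and for Theorem \ref{thm:contcell} the threshold $\zeta_{\ref{thm:contcell}}(X',\lambda)$ depends on quantities like the minimum arc weight of $\arc(X',\lambda)$, which is not controlled by $s$-thickness alone (Remark \ref{rmk:cont_unif} explicitly says the uniform version is \emph{not} established in the paper). At this stage you only know that $q'$ is close to $q$ and that $X'$ is $s'$-thick; you do not yet know that $X'$ is close to $X$---that is precisely the conclusion. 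So your appeal to ``uniformity over the compact region determined by $s$ and $A$'' in the last paragraph is exactly the missing ingredient, and the paper does not supply it.

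The paper sidesteps this by swapping the roles of $\lambda$ and $\lambda'$. It first uses purely flat-geometric input (Proposition \ref{prop:stable_hausdorff_stratum} and Proposition \ref{prop:ttcharts_stratum}) to place $q'$ in the same stratum and realize $\sigma_\lambda(X)=\sigma_{\lambda'}(X')$ on $\taua$; this step does not touch the hyperbolic geometry of $X'$. It then applies Theorem \ref{thm:contcell} only to the \emph{fixed} pair $(X,\lambda)$ versus $(X,\lambda')$, obtaining $\|\sigma_\lambda(X)-\sigma_{\lambda'}(X)\|_{\taua'}=O_s(\zeta^{\sqrt a})$ on $\taua':=\taua(X,\lambda',\delta)$. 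Rewriting via the stable-leaf identity gives $\|\sigma_{\lambda'}(X')-\sigma_{\lambda'}(X)\|_{\taua'}=O_s(\zeta^{\sqrt a})$, and the shape-shift is performed in $\lambda'$-coordinates from the fixed surface $X$. Every threshold involved ($\zeta_{\ref{thm:contcell}}$, $\xi_{\ref{thm:shapeshift_distance_small}}$) is evaluated at the fixed pair $(X,\lambda)$ or $(X,\lambda')$, never at $X'$. If you rewrite your argument with this orientation---compare $\sigma_{\lambda'}(X)$ to $\sigma_\lambda(X)$ on $X$, then shape-shift along $\lambda'$---the circularity disappears and the proof goes through.
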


The main obstacle in the proof of Theorem \ref{thm:cont_inverse} is that the product structures of $\mathcal{PT}_g = \T_g \times \ML$ and $\mathcal{QT}_g = \ML \times \ML \setminus \Delta$ do not match.
That is, deforming in the stable leaf of $q$ changes both the horizontal lamination and the corresponding hyperbolic surface. 
Along these lines, one can interpret Theorem \ref{thm:cont_inverse_stable} as an estimate of the difference between these two product structures, assuming Hausdorff closeness of the horizontal laminations; 
Theorem \ref{thm:cont_inverse} then follows by constructing a neighborhood on which all of our constructions and estimates work simultaneously.

\subsection{Hausdorff convergence of horizontal laminations}
Suppose that $q_n = \cO(X_n, \lambda_n)$ is a sequence of quadratic differentials that converges to $q=\cO(X, \lambda)$.  
The condition that the supports of $\lambda_n$ converge in the Hausdorff topology to the support of $\lambda$ constrains the direction in which $q_n \to q$.

\begin{proposition}\label{prop:stable_hausdorff_stratum}
For any $s >0$ and any $A\ge 1$ there is a $\zeta_{\ref{prop:stable_hausdorff_stratum}}>0$ such that every $s$-thick $q= \cO(X,\lambda) \in \QT_g$ with $1/A\le\Area(q)\le A$ has a neighborhood $U^{ss}_{\ref{prop:stable_hausdorff_stratum}}$ of $q$ in $\Fol^{ss}(q)$ such that whenever 
\[q'= \cO(X',\lambda') \in U^{ss}_{\ref{prop:stable_hausdorff_stratum}} 
\text{ and }
d_X^H(\lambda,\lambda')<\zeta_{\ref{prop:stable_hausdorff_stratum}},\]
then $q$ and $q'$ are in the same stratum component $\cQ$.
\end{proposition}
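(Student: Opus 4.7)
The plan is to analyze permissible stratum changes within the stable leaf $\Fol^{ss}(q)$ and rule them out using the Hausdorff hypothesis on the imaginary laminations. The first step is to locate possible adjacencies: by Lemma \ref{lem:cellspersist} combined with Lemma \ref{lem:ttsaroundstrata}, any $q'$ near $q$ in a different stratum component $\cQ'$ arises from a refinement of the dual cellulation $\mathsf T$ of $q$ that introduces extra short saddle connections collapsing under $q' \to q$. The ``horizontal-refinement'' locus $\cQ^*$ sits in $\Fol^{uu}(q)$, since opening horizontal saddles preserves the Whitehead class of $\lambda_h = \lambda$ while altering that of $\lambda_v$; dually, any $q' \in \Fol^{ss}(q) \setminus \cQ$ must be obtained by a refinement introducing at least one \emph{non-horizontal} short saddle, which (for $\lambda_v(q')$ to remain Whitehead-equivalent to $\lambda_v(q)$ in $\ML$, as is required by membership in $\Fol^{ss}(q)$) must in fact be vertical.

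Granted this reduction, I would next compare $\lambda' = \lambda_h(q')$ with $\lambda = \lambda_h(q)$. A vertical break-up splits a zero of order $k$ into zeros of orders summing to $k$ that are connected by a vertical saddle but no horizontal saddle; the number of horizontal prongs at the new zeros totals $k+4$, two more than at the original $(k+2)$-prong singularity, and these extra prongs correspond to new horizontal separatrix germs of $\lambda'$ near the collision region of $q$. Straightening to the hyperbolic metric $X$, these germs become initial segments of leaves of $\lambda'$ in directions not shared by $\lambda$, which would force $\lambda'$ to fail to be carried by the equilateral train track $\tau(X, \lambda, \delta)$ in the same combinatorial way as $\lambda$ (Proposition \ref{prop:ttstable}) and to deposit positive mass on branches or arcs that are empty under $\lambda$ (Proposition \ref{prop:persistent} and Corollary \ref{cor:augstable}). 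Once $\zeta$ is taken smaller than the threshold $\zeta_{\ref{prop:persistent}}(X, \lambda, \delta)$ and the neighborhood $U^{ss}$ is chosen inside $B_{\mathsf T}(q)$, this inconsistency gives the desired contradiction.

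The main obstacle I anticipate is handling the degenerate case in which $\lambda$ is filling: there the topological supports agree with $X$ and Hausdorff distance may be trivially zero, so one cannot argue directly at the level of supports. Instead, one has to extract a combinatorial contradiction at the cellulation level, namely that the number and orders of zeros (which determine the stratum component) are locally rigid under Hausdorff perturbation once one tracks the cusp combinatorics of the complementary ideal polygons of $\lambda$ and $\lambda'$, using the fact that Hausdorff closeness at a fixed small scale forces these polygons to match up geometrically. Uniformity of $U^{ss}_{\ref{prop:stable_hausdorff_stratum}}$ and $\zeta_{\ref{prop:stable_hausdorff_stratum}}$ over the $s$-thick, $A$-bounded-area locus should follow from a compactness argument, since that locus is compact modulo $\Mod_g$ and all the constructions above vary continuously in $(X, \lambda)$.
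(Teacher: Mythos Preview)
Your overall strategy matches the paper's: show that any nearby $q'\in\Fol^{ss}(q)$ outside $\cQ$ must arise by opening a zero along a \emph{vertical} saddle, and then derive a contradiction with $\lambda'\prec\tau(X,\lambda,\delta)$. Two of your steps have real gaps, though.

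First, your contradiction step invokes Proposition~\ref{prop:ttstable} and Proposition~\ref{prop:persistent}, but Proposition~\ref{prop:ttstable} requires $X'$ close to $X$, which is exactly what is unknown at this point (indeed, it is what Theorem~\ref{thm:cont_inverse_stable} is trying to establish). The paper avoids this circularity: it only uses that $d_X^H(\lambda,\lambda')<\delta/w_{\ref{prop:ttdefs_comp}}$ forces $\lambda'$ to be carried on $\tau$ (via Proposition~\ref{prop:ttdefs_comp}), and then argues at the level of weight spaces. Since the refined cellulation $\mathsf T'$ contains a short vertical saddle, its dual track $\tau'=\tau(q',\mathsf T')$ has strictly more branches than $\tau$, $W(\tau)$ is a proper subspace of $W(\tau')$, and $\lambda'$ is \emph{fully} carried on $\tau'$; hence $\lambda'\notin W(\tau)$, contradiction. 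Your ``extra horizontal prongs'' picture is the geometric shadow of this, but the clean argument is the dimension count on weight spaces, and it needs no control on $X'$.

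Second, your uniformity argument does not work as stated. You choose $\zeta$ below $\zeta_{\ref{prop:persistent}}(X,\lambda,\delta)$, which depends on the minimum arc weight of $(X,\lambda)$ and can be arbitrarily small even on the $s$-thick, $A$-bounded-area locus; compactness will not rescue this. The paper obtains the uniform threshold directly as $\zeta_{\ref{prop:stable_hausdorff_stratum}}=\delta/w_{\ref{prop:ttdefs_comp}}(s')$, with $s'=s'(s,A)$ coming from Corollary~\ref{cor:thinparts}. Note also that only $\zeta$ is uniform; $U^{ss}_{\ref{prop:stable_hausdorff_stratum}}$ is allowed to depend on $q$, and the paper constructs it using the parametrization $I_\eta:\Fol^{ss}(q)\to\SH^+(\eta)$ from \cite{shshI} together with $B_{\mathsf T}(q)$. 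That structure theorem is also what makes your informal reduction (``the new short saddle must be vertical'') rigorous. Finally, your worry about the ``degenerate case $\lambda$ filling'' is a non-issue: a geodesic lamination always has measure zero in $X$, so Hausdorff distance between supports is always meaningful, and the argument above goes through unchanged when $\lambda$ is maximal.
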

\begin{proof}
Using Corollary \ref{cor:thinparts}, there is an $s'$ depending only on $s$ and $A$ such that $(X,\lambda)$ is $s'$-thick.
Taking any $\delta\le \delta_{\ref{lem:equittswork}}(s')$, we can build an equilateral train track $\tau = \tau(X,\lambda,\delta)$.
Let $\arc_\bullet = \arc_\bullet(X,\lambda,\delta)$ be the visible arc system and $\taua$ the standard smoothing of $\tau\cup \arc_\bullet$.

The dual $\mathsf T$ of $\taua$ is a simply horizontally convex cellulation of $\cO(X,\lambda)$ by veering saddle connections (Proposition \ref{prop:dual cell veering} and Lemma \ref{lem:dual track is horizontally convex}). 
Note that $\tau = \tau(q,\mathsf T)$ by Lemma \ref{lem:dualtt_equitt}.
Using Lemma \ref{lem:cellspersist}, we can find a neighborhood $B_{\mathsf T}(q)$ of $q$ in $\QT_g$ such that if  $q'\in B_{\mathsf T}(q)$, then the following hold:
\begin{itemize}
    \item There is a horizontally convex refinement $\mathsf T'$ of $\mathsf T$ by saddle connections on $q'$.
    \item The dual track $\tau'=\tau(q',\mathsf T')$ fully carries the imaginary foliation of $q'$. 
    \item After removing certain branches of $\tau'$ and collapsing certain others, the resulting train track is isotopic to an extension of $\tau$.
\end{itemize}

We now explain why all $q'$ close enough to $q$ in $\Fol^{ss}(q)$ but not in the same stratum component have only short {\em vertical} saddle connections coming from breaking up higher order zeros of $q$. Compare with our discussion in Section \ref{subsec:nbhd_strata}.
Let $\eta$ denote the measured lamination equivalent to the measured foliation $\Ol(X)$.
Theorem 10.15 of \cite{shshI} gives a PIL homeomorphism
\[\operatorname{I}_{\eta}: \Fol^{ss}(\cO(X,\lambda))\to \SH^+(\eta) \]
parameterizing the \emph{stable} leaf through $q$ (rather than the unstable leaf), essentially by period coordinates with bases adapted to vertical saddle connections. 

By the structure theory for shear--shape space \cite[Proposition 8.5]{shshI}, we see that $\operatorname{I}_{\eta}(\mathcal Q\cap \Fol^{ss}(\cO(X,\lambda)))$ is a $\cH^+(\eta)$--bundle over a union of cells of $\mathscr B(S\setminus \eta)$; these cells encode the multiplicities of the zeros and the vertical saddle connections between them.
Furthermore, if  $\arcc$ is the filling arc system of  $S\setminus \eta$ describing the graph of vertical saddle connections for $q$, then a neighborhood of $q$ in $\mathcal Q \cap \Fol^{ss}(q)$
is mapped homeomorphically to a neighborhood of $\operatorname I_\eta (q)$ in $\SH^+(\eta;\arcc)$ via $\operatorname{I}_\eta$.  

In the same vein, we can find a  neighborhood $U$ of $\operatorname I_\eta(q)$ in $\SH^+(\eta)$ whose projection to $\mathcal B(S\setminus \eta)$ meets only the cells that correspond to those filling arc systems $\arc$ of $S\setminus \eta$ containing $\arcc$.  
Dually, all flat surfaces 
in $I_\eta^{-1}(U)$ have sets of zeros corresponding to higher order zeros of $q$ and which are joined by trees of short vertical saddle connections.
Set $U^{ss}_{\ref{prop:stable_hausdorff_stratum}} = B_{\mathsf T}(q)\cap I_\eta\inverse(U)$.
\medskip

Invoking Proposition \ref{prop:ttdefs_comp}, 
so long as 
$d_X^H(\lambda,\lambda')<\delta/w_{\ref{prop:ttdefs_comp}}(s') =: \zeta_{\ref{prop:stable_hausdorff_stratum}}$
then $\lambda'$ is carried by $\tau(X,\lambda,\delta)$.
Towards contradiction, suppose that
$q'=\cO(X',\lambda')\in U^{ss}_{\ref{prop:stable_hausdorff_stratum}}$ is not in $\cQ$
but that 
$d_X^H(\lambda,\lambda')<\zeta_{\ref{prop:stable_hausdorff_stratum}}$.
Since $q'$ is not in $\cQ$,
there is a collection of zeros of $q'$ corresponding to breaking up one zero of $q$.
The horizontally convex cellulation $\mathsf{T}'$ of $q'$ must contain short saddle connections between these zeros (see the proof of Lemma \ref{lem:cellspersist}), but by construction of $U^{ss}_{\ref{prop:stable_hausdorff_stratum}}$ 
all of these saddle connections must be vertical.
Thus, the subspace of $W(\tau')$ corresponding to $W(\tau)$ is a {\em proper} subspace, obtained by setting the weights on some number of the branches of $\tau'$ equal to $0$.
But now since $\tau(q',\mathsf T')$ carries $\lambda'$ {\em fully,} this means $\tau$ cannot carry $\lambda'$.
This is a contradiction, so we conclude that $q'$ is contained in $\mathcal Q$, completing the proof of the proposition.
\end{proof}

We obtain also the following corollary of the proof of Proposition \ref{prop:stable_hausdorff_stratum}, which we record for later use.

\begin{corollary}\label{cor:lamination_close_star_nbh_stratum}
For any $s$-thick $q\in \QT$ with $1/A \le \Area(q)\le A$, 
there is an open neighborhood $U_{\ref{cor:lamination_close_star_nbh_stratum}}$ of $q$ in $\QT_g$ such that if $q' \in U_{\ref{cor:lamination_close_star_nbh_stratum}}$ and $d_X^H(\lambda,\ \lambda') <\zeta_{\ref{prop:stable_hausdorff_stratum}}$,
then $q' \in \cQ^*$.
\end{corollary}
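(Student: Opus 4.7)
The approach will be to rerun the construction from the proof of Proposition \ref{prop:stable_hausdorff_stratum}, weakening the conclusion from ``$q'$ lies in $\cQ$'' to ``$q'$ lies in $\cQ^*$.'' Fix $\delta\le \delta_{\ref{lem:equittswork}}$ small in terms of $s$ and $A$ (using Corollary \ref{cor:thinparts} to control the hyperbolic thickness of $X = \cO\inverse(q)$), form the $\delta$-equilateral train track $\tau = \tau(X,\lambda,\delta)$ with visible arcs $\arc_\bullet$ and standard smoothing $\taua$, and let $\mathsf T$ be its dual simply horizontally convex cellulation of $q$. Applying Lemma \ref{lem:cellspersist} yields a neighborhood $B_{\mathsf T}(q)\subset \QT_g$ on which every $q'$ admits a horizontally convex refinement $\mathsf T'$ of $\mathsf T$ with dual train track $\tau'$; set $U_{\ref{cor:lamination_close_star_nbh_stratum}}:= B_{\mathsf T}(q)$. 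For $q' = \cO(X',\lambda')\in U_{\ref{cor:lamination_close_star_nbh_stratum}}$ with $d_X^H(\lambda,\lambda')<\zeta_{\ref{prop:stable_hausdorff_stratum}}$, the Hausdorff bound (combined with Proposition \ref{prop:ttdefs_comp} and Lemma \ref{lem:Hausdfullcarry}, exactly as in the proof of Proposition \ref{prop:stable_hausdorff_stratum}) ensures $\lambda'\preceq \tau$, while the dual construction automatically provides $\lambda'\preceq \tau'$.

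The main step is to show that no short, non-horizontal saddle connection of $\mathsf T'$ joins two zeros of $q'$ that collapse to the same zero of $q$. Suppose for contradiction that such a saddle $e'$ exists. By Lemma \ref{lem:cellspersist}, its dual branch $b'$ in $\tau'$ is an ``extra'' branch that must be removed in order to recover a smoothing of $\tau\cup \arcb$ for some $\arcb\subseteq \arc_\bullet$. Under the natural inclusion $W(\tau)\hookrightarrow W(\tau')$ associated to this extension, elements of $W(\tau)$ are identified with weight systems assigning zero to every branch of $\tau'\setminus\tau$; since $\lambda'$ is fully carried on $\tau$, its induced weight system on $\tau'$ lies in this image and in particular vanishes on $b'$. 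On the other hand, by Construction \ref{constr:dualtt} and Lemma \ref{lem:ttwts_from_cell}, the weight deposited on $b'$ by $\lambda'$ equals $\Im [\hol_{q'}(e')]_+$, which is strictly positive since $e'$ is non-horizontal --- a contradiction.

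Consequently, every pair of zeros of $q'$ collapsing to a common zero of $q$ is connected in $\mathsf T'$ exclusively by short horizontal saddle connections, namely those dual to arcs in $\arc(q',\mathsf T')\setminus \arc(q,\mathsf T)$. By Definition \ref{def:Q star}, this places $q'\in \cQ^*$.

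The main obstacle I anticipate is verifying cleanly that the two carryings $\lambda'\preceq \tau$ and $\lambda'\preceq \tau'$ are compatible, i.e., that the weight system induced by $\lambda'$ on $\tau'$ genuinely coincides with the extension-by-zero of its weight system on $\tau$ under $W(\tau)\hookrightarrow W(\tau')$. This amounts to showing that a leaf of $\lambda'$ traversing an extra branch $b'$ of $\tau'$ has no route through $\tau$ alone, so that full carrying by $\tau$ is incompatible with a positive weight on $b'$. Once this compatibility is in hand, the rest of the argument is a bookkeeping exercise classifying the ``extra'' branches produced by Lemma \ref{lem:cellspersist}.
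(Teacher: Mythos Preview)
Your proposal is correct and follows essentially the same route as the paper. The paper's proof is terser: it simply says to rerun the argument of Proposition \ref{prop:stable_hausdorff_stratum} but derive the contradiction via Lemma \ref{lem:ttsaroundstrata} rather than Lemma \ref{lem:cellspersist}, the point being that if $q'\notin\cQ^*$ then any refinement $\mathsf T'$ must contain a short \emph{non-horizontal} saddle, whose dual branch is one of the extra branches of $\tau'$.

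Regarding the obstacle you flag: the paper's phrasing sidesteps it. Rather than starting from $\lambda'\preceq\tau$ and pushing forward to $\tau'$, the paper uses that $\lambda'$ is \emph{fully} carried on $\tau'$ (so every branch, including $b'$, has strictly positive weight), while $W(\tau)$ sits inside $W(\tau')$ as the subspace where the removed branches have weight zero. Hence $\lambda'$, viewed on $\tau'$, does not lie in $W(\tau)$, which immediately contradicts $\lambda'\preceq\tau$. This is logically equivalent to your argument, but it makes the compatibility of the two carryings automatic: once you know $\tau$ is obtained from $\tau'$ by removing branches (and collapsing others, which is an isomorphism on weight spaces), any carrying of $\lambda'$ by $\tau$ would produce a weight system in $W(\tau)\subset W(\tau')$, contradicting full carriage on $\tau'$.
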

\begin{proof}
The proof follows the same lines as that of the previous Proposition, except we derive a contradiction with Lemma \ref{lem:ttsaroundstrata} instead of Lemma \ref{lem:cellspersist}.
The relevant observation is just that if $q'$ is close enough to $q$ but not in $\mathcal Q^*$, then any refinement $\mathsf T'$ of $\mathsf T$ on $q'$ constructed in the proof of Lemma \ref{lem:cellspersist} has some short non-horizontal saddle connection as an edge. 
\end{proof}

\begin{remark}
We note that this argument actually proves something even stronger: for every $q'$ satisfying the hypotheses of Corollary \ref{cor:lamination_close_star_nbh_stratum} and every horizontal saddle connection of $\mathsf{T}$ on $q$, the corresponding saddle connection(s) of $\mathsf{T}'$ on $q'$ must also be horizontal.
This is because if $\lambda'$ is to be carried on $\tau$, then the dual train track $\tau(q',\mathsf{T}')$ must not have any new branches coming from smoothing the arcs dual to horizontal saddle connections of $\mathsf{T}$. Hence those saddle connections must remain horizontal in $\mathsf{T}'$.
\end{remark}

\subsection{Proofs of continuity theorems}
By definition, any differential $q' \in \Fol^{ss}(q)$ has the same vertical foliation $\Ol(X)$, but the shear-shape cocycles for $q$ and $q'$ with respect to their horizontal laminations live in different spaces.
By finding a suitable train track that carries the horizontal laminations of both $q$ and $q'$, these shear-shape cocycles give rise to identical weight systems.
Applying Theorem \ref{thm:contcell} and the shape-shifting estimates of Section \ref{sec:shapeshift_est} then allows us to control the distance between the corresponding hyperbolic surfaces.

\begin{proof}[Proof of Theorem \ref{thm:cont_inverse_stable}]
Fix $a \in (0,1).$
As in the proof of Proposition \ref{prop:stable_hausdorff_stratum}, we observe that the conditions that $q$ is $s$-thick and $1/A\le \Area(q)\le A$ imply that $\cO^{-1}(q)$ is $s'$-thick.
Fix a defining parameter $\delta$ smaller than both $\delta_{\ref{lem:equittswork}}(s')$ and $\delta_{\ref{lem:change base ties still meet}}(s')$.
Let $\tau:= \tau(X,\lambda,\delta)$ denote the corresponding equilateral train track,
$\arc_\bullet$ the corresponding visible arc system, and set $\taua$ to be a standard smoothing of $\tau \cup \arc_\bullet$.
By Proposition \ref{prop:dual cell veering} and Lemma \ref{lem:ttwts_from_cell}, we can realize $\sigl(X)$ as a real-valued weight system on $\taua$.

Set $\zeta_{\ref{thm:cont_inverse_stable}}$
to be the minimum of $\zeta_{\ref{prop:stable_hausdorff_stratum}}$ and the threshold $\zeta_{\ref{thm:contcell}}$ from Theorem \ref{thm:contcell}, taken for the pair $(X, \lambda) := \cO^{-1}(q)$ and H{\"o}lder exponent $\sqrt{a}$.
Observe that the first cutoff is uniform over all surfaces with bounded systole, while the second depends on $q$.

Proposition \ref{prop:stable_hausdorff_stratum} builds a small neighborhood $U^{ss}_{\ref{prop:stable_hausdorff_stratum}}$ of $q$ in $\Fol^{ss}(q)$
such that any $q' \in U^{ss}_{\ref{prop:stable_hausdorff_stratum}}$ with $d_X^H(\lambda, \lambda') < \zeta_{\ref{prop:stable_hausdorff_stratum}}$
is also in the same stratum component $\cQ$ as $q$.
Taking $U^{ss} \subset U^{ss}_{\ref{prop:stable_hausdorff_stratum}}$ smaller as necessary, we can also ensure that any such $q'$ also lies in the neighborhood $B^{\cQ}_{\mathsf{T}}(q)$ from Lemma \ref{lem:simplehorizconvex_smooth}, where $\mathsf{T}$ is the cellulation of $q$ dual to $\taua$. 
Proposition \ref{prop:ttcharts_stratum} then implies that the intersection of $U^{ss}$ with the ambient stratum containing $q$ can be parametrized by weight systems on $\taua$; in particular, $\sigma_{\lambda'}(X')$ is the real part of the complex weight system $[\hol(q')]_+$.
Since $q$ and $q'$ are in the same stable leaf, they have the same real parts of periods, so we can say that
\[\sigl(X) = \sigma_{\lambda'}(X')\]
as weight systems on $\taua$.
In summary, we have chosen a neighborhood $U^{ss}$ of $q$ in its stable leaf so that for every $q'=\cO(X', \lambda') \in U^{ss}$ with 
$d_X^H(\lambda, \lambda') < \zeta_{\ref{prop:stable_hausdorff_stratum}}$, 
we can make sense of the equality
$\sigl(X) = \sigma_{\lambda'}(X')$.

Since $\lambda'$ is $\zeta_{\ref{thm:contcell}}$ close to $\lambda$ on $X$, we may invoke 
Corollary \ref{cor:augstable} to deduce that $\taua':=\taua(X, \lambda', \delta)$ is slide-equivalent to a smoothing of $\tau \cup  \arc_\bullet^{\tau}(X, \lambda')$, which in particular contains $W(\taua)$ as a subspace.
Thus, $\sigl(X) = \sigma_{\lambda'}(X')$ and $\sigma_{\lambda'}(X)$ can all be represented as weight systems on $\taua'$.
Theorem \ref{thm:contcell} 
applied with H{\"o}lder exponent $\sqrt{a}$
then lets us conclude that 
\[\| \sigl(X) - \sigma_{\lambda'}(X) \|_{\taua'} =
\| \sigma_{\lambda'}(X') - \sigma_{\lambda'}(X) \|_{\taua'} 
= O_s(\zeta^{\sqrt{a}}).\]

Now so long as $\zeta$ is taken small enough, this is less than the threshold $\xi_{\ref{thm:shapeshift_distance_small}}$, again taken for H{\"o}lder exponent $\sqrt{a}$.
Applying Theorem \ref{thm:shapeshift_distance_small} to the combinatorial deformation 
$\ac := \sigma_{\lambda'}(X') - \sigma_{\lambda'}(X)$ in shear-shape coordinates for $\lambda'$, we obtain
\begin{equation}\label{eqn:inverseest}
d_\Gamma(X,X')
=
O\left(
\| \ac \|_{\taua'}^{\sqrt{a}}
\right)
=
O\left(\zeta^{a} \right),
\end{equation}
completing the proof of the Theorem.
\end{proof}

\begin{remark}
If we were to restrict to only considering those $q$ with no short horizontal saddle connections, then we could use Corollary \ref{cor:persist_notallarcs} (and a slight generalization of Corollary \ref{cor:augstable})
to ensure that $\cO(X, \lambda')$ actually lives in the same stratum component $\cQ$, yielding a uniform threshold (analogous to $\zeta_{\ref{thm:cont_inverse_stable}}$) for all $q$ with no short horizontal saddles, bounded systole, and bounded area.
One can also prove that there is a uniform threshold for all $q$ with bounded systole and area, but this requires a uniform threshold for Theorem \ref{thm:contcell} (as detailed in Remark \ref{rmk:cont_unif}) and is therefore omitted.
\end{remark}

We now assemble the proof of Theorem \ref{thm:cont_inverse}. Theorem \ref{thm:cont_inverse_stable} provides continuity in a neighborhood $U^{ss}$ around $q$ in its stable leaf, and the uniform estimate on shapeshifting from Theorem \ref{thm:shapeshift_distance_small} yields continuity in uniform size neighborhoods around $q' \in U^{ss}$ in their respective unstable leaves. See Figure \ref{fig:localprod invcont}.

\begin{figure}
    \centering
    \includegraphics[scale=.15]{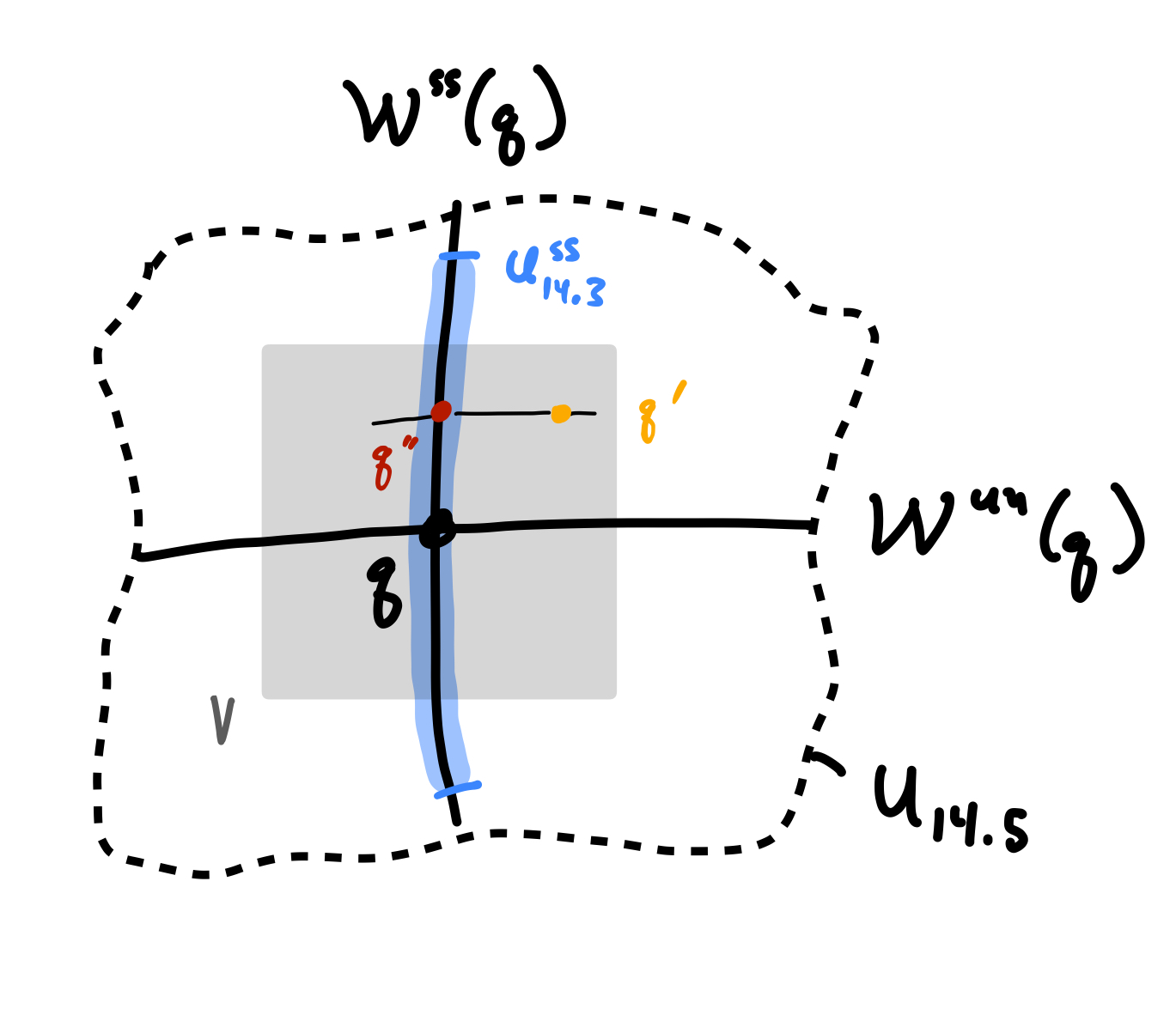}
    \caption{The local product structure around a point and the neighborhoods used in the proof of Theorem \ref{thm:cont_inverse}.}
    \label{fig:localprod invcont}
\end{figure}

\begin{proof}[Proof of Theorem \ref{thm:cont_inverse}]
Fix $a \in (0,1)$ and $\epsilon>0$, and pick any finite generating set $\Gamma$ with $L_X(\Gamma)\le L$.
Our base differential $q$ satisfies $\Area(q) \in (1/A,A)$ for suitable $A\ge 1$. We can apply all of the above results to differentials near $q$ with the same area bound.
Fix $\delta>0$ as in the proof of Theorem \ref{thm:cont_inverse_stable}
and consider the standard smoothing $\taua$ of the equilateral train track $\tau(X, \lambda, \delta)$ together with its visible arc system. Let $\mathsf{T}$ denote the (simply horizontally convex) cellulation of $q$ dual to $\taua$.

We begin with a small local product neighborhood $V$ of $q$ in $\QT_g$ such that 
\begin{enumerate}
    \item $V \cap \Fol^{ss}(q) \subset U^{ss}$, the neighborhood from Theorem \ref{thm:cont_inverse_stable}, and
    \item $V \subset U_{\ref{cor:lamination_close_star_nbh_stratum}}$.
\end{enumerate}
For any $q' \in V$, we can use the local product structure to find a unique $q'' = O(Z, \lambda')$ in $\Fol^{ss}(q)$.
By point (1) and Theorem \ref{thm:cont_inverse_stable}, we know that $d_\Gamma(X, Z) = O_s \left(d_X^H(\lambda, \lambda')^a \right)$; 
it therefore suffices to bound $d_\Gamma(Z, X')$ and take the cutoff $\zeta$ smaller than (a constant times) $\epsilon^{1/a}$.

As in the proof of Theorem \ref{thm:cont_inverse_stable}, 
point (1) and our threshold on $d_X^H(\lambda, \lambda')$ together imply that we can make sense of the equality $\sigl(X) = \sigma_{\lambda'}(Z)$ as weight systems on $\taua$.
On the other hand, point (2) and Corollary \ref{cor:lamination_close_star_nbh_stratum} ensure that $q' \in \cQ^*$, so Proposition \ref{prop:ttcharts_Q*} implies that 
$\sigma_{\lambda'}(X')$ can be represented as a weight system on a smoothing of $\tau \cup \arc'$, where $\arc'$ is some extension of $\arc$.
In particular, $\sigma_{\lambda'}(Z)$ and $\sigma_{\lambda'}(X')$ are both represented by weight systems on this smoothing.

We now shrink $V$ as necessary to ensure that the real parts of the periods of $q$ and $q'$ are $\xi_{\ref{thm:shapeshift_distance_small}}$-close; in particular, the real parts of the periods of $q''$ and $q'$ are also $\xi_{\ref{thm:shapeshift_distance_small}}$-close.
Since the difference in real parts of the periods is exactly the cocycle $\sigma_{\lambda'}(Z) - \sigma_{\lambda'}(X')$, 
Theorem \ref{thm:shapeshift_distance_small} 
implies that 
\[d_\Gamma(Z, X') =
O\left(
\|\sigma_{\lambda'}(Z) - \sigma_{\lambda'}(X')\|_{\tau_{\arc'}} ^a \right),\]
which is in particular arbitrarily small so long as the difference in periods is small enough.
\end{proof}

\part{Transporting measures}\label{part:measures}

In this Part, we analyze the measure-theoretic consequences of our continuity theorems. 
In particular, we show in Section \ref{sec:continuous a.e.} that $\cO$ and $\cO^{-1}$ are continuous almost everywhere for a large class of measures, once their domains are properly restricted.
This implies our main results about transporting convergence of measures between $\PM_g$ and $\QM_g$.

\section{Continuity almost everywhere}\label{sec:continuous a.e.}

In the previous two parts, we proved that both $\cO$ and $\cO^{-1}$ are continuous when we also assume Hausdorff convergence of laminations/horizontal foliations.
The point of this section is to understand how constraining the domains of these maps can enforce Hausdorff convergence along certain sequences.
As a consequence, we can prove that $\cO$ and $\cO^{-1}$ are continuous almost everywhere for many measures, proving Theorems \ref{mainthm:measure convergence strata} and \ref{mainthm:push measures from P to Q}.
See also Theorem \ref{thm:pull back special convergence} for the strongest statement we can prove along these lines.
Along the way, we also make explicit the fact that $\cO$ is a Borel isomorphism by decomposing $\PT_g$ and $\QT_g$ into countable unions of Borel sets, on each of which $\cO$ restricts to a homeomorphism (Theorem \ref{thm:Boreliso}).

\subsection{Measure and Hausdorff convergence}\label{subsec:measure vs hausdorff}
The main obstacle for the convergence of measured laminations $\lambda_n \to \lambda \in \ML_g$ to imply Hausdorff convergence of their supports is the problem of ``disappearing measure.''
More precisely, suppose that for a fixed transversal $t$ (e.g., a tie of a train track), the measure $\lambda_n(t)$ is positive but tends to 0. Any Hausdorff limit of the supports of the  $\lambda_n$ will then contain a leaf passing through $t$, but since $\lambda(t)=0$ the support of $\lambda$ does not meet $t$.

In the example above, complementary components of the $\lambda_n$ ``merge'' in the limit as the support vanishes.
One might hope that if $\lambda_n \to \lambda$ in measure and they all have homeomorphic complements, then Hausdorff convergence of supports follows.
This is still not quite right; if the complements of $\lambda_n$ degenerate while the measure of a transversal goes to 0, then the two phenomena can cancel out, resulting in a limit of the same topological type that winds around the surface differently. 
See Remark \ref{rmk:not homeo over orbit} below.
\medskip

On the other hand, if measure convergence is witnessed by a train track $\tau$ that is \emph{snug} for $\lambda$ (i.e. has the same topological type), then measure convergence implies Hausdorff convergence of supports (Lemma \ref{lem:full_convergence}).
Our main insight in this section is in the proof of Theorem \ref{thm:Boreliso}, where we show that if $\lambda_n \to \lambda$ in measure, the topology of $\lambda_n$ and $\lambda$ is constrained, and the geometry of the complementary components of $\lambda_n$ is also constrained, then the supports of $\lambda_n$ converge to the support of $\lambda$ in the Hausdorff topology.

The following is a consequence of \cite[Lemma 22]{BonZhu:HD} after noting that close-by measured laminations in the interior of the positive cone of weights on a train track $\tau$ are also carried on some $\tau'\prec \tau$ that is a much better approximation to the supports of both.
\begin{lemma}[\cite{BonZhu:HD}]\label{lem:full_convergence}
Let $\lambda \in \ML_g$ and let $\tau$ be a train track snugly carrying $\lambda$.
Suppose that $\lambda_n \in \ML_g$ are 
carried on $\tau$ and that $\lambda_n \to \lambda$ in the measure topology.
Then the supports of $\lambda_n$ converge to the support of $\lambda$ in the Hausdorff topology.
\end{lemma}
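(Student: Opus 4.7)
The plan is to leverage the snug hypothesis together with the openness of the positive weight cone to ``trap'' the supports of $\lambda_n$ and $\lambda$ inside arbitrarily thin train track neighborhoods.

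First, I would observe that the snug condition forces $\lambda$ to lie in the interior $W^{>0}(\tau)$ of the positive cone of weights on $\tau$. Indeed, if some branch $b$ of $\tau$ received weight zero from $\lambda$, then $b$ could be deleted (merging two complementary regions of $\tau$) without affecting how $\lambda$ is carried; this would contradict the fact that $\tau$ has the same complementary topology as $\lambda$. Hence every branch has strictly positive $\lambda$-weight, and by openness of $W^{>0}(\tau)$ together with the assumed measure convergence, every branch also has positive $\lambda_n$-weight for all $n$ sufficiently large.

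Next, given $\epsilon > 0$, I would refine $\tau$ to a sub-train-track $\tau_\epsilon \prec \tau$ whose underlying set on $S$ (realized, say, as an equilateral train track neighborhood of $\lambda$ as in Construction~\ref{constr:equitt}, or obtained by Thurston-style splittings along $\lambda$) is contained in the $\epsilon$-neighborhood of the support of $\lambda$. The point is that one can choose such a $\tau_\epsilon$ that still snugly carries $\lambda$, so by the argument of the previous paragraph $\lambda$ lies in $W^{>0}(\tau_\epsilon)$. By the openness of the positive cone, $\lambda_n \to \lambda$ as weight systems on $\tau$ implies that the weights of $\lambda_n$, after rewriting with respect to the splitting, also land in $W^{>0}(\tau_\epsilon)$ for all $n$ large enough. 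In particular, the supports of both $\lambda$ and $\lambda_n$ lie in $\tau_\epsilon$, hence within $\epsilon$ of each other: this gives supp$(\lambda_n)\subset\cN_\epsilon(\text{supp}(\lambda))$.

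For the reverse inclusion $\text{supp}(\lambda) \subset \cN_\epsilon(\text{supp}(\lambda_n))$, I would use that $\lambda_n$ has positive weight on every branch of $\tau_\epsilon$, so its support passes through every branch; combined with the bound on the width of $\tau_\epsilon$ (for example via Lemma \ref{lem:equittwidth}, if $\tau_\epsilon$ is taken to be an equilateral neighborhood of parameter comparable to $\epsilon$), every point of supp$(\lambda)$ lies within $\epsilon$ of a point of supp$(\lambda_n)$. Combining the two inclusions yields $d_X^H(\text{supp}(\lambda_n),\text{supp}(\lambda)) \to 0$. The main subtlety is ensuring that the refinement $\tau_\epsilon$ can be chosen so that (i) its image in $X$ is contained in an $\epsilon$-neighborhood of $\lambda$, (ii) the natural inclusion $W(\tau_\epsilon) \hookrightarrow W(\tau)$ sends $\lambda$ to a weight system with all positive entries, and (iii) measure-convergent sequences on $\tau$ eventually land in the image of $W^{>0}(\tau_\epsilon)$; this is exactly the content of the parenthetical remark in the citation to \cite[Lemma 22]{BonZhu:HD}, and can be arranged by performing the requisite splittings along full carrying maps.
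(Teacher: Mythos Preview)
Your proposal is correct and matches the paper's approach: the paper does not give a detailed proof but simply remarks that the lemma follows from \cite[Lemma 22]{BonZhu:HD} after noting that close-by measured laminations in the interior of the positive cone on $\tau$ are also carried on some refinement $\tau' \prec \tau$ that better approximates the supports of both. Your argument is precisely an expansion of this sentence---using snugness to get $\lambda \in W^{>0}(\tau)$, refining to a thin $\tau_\epsilon$, and invoking openness of the positive cone to trap the $\lambda_n$ there for large $n$.
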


This Lemma identifies a combinatorial constraint that ensures measure convergence implies Hausdorff convergence; we now show that the sets on which these constraints hold are Borel.
Let $\tau$ be a bi-recurrent train track.  
In the weight space $W(\tau)$ of $\tau$, we wish to identity those (strictly positive) weights that correspond to measured laminations with the same topological type as $\tau$.  
Denote this subset of the positive cone as $W^{>0}_{\text{snug}}(\tau)$.\label{ind:snugwts}  By abuse of notation, this also defines a subset of $\ML_g$. 

\begin{lemma}\label{lem:snug_weights}
The space $W^{>0}_{\text{snug}}(\tau)$ is cut out of $W(\tau)$ by countably many integer linear equations and inequalities, and is therefore Borel.
\end{lemma}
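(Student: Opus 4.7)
The plan is to decompose the complement $W(\tau) \setminus W^{>0}_{\text{snug}}(\tau)$ as a countable union of polyhedral sets, each cut out by integer linear equations and inequalities; this will give the desired countable integer-linear description of $W^{>0}_{\text{snug}}(\tau)$ (and hence Borel-ness) by complementation.

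First, the complement of the open positive cone $W^{>0}(\tau)$ inside $W(\tau)$ is already defined by the finitely many linear inequalities $\{w_b \le 0\}$, one per branch $b$ of $\tau$. It remains to handle weights $w \in W^{>0}(\tau)$ for which $\tau$ fails to snugly carry the corresponding measured lamination $\mu(w)$.

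I claim that such a weight $w$ must factor through some bi-recurrent train track $\sigma$ of strictly different topological type than $\tau$ (i.e., such that the completions of $S \setminus \sigma$ and $S \setminus \tau$ are non-homeomorphic). Indeed, the support of $\mu(w)$ is a closed subset of the realization of $\tau$, and if $\tau$ is not snug for $\mu(w)$, then this support either contains isolated simple closed curves or otherwise lies inside a proper bi-recurrent sub-train-track with strictly smaller complementary subsurface complexity; snug carrying of $\mu(w)$ by some such $\sigma$ then gives a non-trivial carrying map $\phi \colon \sigma \prec \tau$, and $w = \phi_*(v)$ for the weight $v \in W^{\geq 0}(\sigma)$ representing $\mu(w)$ on $\sigma$.

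For each such pair $(\sigma, \phi)$, the induced linear map $\phi_* \colon W(\sigma) \to W(\tau)$ has integer matrix (given by the combinatorics of the carrying), so its image $\phi_*(W^{\geq 0}(\sigma))$ is a polyhedral cone in $W(\tau)$ cut out by integer linear equations (membership in the integer linear subspace $\phi_*(W(\sigma))$) together with integer linear inequalities (non-negativity of a preimage, phrased via the Farkas lemma). Since train tracks on the compact surface $S$ are countable up to isotopy and each ordered pair of train tracks has only finitely many equivalence classes of carrying maps, the union over all such $(\sigma, \phi)$ is a countable union of polyhedral sets. Combined with the finite collection $\{w_b \le 0\}$, this yields a countable integer-linear description of $W(\tau) \setminus W^{>0}_{\text{snug}}(\tau)$. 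The main technical point is verifying the claim above that any non-snug weight arises from a strict sub-carrying, which follows from the standard structure theory of measured laminations carried by bi-recurrent train tracks (see e.g.\ \cite[\S1.7]{PennerHarer}).
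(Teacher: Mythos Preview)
Your overall strategy is sound and matches the paper's: exhibit the non-snug locus inside $W^{>0}(\tau)$ as a countable union of integer-polyhedral sets coming from carrying maps $\sigma \prec \tau$ with $\sigma$ of strictly smaller complexity. However, the justification of your key claim is garbled. You write that when $\tau$ is not snug for $\mu(w)$, the support of $\mu(w)$ ``lies inside a proper bi-recurrent sub-train-track'' of $\tau$. But by hypothesis $w \in W^{>0}(\tau)$, so the carrying map $\mu(w) \to \tau$ is onto every branch; there is no proper sub-track of $\tau$ containing the image. The track $\sigma$ you need is \emph{not} a sub-track of $\tau$; it must be produced by \emph{splitting} $\tau$ along $\mu(w)$ until a collision occurs. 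This is exactly what the paper does: take a splitting sequence $\tau \succ \tau_1 \succ \cdots$ converging to $\mu(w)$, observe that non-snugness forces a collision at some stage $\tau_k$, and check (via an Euler-characteristic count, with care in the orientable case) that $\dim W(\tau_k) < \dim W(\tau)$. The image of $W(\tau_k)$ in $W(\tau)$ is then a proper integer subspace containing $w$. Your citation of \cite[\S1.7]{PennerHarer} points toward this, but the actual mechanism you describe is not the right one.

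Two smaller points. First, ``each ordered pair of train tracks has only finitely many equivalence classes of carrying maps'' is false in general (think of carrying maps that wind differently); what is true, and sufficient, is that there are \emph{countably} many, since a carrying map is determined by finitely many train paths in $\tau$. Second, you should say why the union you form is not too large: if $w = \phi_*(v)$ with $v \in W^{\ge 0}(\sigma)$ and $S \setminus \sigma$ strictly larger than $S \setminus \tau$, then $\mu(w)$ is carried by $\sigma$, so $S \setminus \tau \hookrightarrow S \setminus \sigma \hookrightarrow S \setminus \mu(w)$ is a strict inclusion and $\tau$ cannot be snug for $\mu(w)$. This is easy but worth stating, since otherwise you have only shown one containment.
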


\begin{proof}[Proof sketch]
If $\tau$ is not connected then its weight space is just the product of the weight spaces of its components, so we may as well restrict to the case of connected $\tau$.

Suppose $\lambda\in W^{>0}(\tau)$ is not snugly carried by $\tau$.
There is a (splitting) sequence of birecurrent train tracks $\tau\succ\tau_1\succ\tau_2 \succ ... $
such that the intersection of all of the weight spaces is exactly the cone of measures supported on $\lambda$
\cite[Proposition 3.3.2]{PennerHarer}.
Since $\tau$ is not snug, some $\tau_k$ is obtained from $\tau_{k-1}$ by colliding branches.
\footnote{We recall that a {\bf collision} happens when one rips apart a large branch and there is no branch going between the resulting sides
\cite[\S 2.1]{PennerHarer}.}

We claim that the dimension of $W(\tau_k)$ is strictly less than that of $W(\tau_{k-1})$. If both are non-orientable, then this follows because the Euler characteristic, which computes the dimension of the weight space, drops; see \cite[proof of Theorem 15]{Bon_THDGL} or \cite[\S2.1]{PennerHarer}.
Otherwise, $\tau_k$ may be orientable when $\tau_{k-1}$ is non-orientable, in which case the dimension of $W(\tau_k)$ is the Euler characteristic of $\tau_k$ plus $1$.
When this happens, multiple branches of $\tau_{k-1}$ must be collided, because one cannot add a single branch to an orientable train track to build a non-orientable and birecurrent one (compare \cite[Remark 9.6]{shshI}).
Thus the Euler characteristic of $\tau_k$ is at least {\em two} less than that of $\tau_{k-1}$, and so the dimension of the weight spaces still must drop.

Any carrying map $\tau_k\prec \tau$ induces a $\ZZ$-linear injective mapping of weight spaces $W(\tau_k)\to W(\tau)$, so $\lambda$ is in a (proper) subspace defined by integer equations. Moreover, any other weight in that subspace is clearly not snugly carried by $\tau$.
There are only countably many such subspaces, proving the lemma.
\end{proof}

\subsection{Borel isomorphism}
Since there are only countably many isotopy classes of train tracks, the above discussion gives a decomposition of $\ML_g$ into a countable union of Borel sets $W^{>0}_{\text{snug}}(\tau)$ on which measure convergence $\lambda_n \to \lambda$ implies Hausdorff convergence of their supports.
We can use this decomposition to induce a Borel decomposition of $\PT_g$: given a birecurrent $\tau$, define\label{ind:PTtau}
\[\PT_g(\tau)  = \T_g \times W^{>0}_{\text{snug}}(\tau).\]
This is clearly is a Borel subset of $\PT_g = \T_g\times \ML_g$.

Let us prove a corresponding statement for the corresponding decomposition of $\QT_g$.

\begin{lemma}
The set $\QT_g(\tau):= \cO\left(\PT_g(\tau) \right)$\label{ind:QTtau} is a Borel subset of $\QT_g$.
\end{lemma}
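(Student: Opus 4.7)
The plan is to prove that $\cO|_{\PT_g(\tau)}$ is a continuous injection of a standard Borel space into $\QT_g$, and then appeal to the Lusin--Souslin theorem (continuous injective images of Borel sets in standard Borel spaces are Borel; see, e.g., Kechris, \emph{Classical Descriptive Set Theory}, Thm. 15.1) to conclude that $\QT_g(\tau)$ is Borel. Both $\PT_g$ and $\QT_g$ are Polish, hence standard Borel, and $\PT_g(\tau)$ is Borel in $\PT_g$ by Lemma \ref{lem:snug_weights}, so it is itself standard Borel.

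The heart of the argument is continuity. Suppose $(X_n,\lambda_n)\to (X,\lambda)$ in $\PT_g(\tau)$. Then $X_n\to X$ in $\T_g$ and $\lambda_n\to\lambda$ in the measure topology, with all the $\lambda_n$ and $\lambda$ carried snugly by $\tau$. By Lemma \ref{lem:full_convergence}, the supports of the $\lambda_n$ converge to the support of $\lambda$ in the Hausdorff topology with respect to any fixed hyperbolic metric (the choice of metric is immaterial by Lemma \ref{lem:lamfacts}(\ref{item:Haus_equiv})). The continuity direction of Theorem \ref{maintheorem:O and Oinv continuous}, established in Part \ref{part:continuity} (in particular via Theorem \ref{thm:contcell}), then yields $\cO(X_n,\lambda_n)\to \cO(X,\lambda)$ in $\QT_g$. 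Injectivity of $\cO|_{\PT_g(\tau)}$ is immediate from the global bijectivity of $\cO:\PT_g\to\QT_g$ established in \cite{shshI} and recalled in \S\ref{subsec:orthofoliation}.

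Applying Lusin--Souslin to the continuous injection $\cO|_{\PT_g(\tau)}:\PT_g(\tau)\hookrightarrow\QT_g$ shows that its image $\QT_g(\tau)$ is a Borel subset of $\QT_g$, which is exactly the desired conclusion. The only mildly subtle point is the appeal to a metric-dependent Hausdorff topology on $\GLcr$; this is a non-issue because Lemma \ref{lem:lamfacts}(\ref{item:Haus_equiv}) makes the topologies coming from different hyperbolic structures H\"older equivalent on compact sets of $\T_g$, so convergence can be tested in any one of them. No quantitative input from the main estimates of Part \ref{part:continuity} is needed beyond the qualitative continuity statement, and no input from Part \ref{part:inverse continuity} is required for this lemma.
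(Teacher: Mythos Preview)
Your argument is correct, but it takes a different route from the paper's own proof. The paper argues directly: via the Gardiner--Masur identification $\QT_g\cong\MF_g\times\MF_g\setminus\Delta$ and the definition of $\cO$, one has
\[
\QT_g(\tau)=\bigl(\MF_g\times W^{>0}_{\text{snug}}(\tau)\bigr)\cap(\MF_g\times\MF_g\setminus\Delta),
\]
so it suffices to check that the non-binding locus $\Delta$ is Borel. The paper does this by exhibiting $\Delta$ as the zero set of the upper semi-continuous function $I(\lambda,\eta)=\inf_j\bigl(i(\gamma_j,\lambda)+i(\gamma_j,\eta)\bigr)$, where $\{\gamma_j\}$ enumerates the simple closed curves.

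Your approach instead packages the continuity of $\cO$ on $\PT_g(\tau)$ (via Lemma~\ref{lem:full_convergence} and Theorem~\ref{thm:contcell}) together with injectivity, and invokes Lusin--Souslin. This is clean and entirely valid; indeed, the paper uses exactly this continuity argument a few lines later in the proof of Theorem~\ref{thm:Boreliso}. What each approach buys: the paper's proof keeps this lemma logically independent of the heavy estimates in Part~\ref{part:continuity} and avoids any descriptive set theory, making it a purely elementary computation about the product structure of $\QT_g$. Your proof is shorter and more conceptual, but it front-loads the dependence on Theorem~\ref{thm:contcell} and imports a nontrivial (though standard) theorem from descriptive set theory. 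Either is fine for the lemma as stated.
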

\begin{proof}
From the definition of $\cO$ and the main theorem of \cite{shshI}, we have 
\[\QT_g(\tau) = \bigcup_{\lambda \in W^{>0}_{\text{snug}}(\tau)} \MF(\lambda)\times \{\lambda\}.\]
Via the Gardiner--Masur theorem, we can also view this as $\MF_g\times W_{\text{snug}}^{>0}(\tau) \cap (\MF_g\times \MF_g\setminus \Delta)$, where $\Delta$ is the collection of non-binding pairs, i.e., 
\[\Delta = \{(\lambda,\eta) : \text{ there is a }\gamma \in \MF_g  \text{ such that } i(\gamma, \lambda) + i(\gamma, \eta) = 0\}.\]
Thus to deduce that $\QT(\tau)$ is a Borel subset of $\QT_g$, we need only show that $\Delta$ is a Borel set.

Enumerate the simple closed curves $\gamma_1, \gamma_2, ...$, and define 
\[I (\lambda, \eta) = \inf_{j\ge 0} i(\gamma_j,\lambda) + i(\gamma_j, \eta).\]
Since the projective classes of counting measures on simple closed curves are dense in $\mathcal{PML}_g$ and $\Delta$ is invariant under  scale, we have that $\Delta = I\inverse(0)$.
Then $I(\lambda, \eta)$ is the pointwise non-increasing limit of \[I^k(\lambda,\eta) = \min_{j\le k} i(\gamma_j,\lambda) + i(\gamma_j, \eta)\]
as $k\to \infty$, and $I^k$ is continuous, being a minimum of continuous functions.
This proves that $I$ is Borel (in fact upper semi-continuous), thus $\Delta$ is Borel and the proof is complete.
\end{proof}

We have now decomposed both $\PT_g$ and $\QT_g$ as countable unions of Borel sets mapped bijectively to one another.
On each these sets, measure convergence and Hausdorff convergence of supports coincide, yielding continuity of both $\cO$ and its inverse.
We therefore arrive at the following:

\begin{theorem}\label{thm:Boreliso}
The bijection $\cO : \PT_g \to \QT_g$ is a Borel-Borel isomorphism.
\end{theorem}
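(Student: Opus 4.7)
The plan is to reduce the Borel isomorphism statement to the countable decomposition already established, together with the continuity theorems of Parts \ref{part:continuity} and \ref{part:inverse continuity}. The key point is that on each stratum $\PT_g(\tau)$ of the decomposition, measure convergence automatically upgrades to Hausdorff convergence of supports via Lemma \ref{lem:full_convergence}, so Theorem \ref{maintheorem:O and Oinv continuous} applies in both directions without any extra hypotheses.

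First, I would enumerate isotopy classes of birecurrent train tracks on $S$ as $\tau_1, \tau_2, \ldots$ (a countable collection, since each train track is a finite CW-complex embedded in $S$ up to isotopy). Every measured lamination is snugly carried by some birecurrent train track, so
\[
\PT_g = \bigcup_{i \ge 1} \PT_g(\tau_i), \qquad \QT_g = \bigcup_{i \ge 1} \QT_g(\tau_i),
\]
and disjointifying as $\PT_g^i = \PT_g(\tau_i) \setminus \bigcup_{j<i} \PT_g(\tau_j)$ (and likewise for $\QT_g^i$) keeps everything Borel, with $\cO(\PT_g^i) = \QT_g^i$.

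Second, I would argue that for each fixed $\tau$, the restriction $\cO : \PT_g(\tau) \to \QT_g(\tau)$ is a homeomorphism in the subspace topologies. Suppose $(X_n, \lambda_n) \to (X, \lambda)$ in $\PT_g(\tau)$. Since $W^{>0}_{\text{snug}}(\tau) \subset W^{>0}(\tau)$ is open (as the complement of countably many linear subspaces) and $\lambda_n \to \lambda$ in measure with $\lambda \in W^{>0}_{\text{snug}}(\tau)$, the $\lambda_n$ are eventually in $W^{>0}(\tau)$, i.e., carried by $\tau$. Lemma \ref{lem:full_convergence} then forces Hausdorff convergence of supports of $\lambda_n$ to that of $\lambda$ on $X$, so the forward direction of Theorem \ref{maintheorem:O and Oinv continuous} yields $\cO(X_n, \lambda_n) \to \cO(X, \lambda)$. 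Conversely, if $q_n = \cO(X_n, \lambda_n) \to q = \cO(X, \lambda)$ in $\QT_g(\tau)$, the imaginary foliations converge in measure and lie in $W^{>0}_{\text{snug}}(\tau)$, so the same argument gives Hausdorff convergence of their supports, and the reverse direction of Theorem \ref{maintheorem:O and Oinv continuous} yields $(X_n, \lambda_n) \to (X, \lambda)$.

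Third, a continuous bijection between subspaces of separable metric spaces whose inverse is also continuous is a homeomorphism, hence in particular a Borel isomorphism onto its image. Since Borel sets in $\PT_g^i$ (with the subspace topology) are exactly the intersections with $\PT_g^i$ of Borel sets of $\PT_g$, and $\PT_g^i$ is itself Borel in $\PT_g$, any Borel subset $B \subset \PT_g$ decomposes as $B = \bigsqcup_i (B \cap \PT_g^i)$, and
\[
\cO(B) = \bigsqcup_{i \ge 1} \cO(B \cap \PT_g^i)
\]
is a countable disjoint union of Borel sets in $\QT_g$. The symmetric argument applies to $\cO^{-1}$, proving the Borel isomorphism claim.

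The only real subtlety in the argument is verifying that $W^{>0}_{\text{snug}}(\tau)$ is open in $W^{>0}(\tau)$ (so that convergence in measure to a snug lamination is witnessed by laminations carried by $\tau$, for large $n$); this follows from the proof of Lemma \ref{lem:snug_weights}, since the non-snug locus is a countable union of proper linear subspaces and hence closed away from its boundary inside each such subspace. Everything else is bookkeeping, transporting the quantitative continuity theorems through the decomposition.
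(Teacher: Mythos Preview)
Your proof is correct and follows essentially the same route as the paper: decompose into the Borel pieces $\PT_g(\tau)$ and $\QT_g(\tau)$, use Lemma~\ref{lem:full_convergence} to upgrade measure convergence to Hausdorff convergence on each piece, then invoke the continuity theorems in both directions. The disjointification you add is harmless but not needed; the paper simply writes any Borel set as a countable union $E = \bigcup_\tau E \cap \PT_g(\tau)$ and observes each piece maps to a Borel set.

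One correction: your claim that $W^{>0}_{\text{snug}}(\tau)$ is open in $W^{>0}(\tau)$ is false in general --- the non-snug locus is a countable union of proper linear subspaces, which is $F_\sigma$ but typically not closed (it can even be dense). Fortunately your argument never actually uses openness: since $(X_n,\lambda_n) \in \PT_g(\tau)$ by hypothesis, the $\lambda_n$ already lie in $W^{>0}_{\text{snug}}(\tau) \subset W^{>0}(\tau)$, so they are carried by $\tau$ from the start and Lemma~\ref{lem:full_convergence} applies directly. You should drop the openness remark and the final ``subtlety'' paragraph.
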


This result refines \cite[Theorem C]{shshI}; see also \cite[Remark 2.2]{shshI}.

\begin{proof}
Theorems \ref{thm:contcell} and \ref{thm:cont_inverse} together imply
that the map $\cO$ restricts to a homeomorphism
\[\PT_g(\tau) \cong \QT_g(\tau)\]
for any birecurrent $\tau$.
Indeed, suppose $(X_n, \lambda_n) \to (X,\lambda)$ all in $\PT_g(\tau)$. 
Since $\lambda_n$ and $\lambda$ are all snugly carried by $\tau$, Lemma \ref{lem:full_convergence} implies that the supports of $\lambda_n$ converge in the Hausdorff topology to the support of $\lambda$.
By Theorem \ref{thm:contcell}, we have that $\cO(X_n,\lambda_n) \to \cO(X,\lambda)$.
Similarly, if $\cO(X_n,\lambda_n)\to \cO(X,\lambda)$ all in $\QT_g(\tau)$, then again the supports of $\lambda_n$ converge to the support of $\lambda$, so we can apply Theorem \ref{thm:cont_inverse} to see that $X_n\to X$, and hence that $(X_n,\lambda_n)\to (X,\lambda)$.

To prove the theorem, we observe that since there are only countably many isotopy classes of train tracks, any Borel set $E\subset \PT_g$ can be written as the (not necessarily disjoint) union of countably many Borel sets
\[E = \bigcup_{\tau} E \cap \PT_g(\tau),\]
and each is mapped to a Borel subset of $\QT_g$ via $\cO$ by the argument above. 
Thus $\cO(E)$ is the union of countably many Borel sets, hence Borel.

A symmetric argument proves that $\cO\inverse(E)$ is Borel if $E\subset \QT_g(\tau)$ is Borel, so we are done.
\end{proof}

\begin{remark}\label{rmk:not homeo over orbit}
We note that the map $\cO$ does {\em not} yield a homeomorphism 
\[\cup_{\phi\in \Mod(S)}\PT_g(\phi(\tau))\leftrightarrow \cup_{\phi\in \Mod(S)}\QT_g(\phi(\tau)).\]
Indeed, find a pseudo-Anosov mapping class $\phi$ whose corresponding Teichm{\"u}ller geodesic axis does not lie in the principal stratum; for simplicity, we assume that we can find one for which the corresponding projectively invariant attracting measured lamination $\lambda$ has only complementary triangles except for one $4$-gon.
Let $\tau$ be a snug train track carrying $\lambda$ such that $\phi(\tau) \prec \tau$, and let $\sigma$ be another train track with the same type as $\tau$ but for which the interiors of their weight spaces are disjoint. For example, one can construct such a $\sigma$ by adding a diagonal to the $4$-gon in the complement of $\tau$ and removing a branch between two triangles somewhere else.

There is a common maximal train track $\tau'$ that carries both $\tau$ and $\sigma$, and up to passing to a power of $\phi$, 
we have that $\phi(\tau') \prec \tau'$.
Then the positive part of the weight spaces of train tracks $\phi^n(\sigma)$ will converge to $\RR_{>0}\lambda$ in $W^+(\tau')$ but need not ever intersect $W^+(\tau)$.
In particular, for any $\mu\in W_{\text{snug}}^{>0}(\sigma)$, there is a sequence $c_n \to \infty$ such that 
\[\lambda_n = \frac{\phi^n(\mu)}{c_n} \to \lambda\in \ML_g,
\]
but the Hausdorff limit of the supports of $\lambda_n$ converge to a diagonal completion of $\lambda$, since the $\lambda_n$ are all fully carried on $\tau'$.
This failure of Hausdorff convergence of supports causes a discontinuity in $\cO$.
\end{remark}

\subsection{Transporting measure convergence}
The same arguments as above also allow us to prove Theorems \ref{mainthm:measure convergence strata} and \ref{mainthm:push measures from P to Q}. We begin with the latter, which we remind the reader states that $\cO_*$ is continuous along sequences $\mu_n \to \mu$ of measures on $\PoM_g$ so long as the generic $(X, \lambda)$ with respect to $\mu$ is maximal.

\begin{proof}[Proof of Theorem \ref{mainthm:push measures from P to Q}]
We observe that if the support of $\lambda$ is maximal then any $(X, \lambda) \in \PoM_g$ is a point of continuity for $\cO$.
This follows from the fact that since $\lambda$ is maximal, any train track carrying it is snug. Thus for any $\lambda_n \to \lambda$, Lemma \ref{lem:full_convergence} ensures that the supports of $\lambda_n$ must converge to the support of $\lambda$. Theorem \ref{thm:contcell} gives us that $\cO$ is continuous at $(X, \lambda)$.

The Theorem now follows from standard facts about pushing forward measure convergence. Specifically, we can invoke Theorem 5.1 of \cite{B:measures},
\footnote{Billingsley calls this the ``Main Theorem.''}
which states that if $f:X\to Y$ is a Borel measurable map, $\mu_n$ is a sequence of Borel probability measures on $X$ converging weak-$*$ to a Borel measure $\mu$, and $\mu$-almost every point of $X$ is a point of continuity for $f$, then $f_*\nu_n\to f_*\nu$ weak-$*$.
We proved in Theorem \ref{thm:Boreliso} that $\cO$ was Borel, and our hypothesis on $\mu$ plus the observation in the first paragraph show that $\cO$ is continuous at $\mu$-almost every point of $\PoM_g$. This completes the proof of the Theorem.
\end{proof}

Let us now pivot to Theorem \ref{mainthm:measure convergence strata} and its generalizations.
For a stratum component $\cQ\subset \QT_g$, recall from Definition \ref{def:Q star} that $\cQ^*$ consists of those differentials $q$ obtained from $q'\in \cQ$ by pulling apart higher order zeros of $q'$ \emph{horizontally}.

\begin{theorem}\label{thm:continuity for only short horizontals}
    Let $\cQ$ be a component of a stratum of quadratic differentials and suppose that $q\in \cQ$ has no horizontal saddle connections.  Then $q$ is a point of continuity for $\cO\inverse$ restricted to ${\cQ^*}$.
\end{theorem}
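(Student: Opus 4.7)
The plan is to invoke Theorem \ref{thm:cont_inverse}: once one has shown that any sequence $q_n\to q$ in $\cQ^*$ has horizontal laminations $\lambda_n$ converging to $\lambda$ in the Hausdorff metric on the hyperbolic surface $X$ underlying $\cO\inverse(q)$, the statement follows. Convergence $q_n \to q$ in $\QT_g$ supplies the remaining hypothesis of that theorem, so the entire task is to establish Hausdorff convergence of supports on $X$.

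For $q_n \to q$ in $\cQ^*$, I would first use Lemma \ref{lem:collapsemaps} to obtain, for $n$ large, geometric collapse maps $f_n\colon q_n \to q$ that are $(1+\epsilon_n)$-bi-Lipschitz on the $t$-thick part of $q_n$ and collapse each thin component---a disk of diameter $O(t)$---to a single zero of $q$. By the defining property of $\cQ^*$, the horizontal foliation of $q_n$ inside any thin component consists only of a tree of short horizontal saddle connections joining the zeros of $q_n$ that collapse to that zero of $q$. Combined with the bi-Lipschitz control on the thick part, these observations show that $\mathrm{supp}(\lambda_n)$ converges to $\mathrm{supp}(\lambda)\cup Z(q)$ in the Hausdorff metric induced by the flat structure on $q$. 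Because $q$ has no horizontal saddle connections, each zero $z\in Z(q)$ is the endpoint of an infinite horizontal separatrix of $\lambda$ and hence already lies in $\mathrm{supp}(\lambda)$; so the Hausdorff limit is just $\mathrm{supp}(\lambda)$.

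Next, to pass from Hausdorff convergence in the flat metric on $q$ to convergence in the hyperbolic metric on $X$---the metric in which Theorem \ref{thm:cont_inverse} formulates its hypothesis---I would observe that both metrics lie in the same conformal class on the compact surface $S$ and induce the same topology; hence they are uniformly equivalent, and Hausdorff convergence of closed subsets to a common limit in one metric implies Hausdorff convergence in the other. Theorem \ref{thm:cont_inverse} then yields $\cO\inverse(q_n)\to\cO\inverse(q)$ in $\PoM_g$, which is the desired conclusion.

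The main technical obstacle will be controlling the hyperbolic geodesic realizations of the short horizontal saddle connections that appear in $\lambda_n$ but not in $\lambda$: as measured laminations on the fixed $X$, the $\lambda_n$ might in principle exhibit features absent from $\lambda$. However, since each short saddle is homotopic, rel $Z(q)$, into an arbitrarily small disk around a single zero of $q$, its hyperbolic geodesic realization on $X$ also lies in a small neighborhood of that point; as these points already belong to $\mathrm{supp}(\lambda)$, the saddles' contribution vanishes in the Hausdorff limit. A related minor subtlety is verifying that the regular leaves of $\lambda_n$, straightened to geodesics of $X$, remain Hausdorff close to those of $\lambda$; this follows from the bi-Lipschitz control of the collapse maps together with continuity of uniformization from $q_n$ to the hyperbolic structure in its conformal class.
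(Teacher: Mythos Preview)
Your reduction to Theorem \ref{thm:cont_inverse} is correct, and the task is indeed exactly to show that $\lambda_n \to \lambda$ in the Hausdorff topology on $X$. But your argument for that convergence has a genuine gap.

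The object ``$\mathrm{supp}(\lambda_n)$'' you manipulate is ambiguous, and neither reading works. If you mean the horizontal \emph{foliation} of $q_n$, its support is all of $q_n$, so the sentence ``$\mathrm{supp}(\lambda_n)$ converges to $\mathrm{supp}(\lambda)\cup Z(q)$'' is vacuous. If you mean the geodesic \emph{lamination} $\lambda_n$ realized on $X$ (which is what Theorem \ref{thm:cont_inverse} needs), then the collapse maps $f_n:q_n\to q$ tell you nothing direct about it: the lamination is obtained by straightening the (infinite) leaves of the foliation in the hyperbolic metric on $X$, and the passage $\MF_g\to\ML_g$ is not just ``change of metric in a conformal class.'' In particular, the bi-Lipschitz control of $f_n$ on the thick part is a local statement about segments of leaves, whereas Hausdorff convergence of laminations is governed by how whole bi-infinite leaves straighten. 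Your last paragraph also slips in ``continuity of uniformization from $q_n$ to the hyperbolic structure in its conformal class,'' but $X_n$ is \emph{not} the uniformization of $q_n$; that is the whole point of $\cO$. Relatedly, the short horizontal saddle connections of $q_n$ are not leaves of the lamination $\lambda_n$ at all---they correspond to arcs of $\arc(X_n,\lambda_n)$ in the complement---so the discussion of their ``hyperbolic geodesic realizations'' lying near $Z(q)$ does not address what needs to be controlled.

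The paper avoids all of this by working combinatorially. Take a simply horizontally convex cellulation $\mathsf T$ of $q$ and let $\tau=\tau(q,\mathsf T)$ be the dual train track; since $q$ has no horizontal saddle connections, $\tau$ carries $\lambda$ \emph{snugly}. Lemma \ref{lem:ttsaroundstrata} (this is precisely where $\cQ^*$ enters) shows that for $q'$ in a neighborhood of $q$ in $\cQ^*$, the dual track to the refined cellulation is still isotopic to $\tau$, hence $\lambda' \prec \tau$. Now $\lambda_n\to\lambda$ in measure on a \emph{snug} track, so Lemma \ref{lem:full_convergence} upgrades this to Hausdorff convergence of supports. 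That is the missing bridge between the flat and hyperbolic pictures.
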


\begin{proof}
Let $q= \cO(X,\lambda) \in \cQ$ have no horizontal saddle connections.
Take any simply horizontally convex cellulation $\mathsf{T}$ of $q$ (for example, the one dual to $\tau(X, \lambda, \delta)$ for small enough $\delta$) and let $\tau$ denote the corresponding dual track, as in Construction \ref{constr:dualtt}. 
Since $q$ has no horizontal saddle connections, it must be the case that $\tau$ is snug and $\arc(X, \lambda)$ is empty.

Applying Lemma \ref{lem:ttsaroundstrata}, there is a relatively open neighborhood $B^*_\mathsf{T}(q)$ of $q$ in $\cQ^*$ such that 
\begin{enumerate}
    \item any $q' \in B^*_\mathsf{T}(q)$ has a horizontally convex cellulation $\mathsf{T'}$ that refines $\mathsf{T}$ and
    \item the dual train track $\tau(q', \mathsf{T}')$ is isotopic to $\tau$.
\end{enumerate}
Now suppose $q_n=\cO(X_n,\lambda_n) \in B^*_\mathsf{T}(q)$ converge to $q$.
By (2) and the fact that dual train tracks carry horizontal foliations, we know that $\lambda_n \prec \tau$, so since $\tau$ is snug for $\lambda$, Lemma \ref{lem:full_convergence} implies that the supports of $\lambda_n$ converge to the support of $\lambda$.
Invoking Theorem \ref{thm:cont_inverse}, we get that $(X_n, \lambda_n) \to (X,\lambda) \in \PoM_g$.
\end{proof}

Theorem \ref{mainthm:measure convergence strata} is then an immediate consequence, as is the following more general formulation.

\begin{corollary}\label{cor:weak star convergence of measures if limit has only short horizontals}
Let $\cQ$ be a component of a stratum of quadratic differentials and let $\mu$ be a locally finite measure on $\cQ$ that gives zero measure to the set of differentials with a horizontal saddle connection. Then if $\mu_n$ is any sequence of locally finite Borel measures on $\cQ^*$ converging weak-$*$ to $\mu$, then $\cO^*\mu_n\to \cO^*\mu$ on $\PM_g$.
\end{corollary}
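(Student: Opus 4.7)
The plan is to reduce the statement to the continuity result of Theorem \ref{thm:continuity for only short horizontals} and then apply a continuous mapping argument for vaguely converging locally finite measures. First, I would observe that $\cO\inverse|_{\cQ^*}:\cQ^*\to \PM_g$ is Borel measurable by Theorem \ref{thm:Boreliso}, so that the pushforwards $\cO^*\mu_n$ and $\cO^*\mu$ are well-defined locally finite Borel measures on $\PM_g$. By Theorem \ref{thm:continuity for only short horizontals}, every point $q\in\cQ$ with no horizontal saddle connection is a point of continuity for $\cO\inverse|_{\cQ^*}$. Since $\mu$ is concentrated on $\cQ\subset\cQ^*$ and by hypothesis gives zero measure to the set of differentials with a horizontal saddle connection, $\mu$-almost every point of $\cQ^*$ is therefore a point of continuity of $\cO\inverse|_{\cQ^*}$.

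Next, to transfer the convergence through the pushforward in the locally finite setting, I would verify a properness property of $\cO\inverse|_{\cQ^*}$: for each $\phi\in C_c(\PM_g)$, the Borel function $\phi\circ\cO\inverse$ on $\cQ^*$ vanishes outside of $\cO(\supp(\phi))\cap\cQ^*$, and this set is relatively compact in $\cQ^*$. Indeed, on the compact set $\supp(\phi)\subset\PM_g$, the hyperbolic structure $X$ ranges over a compact part of $\M_g$ and the length $\ell_X(\lambda)=\Area(\cO(X,\lambda))$ is bounded above and below by positive constants, so by Corollary \ref{cor:thinparts} the image $\cO(\supp(\phi))$ is contained in a compact subset of $\QM_g$ of uniformly thick quadratic differentials with bounded area; intersecting with $\cQ^*$ yields a relatively compact subset of $\cQ^*$.

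With these ingredients in hand, $\phi\circ\cO\inverse$ is a bounded, $\mu$-a.e.\ continuous Borel function on $\cQ^*$ whose support is relatively compact in $\cQ^*$. A standard truncation argument reducing to \cite[Main Theorem]{B:measures}, applied to the restrictions of $\mu_n$ and $\mu$ to a relatively compact neighborhood $V\subset\cQ^*$ of this support chosen so that $\mu(\partial V)=0$ (as used previously to deduce Theorems \ref{mainthm:measure convergence strata} and \ref{mainthm:push measures from P to Q}), yields
\[\int \phi\,d(\cO^*\mu_n)=\int(\phi\circ\cO\inverse)\,d\mu_n \longrightarrow \int(\phi\circ\cO\inverse)\,d\mu=\int \phi\,d(\cO^*\mu),\]
which is the desired weak-$*$ convergence. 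The substantive content of the argument is the continuity step, already established in Theorem \ref{thm:continuity for only short horizontals}; the main subtlety in the remaining routine measure-theoretic work is handling the locally finite (rather than probability) setting, which is controlled by the properness consequence of Corollary \ref{cor:thinparts}.
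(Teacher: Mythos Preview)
Your overall strategy---Borel measurability from Theorem \ref{thm:Boreliso}, continuity $\mu$-a.e.\ from Theorem \ref{thm:continuity for only short horizontals}, then the continuous mapping theorem \cite[Theorem 5.1]{B:measures}---is exactly the paper's approach, and you correctly identify the continuity step as the substantive content.

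However, your properness argument contains a genuine error. You invoke Corollary \ref{cor:thinparts} to conclude that $\cO(\supp(\phi))$ consists of uniformly thick differentials, but that corollary goes in the \emph{opposite} direction: it says that thick $q$ implies thick $\cO^{-1}(q)$, not the converse. In fact Example \ref{example:thicktothin} explicitly constructs a family $(X,\underline{P}_t)$ lying in a compact part of $\PM_g$ (with $X$ fixed and thick, and the weighted multicurves $\underline{P}_t$ converging in $\ML_g$ as $t\to 0$) whose images $\cO(X,\underline{P}_t)$ exit every compact subset of $\QM_g$. Hence $\cO(\supp(\phi))\cap\cQ^*$ need not be relatively compact, and $\phi\circ\cO\inverse$ need not vanish outside a compact set; your truncation step therefore does not go through as written. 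The paper's own proof does not attempt any properness argument; it simply cites Billingsley directly, as in the proof of Theorem \ref{mainthm:push measures from P to Q}.
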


\begin{proof}[Proof of Theorem \ref{mainthm:measure convergence strata} and Corollary \ref{cor:weak star convergence of measures if limit has only short horizontals}]
The result follows directly from combining Theorems \ref{thm:Boreliso} and \ref{thm:continuity for only short horizontals} with \cite[Theorem 5.1]{B:measures}, as in the proof of Theorem \ref{mainthm:push measures from P to Q}.
\end{proof}

Using our discussion of train tracks dual to cellulations, we can give a much more general description of sequences $q_n \to q$ along which Hausdorff convergence of the horizontal foliations is enforced.

In what follows, let $\cQ$ be a stratum of quadratic differentials and let $q = \cO(X, \lambda) \in \cQ$.
Fix a simply horizontally convex cellulation $\mathsf{T}$ for $q$ containing all of its horizontal saddle connections, for example by taking $\delta$ small enough so that the equilateral train track $\tau(X, \lambda, \delta)$ is snug and then taking the dual cellulation to $\tau \cup \arc$ (Proposition \ref{prop:dual cell veering}).
For any completion $\arc'$ of $\arc$ to a maximal arc system and a standard smoothing $\taua'$ of $\tau \cup \arc'$, let $C(\taua') \subset W_{\CC}(\taua')$\label{ind:Ccone} be the $\RR$-linear cone cut out by requiring that the weights of the branches corresponding to the arcs of $\arc' \setminus \arc$ are real and non-negative.
Proposition \ref{prop:ttcharts_Q*} allows us to cover the relatively open neighborhood $B^*_\mathsf{T}(q)$ of $q$ in $\cQ^*$ with a union of such cones.

For each such $\arc'$, let $D(\taua')$\label{ind:Dcone} denote the sub-cone of $C(\taua')$ on which all weights of {\em all} arcs of $\arc'$ are real and non-negative.
A differential $q' \in B^*_\mathsf{T}(q)$ is in $C(\taua')$ if and only if the new saddle connections arising from breaking up the zero of $q$ all have real period, i.e., are all horizontal.
The differential $q'$ is in $D(\taua')$ if and only if moreover, every saddle connection of $q'$ realizing a horizontal saddle connection of $q$ is also horizontal.
We note that unless $q$ is in the principal stratum of $\QM_g$, it appears at the boundary of both $C(\taua')$ and $D(\taua')$.

Unraveling the identifications of Section \ref{sec:ttperiod coords}, we can also rephrase this data in terms of period coordinates.
Let $\Hor(q) < H_1(\widehat {q}, Z(\widehat q); \ZZ)$\label{ind:Horq} be the $\RR$-span of the horizontal saddle connections of $q$ (really, of its orientation cover); then the subspace\label{ind:Vq}
\[V(q) = \{ \phi | \Im(\phi(z))= 0 \text{ for all } z \in \Hor(q)\}
< H^1(\widehat{q}, Z(\widehat{q}); \CC)^-\]
is a local coordinate for those differentials in $\cQ$ near $q$ on which the horizontal saddle connections of $q$ remain horizontal.

The differentials $q' \in C(\taua')$ all share an isotopy class of collapse map to $q$ (see Lemma \ref{lem:collapsemaps}). Let $\overline{\text{Col}(q)}$ denote the kernel of the induced map on relative homology (with integer coefficients), equivalently, the span of those saddle connections that are collapsed.\label{ind:collapsepreimages}
Let $\overline{\Hor(q)}$ denote the preimage of $\Hor(q)$ in $H_1(\widehat {q}', Z(\widehat q'); \RR)$, equivalently, the span of the saddle connections realizing the horizontal ones of $q$ together with all collapsed saddle connections.
Then $C(\taua')$ and $D(\taua')$ can be identified as full-dimensional cones inside of the subspaces
\[\ImAnn(\overline{\text{Col}(q)}) < H^1(\widehat {q}', Z(\widehat q'); \CC)^-
\hspace{3ex}
\text{ and }
\hspace{3ex}
\ImAnn(\overline{\Hor(q)}) < H^1(\widehat {q}', Z(\widehat q'); \CC)^-,
\]
respectively, where $\ImAnn$ is defined as follows:\label{ind:ImAnn}
for any real vector space $H_1$ and any subspace $K$, one sets
\[\ImAnn(K) = \{ \phi \mid
\Im(\phi(k)) = 0 
\text{ for all } k \in K\}
< \Hom_{\RR}(H_1, \CC).\]

\begin{lemma}\label{lem:special Hausdconv}
With all notation as above, for any sequence $q_n = \cO(X_n, \lambda_n) \in D(\taua')$
such that $q_n \to q$, we have that the supports of $\lambda_n$ converge to the support of $\lambda$ in the Hausdorff topology.
\end{lemma}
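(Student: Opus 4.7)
The plan is to find a single train track $\tau$ that snugly carries $\lambda$ and that also carries every $\lambda_n$, and then to apply Lemma \ref{lem:full_convergence}. The hypothesis $q_n \in D(\taua')$ will force carrying, while convergence $q_n \to q$ of periods in $W_\CC(\taua')$ will force convergence of transverse measures $\lambda_n \to \lambda$.

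First I would specialize the cellulation $\mathsf T$ to the one dual to $\tau(X,\lambda,\delta) \cup \arc$ for $\delta$ chosen small enough that $\tau(X,\lambda,\delta)$ is snug, as in the explicit example given just before the lemma. By Lemma \ref{lem:dualtt_equitt}, the associated train track $\tau = \tau(q,\mathsf T)$ is then isotopic to $\tau(X,\lambda,\delta)$ and in particular snugly carries $\lambda$. Next I would use Proposition \ref{prop:ttcharts_Q*} to represent each $q_n$ as a complex weight system $w_n \in C(\taua') \subset W_\CC(\taua')$. By the definition of $D(\taua')$, every branch of $\taua'$ corresponding to an arc of $\arc'$ carries a real non-negative $w_n$-weight, so $\Im(w_n)$ is supported on the sub-track $\tau \subset \taua'$. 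The identification of imaginary parts of periods with transverse measure across a tie (Proposition \ref{prop:dual cell veering} together with Lemma \ref{lem:ttwts_from_cell}) then shows that $\Im(w_n)$, viewed as a non-negative weight system on $\tau$, is exactly the transverse measure of $\lambda_n$; in particular $\lambda_n$ is carried by $\tau$.

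Since $q_n \to q$ in $\QT_g$, the periods along any basis of relative cycles converge, so $w_n \to [\hol(q)]_+$ in $W_\CC(\taua')$. Taking imaginary parts gives $\Im(w_n) \to \Im([\hol(q)]_+)$ as weight systems on $\tau$, i.e., $\lambda_n \to \lambda$ in the measure topology on $\ML_g$. With $\tau$ snugly carrying $\lambda$, each $\lambda_n$ carried on $\tau$, and $\lambda_n \to \lambda$ in measure, Lemma \ref{lem:full_convergence} directly yields Hausdorff convergence of the supports. The only nontrivial point is the initial snugness of $\tau$ for $\lambda$, which is why I fix the specific $\mathsf T$ coming from the equilateral construction rather than an arbitrary simply horizontally convex cellulation containing the horizontal saddles of $q$; everything else is a straightforward translation across the train track / period coordinate duality developed in Part \ref{part:tts and cellulations}.
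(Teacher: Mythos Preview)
Your proof is correct and follows essentially the same route as the paper: identify a snug carrier $\tau$ for $\lambda$, show that membership in $D(\taua')$ forces each $\lambda_n$ to be carried on $\tau$, and then invoke Lemma \ref{lem:full_convergence}.

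One small remark: your specialization of $\mathsf T$ to the equilateral cellulation is unnecessary. The paper observes that for \emph{any} simply horizontally convex $\mathsf T$ containing all horizontal saddle connections of $q$, the dual track $\tau=\tau(q,\mathsf T)$ is automatically snug for $\lambda$ (precisely because the arcs dual to horizontal saddles account for all the ``extra'' topology of $S\setminus\lambda$ relative to $S\setminus\tau$). The paper then argues carrying of $\lambda_n$ by noting, via Lemma \ref{lem:ttsaroundstrata}, that the dual track to the refined cellulation on $q_n$ is isotopic to $\tau$; your argument via the vanishing of imaginary parts on all arc branches of $\taua'$ is an equivalent and slightly more explicit way to say the same thing.
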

\begin{proof}
Let $\tau$ denote the dual train track to our simply horizontally convex $\mathsf{T}$. Since $\mathsf{T}$ contains all of the horizontal saddle connections of $q$, it must be the case that $\lambda$ is carried snugly on $\tau$.

The proof of Lemma \ref{lem:ttsaroundstrata} and our choice of $D(\taua')$ ensure that for every $q_n \in D(\taua')$, the dual train track to the cellulation refining $\mathsf{T}$ is actually isotopic to $\tau$.
In particular, the horizontal foliations $\lambda_n$ are all carried on $\tau$,
so we can apply Lemma \ref{lem:full_convergence} to deduce that 
the supports of $\lambda_n$ converge to the support of $\lambda$.
\end{proof}

This broader continuity criterion allows us to pull back convergence to a broader class of measures on $\QM_g$ along a broader class of sequences.
We give a general definition below; see just after for examples of where these naturally arise in Teichm{\"u}ller dynamics.

\begin{definition}
Let $S$ be a surface with marked points $Z$.
Say that a $\RR$-linear subspace $V < H^1(S, Z; \CC)$ is {\bf special}\label{ind:special} if it is cut out by equations of the form
\begin{itemize}
    \item $\phi(z) = 0$ for $z \in H_1(S, Z; \RR)$ and
    \item $\Im(\phi(z)) = 0$ for $z \in H_1(S, Z; \RR)$.
\end{itemize}
Given a special subspace $V$, define $\Hor(V) < H_1(S, Z; \RR)$ to be the subspace\label{ind:HorV}
\[\Hor(V) := \{ z | \Im(\phi(z))= 0 \text{ for all } \phi \in V\}.\]
There is a natural $\RR$-linear map\label{ind:ev}
\[\begin{array}{rcl}
    ev: V & \to & \Hom_{\RR}(\Hor(V), \RR) \\
    \phi & \mapsto & (z \mapsto \phi(z)).
\end{array}\]
Say that a measure on $H^1(S, Z; \CC)$ is {\bf special} if its support is contained in a special subspace $V$ and, when disintegrated over $ev$, almost every fiberwise measure is absolutely continuous with respect to Lebesgue.

Now let $\cH$ be a component of a stratum of abelian differentials.
Say that a measure $\nu$ on $\cH$ is {\bf special} if, for any precompact period coordinate chart $\hol: U \subset \cH \to H^1(S, \text{Zeros}; \CC)$, we have that
$\hol_* (\nu|_U)$ is the restriction of a finite convex combination of special measures to $\hol(U)$.
Since strata of quadratic differentials can be thought of as affine invariant subvarieties of strata of abelian, we can extend this definition to special measures on stratum components $\cQ \subset \QM_g$.
\end{definition}

This definition bears some explaining. In period coordinates, the equations of the first form cut out an ambient affine submanifold (it is locally $\GL_2^+\RR$ invariant, but perhaps not globally), while the second should be thought of as enforcing that some saddle connections must be horizontal.
The (integral points of the) subspace $\Hor(V)$ record all of the saddle connections that must remain horizontal as one deforms within $V$.

Prototypical examples of special measures come from surgering differentials in families to ``push'' a $P$-invariant measure, resulting in a $U$-invariant one.
Moreover, every known ergodic $U$-invariant measure on $\QM_g$ is special.

\begin{example}\label{eg:break zero push}
Consider an abelian differential with a zero of order at least 2 (or more generally, a quadratic differential with a zero of even order at least $4$).
Breaking up the zero horizontally produces a differential in an adjacent stratum with two lower-order zeros and a horizontal connection connecting them (see \S\ref{subsec:nbhd_strata} and \cite{KZstrata}).
This surgery is an isometry outside of a small ball about the zero in question, and if one performs this surgery with defining parameter $t$, then it is always defined as long as there are no horizontal saddle connections of length at most $t$.

Given a stratum $\cQ$ of such differentials (or more generally any affine invariant subvariety in such a stratum), we can therefore perform this surgery in families outside of a measure $0$ set.
\footnote{Properly, this surgery also depends on a finite amount of combinatorial data that tells you how to break up the zero and in which direction.
The base family is really then a finite cover of $\cQ$ that also records this combinatorial information.}
This gives a way to push the Masur--Smillie--Veech measure of $\cQ$ (respectively, the associated affine measure) into an adjacent stratum, where it will end up supported on the set of surfaces which all have ``the same'' horizontal saddle connection of length $t$. 
It is helpful to think of this measure as living on $\cQ^*$ when $t$ is small, but one can perform this construction for arbitrarily large $t$.
Compare Theorem 6.5 of \cite{BSW:Ratnerhorocycle} and the surrounding discussion.

The resulting measure is clearly special in the sense defined above, with $\Hor(V)$ equal to the span of the horizontal saddle connection obtained by breaking up the zero, and the pushforward measure under $ev$ supported entirely over the constant function $(z \mapsto t)$.
\end{example}

\begin{example}
Consider a stratum $\cH$ of abelian differentials with multiple zeros. The {\bf isoperiodic} (or absolute period, or REL) foliation of $\cH$ has leaves consisting of those differentials with the same absolute periods (but different relative ones).
These leaves should be thought of as ``moving the zeros relative to one another,'' see \cite{McM_twists}.
This foliation further decomposes into real and imaginary parts (corresponding to moving zeros horizontally and vertically, respectively), and the real part is called the {\bf real REL} foliation.

Given an affine invariant subvariety $\AIS$ and a vector field defined $\nu_{\AIS}$ almost everywhere on $\AIS$ tangent to the real REL foliation, one can push the affine measure $\nu_{\AIS}$ by this vector field.
The flow of this vector field is not always well-defined, but since $\AIS$ is rotation-invariant, it is well-defined on a full measure subset. See \cite{BSW:Ratnerhorocycle} and \cite{CWY:weakRatner} (which also discuss the more general construction of pushing a $U$-invariant measure by a real REL vector field).

The resulting measure is special, and if $\AIS$ is cut out by equations of the form $\phi(z_i)=0$ for $z_i$, then the special subspace containing the support of the push of $\nu_{\AIS}$ has equations of the form $\phi(z_i)=0$ and $\Im \phi(z_j)=0$.
We note that if the equations defining $\AIS$ were not defined over $\mathbb{Q}$ then the subspace $\Hor(V)$ may have no integer points, and so the generic point with respect to the pushed measure may have no horizontal saddle connections.
\end{example}

The point of defining this class of measures is that a generic point has no more horizontal saddle connections than anticipated, so we can say along which directions $\cO^{-1}$ is continuous (generically).
Given a special measure $\nu$ on $\cQ$, and a point $q \in \cQ$, we can express (the pushforward of) $\nu$ restricted to a small period coordinate chart containing $q$ as a finite convex combination of special measures on the relevant cohomology group.
Enumerate the special subspaces containing the supports of each of these measures as $V_1, \ldots, V_n$; then say that a point $q$ has {\bf no extra horizontals}\label{ind:noextrahor} if there is some $V_i$ such that
every horizontal saddle connection of $q$ lies in $\Hor(V_i)$, i.e., $\Hor(q) \le \Hor(V_i)$.

\begin{lemma}\label{lem:special generic no saddles}
Let $\nu$ be a special measure on a stratum $\cQ$. Then $\nu$-almost every $q$ has no extra horizontals.
\end{lemma}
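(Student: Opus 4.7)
My plan is to reduce to a local statement in period coordinates and then show, by a linear-algebra argument, that ``extra'' horizontal classes cut out fiberwise Lebesgue-null subsets of the disintegration of $\nu$.

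First I would fix a precompact period coordinate chart $\hol : U \to H^1(\widehat q, Z(\widehat q); \CC)^-$ and write $\hol_*(\nu|_U) = \sum_i c_i \mu_i$ as a finite convex combination of special measures $\mu_i$ supported on special subspaces $V_i$. Covering $\cQ$ by countably many such charts, it is enough to prove that for each $i$, $\mu_i$-a.e.\ $\phi \in V_i$ has the property that every integer class $z$ with $\Im(\phi(z)) = 0$ already lies in $N_i := \Hor(V_i)$. Since the set of integer saddle-connection classes is countable, this further reduces to showing that for each fixed $z \notin N_i$, the real hyperplane
\[H_z := \{\phi \in V_i : \Im(\phi(z)) = 0\}\]
is $\mu_i$-null. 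I will obtain this by combining the absolute continuity of the fiberwise measures of the disintegration of $\mu_i$ over $ev$ with a linear-algebraic claim about the real functional $L_z(\phi) := \Im(\phi(z))$.

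The main step, which is where I expect the only nontrivial work, is the following claim: when $z \notin N_i$, $L_z|_{\ker(ev)}$ does not vanish identically. The idea is to note that every defining equation of $V_i$ has the form $\phi(z_k) = 0$ or $\Im(\phi(z_\ell)) = 0$ with $z_k, z_\ell \in N_i$ by definition of $\Hor$, so each such equation is automatically enforced by $\phi|_{N_i} = 0$; hence $\ker(ev)$ coincides with the full complex annihilator $\{\phi \in H^1(\widehat q, Z(\widehat q); \CC)^- : \phi|_{N_i} = 0\}$. In particular $\ker(ev)$ is closed under multiplication by $i$, so if $L_z$ vanished on $\ker(ev)$, applying it to $i\phi$ would give $\Re(\phi(z)) = 0$ as well, forcing $\phi(z) = 0$ for every $\phi$ annihilating $N_i$. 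The double annihilator theorem would then place $z$ in the complex span of $N_i$, and since $z$ is real this would give $z \in N_i$, contradicting $z \notin N_i$.

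Once the claim is in hand, $L_z$ restricts to a nonzero affine functional on each fiber of $ev$, so $H_z$ meets every fiber in a proper affine subspace, which is Lebesgue-null. Absolute continuity of the conditional measures of $\mu_i$ over $ev$ then gives $\mu_i(H_z) = 0$. Taking countable unions over $z \in H_1(\widehat q, Z(\widehat q); \ZZ)^- \setminus N_i$, finite unions over $i$, and countable unions over a cover of $\cQ$ by period-coordinate charts completes the proof.
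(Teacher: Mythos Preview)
Your proposal is correct and follows essentially the same approach as the paper: localize to a period chart, reduce to a single special subspace, observe that extra horizontal classes cut out countably many hyperplanes, and use the fiberwise absolute continuity of the disintegration over $ev$ to conclude each is null. The one place where you go beyond the paper is the key transversality step: the paper simply asserts that the hyperplanes $H_z$ are transverse to the fibers of $ev$, whereas you supply a clean argument (identifying $\ker(ev)$ with the full complex annihilator of $\Hor(V_i)$, using its closure under multiplication by $i$, and invoking double annihilation) that actually justifies this. Your version is thus a slightly more complete write-up of the same proof.
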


\begin{proof}
Since special measures, the subspaces $\Hor(V_i)$, etc., are all defined locally, let us fix a small precompact period coordinate chart for $\cQ$ and work there. The overall result can then be attained by patching together a countable union of such charts.
We can also reduce to the case where the pushforward of $\nu$ on this chart is supported inside of a single special subspace $V$ since we are simply taking a convex combination of them.

A horizontal saddle connection has a real period, and the condition that its period is real defines a codimension 1 $\RR$-subspace of $H^1(S, Z; \mathbb{C})$.
There are countably many possible saddle connections on any quadratic differential, giving rise to countably many such subspaces.
If the homology class of a saddle connection is not in $\Hor(V)$ then its corresponding subspace must meet $V$ transversely; thus, the set of points in $V$ that have more real periods than expected is contained in a countable union of hyperplanes.
These hyperplanes are all transverse to the fibers of $ev$, so in each fiber the same condition holds:
the set of points with more real periods than expected is a countable union of hyperplanes.

We now disintegrate $\nu$ over the evaluation map $ev$. Since the fiberwise measures are almost all absolutely continuous with respect to Lebesgue, this countable union of hyperplanes has fiberwise measure 0 in almost every fiber, hence has $\nu$ measure 0.
\end{proof}

Suppose that $\nu$ is a special measure on $\cQ$.
Let $U \subset \cQ^*$ be a precompact relatively open set in the domain of a period coordinate chart and let $V_1, \ldots, V_n$ denote the special subspaces containing support of $\hol_*(\nu|_U)$.
By definition, each $V_i$ is the intersection of $\ImAnn(\Hor(V_i))$ with the annihilator of $\ker(V_i)$;
as in the discussion before Lemma \ref{lem:special Hausdconv}, let $\overline{\Hor(V_i)}$ denote the preimage of $\Hor(V_i)$ in $H_1(\widehat {q}', Z(\widehat q'); \RR)$ and set\label{ind:specpreimage}
\[ \overline{V_i} := \ImAnn(\overline{\Hor(V_i)}) < H^1(\widehat {q}', Z(\widehat q'); \CC).\]
Restricting to our period coordinate chart $U$, this gives a subset of a neighborhood of $q$ in $\cQ^*$. 

If $\nu_n$ is a sequence of measures $\nu_n \to \nu$ such that $\hol_*(\nu_n|_U)$ is supported on $\bigcup \overline{V_i}$ for every such $U$, then we say that the sequence $\nu_n$ is {\bf specially convergent.}\label{ind:specconv}

\begin{example}
Consider a sequence of measures $\nu_n$ obtained by breaking up a zero along a horizontal saddle connection of length $1 - 1/n$, as in Example \ref{eg:break zero push}. Then the sequence $\nu_n$ converges specially to the measure obtained by breaking up the zero along a horizontal saddle connection of length $1$.
\end{example}

\begin{theorem}\label{thm:pull back special convergence}
Suppose that $\nu$ is a special measure on $\cQ$ and suppose that $\nu_n \to \nu$ is a specially convergent sequence of locally finite Borel measures on $\cQ^*$.
Then $\cO^* \nu_n \to \cO^* \nu$ on $\PM_g$.
\end{theorem}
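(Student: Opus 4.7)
The strategy is to adapt the proofs of Theorems \ref{mainthm:measure convergence strata} and \ref{mainthm:push measures from P to Q} by invoking \cite[Theorem 5.1]{B:measures}: it suffices to identify a $\nu$-full measure set $G \subset \cQ$ of points at which $\cO\inverse$ is continuous along sequences in $\bigcup_j \overline{V_j}$, since this is where the supports of $\nu_n$ live. The main work is therefore a geometric step—promoting the non-special continuity statement of Theorem \ref{thm:continuity for only short horizontals} to a statement that allows $q$ to carry horizontal saddle connections, provided we only test against sequences $q_n$ which keep those saddles horizontal.

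The first step is to take $G$ to be the full-measure subset guaranteed by Lemma \ref{lem:special generic no saddles}: every $q=\cO(X,\lambda) \in G$ has \emph{no extra horizontals} relative to some $V=V_i$ in the local decomposition of $\nu$ near $q$, meaning $\Hor(q) \le \Hor(V)$. After choosing a countable Borel partition subordinate to the local convex-combination representations of $\nu$, we may assume that a single special subspace $V$ witnesses the no-extra-horizontals property at $q$. For each such $q$, fix a simply horizontally convex cellulation $\mathsf T$ of $q$ containing every horizontal saddle connection of $q$—e.g., the one dual to $\tau(X,\lambda,\delta) \cup \arc(X, \lambda)$ for sufficiently small $\delta$, as in Proposition \ref{prop:dual cell veering}. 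Letting $\tau = \tau(q,\mathsf T)$ be the dual train track, the inclusion of all horizontal saddle connections in $\mathsf T$ forces $\tau$ to snugly carry $\lambda$.

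Next, I would establish continuity of $\cO\inverse$ at such $q$ along any sequence $q_n=\cO(X_n,\lambda_n)$ with $q_n \to q$ and $[\hol(q_n)]_+ \in \overline V$. The conditions defining $\overline V = \ImAnn(\overline{\Hor(V)})$ assert precisely that (i) every saddle connection of $q_n$ collapsed by the map $q_n \to q$ has real period, and (ii) every saddle connection of $q_n$ realizing a horizontal saddle of $\mathsf T$—whose homology class lies in $\Hor(q) \le \Hor(V)$—remains horizontal. These are exactly the hypotheses of Lemma \ref{lem:special Hausdconv}, so the dual train track of any horizontally convex refinement $\mathsf T'$ of $\mathsf T$ on $q_n$ is isotopic to $\tau$; in particular $\lambda_n \prec \tau$. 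Snugness of $\tau$ for $\lambda$ and Lemma \ref{lem:full_convergence} then give $d_X^H(\lambda_n, \lambda) \to 0$, and Theorem \ref{thm:cont_inverse} upgrades this to $\cO\inverse(q_n) \to \cO\inverse(q)$ in $\PM_g$.

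The subtle point—and the main obstacle—is that a priori $\supp(\nu_n)$ meets every $\overline{V_j}$ in the local decomposition, not only the $\overline V$ witnessing no extra horizontals at $q$. To handle this, I would exploit the closedness of each $\overline{V_j}$: if a sequence $q_n \in \overline{V_j}$ accumulates at $q$, then $q$ itself satisfies all the non-degenerate equations defining $V_j$, hence lies in the $V_j$-piece of the local decomposition of $\nu$. After discarding a further $\nu$-null set we may assume that for every such $j$ the same no-extra-horizontals argument applies, so the preceding step works uniformly. With continuity of $\cO\inverse$ established on the resulting full-$\nu$ measure set, the Borel isomorphism of Theorem \ref{thm:Boreliso} and \cite[Theorem 5.1]{B:measures} conclude that $\cO^*\nu_n \to \cO^*\nu$ weak-$*$ on $\PM_g$, as desired.
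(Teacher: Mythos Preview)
Your proposal is correct and follows essentially the same route as the paper: invoke Lemma \ref{lem:special generic no saddles} to obtain a $\nu$-full set of points with no extra horizontals, use the containment $\overline{V_i}\le \ImAnn(\overline{\Hor(q)})$ together with Lemma \ref{lem:special Hausdconv} and Theorem \ref{thm:cont_inverse} to get continuity of $\cO^{-1}$ restricted to $\bigcup_j \overline{V_j}$ at such points, and conclude via \cite[Theorem 5.1]{B:measures}. Your treatment is somewhat more explicit than the paper's---in particular you spell out the snugness of the dual track and flag the ``multiple $V_j$'' issue---but the underlying argument is the same.
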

\begin{proof}
Cover $\cQ^*$ with a countable set of compact period coordinate charts; we prove the Theorem on each.

By Lemma \ref{lem:special generic no saddles}, the $\nu$-generic point has no extra horizontal saddle connections.
That is, on a small period coordinate neighborhood on which $\nu$ looks like a convex combination of special measures supported in special subspaces $V_i$, the following holds:
for almost every $q$, we have that $\Hor(q) < \Hor(V_i)$ for one of these $V_i$.
In particular, we have that
\[\overline{V_i}
= \ImAnn(\overline{\Hor(V_i)})
\le \ImAnn(\overline{\Hor(q)})\]
and so Lemma \ref{lem:special Hausdconv} ensures that if $q_n \to q$ inside $\overline{V_i}$, the supports of the corresponding horizontal foliations must also converge.
Combining this with Theorem \ref{thm:cont_inverse}, we get that $\cO^{-1}$, restricted to the special subspaces containing the support of $\nu$, is continuous at $\nu$-almost every point.

Since all of the measures $\nu_n$ are supported on $\bigcup \overline{V_i}$, we can push them forward along $\cO^{-1}$ even with the domain restriction, and Theorem 5.1 of \cite{B:measures} yields the convergence of their pushforwards.
\end{proof}

We highlight one instance of special convergence; this plus \cite[Theorem 11.1]{BSW:Ratnerhorocycle}
proves Theorem \ref{thm:eigenform genericity}.
Recall that a point is generic for an ergodic flow $(\varphi^t, \mu)$ if ergodic averages along its $\varphi$-flowline converge to the average with respect to $\mu$.

\begin{corollary}
Suppose that $\nu$ is a $U$-invariant ergodic measure on $\cQ$ that is also special.
If $q = \cO(X, \lambda)$ in the support of $\nu$ is generic for $(U, \nu)$, then $(X, \lambda)$ is generic for $(\Eq, \cO^*\nu)$.
\end{corollary}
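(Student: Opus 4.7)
The plan is to transfer the genericity of $q$ for $(u_s, \nu)$ to genericity of $(X,\lambda)$ for $(\Eq_s, \cO^* \nu)$ by pushing forward the defining convergence of ergodic averages along $\cO\inverse$ using Theorem \ref{thm:pull back special convergence}, and then identifying the pushed-forward averages with the averages for the earthquake flow via the conjugacy property of $\cO$.

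For each $T > 0$, let $\nu_T := \frac{1}{T} \int_0^T \delta_{u_s q} \, ds$ denote the uniform probability measure on the length-$T$ horocycle orbit starting at $q$. By definition, genericity of $q$ for $(U, \nu)$ is the weak-$*$ statement $\nu_T \to \nu$ on $\cQ$. The first step I would verify is that $(\nu_T)$ in fact converges specially to $\nu$. The action of $u_s$ on periods sends $(\Re\phi, \Im\phi) \mapsto (\Re\phi + s\Im\phi, \Im\phi)$, so it preserves both conditions of the form $\phi(z) = 0$ and of the form $\Im\phi(z) = 0$; hence every special subspace is $U$-invariant. Since $q$ lies in the support of $\nu$, in any period coordinate chart the orbit $\{u_s q\}$ is contained in one of the special subspaces $V_i$ whose finite convex combination locally represents $\nu$. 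Thus, viewing $\nu_T$ as a measure on $\cQ^*$ via the inclusion $\cQ \subset \cQ^*$, the support of $\nu_T|_U$ lies in $\bigcup_i \overline{V_i}$ in each chart $U$, which is exactly the definition of special convergence.

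Given special convergence, Theorem \ref{thm:pull back special convergence} immediately yields $\cO^* \nu_T \to \cO^* \nu$ weak-$*$ on $\PM_g$. To extract the desired conclusion, I would use the defining conjugacy $\cO \circ \Eq_s = u_s \circ \cO$, i.e., $\cO\inverse(u_s q) = \Eq_s(X, \lambda)$. For any test function $\phi \in C_c(\PoM_g)$, this gives
\[
\int \phi \, d(\cO^* \nu_T) = \int (\phi \circ \cO\inverse) \, d\nu_T = \frac{1}{T} \int_0^T \phi(\Eq_s(X, \lambda)) \, ds.
\]
Since the left-hand side converges to $\int \phi \, d(\cO^* \nu)$, the ergodic averages of $\phi$ along the earthquake orbit through $(X, \lambda)$ converge to $\int \phi \, d(\cO^* \nu)$, establishing genericity of $(X, \lambda)$ for $(\Eq, \cO^* \nu)$.

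The principal technical obstacle is the verification of special convergence: one must check not just that a single $V_i$ works locally near $q$, but that as the horocycle orbit moves through distinct period coordinate charts, the local specialness of $\nu_T|_U$ is witnessed by precisely the special subspaces that locally represent $\nu|_U$. This is ultimately a consequence of the $U$-invariance both of $\nu$ itself and of each special subspace, but writing this out carefully across charts is the one step requiring attention. Once this is in place, the rest is a direct application of the machinery already developed.
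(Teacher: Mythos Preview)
Your proposal is correct and matches the paper's intended argument: the paper states the Corollary without proof, presenting it simply as ``one instance of special convergence'' to which Theorem \ref{thm:pull back special convergence} applies, and you have filled in exactly those details.

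One simplification: the ``principal technical obstacle'' you flag dissolves immediately once you use $U$-invariance of the \emph{support of $\nu$} rather than of the individual special subspaces. Since $\nu$ is $U$-invariant and $q \in \supp(\nu)$, the entire orbit $\{u_s q : s \ge 0\}$ lies in $\supp(\nu)$. In any chart $U$, specialness of $\nu$ gives $\supp(\nu) \cap U \subset \bigcup_i V_i \subset \bigcup_i \overline{V_i}$, so $\supp(\nu_T|_U) \subset \bigcup_i \overline{V_i}$ holds in every chart with no further bookkeeping. Your argument via $U$-invariance of special subspaces is also correct, but this route avoids tracking which $V_i$ contains the orbit as it passes between charts.
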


\addtocontents{toc}{\protect\setcounter{tocdepth}{0}}
\section*{Conflicts of Interest}
None.

\section*{Financial support}
AC acknowledges support from NSF grants
DGE-1122492, 
DMS-2005328, 
and DMS-2202703. 
JF acknowledges support from
NSF grant DMS-2005328 
and DFG grants 
427903332 
and 281071066 – TRR 191. 

\bibliography{references_SHSHII}{}
\bibliographystyle{amsalpha.bst}

\vfill \pagebreak
\part*{Index}

{\noindent Here, we collect many of the constants and notations used throughout the paper.}
\\

$s$, a lower bound on the systole

$\delta_{\ref{prop:ttdefs_comp}}$, 
$\delta_{\ref{lem:equittswork}}$,
$\delta_{\ref{lem:change base ties still meet}}$,
defining parameters for train track neighborhoods

$\zeta_{\ref{prop:persistent}}$,
$\zeta_{\ref{lem:fixX persistent}}$,
$\zeta_{\ref{cor:persist_notallarcs}}$,
$\zeta_{\ref{prop:tiestable}}$,
$\zeta_{\ref{lem:ttstablefixX}}$,
$\zeta_{\ref{thm:contcell}}$,
$\zeta_{\ref{thm:cont_inverse_stable}}$, 
$\zeta_{\ref{prop:stable_hausdorff_stratum}}$,
thresholds for Hausdorff-close laminations

$B_{\ref{prop:persistent}}(\zeta)$, 
$B_{\ref{lem:ttstablevarX}}(\ell)$,
$B_{\ref{thm:contcell}}(\zeta)$,
neighborhoods of a fixed $X \in \T_g$

$U_{\ref{thm:cont_inverse_stable}}^{ss}$,
$U^{ss}_{\ref{prop:stable_hausdorff_stratum}}$,
$U_{\ref{cor:lamination_close_star_nbh_stratum}}$,
neighborhoods of a fixed $q \in \QT_g$

$\xi_{\ref{thm:shapeshift_distance_small}}$,
a threshold for the size of a combinatorial deformation $\ac$

$w_{\ref{prop:ttdefs_comp}}$, comparing the width of uniform and equilateral neighborhoods

$W_{\ref{lem:equittwidth}}$, describing the width of an equilateral neighborhood

\begin{multicols}{2}

$\arc_*$ \pageref{ind:arccompletion}

$\arc(X,\lambda)$ \pageref{ind:arcsystems}

$\arc(q, {\mathsf{T}})$ \pageref{ind:ttdualtocell}

$\arc_\circ^\tau (\lambda)$ \pageref{ind:topinvis}

$\arc_\bullet^\tau(X,\lambda)$ \pageref{ind:topvis}

$\gvarc$ \pageref{ind:geominvis}

$\arc(\zeta)$ \pageref{ind:persistnotallarcs}

$\arcwt(X,\lambda)$ \pageref{ind:arcsystems}

$A(V)$ \pageref{ind:elemshaping}

$A(\alpha)$ \pageref{ind:shapingspine}

$|\Arcfill(S\setminus \lambda)|_\RR$ \pageref{ind:arccx}

Affine measures \pageref{ind:AIS}

Augmented equilateral train tracks \pageref{ind:augtt}
\\

$B_\mathsf{T}(q)$ \pageref{ind:BTq}

$B^*_\mathsf{T}(q)$ \pageref{ind:BTq*}

$B^{\cQ}_{\mathsf{T}}(q)$ \pageref{ind:BQT(q)}

$\mathscr B(S\setminus \lambda)$ \pageref{ind:base}

Basepoints of triples \pageref{ind:basepts}

Binding pairs \pageref{ind:bind}

Branches of a train track \pageref{ind:ttbasics}
\\

$c_\alpha$ \pageref{ind:arcsystems}

$C(\taua')$ \pageref{ind:Ccone}

$\overline{\text{Col}(q)}$ \pageref{ind:collapsepreimages}

Centers of triples \pageref{ind:center}

Chain-recurrence \pageref{ind:chainrec}

Collapse maps \pageref{ind:geomcollapse}, \pageref{ind:topcollapse}
\\

$\Delta_V(\delta)$\pageref{ind:DVd}

$d_\lambda(b)$ \pageref{ind:dlb}

$d_X^H$ \pageref{ind:Hausdmetric}

$d_{\hat x}(A, B )$ \pageref{ind:PSL2Rmetric}

$d_\Gamma(X,X')$ \pageref{ind:dGamma}

$D_\delta$ \pageref{ind:Ddelta}

$D(\taua')$ \pageref{ind:Dcone}

Dilation rays \pageref{ind:dilrays}
\\ 

$ev$ \pageref{ind:ev}

${\cE}_{\delta}(\lambda)$ \pageref{ind:equinbhd}

Equilateral neighborhoods \pageref{ind:equinbhd}

Equilateral train tracks \pageref{ind:equitt}

Extensions of train tracks \pageref{ind:extensions}
\\

$\varphi_\ac$ \pageref{ind:shapeshift}

$\varphi(k_b)$ \pageref{ind:shapeshifttrans}

$\varphi(k)$ \pageref{ind:shapingspine}

$\varphi(V)$ \pageref{ind:elemshsh}

$f_{X,\ac}(V)$ \pageref{ind:sharpnessfunctions}

Flipping smoothings \pageref{ind:flipping}

Fully carrying \pageref{ind:fully}
\\

$G_P(X,\lambda)$ \pageref{ind:config}

$\mathcal {GL}^{cr}(X)$ \pageref{ind:chainrec}
\\

$\Hor(q)$ \pageref{ind:Horq}

$\overline{\Hor(q)}$ \pageref{ind:collapsepreimages}

$\Hor(V)$ \pageref{ind:HorV}

$\overline{\Hor(V)}$ \pageref{ind:specpreimage}

$\cH(\lambda)$ \pageref{ind:transversecoc}

Hexagons \pageref{ind:hexagon}

Horizontally convex, simply horizontally convex \pageref{ind:horconv}

Horospherical measures \pageref{ind:horosphere}
\\

$\ImAnn$ \pageref{ind:ImAnn}

Injectivity radius $\injrad_q(x)$ \pageref{ind:flatthickthin}

Invariant subvarieties \pageref{ind:AIS}

Invisible arcs \pageref{ind:geominvis}, \pageref{ind:topinvis}
\\

$k_b$ \pageref{ind:transversal}
\\

$\widehat {\lambda}\cup\widehat{\arc}$ \pageref{ind:lamor}

$\ell(X, \lambda, \delta)$ \pageref{ind:lxdb}

$\ell_{\ac}(\alpha)$ \pageref{ind:sharpnessfunctions}

$L_{\rho}(\Gamma)$, $L_X(\Gamma)$ \pageref{ind:gensetlength}
\\

$\MF_g$ \pageref{ind:MF}

$\MF(\lambda)$ \pageref{ind:MFlambda}

$\ML_g$ \pageref{ind:ML}

Masur--Smillie--Veech measures \pageref{ind:MSV}

Mirzakhani measure \pageref{ind:Mirzmeasure}
\\

$\nu_{\cQ}$, $\nu_{\cQ}^1$ \pageref{ind:MSV}

$N_{\arc}$, $\widehat {N_{\arc}}$ \pageref{ind:snugtopnbhd}

$\cN_{\delta}(\lambda)$ \pageref{ind:unifnbhd}

No extra horizontals \pageref{ind:noextrahor}
\\

$O_s(f)$ \pageref{ind:Os}

$\cO$ \pageref{ind:O}

$\cO^*$ \pageref{ind:O*}

$\Ol$ \pageref{ind:orthofolmap}

$\cO_{\partial Y}(Y)$ \pageref{ind:orthofol}

Orientations of $\lambda\cup \arc$ \pageref{ind:lamor}

Orthogeodesic foliation \pageref{ind:orthofol}
\\

$\PM_g$, $\PoM_g$, $\PT_g$, $\PoT_g$ \pageref{ind:PMg}

$\PT_g(\tau)$ \pageref{ind:PTtau}

Period coordinates \pageref{ind:period coordinates}

Proto-spike \pageref{ind:proto-spike}
\\

$\widehat{q}$ \pageref{ind:period coordinates}

$\cQ$, $\cQ^1$ \pageref{ind:strata}

$\cQ^*$ \pageref{ind:Q*}

$\QM_g$, $\QoM_g$, $\QT_g$, $\QoT_g$ \pageref{ind:QMg}

$\QT_g(\tau)$ \pageref{ind:QTtau}
\\

$\rho_\ac$ \pageref{ind:shapeshift}

$r(V)$ \pageref{ind:depth}

Realizing a saddle connection \pageref{ind:rere}

Refining a cellulation \pageref{ind:rere}
\\

$\sigl(X)$ \pageref{ind:geomshsh}

$s$-thick, $s$-thin \pageref{ind:thickthin}

$\Sp$ \pageref{ind:spine}

$\ac$ \pageref{ind:combdef}

$\SH(\lambda)$ \pageref{ind:SHlambda}

$\SH^+(\lambda)$ \pageref{ind:SH+}

$\SH(\lambda;\arc)$ \pageref{ind:shshwitharc}

(SH0), (SH1), (SH2), (SH3) \pageref{ind:shshaxioms}

Shape-shifting cocycle \pageref{ind:shapeshift}

Shear-shape cocycles \pageref{ind:shshcoc}

Slide equivalence \pageref{ind:slideequiv}

Smoothings, standard smoothings \pageref{ind:smoothing}

Special convergence \pageref{ind:specconv}

Special subspaces, measures \pageref{ind:special}

Spine \pageref{ind:spine}

Standard transversals \pageref{ind:standardtrans}

Strata \pageref{ind:strata}

Stretchquakes \pageref{ind:stretchquakes}

Switches of a train track \pageref{ind:ttbasics}
\\

$\tau(q, {\mathsf{T}})$ \pageref{ind:ttdualtocell}

$\tau(X,\lambda, \delta)$ \pageref{ind:uniftt}, \pageref{ind:equitt}

$\mathsf{T}$ \pageref{ind:dualcelltott}

$\text{Thick}_\delta(Y)$, $\text{Thin}_\delta(Y)$ \pageref{ind:equithickthin}

$\Thick_t(q)$, $\Thin_t(q)$ \pageref{ind:flatthickthin}

Ties of a train track \pageref{ind:ttbasics}

Train paths \pageref{ind:ttbasics}

Transverse cocycles \pageref{ind:transversecoc}
\\

Uniform geometric train track \pageref{ind:uniftt}
\\

$V(q)$ \pageref{ind:Vq}

$\overline{V}$ \pageref{ind:specpreimage}

$(\mathcal V, \le )$ \pageref{ind:branchprotospikes}

$\mathcal V_{ \varepsilon}$ \pageref{ind:protospike_eps}

Veering saddle connections \pageref{ind:veering}

Visible arcs \pageref{ind:geominvis}, \pageref{ind:topvis}
\\

$\omega_{\SH}$ \pageref{ind:Thform}

$W(\tau)$, $W^{>0}(\tau)$, $W^{\ge0}(\tau)$, $W_\mathbb{C}(\tau)$ \pageref{ind:wtspaces}

$W^{>0}_{\text{snug}}(\tau)$ \pageref{ind:snugwts}

$\Fol^{uu}$, $\Fol^{ss}$ \pageref{ind:unstablefols}

Width of a train track \pageref{ind:ttbasics}
\\

$X_\ac$ \pageref{ind:shapeshift}
\\

$Y_q$ \pageref{ind:Yq}
\\

$\zeta$-equidistant \pageref{ind:zetaequid}

$[z]_+$ \pageref{ind:z+}
\\

$\preceq$ \pageref{ind:fully}

\end{multicols}
\Addresses
\end{document}